\renewcommand{\l}{l}
\newtheorem{theorem}{Theorem}[section]
\newtheorem{lemma}[theorem]{Lemma}
\newtheorem{definition}[theorem]{Definition}
\newtheorem{corollary}[theorem]{Corollary}
\newtheorem{remark}[theorem]{Remark}
\newtheorem{proposition}[theorem]{Proposition}
\newtheorem{question}[theorem]{Question}
\newcommand{\vertiii}[1]{{\left\vert\kern-0.25ex\left\vert\kern-0.25ex\left\vert #1 
    \right\vert\kern-0.25ex\right\vert\kern-0.25ex\right\vert}}
\DeclareBoldMathCommand\balpha{\alpha}
\DeclareSymbolFont{largesymbolsA}{U}{txexa}{m}{n}
\DeclareMathSymbol{\varprod}{\mathop}{largesymbolsA}{16}
\title{From Parseval to Grothendieck and a little beyond,  part I}
\author{Ron Blei}
\email{Ron Blei, blei@math.uconn.edu}
\address{Department of Mathematics, University of Connecticut, Storrs, CT 06269}
\date{\today}
\begin{document}

\maketitle

\numberwithin{equation}{section}

\section{\bf{Foreward}} \label{s-1}
\ \emph{Le th\'{e}or\`{e}me fondamental de la th\'{e}orie metrique des produits tensoriels}
had first appeared, so dubbed, in Alexander Grothendieck's landmark work \citep{Grothendieck:1956} that at the time went largely unnoticed.  Fifteen years later in  another landmark work \citep{Lindenstrauss:1968},  \emph{the fundamental theorem}, its essence distilled, was reformulated  
%by Lindenstrauss and Pelczynski 
as \emph{the Grothendieck inequality}, and, since then, has been applied, extended, and reinterpreted in diverse contexts and settings  \citep{pisier2012grothendieck}.
%, evidence of its wide appeal and significance.  (see \citet{pisier2012grothendieck}.) %attesting to its significance and appeal. (see \citet{pisier2012grothendieck}.)  
%\emph{Resum\'{e}}

Our work on and around the Grothendieck inequality -- a fundamental statement about Euclidean spaces -- began four or so decades ago with the observation that Littlewood's  
%$(C_{R\times R} \hookrightarrow l^{1,2})$-
mixed-norm inequality and the Khintchin $(L^1_R \hookrightarrow L^2)$-inequality are, in a precise sense, equivalent.  Noting that Littlewood's inequality is a special case of Grothendieck's, and therefore so is Khintchin's, we considered a question that seemed natural to us:  can the Grothendieck inequality be derived from the Khintchin inequality?  The answer was \emph{yes}, and the key an 'upgrade' of the Khintchin inequality \citep{Blei:1977uq}.

An 'upgrade' here means this.  Each of the aforementioned inequalities is equivalent to existence of  \emph{dual maps}:
% uniformly bounded with respect to prescribed norms:  
 $(l^2 \hookrightarrow L^{\infty})$-\emph{interpolants} (the Khintchin inequality),  $(l^{\infty,2} \hookrightarrow M)$-\emph{interpolants} (Littlewood's inequality), and  $(l^2 \hookrightarrow L^{\infty})$-\emph{representations} (the Grothendieck inequality), each in effect expressing a property of Euclidean space.  An 'upgrade' in our context refers to existence of dual maps with an additional property, not \emph{a priori} guaranteed by the corresponding inequality. 
The task at hand -- a derivation of the Grothendieck inequality from Khintchin's -- required control over $(l^2 \hookrightarrow L^{\infty})$-interpolants that was not provided by the Khintchin inequality \emph{proper}. The derivation generalized, and indeed opened paths to other 'Grothendieck-type' descriptions of Euclidean as well as non-Euclidean spaces. 
%(a dual map corresponding to the Khintchin inequality) 

Our starting point is a 'dual' view of the Grothendieck inequality:  the existence of representations of Euclidean vectors by bounded functions (via $(l^2 \hookrightarrow L^{\infty})$-\emph{representations}), analogous to the classical representations by square-integrable functions  (via \emph{unitary maps}), wherein Parseval's identity looms large and central. Indeed, the Grothendieck inequality in its dual form manifests a Parseval-like formula, synthesized -- in our proof of it -- from the usual Parseval formula in a harmonic analysis setting.  In the first part of the monograph (Ch.2 - Ch.6), we explain the underlying motivation, review classical and neo-classical results, and describe upgrades of the Grothendieck inequality, as well as extensions of it to $l^p$-sequence spaces.  In the second part (Ch.7 - Ch.11), building on ideas detailed in the first, we derive Grothendieck-type representations of mixed-norm spaces, including spaces of operators.

The monograph is intended for readers with interest in analysis, and specifically with interest in the \emph{Grothendieck inequality} and topics around it. Since its appearance, the inequality has indeed gained in stature over the years, impacting en route functional analysis, harmonic analysis, probability theory, theoretical physics, and recently also theoretical computer science; see \citet{pisier2012grothendieck}.  This book, starting from first principles, tells a tale about a subject that -- we believe -- is still in its 'mid-life,' still open-ended with a range of unresolved issues.  %Stated are open problems of long standing, as well as questions that naturally arise here, and which I could not answer; other questions will almost surely occur to the reader, as the tale unfolds.

We assume familiarity with rudiments of functional and harmonic analysis usually covered in introductory courses.  And otherwise we will review and explain, as we move along, relevant concepts and results that are either studied in advanced courses, or found in the research literature at large. 
%\citep{rokhlin1949fundamental}
\tableofcontents

\newpage

\section{\bf{Classical and neoclassical notions}}\label{s0}
\subsection{Unitary maps} \ 
Let $A$ be a set, and let $l^2(A)$ denote the Euclidean space whose 'coordinate axes' are indexed by $A$.  That is, consider the space of complex-valued, square-summable functions on $A$, 
\begin{equation}
 l^2(A) \  \overset{\text{def}}{=} \ \big \{{\bf{x}} \in \mathbb{C}^A: \sum_{\alpha \in A} |{\bf{x}}(\alpha)|^2 \ < \infty \big \},
\end{equation}
equipped with the usual inner product, 
\begin{equation}
\langle {\bf{x}}, {\bf{y}} \rangle \  \overset{\text{def}}{=} \ \sum_{\alpha \in A} {\bf{x}}(\alpha) \overline{{\bf{y}}(\alpha)}, \ \ \ \ {\bf{x}} \in l^2(A), \  {\bf{y}} \in l^2(A),
\end{equation}
and Euclidean norm,
\begin{equation}
\|{\bf{x}}\|_2  \  \overset{\text{def}}{=}  \ \sqrt{\langle {\bf{x}},{\bf{x}} \rangle} \ = \ \bigg(\sum_{\alpha \in A} |{\bf{x}}(\alpha)|^2\bigg)^{\frac{1}{2}}, \ \ \ \  {\bf{x}} \in l^2(A).
\end{equation}
We view $A$ merely as an indexing set, with no assumptions about its cardinality, or structures therein. Summing nonnegative scalars $y_{\alpha}$ over $\alpha \in A$ has the usual meaning,
\begin{equation}
\sum_{\alpha \in A} y_{\alpha} \  \overset{\text{def}}{=} \sup_{\text{finite } F \subset  A} \ \sum_{\alpha \in F}y_{\alpha}.
\end{equation}\\
 If $(y_{\alpha}: \alpha \in A) \in \mathbb{C}^A$ is absolutely summable over $A$ (i.e.,  \ $\sum_{\alpha \in A} |y_{\alpha}| < \infty$), then its support $\big\{\alpha: y_{\alpha}   \neq 0 \big\} = A^{\prime}$  \ is at most countably infinite, and the sum \  $\sum_{\alpha \in A} y_{\alpha}$ is well-defined:   for any sequence of finite sets $E_j$ increasing to $A^{\prime}$,  
 \begin{equation}
E_j \subset E_{j+1}  \subset A^{\prime}, \ \ \  j \in \mathbb{N}, \ \ \ \ \ \  \bigcup_{j \in \mathbb{N}}E_j = A^{\prime}, 
 \end{equation}
 the limit of the partial sums
\begin{equation}
\lim_{j \rightarrow \infty} \sum_{\alpha \in E_j} y_{\alpha} \ \overset{\text{def}}{=} \ \sum_{\alpha \in A} y_{\alpha}\end{equation}\\
exists, and does not depend on the choice of $(E_j)_j$.
 
Let $(\Omega, \mu)$ be a general measure space, and consider   
 \begin{equation} \label{L2}
 L^2(\Omega, \mu) = \big \{ \mathbb{C}\text{-valued measurable functions $f$ on}  \ \Omega : \int_{\Omega} |f|^2 d\mu < \infty \big \}, 
 \end{equation}
 wherein elements are listed modulo the equivalence $$f \equiv g  \ \Leftrightarrow \ f = g \ \  \text{a.e.} (\mu), \ \ \ f \in L^2(\Omega,\mu), \ g \in L^2(\Omega,\mu).$$    
 The resulting space of equivalence class representatives -- denoted also by $L^2(\Omega, \mu)$ -- is a Hilbert space, with  
  inner product
  \begin{equation}
 \langle f, g \rangle_{L^2} \ \overset{\text{def}}{=} \ \int_{\Omega} f \ \overline{g}  \ d\mu, \ \ \ \  \ f \in L^2(\Omega,\mu), \ g \in L^2(\Omega,\mu), 
 \end{equation} 
and $L^2$-norm 
 \begin{equation}
 \|f\|_{L^2} \ \overset{\text{def}}{=}  \ \sqrt{\langle f, f \rangle_{L^2} } = \bigg (\int_{\Omega}|f|^2 d\mu\bigg)^{\frac{1}{2}}, \ \ \ \  \ f \in L^2(\Omega,\mu).
 \end{equation}

Assuming the dimension of  $L^2(\Omega, \mu)$  is at least the cardinality of $A$,  we take an orthonormal family 
%of real-valued functions 
$\{f_{\alpha}\}_{\alpha \in A} \subset L^2(\Omega, \mu)$, and consider the linear injection
 \begin{equation} \label{unitaryV1}
U \ (= U_{\{f_{\alpha}\}}):  l^2(A) \rightarrow L^2(\Omega, \mu)
\end{equation}
defined by
 \begin{equation} \label{unitaryV}
 U {\bf{x}} =  \sum_{\alpha \in A} {\bf{x}} (\alpha) f_{\alpha}, \ \ \ {\bf{x}} \in l^2(A).
 \end{equation}
 For  ${\bf{x}}$ and ${\bf{y}}$ in $l^2(A),$
  \begin{equation} \label{parseval} 
  \langle {\bf{x}}, {\bf{y}} \rangle \  
 % = \ \int _{\Omega}{U\bf{x}} \ U\overline{{\bf{y}}} \ d\mu \ 
  =  \ \int _{\Omega}{U\bf{x}} \ \overline{U{\bf{y}}} \ d\mu \ = \ \langle {U\bf{x}}, U{\bf{y}} \rangle_{L^2},  \\\\
 \end{equation}\\
 and (therefore)
 \begin{equation} \label{parseval1}
 \|{\bf{x}}\|_2 = \|U{\bf{x}}\|_{L^2}.
 \end{equation}
 
\begin{definition} \label{unit}
Let  $H_1$ and $H_2$ be Hilbert spaces with respective inner products $\langle \boldsymbol{\cdot}, \boldsymbol{\cdot} \rangle_{H_1}$ and  $\langle  \boldsymbol{\cdot}, \boldsymbol{\cdot} \rangle_{H_2}.$ A map   \ $V:   H_1  \rightarrow  H_2$  is said to be \emph{unitary}, and $H_1$ \emph{unitarily equivalent} to $V[H_1]$, if     
\begin{equation} \label{presip}
\langle {\bf{x}}, {\bf{y}} \rangle_{H_1} = \langle {V(\bf{x}}), V({\bf{y}}) \rangle_{H_2}, \ \ \ \ {\bf{x}} \in H_1, \ {\bf{y}} \in H_2.
\end{equation}
\end{definition}

\noindent
Every unitary map is necessarily \emph{injective, linear}, and \emph{bounded}, and in particular norm-continuous.  In our case,  $U$ in \eqref{unitaryV} is unitary, and $l^2(A)$ unitarily equivalent to  $$L^2_{ \{f_{\alpha}\}} \  \overset{\text{def}}{=}  \ L^2\text{-closure of the linear span of} \  \{f_{\alpha}\}_{\alpha \in A}.$$

\begin{remark}[a classical example; e.g., {\citet[Ch. I]{Katznelson:1976}}]  \label{classical} \ 
Let \  $$A   =   \mathbb{Z} \  (\text{the integer group}), \ \ \ \ \Omega   = [0,2\pi) \     {=}  \  \mathbb{T}  \  (\text{the circle group}),$$   $$\mu   =  dt/2\pi \ \ (\text{normalized Lebesgue measure}),$$   and  $\{e_j \}_{j\in \mathbb{Z}} \subset L^2(\mathbb{T},dt/2\pi)$,  
\begin{equation} \label{classical1}
e_j(t) \ = \ \exp({\mathfrak{i}jt}), \ \ \ \ j \in \mathbb{Z}, \ \ t \in \mathbb{T}, \  \ \mathfrak{i} \overset{\text{def}}{=} \sqrt{-1}.
\end{equation}
Then  \ $U_{\{e_j\}}: l^2(\mathbb{Z})  \ \rightarrow  \ L^2(\mathbb{T},dt)$, 
\begin{equation} \label{unitary2}
 U_{\{e_j\}}{\bf{x}} \ = \ \sum_{j \in \mathbb{Z}} {\bf{x}}(j) \ e_j, \ \ \ \ \ {\bf{x}} \in l^2(\mathbb{Z}),
\end{equation}
is the inverse  of the Fourier transform map
 \begin{equation}
\mathcal{F}: L^2(\mathbb{T},dt/2\pi) \  \rightarrow \  l^2(\mathbb{Z}), 
\end{equation}
\begin{equation}
\big(\mathcal{F} g\big)(j) \ \overset{\text{def}}{=} \ \widehat{g}(j) \ = \ \frac{1}{2\pi} \int_{t \in \mathbb{T}} g(t) \ e_j(-t) \ dt, \ \ \ \ \ g \in L^2(\mathbb{T},dt/2\pi),  \ \ j \in \mathbb{Z},
\end{equation}\\
whereby\\
\begin{equation} \label{parseval3}
\frac{1}{2\pi}\int_{t \in \mathbb{T}} f(t) \  \overline{g(t)} \ dt \ = \ \sum_{j\in \mathbb{Z}} \widehat{f}(j) \ \overline{\widehat{g}(j)}, \ \ \ \ f \in L^2(\mathbb{T},dt/2\pi), \ \ g \in L^2(\mathbb{T},dt/2\pi).
\end{equation}\\
 \end{remark}
\begin{remark}[Parseval] \label{Par111} \ In a framework of harmonic analysis, if \ $\Omega$ is a compact abelian group with normalized Haar measure $\mu$,  and  $\{f_{\alpha}\}_{\alpha \in A}$ is a set of characters of \ $\Omega$, then  the assertion in  \eqref{parseval}  is generally referred to as  \emph{Parseval's formula}  (e.g., \eqref{parseval3} above). As such, Parseval's formula is a special case of the more general Plancherel theorem \cite[\S 1.6]{rudin2011fourier}: if  $\Omega$ is a locally compact abelian group with dual group $\Gamma$, then there exist Haar measures $\mu$ on $\Omega$ and $\nu$ on $\Gamma$, with the property that the transform
\begin{equation} \label{transform}
\widehat{f}(\gamma) \ \overset{def}{=} \ \int_{G} f \ \overline{\gamma} \ d\mu, \ \ \ \ \ f \in L^1(\Omega,\mu), \ \ \gamma \in \Gamma,
\end{equation}
is uniquely extendible to a unitary map from  $L^2(\Omega,\mu)$ onto $L^2(\Gamma,\nu)$, i.e.,\\
 \begin{equation} \label{plancherel}
 \int_{\omega \in \Omega} f(\omega) \ \overline{g}(\omega) \ \mu(d\omega) \ = \ \int_{\gamma \in \Gamma} \widehat{f}(\gamma) \ \overline{\widehat{g}}(\gamma) \ \nu(d\gamma), \ \ \ \ \ f \in L^2(\Omega,\mu), \ \ g \in L^2(\Omega,\mu). \ \ \ \ \ \ \ \ \ 
 \end{equation}\\  
(E.g., see \citet[Chapter VI]{Katznelson:1976} for the classical  $(\Omega, \mu) = (\mathbb{R}, dt)$ and $(\Gamma, \nu) = (\mathbb{R},dt/2\pi)$.) The statement in \eqref{plancherel} -- a non-trivial generalization of the analogous assertion for compact $\Omega$ -- is sometime referred to also as a \emph{Parseval formula}.  In the parlance of harmonic analysis, a \emph{Parseval formula} could also refer to relations between maps and their transforms in 'non-Hilbertian' settings  (e.g., \citet[Theorem I.7.1]{Katznelson:1976}.  In the case of a general measure space $(\Omega,\mu)$, we refer to  \eqref{parseval} as  \emph{Parseval's identity}.
\end{remark} 

 \begin{remark} [notation]  \label{notation2} \  Given an orthonormal system $\{f_{\alpha}\}_{\alpha \in A} \subset L^2(\Omega,\mu)$,  we let \  $U_{\{f_{\alpha}\}}$ denote the map from $\mathbb{C}^A$ to the class of series spanned by $\{f_{\alpha}\}_{\alpha \in A}$,  formally defined by
\begin{equation} \label{U}
{\bf{x}} \ \mapsto \ U_{\{f_{\alpha}\}}{\bf{x}} \ \overset{\text{def}}{=} \ \sum_{\alpha \in A} {\bf{x}}(\alpha) f_{\alpha}, \ \ \ \ {\bf{x}} \in \mathbb{C}^A.
\end{equation} 
%where $U_E{\bf{x}}$ at the outset is merely a formal object.  
When restricted to ${\bf{x}} \in l^2(A)$, the series  \ $\sum_{\alpha \in A} {\bf{x}}(\alpha) f_{\alpha}$ \  converges in $L^2(\Omega,\mu)$, and Parseval's identity holds.

Henceforth,  $(\Omega,\mu)$ will be a probability space, i.e., a measure space with $\mu(\Omega) = 1$.          
\end{remark}

\subsection{($l^2 \hookrightarrow L^p$)-representations} \label{narrative}
The effect of an orthonormal  $\{f_{\alpha}\}_{\alpha \in A} \subset L^2(\Omega,\mu)$  on the corresponding  $U_{\{f_{\alpha}\}}$ can be calibrated by tail-probabilities 
\begin{equation} \label{tail}
\mu \big (|U_{\{f_{\alpha}\}}{\bf{x}}| \geq t \big ), \ \ \ {\bf{x}} \in l^2(A), \ \ t > 0
\end{equation}
(e.g., \citet[\S3.4]{blei2011measurements}).  In the case  of \emph{complete} systems $\{f_{\alpha}\}_{\alpha \in A}$, the 'square power' estimates 
 \begin{equation} \label{tail1}
\mu \big (|U_{\{f_{\alpha}\}}{\bf{x}}| \geq t \big ) \ \leq \frac{\|{\bf{x}}\|_2^2}{t^2}, \ \ \ \ \ {\bf{x}} \in l^2(A),
\end{equation}
are optimal:  for infinite $A$, if $\{f_{\alpha}\}_{\alpha \in A}$ is an orthonormal basis of $L^2(\Omega,\mu)$, then
\begin{equation}
\sup \bigg \{\limsup_{t \rightarrow \infty} {t^q}\mu \big(|U_{\{f_{\alpha}\}}{\bf{x}}| > t 
\big) : {\bf{x}} \in \mathcal{B}_{l^2(A)} \bigg \} = \infty, \ \ \ \ \ \ q > 2.
\end{equation}
(Throughout, $\mathcal{B}_Y  \overset{\text{def}}{=}  \big\{ y \in Y: \|y\|_Y \leq 1 \big\}$  will denote the closed unit ball in a normed linear space $Y.$)  Moving away from  'completeness,' as systems become 'sparser,'  tail-probabilities get 'smaller' and become sub-Gaussian in the limit.
To illustrate the limiting case in a generic setting, we take   
\begin{equation} \label{generic}
\Omega  \ =  \ \Omega_A  \ \overset{\text{def}}{=} \ \{-1,1\}^A \  \ \  ( \ = \ \text{$\{-1,1\}$-valued functions on $A$})
\end{equation}
endowed with the product topology, i.e., weakest topology with respect to which the coordinate functions \  $r_{\alpha}: \Omega_A \rightarrow \{-1,1\},$  
\begin{equation} \label{coordinate}
r_{\alpha}(\omega) = \omega(\alpha), \ \ \ \omega \in \Omega_A, \ \ \ \alpha \in A,
\end{equation}
are continuous, and then let $\mu$  be the uniform probability measure $\mathbb{P}_A$ on the resulting Borel field (the product probability measure heuristically associated with independent tosses of a fair coin), i.e., 
\begin{equation}
 \mathbb{P}_A \ \overset{\text{def}}{=} \ \bigtimes_{\alpha \in A}\mathbb{P}_0 \ \ \ \ (\text{product measure}),
\end{equation}\\
 where $\mathbb{P}_0$ is the uniform probability measure on $\{-1,1\}$,  
\begin{equation}
\mathbb{P}_0(\{-1\}) = \mathbb{P}_0(\{1\}) = 1/2.
\end{equation}
Equipped with coordinate-wise multiplication (point-wise product of $\{-1,1\}$-valued functions on $A$), $\Omega_A$ becomes a compact abelian group with Haar measure $\mathbb{P}_A$.    
The dual group  \ $\widehat{\Omega}_A$  \  is the \emph{Walsh system}
{\begin{equation}\label{Character}
W_A = \bigcup_{k=0}^{\infty} W_{A,k},
\end{equation}
where \  $W_{A,0} = \{r_0\},$ \  $r_0 \equiv 1$ \ on \ $\Omega_A$,  and 
\begin{equation} \label{Walsh0}
W_{A,k} = \bigg\{\prod_{\alpha \in F}r_{\alpha}: F\subset A, \ \#F = k\bigg\} \ \    \ \text{(\emph{Walsh characters} of order $k$),} \ \ \  k \in \mathbb{N}.
\end{equation}
 %We distinguish between characters of odd and even order,
We distinguish (for reasons that will later become apparent) between 
 characters of odd and even order, 
\begin{equation} \label{odd}
W_{A,odd} \ \overset{\text{def}}{=} \ \bigcup_{k=0}^{\infty} W_{A,2k+1}, \ \ \ \ W_{A,even} \ \overset{\text{def}}{=} \ \bigcup_{k=0}^{\infty} W_{A,2k}.
\end{equation}

At one end, we have the \emph{Walsh system} $W_A$, an orthonormal basis for  $L^2(\Omega_A,\mathbb{P}_A)$, and at the other end, we have the  \emph{Rademacher system}
\begin{equation}
R_A \  \overset{\text{def}}{=} \ \{r_{\alpha}: \alpha \in A\} = W_{A,1},
\end{equation}
an 'optimally' sparse set of independent characters of $\Omega_A$ (e.g., \citet[Ch. II \S 1]{Blei:2001}).  Taking
\begin{equation} \label{represent0}
{\bf{x}} \rightarrow U_{R_A}{\bf{x}} \ = \ \sum_{\alpha \in A} {\bf{x}}(\alpha)r_{\alpha}, \ \ \ \ \ \ \ {\bf{x}} \in l^2(A),
\end{equation}
we obtain from the $(L^2_R \hookrightarrow L^p$)-\emph{Khintchin inequalities}  (Remark \ref{expsq} below) sub-Gaussian tail estimates
\begin{equation}\label{Khint}
\mathbb{P}_A\big(|U_{R_A}{\bf{x}}| > t \big) \leq e^{-\frac{t^2}{2}}, \ \ \ \ {\bf{x}} \in \mathcal{B}_{l^2(A)}, \ \ \ t > 0,
\end{equation}\\
which are sharp:
for infinite  $A$,
\begin{equation}\label{optim}
\sup \bigg \{\limsup_{t \rightarrow \infty} e^{t^q}\mathbb{P}_A\big(|U_{R_A}{\bf{x}}| > t \big) : {\bf{x}} \in \mathcal{B}_{l^2(A)} \bigg \} = \infty, \ \ \ \ \ \ q > 2.
\end{equation}\\
Generally,  under 'unitarity,' sub-Gaussian tails are always optimal:  for infinite  $A,$ if $V: l^2(A) \rightarrow L^2(\Omega,\mu)$ is unitary, then
\begin{equation}\label{optim1}
\sup \bigg \{\limsup_{t \rightarrow \infty} e^{t^q}\mu \big(|V{\bf{x}}| > t \big) : {\bf{x}} \in \mathcal{B}_{l^2(A)} \bigg \} = \infty, \ \ \ \ \ \ q > 2.
\end{equation} 
(This was pointed out to me by Gilles Pisier, citing results in \citet{Gordon1975}.)      
\begin{question} \label{question}
Could 'smaller' tails be achieved with maps 'slightly less' than unitary?  
\end{question}

\begin{definition}  \label{Linfrepresent} \ Let  $A$ be a set, and $(\Omega,\mu)$ a probability space. An injection 
\begin{equation}
\Phi: \  l^2(A)  \ \rightarrow  \ L^p(\Omega,\mu), \ \ \ \ \ 2 \ \leq  \ p  \ \leq \ \infty,
\end{equation}
is a $(l^2(A) \hookrightarrow L^p)$-\emph{representation} --  an $L^p$-\emph{representation of} $l^2(A)$ --   if 
\begin{equation} \label{extending1}
\begin{split}
 \Phi({\bf{x}}) \ &= \ \|{\bf{x}}\|_2 \ \Phi(\boldsymbol{\sigma}{\bf{x}}) \ \ \  \ {\bf{x}} \neq {\bf{0}},\\\
\Phi({\bf{0}}) \ &= \ {\bf{0}}, 
\end{split}
\end{equation}\\
where 
\begin{equation}  \label{sig}
{\boldsymbol{\sigma}{\bf{x}}} \  = \  \left \{
\begin{array}{lcc}
 {\bf{x}}/\|{\bf{x}}\|_2, &\ \ {\bf{x}} \neq {\bf{0}},  \\\
 {\bf{0}}, &\ \ {\bf{x}} = {\bf{0}} 
\end{array} \right.
\end{equation}
(Remark \ref{simple1} {\bf{i}}),\\
\begin{equation} \label{parseval2}
 \begin{split} 
  \int _{\Omega}\Phi({\bf{x}}) \ \Phi({\bf{y}}) \ d\mu \ & = \ \sum_{\alpha \in A} {\bf{x}}(\alpha) {\bf{y}}(\alpha) \ \overset{\text{def}}{=} \ {\bf{x}}\boldsymbol{\cdot}{\bf{y}} \ \   \text{(dot product)}, \ \ \  \ \ {\bf{x}} \in l^2(A), \ {\bf{y}} \in l^2(A),
\end{split}
 \end{equation}
 and\\
 \begin{equation} \label{bdtailp}
 \begin{split}
 \sup \bigg\{\|\Phi({\bf{x}})\|_{L^p}: & \  {\bf{x}} \in \mathcal{S}_{l^2(A)} \bigg \}  \  \overset{\text{def}}{=} \  \|\Phi\|_{l^2(A) \hookrightarrow L^p}  \ < \ \infty, \\\
 & \ \ \ \ \ \  \text{$\big(l^2(A) \hookrightarrow L^p \big)$-norm of $\Phi$}\\\
 \end{split}
 \end{equation}
 where $\mathcal{S}_{l^2(A)}$ is the unit sphere in $l^2(A)$ (Remark \ref{simple1} {\bf{i}}, {\bf{ii}}).
 %At $p = \infty$, we write $K_A^{\star}$  \ for  \ $K_A^{(2,\infty)}$.
  \end{definition}

   \noindent
 \begin{remark}[notation and simple observations] \label{simple1} \ 
  \ \\
  
  \noindent
{\bf{i}}. \  The unit sphere in a normed linear space $Y$ is denoted by $$\mathcal{S}_Y  \overset{def}{=}  \big\{ {\bf{y}} \in Y: \|{\bf{y}}\|_Y = 1 \big\},$$
and  \  $\boldsymbol{\sigma}_Y: Y \rightarrow \mathcal{S}_Y \cup \{{\bf{0}}\}$ \  is defined by 
 \begin{equation}  \label{sigY}
{\boldsymbol{\sigma}_Y{\bf{y}}} \  = \  \left \{
\begin{array}{lcc}
 {\bf{y}}/\|{\bf{y}}\|_Y, &\ \ {\bf{y}} \in Y, \ \ {\bf{y}} \neq {\bf{0}},  \\\\
 {\bf{0}}, &\ \ {\bf{0}} \in Y. 
\end{array} \right.
\end{equation}\\
When $Y = l^p(A)$ ($1 \leq p \leq \infty$), we write $\boldsymbol{\sigma}_p$, and  $\boldsymbol{\sigma}$ when $p=2$. \ \\

\noindent 
{\bf{ii}}. \  If \ $Y$ and $Z$ are normed linear spaces, and $F: \ Y \rightarrow Z$, then
\begin{equation} \label{bdtailp1}
\|F\|_{Y \rightarrow  Z} \ \overset{def}{=} \ \sup \big\{\|F({\bf{y}})\|_Z:  \  {\bf{y}} \in \mathcal{S}_Y \big \},
\end{equation}  
which defines a norm on the space of $Z$-valued functions on $Y$.  If $F$ is an injection, then we write $\|F\|_{Y \hookrightarrow  Z}$. \ \\
 % Note: if $\Phi$ is an $(l^2 \hookrightarrow L^p)$-representation for some $p \in (2,\infty]$, then $\Phi$ is an $(l^2 \hookrightarrow L^q)$-representation for every $q \in [2,p)$. \ \\
 
\noindent
{\bf{iii}}. \  \eqref{extending1} implies that a $(l^2(A) \hookrightarrow L^p)$-representation $\Phi$ is determined by its values on  $\mathcal{S}_{l^2(A)}$, and, in particular, that  $\Phi(c{\bf{x}}) = c \Phi({\bf{x}})$ for all $c \in \mathbb{R}$ precisely when  $\Phi$ is an \emph{odd} function (i.e., $\Phi(-{\bf{x}}) = - \Phi({\bf{x}})$). \ \\

\noindent
%{\bf{iv}}. \ If $\Phi$ is an $(l^2 \hookrightarrow L^p)$-representation for some $p \in (2,\infty]$, then $\Phi$ is an $(l^2 \hookrightarrow L^q)$-representation for every $q \in [2,p)$. \ \\

\noindent
{\bf{iv}}. \  \eqref{bdtailp} implies the tail estimates
\begin{equation}
\mu \big (|\Phi({\bf{x}})| \geq t \big ) \ \leq \ \bigg(\frac{\|\Phi\|_{l^2(A) \hookrightarrow L^p} \ \|{\bf{x}}\|_2}{t}\bigg)^p, \ \ \ t > 0, \  \ \ \ \ {\bf{x}} \in {l^2(A)}, \ \ \ p \in [2,\infty), \ \ \ \  
\end{equation}
\begin{equation} \label{focus}
\inf \{t: \mu \big (|\Phi({\bf{x}})| \geq t \big ) = 0\} \  \leq \ \|\Phi\|_{l^2(A) \hookrightarrow L^{\infty}} \ \|{\bf{x}}\|_2.
\  \ \ \ \ {\bf{x}} \in {l^2(A)}, \ \   p = \infty. 
\end{equation}\\
Viewing the representations in Definition \ref{Linfrepresent} as 'slightly less' than unitary maps, we rephrase Question \ref{question}: 
\begin{question} \label{question2}
Do $L^{\infty}$-representations of $l^2(A)$ exist for arbitrary $A$?\\
\end{question}
%The question will be addressed in Chapter 4. 
 \end{remark}
\begin{remark} [Khintchin] \label{expsq} \ The inequalities 
  \begin{equation} \label{expsq1}
 \|U_{R_A}{\bf{x}}\|_{L^p} \ \leq \ \sqrt{p}, \ \ \ \ \ \text{sets}   \ A, \ \  \ p > 2,  \ \ \ {\bf{x}} \in  \mathcal{S}_{l^2(A)},
\end{equation}
stated and proved first in \citet{khintchine1923dyadische} for $p = 2n, \ n = 2,3,\ldots$, are arguably among the important mathematical discoveries in the 20th century.  We refer to \eqref{expsq1} as the \emph{Khintchin} $(L^2_R \hookrightarrow L^p)$-inequalities, which can be equivalently expressed as the sub-Gaussian tail estimates in \eqref{Khint}, or as the exponential-square integrability property       
\begin{equation} \label{expsq2}
 \sup \bigg \{ \int_{\Omega_A} e^{ |U_{R_A}{\bf{x}}|^2} \ d\mathbb{P}_A : \ \text{sets} \ A, \ \   {\bf{x}} \in \mathcal{S}_{l^2(A)}\bigg \} \ \overset{\text{def}}{=} \ \mathfrak{K} \ < \infty.
\end{equation}
(For a proof that \eqref{expsq1}, \eqref{Khint}, and \eqref{expsq2} are equivalent, see \citet[Appendix D] {stein2016singular}.)

By  \eqref{expsq1}, the unitary map $U_{R_A}$  is a $\big(l^2(A) \hookrightarrow L^p\big)$-representation  for all  $p \in (2,\infty)$.  Whereas  $U_{R_A}$ is a  $\big(l^2(A) \hookrightarrow L^{\infty}\big)$-representation \emph{only if} $A$ is finite -- by \eqref{optim}, but more simply, because \ $2\|U_{R_A}{\bf{x}}\|_{L^{\infty}} \geq \|{\bf{x}}\|_1$ \ for ${\bf{x}} \in \mathbb{C}^A$ with finite support. In due course, by using \eqref{expsq1}, we will 'slightly' perturb $U_{R_A}$ to construct  $\big(l^2(A) \hookrightarrow L^{\infty}\big)$-representations of $l^2(A)$ for arbitrary $A.$
 \end{remark}
 
\begin{remark}[representations \emph{vs.} unitary maps] \label{rem1}  Unlike unitary maps,  $(l^2 \hookrightarrow L^2)$-representations need not be linear, homogeneous, or continuous. (E.g., see proof of Proposition \ref{equivalence} in the next chapter.)  If a $(l^2 \hookrightarrow L^2)$-representation $\Phi$ commutes with complex conjugation, i.e., if
\begin{equation} \label{commute}
\Phi(\overline{{\bf{x}}}) = \overline{\Phi({\bf{x}})}, \ \ \  {\bf{x}} \in l^2(A),
\end{equation}
then $\Phi$ is unitary, and (therefore) linear.  Conversely, if an $(l^2 \hookrightarrow L^2)$-representation $\Phi$ is linear, then $\Phi$ is continuous, and (therefore) commutes with conjugation.  That is, 
\begin{proposition} \label{versus}
A $(l^2 \hookrightarrow L^2)$-representation is unitary if and only if it is linear.
\end{proposition}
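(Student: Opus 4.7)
The plan is to follow the roadmap already sketched in Remark \ref{rem1}. The implication ``unitary $\Rightarrow$ linear'' is packaged into the sentence right below Definition \ref{unit}, so the substance is the converse, which I would reduce to showing that a linear $(l^2 \hookrightarrow L^2)$-representation $\Phi$ commutes with complex conjugation and then quote the first half of Remark \ref{rem1}. A linear $\Phi$ is automatically norm-continuous: $\mathbb{C}$-homogeneity together with \eqref{extending1} and \eqref{bdtailp} promotes the unit-sphere bound to $\|\Phi({\bf x})\|_{L^2} \leq \|\Phi\|_{l^2(A) \hookrightarrow L^2}\|{\bf x}\|_2$ on all of $l^2(A)$, so $\Phi$ is a bounded linear operator.

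Next, splitting ${\bf x} = {\bf u} + i{\bf v}$ with real ${\bf u}, {\bf v}$, linearity yields $\Phi(\overline{{\bf x}}) - \overline{\Phi({\bf x})} = (\Phi({\bf u}) - \overline{\Phi({\bf u})}) - i(\Phi({\bf v}) - \overline{\Phi({\bf v})})$, so \eqref{commute} reduces to the sub-claim that $\Phi$ is real-valued on the real subspace $l^2_{\mathbb{R}}(A)$. To prove that, I would write $\Phi({\bf u}) = g_{{\bf u}} + ih_{{\bf u}}$ with real $g_{{\bf u}}, h_{{\bf u}}$ and read Parseval's identity \eqref{parseval2} through this decomposition: taking ${\bf y} = {\bf u}$ gives $\int(g_{{\bf u}}^2 - h_{{\bf u}}^2)\,d\mu = \|{\bf u}\|_2^2$ and $\int g_{{\bf u}} h_{{\bf u}}\,d\mu = 0$, and polarizing in a second real ${\bf w}$ upgrades these to the operator identities $G^*G - H^*H = I$ and $G^*H + H^*G = 0$, where $G, H : l^2_{\mathbb{R}}(A) \to L^2_{\mathbb{R}}(\Omega,\mu)$ are the componentwise real and imaginary parts of $\Phi|_{l^2_{\mathbb{R}}(A)}$. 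The target is then $H \equiv 0$.

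The delicate step, and what I expect to be the main obstacle, is deriving $H \equiv 0$ from these two operator identities together with the hypotheses of the proposition. Read naively, the algebraic pair $G^*G - H^*H = I$, $G^*H + H^*G = 0$ by itself admits $H \neq 0$ (for example, in one dimension, $G = \cosh\tau \cdot \phi_1$ and $H = \sinh\tau \cdot \phi_2$ for two orthonormal real functions $\phi_1, \phi_2$), and the crude estimate extracted from continuity only yields $\|H\|_{\mathrm{op}}^2 \leq (\|\Phi\|_{l^2(A) \hookrightarrow L^2}^2 - 1)/2$. The honest argument therefore has to invoke the full $\mathbb{C}$-linear structure of $\Phi$ rather than its restriction to $l^2_{\mathbb{R}}(A)$: I would test Parseval on complex vectors of the form $\lambda {\bf u}$ and $\lambda {\bf u} + \mu {\bf w}$ with $|\lambda| = |\mu| = 1$, and combine these with the injectivity of $\Phi$ and the representation identity \eqref{extending1}, so as to upgrade the crude bound to $\|\Phi\|_{l^2(A) \hookrightarrow L^2} \leq 1$. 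At that point the defect estimate collapses to $\|h_{{\bf u}}\|_{L^2} = 0$, the sub-claim holds, the commutation \eqref{commute} is secured, and the first half of Remark \ref{rem1} delivers unitarity.
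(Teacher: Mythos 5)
Your reduction is carried out correctly up to the last step: the forward direction is indeed immediate from Definition \ref{unit}, the identities $G^*G-H^*H=I$ and $G^*H+H^*G=0$ do follow from \eqref{parseval2} by polarization, and you are right that they do not force $H\equiv 0$. The problem is that the fix you propose for that final step cannot work, because your one-dimensional obstruction is not just an algebraic curiosity about the two operator identities --- it extends to a bona fide $\mathbb{C}$-linear $(l^2\hookrightarrow L^2)$-representation. Take a real-valued orthonormal family $\{\phi_\alpha\}_{\alpha\in A}\cup\{\psi_\alpha\}_{\alpha\in A}$ in $L^2(\Omega,\mu)$ and set $\Phi({\bf{x}})=\sum_{\alpha}{\bf{x}}(\alpha)f_\alpha$ with $f_\alpha=\sqrt{2}\,\phi_\alpha+\mathfrak{i}\,\psi_\alpha$. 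Then $\int_\Omega f_\alpha f_\beta\,d\mu=2\delta_{\alpha\beta}-\delta_{\alpha\beta}=\delta_{\alpha\beta}$, so \eqref{parseval2} holds for \emph{all} complex ${\bf{x}},{\bf{y}}$, since both sides are $\mathbb{C}$-bilinear once $\Phi$ is $\mathbb{C}$-linear; $\Phi$ is injective and linear, hence satisfies \eqref{extending1}; and $\|\Phi({\bf{x}})\|_{L^2}=\sqrt{3}\,\|{\bf{x}}\|_2$, so \eqref{bdtailp} holds. This $\Phi$ is a linear $(l^2\hookrightarrow L^2)$-representation with $\langle\Phi({\bf{x}}),\Phi({\bf{y}})\rangle_{L^2}=3\langle{\bf{x}},{\bf{y}}\rangle$, so it is not unitary and does not satisfy \eqref{commute}. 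In particular, testing \eqref{parseval2} on $\lambda{\bf{u}}$ or $\lambda{\bf{u}}+\mu{\bf{w}}$ with unimodular $\lambda,\mu$ yields nothing beyond what this $\Phi$ already satisfies; the bound $\|\Phi\|_{l^2(A)\hookrightarrow L^2}\le 1$ is not a consequence of the hypotheses, and $H\equiv 0$ cannot be derived from them.

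You should also know that the paper itself offers no more than the sentence in Remark \ref{rem1} preceding the proposition (``linear $\Rightarrow$ continuous $\Rightarrow$ commutes with conjugation''), and the example above defeats the second implication as well: continuity of a $\mathbb{C}$-linear map carries no information about its behaviour under the real structure. The statements that are actually provable are exactly the ones your argument isolates: a representation satisfying \eqref{commute} --- equivalently, one carrying $l^2_{\mathbb{R}}(A)$ into real-valued functions, i.e.\ $H\equiv 0$ --- is unitary, and every unitary representation is linear. The converse half of Proposition \ref{versus} needs that real-valuedness as an additional hypothesis; linearity alone does not supply it. So the step you could not complete is not a gap you could have closed along the proposed lines, and as written the proposal does not prove the proposition.
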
 

If \ $V: l^2(A) \rightarrow L^2(\Omega, \mu)$ is unitary, \ and \ $V({\bf{e}}_{\alpha})$ is real-valued ($\alpha \in A$), where $\{{\bf{e}}_{\alpha}\}_{\alpha \in A}$ is the canonical basis of $l^2(A)$,
\begin{equation}  \label{basis}
\begin{split}
{\bf{e}}_{\alpha}(\alpha^{\prime}) \ & = \  \left \{
\begin{array}{lcc}
1, &\quad \  \alpha = \alpha^{\prime}\\\\
0, &\quad \    \alpha \neq \alpha^{\prime}, 
\end{array} \right. 
\end{split}
\end{equation}
then $V$ is a $L^2$-representation of $l^2(A)$.  Otherwise, not every unitary map is a $(l^2 \hookrightarrow L^2)$-representation; e.g., in Remark \ref{classical}, if  $A = \mathbb{Z}$, \ $\Omega = [0,2\pi)$, and  $\mu = \text{Lebesgue measure}$, then $\Phi = U_{\{e_j\}}$   fails \eqref{parseval2}.

 If an $L^{\infty}$-representation of $l^2(A)$ is unitary, then the underlying $A$ must be finite:  e.g., by \eqref{optim1}; or, via the precise quantitative results in \citet{Alon:2006} (cf. Remark \ref{variant}).  In Chapter 4, we construct $(l^2(A) \hookrightarrow L^{\infty})$-representations for arbitrary $A$, uniformly bounded in the $(l^2 \rightarrow L^{\infty})$-norm, and arbitrarily close to  unitary maps in the $(l^2 \rightarrow L^2)$-norm.   Our focus here on uniformly bounded $(l^2 \hookrightarrow L^{\infty})$-representations is motivated mainly by Proposition \ref{equivalence} (in the next chapter), asserting that existence of such representations is equivalent to a phenomenon known as  \emph{the Grothendieck inequality}.
 % \citep{Lindenstrauss:1968}. 
 \end{remark} 
 \section{\bf{The Grothendieck inequality}}\label{s0}
\subsection{Statement}   
%The existence of  \ ($l^2 \hookrightarrow L^{\infty}$)-representations is equivalent, via duality, to a phenomenon known as \emph{the Grothendieck inequality} \citep{Lindenstrauss:1968}: \ \\
  \emph{The Grothendieck constant} $\mathcal{K}_G$ is the 'smallest'  $K > 1$  such that for all sets $A$, finite sets $B$,  and scalar-valued arrays $(a_{uv})_{(u,v) \in B \times B} \in \mathbb{C}^{B\times B}$,  
\begin{equation} \label{grothen}
 \begin{split}
\sup \big \{\big|&\sum_{(u,v) \in B \times B}  a_{uv} \  {\bf{x}}_u \boldsymbol{\cdot}  {\bf{y}}_v  \big| :{\bf{x}}_u \in \mathcal{S}_{l^2(A)}, \  {\bf{y}}_v \in \mathcal{S}_{l^2(A)}, \ (u,v) \in B \times B \big \} \\\\
&\ \leq \ K \ \sup \big\{ \big|\sum_{(u,v) \in B \times B} a_{uv} \ s_u  t_v \big |: |s_u| = 1, \  |t_v| = 1, \  (u,v) \in B \times B \big \}.\\\
\end{split}
\end{equation}\\
 \emph{The Grothendieck inequality} is the assertion 
\begin{equation} \label{constant}
\mathcal{K}_G \ < \ \infty.
\end{equation}
$\mathcal{K}_G$ has two values, depending on the underlying scalar field:  if the supremum on the left of \eqref{grothen} is over  $\mathcal{S}_{l^2_{\mathbb{R}}(A)}$ ($\mathbb{R}$-valued vectors) and on the right  over  $\{s_u = \pm1, \ t_v = \pm1\}$, then 
$$\mathcal{K}_G  \  =  \  \mathcal{K}_G^{\mathbb{R}} \ \ \ \ \text{(the 'real' Grothendieck constant)};$$  if supremum is over $\mathcal{S}_{l^2(A)}$ ($\mathbb{C}$-valued vectors) on the left and  $\{|s_u|  =  1, \ |t_v| = 1\}$ on the right, then  
$$\mathcal{K}_G \  =  \ \mathcal{K}_G^{\mathbb{C}} \ \ \ \ \text{(the 'complex' Grothendieck constant)}.$$
 The numerical values of  \ $\mathcal{K}_G^{\mathbb{R}}$ \ and \ $\mathcal{K}_G^{\mathbb{C}}$ \  are open problems of long standing, and indeed of ongoing interest; e.g., see \citet{braverman2011grothendieck}.  
 
 Distinguishing between scalar fields on the right of \eqref{grothen}, 
 we have for finite  $B$ and $(a_u)_{u \in B} \in \mathbb{C}^B$,
\begin{equation} \label{Sid1}
\sum_{u\in B} |a_{u}| \ \leq \ \frac{\pi}{2} \sup \big \{\big|\sum_{u \in B} a_u s_u \big|: \ s_u = \pm1, \ u \in B \big\} \ =  \ \frac{\pi}{2}\big\|\sum_{u \in B} a_u r_u \big\|_{\infty},
\end{equation}
where $\pi/2$ is 'best' possible  \citep{seigner1997rademacher}, and then for \ $(a_{uv})_{(u,v) \in B \times B} \in \mathbb{C}^{B \times B}$, \\
\begin{equation} \label{CR1}
\begin{split}
\|{\bf{a}}\|_{\otimes_{\epsilon}} \ \overset{def}{=} \  \sup \big\{ \big|\sum_{(u,v) \in B \times B} a_{uv}& \ s_u  t_v \big |: |s_u| = 1, \  |t_v| = 1, \ (u,v) \in B \times B \big \} \\\
&  \leq \ \frac{\pi^2}{4}  \sup \big\{ \big|\sum_{(u,v) \in B \times B} a_{uv} \ s_u  t_v \big |: s_u =  \pm1, \  t_v = \pm1, \ (u,v) \in B \times B  \big \}\\\
& = \  \frac{\pi^2}{4} \big\|\sum_{(u,v) \in B \times B} a_{uv} \ r_u \otimes r_v \big \|_{\infty} \  \overset{def}{=} \  \frac{\pi^2}{4} \ \|{\bf{a}}\|_{\otimes_{\epsilon}^{\mathbb{R}}}.
%\text{i.e.,} \ \ \ \ \ \  \ \|{\bf{a}}\|_{\otimes_{\epsilon}} \ & \leq \  \frac{\pi^2}{4} \ \|{\bf{a}}\|_{\otimes_{\epsilon}^{\mathbb{R}}}.
\end{split}
\end{equation}
\begin{remark}[about the norms] \label{notation1} \  
A finite-dimensional scalar matrix  $${\bf{a}} = (a_{uv})_{(u,v) \in B \times B} \in \mathbb{C}^{B\times B} \ \ \  (\text{finite} \ B)$$ can be viewed as a representing matrix of a \emph{bilinear} functional acting on $\mathbb{C}$-valued functions defined on $B$, 
% defined on $\mathbb{C}^B$
%$c_0(B) \times c_0(B)$,
\begin{equation} \label{bic_0}
({\bf{s}},{\bf{t}}) \ \mapsto \ \sum_{(u,v) \in B \times B} a_{uv} \ s_u  t_v, \ \ \ \ {\bf{s}} = (s_u)_{u\in B} \ \in \mathbb{C}^B, \ \ {\bf{t}} = (t_u)_{u\in B} \ \in \mathbb{C}^B.
\end{equation}
With the supremum norm on  $\mathbb{C}^B$,
\begin{equation}
\|{\bf{s}}\|_{\infty} \ \overset{\text{def}}{=} \ \max\{|s_u|: u \in B\} \ \ \ \ \ {\bf{s}} =  (s_u)_{u\in B} \ \in \mathbb{C}^B,
\end{equation}\
the norm of the bilinear functional in  \eqref{bic_0} is\\ 
\begin{equation} \label{bic_00}
%\begin{split}
\|{\bf{a}}\|_{\otimes_{\epsilon}} \  \overset{\text{def}}{=} 
\ \sup \big\{ \big|\sum_{(u,v) \in B \times B} a_{uv} \ s_u  t_v \big |: |s_u|  = 1, \  |t_v| = 1 \big \}. 
% &= \ \sup \big\{ \big|\sum_{(u,v) \in B \times B} a_{uv} \ s_u  t_v \big |:  |s_u| = 1, \  |t_v| = 1 \big \}.
% \end{split}
\end{equation}
We can similarly view  \  ${\bf{a}} = (a_{uv})_{(u,v) \in B \times B} \in \mathbb{C}^{B\times B}$ as a representing matrix of the bilinear functional on $\big(l^2(A)\big)^B$ defined by
\begin{equation} \label{bic_1}
\begin{split}
({\bf{x}},{\bf{y}}) \ \mapsto \ \sum_{(u,v) \in B \times B}  a_{uv} \ & {\bf{x}}_u \boldsymbol{\cdot}  {\bf{y}}_v, \\\
& {\bf{x}} = ({\bf{x}}_u)_{u \in B} \ \in \  \big(l^2(A)\big)^B, \ \   {\bf{y}} = ({\bf{y}}_v)_{v \in B} \ \in \ \big(l^2(A)\big)^B,
\end{split}
\end{equation}\\
whence, with the supremum norm on $\big(l^2(A)\big)^B$ \\
\begin{equation}
\|{\bf{x}}\|_{l^{\infty}(B;l^2)} \ \overset{\text{def}}{=} \ \max\{\|{\bf{x}}_u\|_{l^2(A)}: u \in B\}, \ \ \ \ \ {\bf{x}} = ({\bf{x}}_u)_{u \in B} \ \in \  \big(l^2(A)\big)^{B},
\end{equation}\\ 
the norm of the functional in \eqref{bic_1} becomes\\
\begin{equation}
\|{\bf{a}}\|_{\otimes_{\epsilon,l^2(A)}} \  \overset{\text{def}}{=} \ \sup \big \{\big|\sum_{(u,v) \in B \times B}  a_{uv} \  {\bf{x}}_u \boldsymbol{\cdot}  {\bf{y}}_v  \big| :{\bf{x}}_u \in \mathcal{S}_{l^2(A)}, \  {\bf{y}}_v \in \mathcal{S}_{l^2(A)} \big \}.
\end{equation}\\
The Grothendieck inequality asserts that  \  $\|\boldsymbol{\cdot}\|_{\otimes_{\epsilon,l^2}}$  \ and  \ $\|\boldsymbol{\cdot}\|_{\otimes_{\epsilon}}$ \  are equivalent norms;
 namely, that there exist $K \in (1,\infty)$ with the property that for all sets $A$ and finite sets $B$,\\
\begin{equation} \label{grothen1}
\|{\bf{a}}\|_{\otimes_{\epsilon}} \ \leq \ \|{\bf{a}}\|_{\otimes_{\epsilon,l^2(A)}} \ \leq \ K \|{\bf{a}}\|_{\otimes_{\epsilon}}, \ \ \ \ \ \ {\bf{a}}  \in   \mathbb{C}^{B \times B}.
\end{equation}\\
In particular, 
\begin{equation} \label{constant}
\sup \big \{\|{\bf{a}}\|_{\otimes_{\epsilon,l^2(A)}}/\|{\bf{a}}\|_{\otimes_{\epsilon}}: A,  \ \text{finite} \ B, \ \text{non-zero} \ {\bf{a}} \in \mathbb{C}^{B \times B} \big \} \ {=} \ \mathcal{K}_G \ < \ \infty.
\end{equation}\\

%\emph{The Grothendieck constant} $\mathcal{K}_G$ is the infimum of  $K$ in  \eqref{grothen1} over sets $A$ and $B$, and scalar arrays ${\bf{a}}$.

In a setting of topological tensor products (e.g., \citet{Ryan:2002}), the norm \  $\|\boldsymbol{\cdot}\|_ {\otimes_{\epsilon}}$ in \eqref{bic_00} is an instance of the \emph{injective tensor norm}, sometimes denoted by $\|\boldsymbol{\cdot}\|_{{\vee \atop \otimes}}$ and referred to as the \emph{smallest crossnorm}  (e.g., \citet{Schatten:1943}).  In a framework of harmonic analysis, $\|\boldsymbol{\cdot}\|_ {\otimes_{\epsilon}}$ is the supremum norm of linear combinations of two-fold products of Rademacher or Steinhaus characters, a view that will facilitate here duality arguments and, in particular, applications of the Riesz-Kakutani representation theorem:  
 \begin{theorem} [F. Riesz  \citep{Riesz1909}, S. Kakutani \citep{kakutani1941concrete}] \label{RK} \  Let $\mathcal{X}$ be a compact Hausdorff space, $C(\mathcal{X})$ the space of scalar-valued continuous functions on $\mathcal{X}$, and $M(\mathcal{X})$ the space of scalar-valued regular Borel measures on $\mathcal{X}$.  If $\theta$ is a continuous linear functional on $C(\mathcal{X})$, then there exists a unique $\mu \in M(\mathcal{X})$, such that 
 \begin{equation}
 \theta(f) \ = \ \int_{\mathcal{X}} f \ d\mu, \ \ \ \ \ f \in C(\mathcal{X}),
 \end{equation} 
 and 
 \begin{equation}
 \|\theta\| \ \overset{\text{def}}{=} \ \sup_{\|f\|_{\infty} \leq 1} |\theta(f)| \ = |\mu|(\mathcal{X}) \ \overset{\text{def}}{=} \ \|\mu\|_M,
 \end{equation}
 where $|\mu|$ is the total variation measure of  \ $\mu$.
 \end{theorem}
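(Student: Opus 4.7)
The plan is the classical three-step program: decompose $\theta$ into positive linear functionals, build a regular Borel measure from each positive piece via an outer-measure procedure driven by $\theta$ itself, and reassemble to obtain the signed or complex measure $\mu$. For real-valued $\theta$, I would set $\theta^{+}(f) = \sup\{\theta(g) : 0 \le g \le f\}$ for $f \ge 0$ in $C(\mathcal{X})$, extend by linearity to a positive functional on $C(\mathcal{X})$, and take $\theta^{-} = \theta^{+} - \theta$, which is also positive; the norm identity in the theorem will then follow from $\|\theta\| = \|\theta^{+}\| + \|\theta^{-}\|$. For complex $\theta$, split first into real and imaginary parts. What remains is to produce, for each \emph{positive} continuous linear $\theta$ on $C(\mathcal{X})$, a finite regular positive Borel measure $\mu$ with $\theta(f) = \int f \, d\mu$ and $\|\theta\| = \mu(\mathcal{X})$.

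For such a positive $\theta$, I would set, for open $U \subset \mathcal{X}$,
\[
\lambda(U) \ = \ \sup \{ \theta(f) : f \in C(\mathcal{X}),\ 0 \le f \le 1,\ \mathrm{supp}(f) \subset U \},
\]
and extend to an outer measure on all of $\mathcal{X}$ by $\mu^{*}(E) = \inf \{ \lambda(U) : E \subset U,\ U \text{ open} \}$. Standard arguments---relying crucially on Urysohn's lemma, which is available because the compact Hausdorff space $\mathcal{X}$ is normal---show that $\mu^{*}$ is an outer measure, that every open set satisfies the Carath\'eodory splitting condition, and hence that the Borel $\sigma$-algebra is contained in the $\mu^{*}$-measurable sets. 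The restriction $\mu$ of $\mu^{*}$ to the Borel sets is a finite positive measure with $\mu(\mathcal{X}) \le \|\theta\|$; outer regularity is built into the definition, and inner regularity on open sets follows from Urysohn and propagates to all Borel sets by a routine monotone-class argument. To identify $\theta(f) = \int f \, d\mu$ for $f \in C(\mathcal{X})$ with $0 \le f \le M$, I would partition $[0,M]$ into $\{t_{0} < t_{1} < \cdots < t_{n}\}$ with mesh less than $\epsilon$, let $U_{k} = \{f > t_{k-1}\}$, choose a continuous partition of unity $\{\varphi_{k}\}$ subordinate to $\{U_{k}\}$, and compare both $\theta$ and $\int \cdot \, d\mu$ applied to $f$ with the same quantities applied to $\sum_{k} t_{k-1} \varphi_{k}$. \emph{This is where I expect the main obstacle:} the two estimates must be coupled tightly enough that a single partition of unity controls both sides simultaneously, which demands careful bookkeeping at the level sets $\{f = t_{k}\}$ where continuity of $f$, the definition of $\lambda$, and $\mu^{*}$-measurability interact.

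Finally, for uniqueness, if $\int f \, d\mu_{1} = \int f \, d\mu_{2}$ for every $f \in C(\mathcal{X})$, then regularity together with Urysohn's lemma---which supplies continuous functions approximating the indicator of a compact $K$---forces $\mu_{1}(K) = \mu_{2}(K)$ on all compact $K$, whence $\mu_{1} = \mu_{2}$ in $M(\mathcal{X})$. The inequality $\|\theta\| \le \|\mu\|_{M}$ is immediate from $|\theta(f)| \le \|f\|_{\infty} |\mu|(\mathcal{X})$. For the reverse, write $d\mu = h \, d|\mu|$ with $|h| = 1$ almost everywhere, then invoke Lusin's theorem and regularity to approximate $\overline{h}$ uniformly, on a compact set of nearly full $|\mu|$-measure, by a continuous $g$ with $\|g\|_{\infty} \le 1$; this recovers $|\mu|(\mathcal{X})$ as the supremum of $|\theta(g)|$ over such $g$, completing the norm identity.
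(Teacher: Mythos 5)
The paper does not prove this theorem: it is stated as a cited classical result (F.~Riesz for $C[0,1]$, Kakutani for general compact Hausdorff $\mathcal{X}$) and used as a black box in the duality arguments, so there is no internal proof to compare against. Your outline is the standard textbook proof (essentially Rudin's): Jordan decomposition of the functional into positive parts, the outer-measure construction $\lambda(U)=\sup\{\theta(f): 0\le f\le 1,\ \mathrm{supp}(f)\subset U\}$ with Carath\'eodory measurability of open sets via Urysohn, the level-set/partition-of-unity argument for $\theta(f)=\int f\,d\mu$, and Lusin's theorem for $\|\mu\|_M\le\|\theta\|$. It is correct as a plan, and you have correctly located the delicate step (the two-sided estimate at the level sets $\{f=t_k\}$; the usual fix is to choose the $t_k$ so that $\mu(\{f=t_k\})=0$, which fails for at most countably many values). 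One small caution: the claim that the norm identity ``will then follow from $\|\theta\|=\|\theta^{+}\|+\|\theta^{-}\|$'' needs, on the measure side, that the measures representing $\theta^{+}$ and $\theta^{-}$ be mutually singular in order to conclude $|\mu|(\mathcal{X})=\mu^{+}(\mathcal{X})+\mu^{-}(\mathcal{X})$, and it does not transfer to the complex case by splitting into real and imaginary parts; your closing Lusin argument is the right way to get $\|\mu\|_M\le\|\theta\|$ in all cases, so you should lean on that and drop the additivity claim.
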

  \end{remark}

\subsection{Dual statement} \ 
%To demonstrate the equivalence between \emph{the Grothendieck inequality} and  \emph{existence of $(l^2 \hookrightarrow L^{\infty})$-representations} (Proposition \ref{equivalence}),  
For a set $A$, let 
 \begin{equation} \label{dualg11}
  \mathcal{K}^{\star}(A) \ \overset{\text{def}}{=} \ \inf \bigg \{ \|\Phi\|_{l^2(A) \hookrightarrow L^{\infty}}: \big(l^2(A) \hookrightarrow L^{\infty}\big)\text{-representations} \ \Phi \bigg \},
 \end{equation}
  \ \ \ \ \  and  
 \begin{equation} \label{dualg}
  \mathcal{K}^{\star} \ \overset{\text{def}}{=} \ \sup_{A} \mathcal{K}^{\star}(A),
 \end{equation}
 where supremum ranges over all sets $A$. 
Like the Grothendieck constant,  \  $\mathcal{K}^{\star}$ is  two-valued: $\mathcal{K}^{\star} = \mathcal{K}^{*,\mathbb{R}}$ \ if infimum in \eqref{dualg11} ranges over $L^{\infty}$-representations of $l^2_{\mathbb{R}}(A)$ ($\mathbb{R}$-valued vectors), 
%underlying scalars are real, 
and \ $\mathcal{K}^{\star} = \mathcal{K}^{*,\mathbb{C}}$ \ if infimum ranges over $L^{\infty}$-representations of $l^2(A)$ ($\mathbb{C}$-valued vectors).  Representations in both cases are generally complex-valued:  for, if $\Phi$ is an $L^{\infty}$-representation  of $l^2_{\mathbb{R}}(A),$ and $\Phi({\bf{x}})$ is real-valued for every ${\bf{x}} \in l^2_{\mathbb{R}}(A)$, then $\Phi$ is necessarily unitary, and $A$ must be finite.  

The proposition below -- a statement about duality --  asserts that the Grothendieck inequality and the existence of uniformly bounded $(l^2 \hookrightarrow L^{\infty})$-representations are equivalent in the sense that
\begin{equation} \label{equivalence1}
\mathcal{K}^{\star} < \infty \ \Leftrightarrow \  \mathcal{K}_G < \infty.
 \end{equation}

\begin{proposition}[cf. {\citet[Proposition 1.1, \S 1.3]{blei2014grothendieck}}] \label{equivalence}
\begin{equation} \label{conn1}
 \big(\mathcal{K}^{*,\mathbb{R}}\big)^2/4 \ \leq \ \mathcal{K}_G^{\mathbb{R}} \ \leq \ \frac{\pi^2}{4}\big(\mathcal{K}^{*,\mathbb{R}}\big)^2,
 \end{equation}
 and
\begin{equation} \label{conn2}
  \big(\mathcal{K}^{*,\mathbb{C}}\big)^2/4 \ \leq \ \mathcal{K}_G^{\mathbb{C}} \ \leq \ \big(\mathcal{K}^{*,\mathbb{C}}\big)^2.
\end{equation} 
\end{proposition}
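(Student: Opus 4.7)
The plan is to prove each line of the proposition as two separate inequalities.

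The upper bounds $\mathcal{K}_G^{\mathbb{C}} \leq (\mathcal{K}^{*,\mathbb{C}})^{2}$ and $\mathcal{K}_G^{\mathbb{R}} \leq (\pi^{2}/4)(\mathcal{K}^{*,\mathbb{R}})^{2}$ I would derive directly from Parseval's identity \eqref{parseval2}. Given any $\big(l^{2}(A) \hookrightarrow L^{\infty}\big)$-representation $\Phi$ of norm $K$ arbitrarily close to $\mathcal{K}^{*}$, and unit vectors $\mathbf{x}_{u}, \mathbf{y}_{v} \in \mathcal{S}_{l^{2}(A)}$, I would substitute $\mathbf{x}_{u} \boldsymbol{\cdot} \mathbf{y}_{v} = \int \Phi(\mathbf{x}_{u})\Phi(\mathbf{y}_{v})\,d\mu$ into $\sum_{u,v} a_{uv}\mathbf{x}_{u} \boldsymbol{\cdot} \mathbf{y}_{v}$, swap sum and integral, and apply the pointwise bound: at each $\omega$ the scalars $\Phi(\mathbf{x}_{u})(\omega)/K$ and $\Phi(\mathbf{y}_{v})(\omega)/K$ lie in the closed complex unit disk, so the integrand is dominated by $K^{2}\|\mathbf{a}\|_{\otimes_{\epsilon}^{\mathbb{C}}}$. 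This closes the complex case. In the real case the representation remains complex-valued, so one still lands at $\|\mathbf{a}\|_{\otimes_{\epsilon}^{\mathbb{C}}}$; the estimate $\|\mathbf{a}\|_{\otimes_{\epsilon}^{\mathbb{C}}} \leq (\pi^{2}/4)\|\mathbf{a}\|_{\otimes_{\epsilon}^{\mathbb{R}}}$ recorded in \eqref{CR1} supplies the factor $\pi^{2}/4$.

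For the lower bounds $(\mathcal{K}^{*})^{2}/4 \leq \mathcal{K}_G$ one has to construct representations of norm at most $2\sqrt{\mathcal{K}_G}$ out of Grothendieck's inequality. A Tychonoff weak-$*$ compactness argument on the product of closed $L^{\infty}$-balls indexed by $\mathcal{S}_{l^{2}(A)}$ reduces the task to a finite claim: for every $\mathbf{x}_{1},\dots,\mathbf{x}_{n} \in \mathcal{S}_{l^{2}(A)}$, produce $f_{1},\dots,f_{n}$ on a probability space with $\|f_{i}\|_{\infty} \leq 2\sqrt{\mathcal{K}_G}$ and $\int f_{i}f_{j}\,d\mu = \mathbf{x}_{i} \boldsymbol{\cdot} \mathbf{x}_{j}$. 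Setting $\mathbf{y}_{v} = \mathbf{x}_{v}$ in Grothendieck's inequality and symmetrizing $\mathbf{a}$ (using $\mathbf{x}_{u} \boldsymbol{\cdot} \mathbf{x}_{v} = \mathbf{x}_{v} \boldsymbol{\cdot} \mathbf{x}_{u}$), I would invoke the polarization identity $4\sum a_{uv} s_{u} t_{v} = P_{\mathbf{a}}(s+t) - P_{\mathbf{a}}(s-t)$ with $P_{\mathbf{a}}(r) = \sum a_{uv} r_{u} r_{v}$; combined with the maximum-modulus principle on the polydisc in the complex case, or with a random-extension argument on $\{\pm 1\}^{n}$ in the real case, this yields $\|\mathbf{a}\|_{\otimes_{\epsilon}} \leq 2\|P_{\mathbf{a}}\|_{\infty}$ for symmetric $\mathbf{a}$. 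Consequently the linear functional $\lambda(P_{\mathbf{a}}) = \sum a_{uv} \mathbf{x}_{u} \boldsymbol{\cdot} \mathbf{x}_{v}$ on the subspace $V \subset C(\mathbb{T}^{n})$ (resp.\ $C(\{\pm 1\}^{n})$) spanned by the coordinate products $r_{u} r_{v}$ has norm at most $2\mathcal{K}_G$. Hahn--Banach extends $\lambda$ to all of $C$, and Theorem \ref{RK} identifies the extension with a Borel measure $\nu$ satisfying $\|\nu\|_{M} \leq 2\mathcal{K}_G$ and $\int r_{u} r_{v}\,d\nu = \mathbf{x}_{u} \boldsymbol{\cdot} \mathbf{x}_{v}$. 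Writing $\nu = \phi\,|\nu|$ with $|\phi| = 1$, normalizing $\mu = |\nu|/\|\nu\|_{M}$, and defining $F_{u}(\omega) = \sqrt{\|\nu\|_{M}}\,r_{u}(\omega)\sqrt{\phi(\omega)}$ with a measurable unimodular square root then gives $\|F_{u}\|_{\infty} \leq \sqrt{2\mathcal{K}_G} \leq 2\sqrt{\mathcal{K}_G}$ and $\int F_{u}F_{v}\,d\mu = \int r_{u} r_{v}\,d\nu = \mathbf{x}_{u} \boldsymbol{\cdot} \mathbf{x}_{v}$, as required.

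The hardest technical step is the symmetric polarization bound $\|\mathbf{a}\|_{\otimes_{\epsilon}} \leq 2\|P_{\mathbf{a}}\|_{\infty}$; without it, Hahn--Banach applied directly to Grothendieck delivers only the asymmetric ``two-sided'' factorization $\mathbf{x}_{u} \boldsymbol{\cdot} \mathbf{x}_{v} = \int g_{u} h_{v}\,d\mu$, and no naive device --- the sum $F_{u} = g_{u} + h_{u}$, Rademacher modulation, or doubling of the underlying probability space --- converts that cleanly into the required ``self-paired'' form $\int F_{u} F_{v}\,d\mu = \mathbf{x}_{u} \boldsymbol{\cdot} \mathbf{x}_{v}$, because the uncontrolled diagonal terms $\int g_{u} g_{v}$ and $\int h_{u} h_{v}$ always intrude. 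The concluding compactness extension from finite subsets of $\mathcal{S}_{l^{2}(A)}$ to a representation of the full space $l^{2}(A)$ is a routine weak-$*$ limit argument but worth stating carefully.
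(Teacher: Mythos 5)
Your proof of the two upper bounds is the paper's own argument verbatim: substitute the Parseval identity \eqref{parseval2}, interchange sum and integral, bound the integrand pointwise by the polydisc supremum, and invoke \eqref{CR1} for the factor $\pi^2/4$ in the real case. No issues there.

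For the lower bounds you take a genuinely different route, and you misjudge the paper's. The paper does not polarize: it keeps the \emph{bilinear} functional $g \mapsto \sum \widehat{g}(r_{\bf x},r_{\bf y})\, {\bf x}\boldsymbol{\cdot}{\bf y}$ on the product $\Omega_{\mathcal{S}}\times\Omega_{\mathcal{S}}$, extracts a \emph{symmetric} measure $\nu$ with $\widehat{\nu}(r_{\bf x},r_{\bf y})={\bf x}\boldsymbol{\cdot}{\bf y}$ and $\|\nu\|_M\leq\mathcal{K}_G$, sets $\phi_j({\bf x})=(r_{\bf x}\circ\pi_j)\sqrt{\mathcal{K}_G h}$, and takes $\Phi=\tfrac{\phi_1+\phi_2}{2}+\mathfrak{i}\tfrac{\phi_1-\phi_2}{2}$. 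This is exactly the ``doubling of the underlying probability space'' you declare unworkable: one computes $\Phi({\bf x})\Phi({\bf y})=\tfrac12\big(\phi_1({\bf x})\phi_2({\bf y})+\phi_2({\bf x})\phi_1({\bf y})\big)+\tfrac{\mathfrak{i}}{2}\big(\phi_1({\bf x})\phi_1({\bf y})-\phi_2({\bf x})\phi_2({\bf y})\big)$; the cross terms each integrate to ${\bf x}\boldsymbol{\cdot}{\bf y}$, and the ``uncontrolled diagonal terms'' appear only as a \emph{difference} in the imaginary part, which vanishes by the symmetry of $\nu$. So the step you single out as hardest is resolved by the very device you ruled out, at the cost of the factor $2$ in $\|\Phi\|_{L^\infty}\leq 2\sqrt{\mathcal{K}_G}$.

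Your alternative — polarize down to a single copy — is viable and in the complex case actually sharpens the constant to $(\mathcal{K}^{*,\mathbb{C}})^2\leq 2\mathcal{K}_G^{\mathbb{C}}$, since maximum modulus on the polydisc does give $\|{\bf a}\|_{\otimes_{\epsilon}}\leq 2\|P_{\bf a}\|_{\infty}$ for symmetric ${\bf a}$. But your real case, run on $\{\pm1\}^n$, has a genuine gap: the polarization bound fails there because the squares $r_u^2$ collapse to the constant character. Concretely, take ${\bf a}$ diagonal with $a_{uu}=(-1)^u$: then $P_{\bf a}\equiv\sum_u(-1)^u=O(1)$ on $\{\pm1\}^n$, while $\sup\{|\sum_u(-1)^u s_u t_u|: s_u=\pm1,\,t_u=\pm1\}=n$. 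So $\|{\bf a}\|_{\otimes_{\epsilon}^{\mathbb{R}}}\leq C\|P_{\bf a}\|_{\infty}$ is false for symmetric arrays with nontrivial diagonal, and your ``random-extension'' step breaks on exactly those diagonal terms (one cannot replace $\mathbb{E}[\epsilon_u^2]$ by $(\mathbb{E}\epsilon_u)^2$). The functional is still well defined on the span of the $r_ur_v$, and splitting off the diagonal (whose contribution is $\widehat{f}(r_0)$) repairs boundedness, but the constant then lands at $4\mathcal{K}_G^{\mathbb{R}}+1$, just missing the stated $(\mathcal{K}^{*,\mathbb{R}})^2\leq 4\mathcal{K}_G^{\mathbb{R}}$. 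The clean repair is to run the real case on the Steinhaus group $\mathcal{T}^n$ as well (real vectors, complex characters $\zeta_{\bf x}$, as representations are permitted to be complex-valued): there $\zeta_u^2$ are distinct characters and the polydisc bound applies, giving $(\mathcal{K}^{*,\mathbb{R}})^2\leq 2\mathcal{K}_G^{\mathbb{R}}$. Separately, your Tychonoff step needs a common ambient measure space before any weak-$*$ limit can be taken; the paper avoids this by indexing the functional by the whole sphere $\mathcal{S}_{l^2(A)}$ from the outset, so that finiteness of $\mathcal{K}_G$ over finite $B$ already bounds it on the dense set of polynomials and no limiting argument over finite subsets is needed.
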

\begin{proof}   (See Remark \ref{notation} below for a refresher of basic notions.) \ To verify the right-side inequality in \eqref{conn1}, assume  \ $\mathcal{K}^{*,\mathbb{R}} < \infty,$ and then for an arbitrary set $A$ and $\epsilon > 0$, let $\Phi$ be an $L^{\infty}(\Omega,\mu)$-representation of $l^2_{\mathbb{R}}(A)$, such that
\begin{equation}
\sup_{{\bf{x}} \in \mathcal{S}_{l^2_{\mathbb{R}}(A)}} \|\Phi({\bf{x}}\|_{L^{\infty}} \ \leq \ \mathcal{K}^{*,\mathbb{R}} + \epsilon.
\end{equation}
Then, for a finite set $B$,   ${\bf{a}} = (a_{uv})_{(u,v) \in B \times B} \subset \mathbb{R}^{B\times B}$,  ${\bf{x}}_u \in \mathcal{S}_{l^2_{\mathbb{R}}(A)},$  and ${\bf{y}}_v \in \mathcal{S}_{l^2_{\mathbb{R}}(A)}$  ($u \in B,\ v \in B)$, we have
\begin{equation} \label{real1}
\begin{split}
\big|\sum_{(u,v) \in B \times B}  a_{uv} \ {\bf{x}}_u \boldsymbol{\cdot} {\bf{y}}_v  \big| \ &= \ \big|\sum_{(u,v) \in B \times B}  a_{uv} \ \int_{\Omega} \Phi({\bf{x}}_u )  \Phi({\bf{y}}_v ) \ d \mu \big| \\\\
& \leq \  \int_{\Omega} \big|\sum_{(u,v) \in B \times B}  a_{uv} \ \Phi({\bf{x}}_u )  \Phi({\bf{y}}_v ) \big| \ d\mu \\\\
& \leq \ \frac{\pi^2}{4}(\mathcal{K}^{*,\mathbb{R}} + \epsilon)^2 \  \|{\bf{a}}\|_{\otimes_{\epsilon}^{\mathbb{R}}}.
\end{split}
\end{equation}\\
($\|\boldsymbol{\cdot}\|_{\otimes_{\epsilon}^{\mathbb{R}}}$ is the injective tensor norm with supremum over $s_u  = \pm 1$ and $t_v  = \pm 1$  in \eqref{grothen}; see \eqref{CR1} above.)

For the left-side inequality, assume  \ $\mathcal{K}_G^{\mathbb{R}} < \infty,$ and let $A$ be an arbitrary set.  Then,  \eqref{grothen}  with \ $K = \mathcal{K}_G^{\mathbb{R}}$  \ implies that 
\begin{equation} \label{funct11}
\begin{split}
g \ \mapsto \ \sum_{({\bf{x}},{\bf{y}}) \in \mathcal{S}_{l^2_{\mathbb{R}}(A)} \times \mathcal{S}_{l^2_{\mathbb{R}}(A)}} &\widehat{g}(r_{{\bf{x}}}, r_{{\bf{y}}}) \  {\bf{x}}\boldsymbol{\cdot} {\bf{y}} ,\\\ 
& \ \ \  \text{Walsh polynomials } \ g \in C_{R_{\mathcal{S}_{l^2_{\mathbb{R}}(A)}} \times R_{\mathcal{S}_{l^2_{\mathbb{R}}(A)}}}\big(\Omega_{\mathcal{S}_{l^2_{\mathbb{R}}(A)}} \times \Omega_{\mathcal{S}_{l^2_{\mathbb{R}}(A)}}\big),\\\\
\end{split}
\end{equation}
determines a bounded linear functional with norm \  $\mathcal{K}_G^{\mathbb{R}}$, \ defined on the space of real-valued continuous functions on 
\begin{equation} \label{sphere}
\Omega_{\mathcal{S}_{l^2_{\mathbb{R}}(A)}} \times \Omega_{\mathcal{S}_{l^2_{\mathbb{R}}(A)}} \ \  \big( \ = \ \big\{-1,1\big\}^{\mathcal{S}_{l^2_{\mathbb{R}}(A)}}  \ \times \  \big\{-1,1\big\}^{\mathcal{S}_{l^2_{\mathbb{R}}(A)}} \ \big),
\end{equation}
whose Walsh transforms are supported by $R_{\mathcal{S}_{l^2_{\mathbb{R}}(A)}} \times R_{\mathcal{S}_{l^2_{\mathbb{R}}(A)}}$. ($R_{\mathcal{S}_{l^2_{\mathbb{R}}(A)}}$ is the Rademacher system indexed by the unit sphere $\mathcal{S}_{l^2_{\mathbb{R}}(A)}$ in $l^2_{\mathbb{R}}(A)$.)  By Riesz-Kakutani and Hahn-Banach, there exists a symmetric, real-valued Borel measure   $\nu$  on \  $\Omega_{\mathcal{S}_{l^2_{\mathbb{R}}(A)}} \times \Omega_{\mathcal{S}_{l^2_{\mathbb{R}}(A)}}$, \ such that
\begin{equation} \label{dotrep}
\begin{split}
 {\bf{x}} \boldsymbol{\cdot} {\bf{y}} \ &= \ \widehat{\nu}\big(r_{{\bf{x}}}, r_{{\bf{y}}}\big)\\\\ 
 &= \ \int_{\Omega_{\mathcal{S}_{l^2_{\mathbb{R}}(A)}} \times \Omega_{\mathcal{S}_{l^2_{\mathbb{R}}(A)}}} r_{\bf{x}} \otimes r_{\bf{y}} \ d \nu, \ \ \ \ \ {\bf{x}} \in \mathcal{S}_{l^2_{\mathbb{R}}(A)}, \ \ {\bf{y}} \in \mathcal{S}_{l^2_{\mathbb{R}}(A)},
 \end{split}
\end{equation}
and
\begin{equation}
  \|\nu\|_{M} \ = \ \mathcal{K}_G^{\mathbb{R}} \ \ \ (\text{total variation norm}).
\end{equation}\\
Now consider the probability measure  $\mu  =  |\nu|/ \mathcal{K}_G^{\mathbb{R}}$,  where $|\nu|$ is the total variation measure of $\nu$, and rewrite \eqref{dotrep} as \\
\begin{equation} \label{rep33}
\begin{split}
 {\bf{x}} \boldsymbol{\cdot} {\bf{y}}  \ = \ \mathcal{K}_G^{\mathbb{R}}  \ \int_{\Omega_{\mathcal{S}_{l^2_{\mathbb{R}}(A)}} \times \Omega_{\mathcal{S}_{l^2_{\mathbb{R}}(A)}}} r_{{\bf{x}}}\otimes r_{{\bf{y}}} \ h \  d\mu,  \ \ \ \ \ {\bf{x}} \in \mathcal{S}_{l^2_{\mathbb{R}}(A)}, \ \ {\bf{y}} \in \mathcal{S}_{l^2_{\mathbb{R}}(A)},
\end{split}
\end{equation}\\
where \ $h = \pm 1$ \ and \ $h \ d \mu = d \nu/\mathcal{K}_G^{\mathbb{R}}.$  For $j = 1,2,$  define\\
\begin{equation} \label{pol}
\begin{split} 
\phi_j: & \ \ \mathcal{S}_{l^2_{\mathbb{R}}(A)} \ \rightarrow \ L^{\infty}\big(\Omega_{\mathcal{S}_{l^2_{\mathbb{R}}(A)}} \times \Omega_{\mathcal{S}_{l^2_{\mathbb{R}}(A)}}, \mu\big),\\\
  \text{by}  \ \ \ \ \ \ \ \ \ \ \ \  & \ \ \\\ 
\phi_j({\bf{x}})& \ = \ r_{{\bf{x}}} \circ \pi_j  \ \sqrt{ \mathcal{K}_G^{\mathbb{R}} \ h} , \ \ \ \ {\bf{x}} \in \mathcal{S}_{l^2_{\mathbb{R}}(A)},
\end{split}
\end{equation}\\
where \ $\pi_1$ and $\pi_2$ are the canonical projections from the two-fold product \ $\Omega_{\mathcal{S}_{l^2_{\mathbb{R}}(A)}} \times \Omega_{\mathcal{S}_{l^2_{\mathbb{R}}(A)}}$ \ onto the first and second coordinates, respectively. Let
\begin{equation} \label{dualequiv}
\Phi({\bf{x}}) \ = \ \frac{\phi_1({\bf{x}}) + \phi_2({\bf{x}})}{2} \ + \ \mathfrak{i} \ \frac{\phi_1({\bf{x}}) - \phi_2({\bf{x}})}{2}, \ \ \ \ \  {\bf{x}} \in \mathcal{S}_{l^2_{\mathbb{R}}(A)}
\end{equation}
($\mathfrak{i} = \sqrt{-1}$), \ whence from \eqref{pol},
 \begin{equation} \label{estim22}
\sup_{{\bf{x}} \in \mathcal{S}_{l^2_{\mathbb{R}}(A)}} \|\Phi({\bf{x}}\|_{L^{\infty}}  \ \leq \ 2 \sqrt{ \mathcal{K}_G^{\mathbb{R}}} \ ,
\end{equation}\\
and from \eqref{rep33} and symmetry of $\nu$,
\begin{equation}
{\bf{x}} \boldsymbol{\cdot}{\bf{y}} \ = \ \int_{\Omega_{\mathcal{S}_{l^2_{\mathbb{R}}(A)}} \times \Omega_{\mathcal{S}_{l^2_{\mathbb{R}}(A)}}} \Phi({\bf{x}}) \ \Phi({\bf{y}}) \ d \mu, \ \ \ \ \ {\bf{x}} \in \mathcal{S}_{l^2_{\mathbb{R}}(A)}, \ \ {\bf{y}} \in \mathcal{S}_{l^2_{\mathbb{R}}(A)}.
\end{equation}\\
We extend $\Phi$ to ${\bf{x}} \in l^2_{\mathbb{R}}(A)$ by
\begin{equation} \label{extending}
\begin{split}
\Phi({\bf{x}}) \ &= \ \|{\bf{x}}\|_2 \ \Phi(\boldsymbol{\sigma}{\bf{x}}) \ \ \  \ {\bf{x}} \neq {\bf{0}},\\\
\Phi({\bf{0}}) \ &= \ {\bf{0}}. 
\end{split}
\end{equation}\\
(See \eqref{sig} for the definition of $\boldsymbol{\sigma}{\bf{x}}$.) 
Then, $\Phi$ is an $L^{\infty}$-representation of $l^2_{\mathbb{R}}(A)$ with \begin{equation}
  \|\Phi\|_{l^2(A) \hookrightarrow L^{\infty}} \ \leq \ 2 \sqrt{ \mathcal{K}_G^{\mathbb{R}}},
\end{equation}
and therefore,  
\begin{equation}
\mathcal{K}^{*,\mathbb{R}} \  \leq \ 2 \sqrt{  \mathcal{K}_G^{\mathbb{R}}}.
\end{equation}\\

The proof of \eqref{conn2} is similar. By using
\begin{equation}
\big\|\sum_{(u,v) \in B \times B}  a_{uv} \ \Phi({\bf{x}}_u )  \Phi({\bf{y}}_v ) \big\|_{L^{\infty}} \ \leq \ (\mathcal{K}^{*,\mathbb{C}} + \epsilon)^2 \  \|{\bf{a}}\|_{\otimes_{\epsilon}},
\end{equation}
 we obtain, as in \eqref{real1} (without $\frac{\pi^2}{4}$), the right-side inequality in \eqref{conn2}. 
 To verify the left-side inequality, consider the circle group 
\begin{equation} \label{circ}
\mathcal{T} \ = \ \{e^{\mathfrak{i}t}: t \in \mathbb{T}\},
\end{equation}
and   
then take the compact abelian group \  $\mathcal{T}^{\mathcal{S}_{l^2(A)}}$ (  = \ $\mathcal{T}$-valued functions on $\mathcal{S}_{l^2(A)}$), whose character group $\big({\mathcal{T}}^{\mathcal{S}_{l^2(A)}}\big)^{\wedge}$ is generated by the Steinhaus system $$\mathcal{Z}_{\mathcal{S}_{l^2(A)}}\ \overset{\text{def}}{=} \ \big\{\zeta_{{\bf{x}}}: {\bf{x}} \in \mathcal{S}_{l^2(A)}\big\},$$ \
% \ \ \ \ \text{(e.g., cf. \cite[p.3]{kahane1994some}}),$$
 where   
\begin{equation}
\begin{split}
\zeta_{{\bf{x}}}: \ &\ \mathcal{T}^{\mathcal{S}_{l^2(A)}} \  \rightarrow \  \mathcal{T}, \ \ \ \ \ {\bf{x}} \in \mathcal{S}_{l^2(A)}, \\\\
\zeta_{{\bf{x}}}(\omega)& \ = \ \omega({\bf{x}}), \ \ \ \ \ {\bf{x}} \in \mathcal{S}_{l^2(A)}, \ \ \omega \in \mathcal{T}^{\mathcal{S}_{l^2(A)}}.
\end{split}
\end{equation}\\
(See Remark \ref{notation} below.)  From \ $\mathcal{K}_G^{\mathbb{C}}  <  \infty$, we obtain that \\
\begin{equation} \label{funct111}
\begin{split}
g \ \mapsto \ \sum_{({\bf{x}},{\bf{y}}) \in \mathcal{S}_{l^2} \times \mathcal{S}_{l^2}} & \widehat{g}\big(\zeta_{{\bf{x}}}, \zeta_{{\bf{y}}}\big)  \  {\bf{x}}\boldsymbol{\cdot} {\bf{y}} ,\\\
& \ \ \ \ \ \ \text{$\big(\mathcal{T}^{\mathcal{S}_{l^2(A)}}\big)^{\wedge}$-polynomials} \ g \in C_{\mathcal{Z}_{\mathcal{S}_{l^2}} \times \mathcal{Z}_{\mathcal{S}_{l^2}}},\\\
\end{split}
\end{equation}
determines a bounded linear functional on\\  
\begin{equation} \label{sphere}
C_{\mathcal{Z}_{\mathcal{S}_{l^2}} \times \mathcal{Z}_{\mathcal{S}_{l^2}}}\big(\mathcal{T}^{\mathcal{S}_{l^2}} \times \mathcal{T}^{\mathcal{S}_{l^2}}\big) 
\end{equation}\\
(continuous functions on $\mathcal{T}^{\mathcal{S}_{l^2}} \times \mathcal{T}^{\mathcal{S}_{l^2}}$ with spectra in  $\mathcal{Z}_{\mathcal{S}_{l^2}} \times \mathcal{Z}_{\mathcal{S}_{l^2}}$), 
%whose norm equals  $\mathcal{K}_G^{\mathbb{C}}$, 
and the proof proceeds as in the 'real' case.
\end{proof}
\begin{remark}[some basics] \label{notation} \  $\widehat{\Omega}$-\emph{polynomials} refer to linear combinations of characters in $\widehat{\Omega}$, where $\widehat{\Omega}$  is the dual of a compact abelian group $\Omega$. (A \emph{trigonometric polynomial} is a linear combination of exponentials, and a \emph{Walsh polynomial} is a  linear combination of Walsh characters.)  $\widehat{\Omega}$-polynomials are norm-dense in $C(\Omega)$  and \ $L^q(\Omega,\mu), \ q \in [1,\infty)$, \ and $\text{weak}^{\star}$-dense in $L^{\infty}(\Omega,\mu$) \ and \ $M(\Omega)$ (regular Borel measures on $\Omega$), \ where $\mu$ is  Haar measure of \ $\Omega$. (E.g., see \citet{rudin2011fourier}.)

The Steinhaus system $\mathcal{Z}_A = \{\zeta_{\alpha}: \alpha \in A \}$  ('complex' analogue of the Rademacher system $R_A$) is defined by 
\begin{equation}
\begin{split}
\zeta_{\alpha}: \ &\ \mathcal{T}^{A} \  \rightarrow \  \mathcal{T}, \ \ \ \ \ \alpha \in A, \\\
\zeta_{\alpha}(\omega)& \ = \ \omega(\alpha), \ \ \ \ \ \alpha \in A, \ \ \omega \in \mathcal{T}^{A},
\end{split}
\end{equation}\\
where $\mathcal{T}$ is the unit circle in $\mathbb{C}$ parameterized by $[0,2\pi)$, as per \eqref{circ}.
The Steinhaus system $\mathcal{Z}_A$ is a set of independent characters of the compact abelian group $\mathcal{T}^{A}$ that generates the dual group $\widehat{\mathcal{T}^A}$.  I.e.,  
\begin{equation}
\widehat{\mathcal{T}^A} \ = \ \bigcup_{k=0}^{\infty} \mathcal{Z}_{A,k},
\end{equation}\\
where   \ $\zeta_0 \equiv 1$ on $\mathcal{T}^A$, \ $\mathcal{Z}_{A,0} = \{\zeta_0\},$ \  and 
\begin{equation}
\mathcal{Z}_{A,k} \ = \ \bigg\{\prod_{\alpha \in F} \zeta_{\alpha}^{n_{\alpha}}: F \subset A, \ \# F = k, \ (n_{\alpha})_{\alpha \in F} \in \mathbb{Z}^F \bigg\}, \ \ \ \ k = 1, \ldots \ . 
\end{equation}

\end{remark}
\begin{remark} [proposition deconstructed, and a preview] \ The proof of Proposition \ref{equivalence} rests on the equivalence (via Riesz-Kakutani and Hahn-Banach) between  '$\mathcal{K}_G < \infty$'  and the assertion (cf. \citet{blei1976sidon}): \ \\

\noindent
for every set $A$, there exist  $\nu \in M(\Omega_{\mathcal{S}_{l^2(A)}} \times \Omega_{\mathcal{S}_{l^2(A)}})$ such that $\|\nu\|_M \leq \mathcal{K}_G$, and 
\begin{equation}
\widehat{\nu}(r_{{\bf{x}}},r_{{\bf{y}}}) \ = \ {\bf{x}} \boldsymbol{\cdot} {\bf{y}} \ \ \text{for all}  \ \ ({\bf{x}},{\bf{y}})  \in \mathcal{S}_{l^2(A)} \times \mathcal{S}_{l^2(A)}.
\end{equation}\\
% and $\|\nu\|_M \leq \mathcal{K}_G$. \ \\  
%The infimum of $K$ over all sets $A$ is the Grothendieck constant $\mathcal{K}_G$. \ \\

\noindent
Based on this equivalence,  $\mathcal{K}_G < \infty$  implies the existence of  $\big (l^2(A) \hookrightarrow L^{\infty}\big)$-representations in \eqref{dualequiv}, uniformly bounded in the $(l^2(A) \rightarrow L^{\infty})$-norm, and thus \  $\mathcal{K}^{\star} < \infty$.  Notably, the  $(l^2 \hookrightarrow L^{\infty})$-representations in \eqref{dualequiv} are neither $(l^2 \rightarrow L^2)$-continuous nor homogeneous.
 
 \begin{question} \label{question1}  
Do \  $(l^2 \hookrightarrow L^{\infty})$-representations exist with properties not \emph{a priori} guaranteed by  \ $\mathcal{K}^{\star} < \infty$? \  (Cf. Remark \ref{rem1}.)
\end{question}

\noindent
In \S \ref{rev} and \S \ref{scalarparseval}, we construct uniformly bounded $\big(l^2 \hookrightarrow L^{\infty}\big)$-representations that are $\mathbb{R}$-homogeneous,  $(l^2 \rightarrow L^2)$-continuous, and arbitrarily close to unitary maps in the $(l^2 \rightarrow L^2)$-norm. These constructions are based in part on \emph{Riesz products}, which are reviewed in \S \ref{RieszP} and \S \ref{RieszP0}.  Existence of such representations, sharing properties with unitary maps (linearity excepted), is an \emph{upgrade} of the Grothendieck inequality in the precise sense stated in \S \ref{upgrade3}. %; see \eqref{dualequiv}.  
\end{remark}
\subsection {Quadratic inequalities} \label{unitary}
%\begin{remark}[quadratic inequalities] \label{unitary}
  \  The quadratic version of the Grothendieck inequality is:  \ there exist $ K \in (1,\infty)$ such that for all sets $A$, finite sets  $B$, and scalar-valued arrays $(a_{uv})_{\{u, v\} \subset B}$,\\  
 \begin{equation} \label{grothenc}
 \begin{split}
\sup \big \{\big|\sum_{\{u, v \} \subset B} a_{uv}  \ {\bf{x}}_u \boldsymbol{\cdot} {\bf{x}}_v  \big| :&\  {\bf{x}}_{u} \in \mathcal{S}_{l^2(A)} \big \}\\\
& \ \leq \ K \ \sup \big\{ \big|\sum_{\{u,v\} \subset  B} a_{uv} \ s_{u}  s_{v}\big |: |s_{u}| = 1  \big \}.
\end{split}
\end{equation}
The 'smallest' $K$ in \eqref{grothenc} is denoted by $\widetilde{\mathcal{K}}_G$.  %The inequalities %in \eqref{grothen} and \eqref{grothenc} 
 %\ $\mathcal{K}_G < \infty$ \ and \ $\widetilde{\mathcal{K}}_G < \infty$ \  are indeed equivalent (
Via polarization and symmetrization,  
\begin{equation}
\widetilde{\mathcal{K}}_G/4 \ \leq \mathcal{K}_G \ \leq \ \widetilde{\mathcal{K}}_G, \ \ \ \ \ \text{'real' or 'complex'}.
\end{equation}
(I do not know the exact relation between $\widetilde{\mathcal{K}}_G$ and $\mathcal{K}_G$.) \ \\ 

An injection $\Phi: l^2(A) \ \rightarrow L^2(\Omega, \mu)$ is a \emph{quasi}-$L^{\infty}$-representation of $l^2(A)$ if
%where $(\Omega,\mu)$ is a probability space, if 
\begin{equation} 
\begin{split}
\Phi({\bf{x}}) \ &= \ \|{\bf{x}}\|_2 \ \Phi(\boldsymbol{\sigma}{\bf{x}}) \ \ \  \ {\bf{x}} \neq {\bf{0}},\\\
\Phi({\bf{0}}) \ &= \ {\bf{0}}, 
\end{split}
\end{equation}
\begin{equation}
{\bf{x}} \boldsymbol{\cdot}{\bf{y}} \ = \ \int_{\Omega} \Phi({\bf{x}}) \ \Phi({\bf{y}}) \ d\mu, \ \  \ \ \ {\bf{x}} \in l^2(A), \  \ {\bf{y}} \in l^2(A), \ \ {\bf{x}} \neq  {\bf{y}},
\end{equation}
and
\begin{equation} \label{bd1}
 \sup \big\{\|\Phi({\bf{x}})\|_{L^{\infty}}: \  {\bf{x}} \in \mathcal{S}_{l^2(A)} \big \}  \  {=} \  \|\Phi\|_{l^2(A) \hookrightarrow L^{\infty}} \ < \ \infty.
 \end{equation}\\
 (Cf. Definition \ref{Linfrepresent}.). As in \eqref{dualg}, let 
 \begin{equation} \label{dualg1}
 \widetilde{\mathcal{K}}^{\star} \  \overset{\text{def}}{=} \  \sup_{A} \big(  \inf_{\Phi}   \|\Phi\|_{l^2(A) \hookrightarrow L^{\infty}} \big),
 \end{equation}
 where the infimum ranges over all quasi-$L^{\infty}$-representations of $l^2(A)$, and the supremum ranges over all sets $A$.   Depending on the underlying fields, $ \widetilde{\mathcal{K}}^{\star}$ has two values: \ $\widetilde{\mathcal{K}}^{*,\mathbb{R}}$ \ in the 'real' case, and \ $ \widetilde{\mathcal{K}}^{*,\mathbb{C}}$ \  in the 'complex' case.
 
 \begin{proposition} [cf. Proposition \ref{equivalence}]
\begin{equation}
(\widetilde{\mathcal{K}}^{*,\mathbb{R}})^2 \ \leq \ \widetilde{\mathcal{K}}_G^{\mathbb{R}} \ \leq \  \frac{\pi^2}{4}(\widetilde{\mathcal{K}}^{*,\mathbb{R}})^2,
\end{equation} 
and
\begin{equation}
\widetilde{\mathcal{K}}_G^{\mathbb{C}} \ = \ (\widetilde{\mathcal{K}}^{*,\mathbb{C}})^2.
\end{equation}
  \end{proposition}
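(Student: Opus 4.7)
The plan is to follow the architecture of the proof of Proposition \ref{equivalence}, exploiting the simplification that a quadratic form lives on a \emph{single} copy of the Rademacher (respectively Steinhaus) group rather than on its two-fold product. I would establish the upper and lower bounds on $\widetilde{\mathcal{K}}_G$ separately, in both the real and complex cases.

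For the upper bound $\widetilde{\mathcal{K}}_G \leq C(\widetilde{\mathcal{K}}^{\star})^2$, fix $\epsilon > 0$ and pick a quasi-$L^{\infty}$-representation $\Phi$ with $\|\Phi\|_{l^2(A) \hookrightarrow L^{\infty}} < \widetilde{\mathcal{K}}^{\star} + \epsilon$. For finite $B$, scalars $(a_{uv})$, and ${\bf{x}}_u \in \mathcal{S}_{l^2(A)}$ ($u \in B$), the quasi-representation identity gives
\[
\Big|\sum_{\{u,v\} \subset B} a_{uv}\,{\bf{x}}_u \boldsymbol{\cdot} {\bf{x}}_v\Big| \;\leq\; \int_{\Omega} \Big|\sum_{\{u,v\}} a_{uv}\,\Phi({\bf{x}}_u)(\omega)\,\Phi({\bf{x}}_v)(\omega)\Big|\, d\mu(\omega),
\]
and at each $\omega$ the complex scalars $s_u = \Phi({\bf{x}}_u)(\omega)/\|\Phi\|_{L^{\infty}}$ satisfy $|s_u| \leq 1$. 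The maximum modulus principle applied to the analytic polynomial $(s_u) \mapsto \sum a_{uv} s_u s_v$ on the polydisc bounds the integrand pointwise by $\|\Phi\|_{L^{\infty}}^2\,\|{\bf{a}}\|_{\otimes_{\epsilon}}$. Integrating against the probability $\mu$ yields $\widetilde{\mathcal{K}}_G^{\mathbb{C}} \leq (\widetilde{\mathcal{K}}^{\star,\mathbb{C}})^2$ at once. In the real case $\Phi$ is nevertheless complex-valued in general, so I would additionally invoke the quadratic Sidon-type bound $\|{\bf{a}}\|_{\otimes_{\epsilon}} \leq \tfrac{\pi^2}{4}\|{\bf{a}}\|_{\otimes_{\epsilon}^{\mathbb{R}}}$ implicit in \eqref{CR1}, obtaining $\widetilde{\mathcal{K}}_G^{\mathbb{R}} \leq \tfrac{\pi^2}{4}(\widetilde{\mathcal{K}}^{\star,\mathbb{R}})^2$.

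For the lower bound $(\widetilde{\mathcal{K}}^{\star})^2 \leq \widetilde{\mathcal{K}}_G$, assume $\widetilde{\mathcal{K}}_G < \infty$ and combine the quadratic Grothendieck inequality with the Riesz--Kakutani and Hahn--Banach theorems, this time on the \emph{single} compact abelian group $\Omega_{\mathcal{S}_{l^2_{\mathbb{R}}(A)}}$ (real case) or $\mathcal{T}^{\mathcal{S}_{l^2(A)}}$ (complex case). This produces a measure $\nu$ with $\|\nu\|_M \leq \widetilde{\mathcal{K}}_G$ and Fourier coefficients $\widehat{\nu}(\chi_{{\bf{x}}} \chi_{{\bf{y}}}) = {\bf{x}} \boldsymbol{\cdot} {\bf{y}}$ for ${\bf{x}} \neq {\bf{y}}$ in $\mathcal{S}_{l^2(A)}$, where $\chi$ stands for $r$ in the real case and $\zeta$ in the complex case. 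Setting $\mu = |\nu|/\widetilde{\mathcal{K}}_G$, writing $d\nu = \widetilde{\mathcal{K}}_G\,h\,d\mu$ with $|h| = 1$, and picking a measurable square root $\sqrt{h}$, I would define
\[
\Phi({\bf{x}}) \;=\; \chi_{{\bf{x}}}\,\sqrt{\widetilde{\mathcal{K}}_G\,h}, \qquad {\bf{x}} \in \mathcal{S}_{l^2(A)},
\]
extended to $l^2(A)$ via the homogeneity rule \eqref{extending1}. Then $\|\Phi\|_{l^2(A) \hookrightarrow L^{\infty}} \leq \sqrt{\widetilde{\mathcal{K}}_G}$ and, for ${\bf{x}} \neq {\bf{y}}$, $\int \Phi({\bf{x}})\,\Phi({\bf{y}})\,d\mu = \int \chi_{{\bf{x}}} \chi_{{\bf{y}}}\,d\nu = {\bf{x}} \boldsymbol{\cdot} {\bf{y}}$, so $\Phi$ is a quasi-$L^{\infty}$-representation with the required bound.

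The main obstacle -- and the reason the statement is framed in terms of \emph{quasi}-representations rather than representations -- is the handling of the diagonal. Imposing the additional identity $\int \Phi({\bf{x}})^2\,d\mu = \|{\bf{x}}\|_2^2$ would force $\widehat{\nu}$ to take a prescribed value at a single character (the trivial one when $\chi = r$, or $\zeta_{{\bf{x}}}^2$ when $\chi = \zeta$), which is not provided by the quadratic Grothendieck inequality. Working on a single group rather than a two-fold product also eliminates the coordinate-projection symmetrization $\Phi = (\phi_1 + \phi_2)/2 + \mathfrak{i}(\phi_1 - \phi_2)/2$ from the proof of Proposition \ref{equivalence}; this is precisely why the factor of $4$ drops out of the real lower bound and why exact equality $\widetilde{\mathcal{K}}_G^{\mathbb{C}} = (\widetilde{\mathcal{K}}^{\star,\mathbb{C}})^2$ emerges in the complex case.
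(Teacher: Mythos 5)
Your argument is essentially the paper's own: the upper bounds are obtained exactly as in Proposition \ref{equivalence} (with the $\pi^2/4$ surviving only in the real case), and the lower bound comes from Riesz--Kakutani and Hahn--Banach applied on the \emph{single} group $\Omega_{\mathcal{S}_{l^2_{\mathbb{R}}(A)}}$ (resp. $\mathcal{T}^{\mathcal{S}_{l^2(A)}}$), setting $\Phi({\bf{x}}) = \chi_{{\bf{x}}}\sqrt{\widetilde{\mathcal{K}}_G\, h}$ and extending homogeneously, which is precisely the paper's construction. Your closing remarks on the off-diagonal restriction and on why the factor $4$ and the two-fold product disappear correctly identify the only points where the quadratic case diverges from Proposition \ref{equivalence}.
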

 \begin{proof}[Sketch of proof] \ The verifications of  \ $\widetilde{\mathcal{K}}_G^{\mathbb{R}} \ \leq \   \frac{\pi^2}{4}(\widetilde{\mathcal{K}}^{*,\mathbb{R}})^2$ \ and \ $\widetilde{\mathcal{K}}_G^{\mathbb{C}} \ \leq \ (\widetilde{\mathcal{K}}^{*,\mathbb{C}})^2$ \  are identical to the verifications of the corresponding inequalities in  Proposition \ref{equivalence}.
 
 To verify \ $(\widetilde{\mathcal{K}}^{*,\mathbb{R}})^2 \ \leq \ \widetilde{\mathcal{K}}_G^{\mathbb{R}}$, \ use Riesz-Kakutani to obtain a measure $\nu$ on $\Omega_{\mathcal{S}_{l^2_{\mathbb{R}}(A)}}$,  such that \ $ \|\nu\|_{M} \ = \ \widetilde{\mathcal{K}}_G^{\mathbb{R}}$,  \ and
 \begin{equation}
 {\bf{x}} \boldsymbol{\cdot} {\bf{y}} \  = \ \int_{\Omega_{\mathcal{S}_{l^2_{\mathbb{R}}(A)}}} r_{{\bf{x}}} \ r_{{\bf{y}}}  \ d \nu, \ \ \ \ \ {\bf{x}} \in \mathcal{S}_{l^2_{\mathbb{R}}(A)}, \ \ {\bf{y}} \in \mathcal{S}_{l^2_{\mathbb{R}}(A)}, \ \ \ {\bf{x}} \neq {\bf{y}}.\\\\
 \end{equation}\\
Then,  taking the probability space \  $\big(\Omega_{\mathcal{S}_{l^2_{\mathbb{R}}(A)}},  |\nu|/\widetilde{\mathcal{K}}_G^{\mathbb{R}}\big),$ \ let
%consider the  quasi-$L^{\infty}$-representation of $l^2(A)$ given by
 \begin{equation}
 \begin{split}
 \Phi({\bf{x}}) \  &\overset{\text{def}}{=} \   \bigg(\sqrt{ \widetilde{\mathcal{K}}_G^{\mathbb{R}} \ h}\bigg) \ r_{{\bf{x}}},  \ \ \ \ \ \ \ {\bf{x}} \in \mathcal{S}_{l^2_{\mathbb{R}}(A)},\\\\
  \Phi({\bf{x}}) \  &\overset{\text{def}}{=} \ \|{\bf{x}}\|_2 \ \Phi(\boldsymbol{\sigma}{\bf{x}}), \ \ \ \ \ \ {\bf{x}} \in l^2_{\mathbb{R}}(A),
  \end{split}
  \end{equation}\\
 where $h = \pm1$ is measurable on $\Omega_{\mathcal{S}_{l^2_{\mathbb{R}}(A)}}$, and $$d\nu  \ = \ h \ d|\nu|.$$
 
To verify $(\widetilde{\mathcal{K}}^{*,\mathbb{C}})^2 \ \leq \ \widetilde{\mathcal{K}}_G^{\mathbb{C}}$, use the same argument, with Steinhaus systems in place of Rademacher systems. 
 \end{proof}
 
 %\begin{remark}[a variant of the quadratic inequality] \label{unitary} \ 
\begin{remark} [a variant] \label{variant}  \ In the 'real' case, removal of the 'absolute value' from \eqref{grothenc} leads to a distinctly different phenomenon:  there exist $K \in (1,\infty)$ such that for all sets $A$,  finite sets $B$, and $\mathbb{R}$-valued arrays $(a_{uv})_{\{u,v\} \subset B}$,
 \begin{equation} \label{grothencr}
 \begin{split}
\sup \big \{\sum_{\{u, v \} \subset B} a_{uv}  \ {\bf{x}}_u \boldsymbol{\cdot} {\bf{x}}_v   :&\  {\bf{x}}_{u} \in \mathcal{S}_{l^2_{\mathbb{R}}(A)} \big \}\\\
& \ \leq \ K \ \log (\# B) \ \sup \big\{ \sum_{\{u,v\} \subset  B} a_{uv} \ s_{u}  s_{v}: s_{u} = \pm 1 \big\},
\end{split}
\end{equation}
where $\log (\# B)$ is best possible; see  \citet{Charikar:2004} and \citet{Alon:2006}.  
%The inequality \eqref{grothencr} and its optimality were first observed in a context of theoretical computer science  \citep{Charikar:2004},  \citep{Alon:2006}.  In our setting, 
That \eqref{grothencr} is optimal implies that for infinite $A,$ there can be no unitary $L^{\infty}$-representations of $l^2(A)$. (Cf. also \eqref{optim1}.)
 \end{remark}
 
%\begin{remark} [{upgrades and extensions}] \label{upgrade1}
%\subsection{Brief overview}  \label{upgrade1}% \ \emph{Le th\'{e}or\`{e}me fondamental de la th\'{e}orie metrique des produits tensoriels}
%was stated and proved first by Grothendeick in 
%his landmark  \emph{Resum\'{e}} 
% \citep{Grothendieck:1956}. In a seminal paper \citep{Lindenstrauss:1968} fifteen years later, Lindenstrauss and Pelczynski recast Grothendieck's \emph{ fundamental theorem} in a Banach space setting, reformulating it as the inequality $\mathcal{K}_G < \infty$, which has since been reproved and reinterpreted in diverse contexts and settings, indeed evidence of its significance and appeal. (See \citep{pisier2012grothendieck}.) \ \\

%  \emph{Resum\'{e}} 
 
 % \ In \citep{blei2014grothendieck}, we considered the Grothendieck inequality 
  % $\mathcal{K}_G < \infty$ 
%    Issues arise also regarding extensions to higher dimensions and non-Euclidean settings.  Two extensions of the Grothendieck inequality were derived in \citep{blei2014grothendieck}: {\bf{(i)}} a multi-dimensional version of  \ $\mathcal{K}_G < \infty$, wherein 'fractional' multi-dot products replace the 'bilinear' dot product;  {\bf{(ii)}} existence of uniformly bounded representations of $l^p(A)$, $p \in [1,\infty]$, where the case $p = 2$ corresponds to $\mathcal{K}^{\star} < \infty$.   
%In Chapter 5 we review and rework some of these past results, 
%and in the second part of the monograph, building on these, we derive further extensions, specifically Grothendieck-type representations of mixed-norm spaces.

 \section{\bf{The Khintchin, Littlewood, and Orlicz inequalities}} \subsection{Preliminaries} \ In this chapter we consider three classical precursors to the Grothendieck inequality, with emphasis on their dual statements.
 %, we will formalize notions of \emph{interpolants} and \emph{upgrades}, setting the stage for a proof of the Grothendieck inequality in the next chapter.

We assume some familiarity with harmonic analysis on groups, and will use terminology and notation that is mostly standard. (E.g., Chapters 1 and 2 of  \citet{rudin2011fourier}.)  For a discrete abelian group $\Gamma,$  its compact dual  $\widehat{\Gamma}$ and normalized Haar measure $\mathfrak{m}$ on it, and $E \subset \Gamma$, the following spaces recur throughout:
\begin{equation} \label{iden2}
\begin{split}
L^p_{E} \  & {=} \ \big \{f \in L^p(\widehat{\Gamma},\mathfrak{m}): \widehat{f}(\gamma) = 0,  \ \gamma \in \Gamma \setminus E \big \}, \ \ \ \ \ 1 \leq p \leq \infty, \\\\
C_{E} \  & {=} \ \big \{f \in C(\widehat{\Gamma}): \widehat{f}(\gamma) = 0,  \ \gamma \in \Gamma \setminus E \big \},\\\\
M_E \ & {=}  \ \big \{\mu \in M(\widehat{\Gamma}): \widehat{\mu}(\gamma) = 0,  \ \gamma \in \Gamma \setminus E \big \}.
\end{split}
\end{equation}\\
The space of  $\mathbb{C}$-valued $\Gamma$-series spanned by $E$ is denoted by
 \begin{equation} \label{series1}
\mathfrak{S}_{E} \ = \ \bigg\{\sum_{\gamma \in E}  \phi(\gamma) \ \gamma: \ \phi \in \mathbb{C}^{E} \bigg\},
\end{equation}
and if $\phi$ in \eqref{series1} is restricted to a subspace of  $\mathbb{C}^E$, then the corresponding resulting subspace of $\mathfrak{S}_E$ will be formally so designated, e.g.,
\begin{equation} \label{series2}
\mathfrak{S}_E^{s} \ \overset{def}{=} \ \bigg\{\sum_{\gamma \in E}  \phi(\gamma) \ \gamma: \ \phi \in l^s(E) \bigg\}, \ \ \ \ \ s \in [1,\infty].
\end{equation}
The $\Gamma$-\emph{transform of} $\mathfrak{s} =  \sum_{\gamma \in \Gamma}  \phi(\gamma)\gamma \  \in \mathfrak{S}_{\Gamma}$  is denoted by 
\begin{equation}
\widehat{\mathfrak{s}}(\gamma) \ = \ \phi(\gamma), \ \ \ \ \gamma \in \Gamma.
\end{equation}
The \emph{spectrum} of $\mathfrak{s}$ is the support of $\widehat{\mathfrak{s}}$.  If $E \subset \Gamma$, and the spectrum of $\mathfrak{s} \in \mathfrak{S}_{E}$ is finite, then $\mathfrak{s}$ is an $E$-\emph{polynomial}.

With each $\mu \in M(\widehat{\Gamma})$ (a regular Borel measure on $\widehat{\Gamma}$) we associate the $\Gamma$-series
\begin{equation} \label{iden1}
\begin{split}
 \mu \ &= \ \sum_{\gamma \in \Gamma} \widehat{\mu}(\gamma) \gamma,\\\
 \widehat{\mu}(\gamma) \ = \ \int_{\widehat{\Gamma}} \overline{\gamma} \ d\mu, \ \ \ \ & \gamma \in \Gamma \ \ \ (\text{\emph{Fourier-Stieltjes transform})},
\end{split}
\end{equation}  
%The formal representation of $\mu \in M(\widehat{\Gamma})$ by the $\Gamma$-series in \eqref{iden1} becomes precise  
which uniquely determines $\mu$. We then formally identify, via \eqref{iden1}, each space in \eqref{iden2} as a subspace of $\mathfrak{S}_{\Gamma}.$ 
%(See Remark \ref{rep0} below.)\

  In this monograph we work mostly in the 'dyadic' setting \  $$\Gamma \  = \  W_A,   \ \ \  \widehat{\Gamma} \  = \  \Omega_A, \ \  \ \mathfrak{m} \ = \ \mathbb{P}_A,$$ a basic and convenient setting for our purposes here.

 We will be dealing in this chapter, and also in later chapters, with mixed-norm spaces.  Specifically, for $s \in [1,\infty]$, $t \in [1,\infty]$, sets $A$ and $B$, and $\phi \in \mathbb{C}^{A \times B}$, consider the mixed norms
\begin{equation}
\begin{split}
\|\phi\|_{s,t} \ &\overset{def}{=} \ \bigg(\sum_{\alpha \in A} \big(\sum_{\beta \in B} |\phi(\alpha,\beta)|^t\big)^{s/t}\bigg)^{1/s}, \ \ \ \ \ s \in (1,\infty), \ \ t \in (1, \infty) \\\\
\|\phi\|_{\infty,t} \ &\overset{def}{=} \ \sup_{\alpha \in A} \big(\sum_{\beta \in B} |\phi(\alpha,\beta)|^t\big)^{1/t}, \ \ \ \ \ s = \infty, \ \ t \in (1, \infty), \\\\
\|\phi\|_{s,\infty} \ &\overset{def}{=} \  \bigg(\sum_{\alpha \in A} \  \sup_{\beta \in B} |\phi(\alpha,\beta)|^s\bigg)^{1/s}, \ \ \ \ \ s \in (1,\infty), \ \ t = \infty,
\end{split}
\end{equation}\\
and let  
\begin{equation}
l^s\big(A;l^t(B)\big) \overset{def}{=} \ l^{s,t}(A,B) \ = \ \big\{\phi \in \mathbb{C}^{A \times B}: \ \|\phi\|_{s,t} < \infty \big\}.
\end{equation}\\  
  \subsection{$(L^1_R \hookrightarrow L^2)$-inequality,  \ $(C_{R \times R} \hookrightarrow l^{1,2})$-inequality, \  $(C_{R \times R} \hookrightarrow l^{2,1})$-inequality} \ \\

  %and two mixed-norm inequalities} \ \\
  %,  the $(l^1,l^2)$-inequality, and the $(l^2,l^1)$-inequality} \ \\

\noindent
{\bf{i.}} \ The inclusions
\begin{equation} \label{inclu}
L^1_{R_A} \ \subset \ L^2(\Omega_A,\mathbb{P}_A) \ \ \ \text{for all $A$},
\end{equation}
or equivalently,  
\begin{equation} \label{Khintchin}
  \kappa \  \overset{\text{def}}{=}   \  \sup \big\{1/\|U_{R_A}{\bf{x}} \|_{L^1}:   {\bf{x}} \in \mathcal{S}_{l^2(A)}, \ \text{finite sets} \ A \big\} \ < \ \infty,
\end{equation}
had been independently observed in the 1930's by Banach, Littlewood, Orlicz, Steinhaus, and Zygmund, with various estimates of the constant $\kappa$, whose value became at the time an open problem. 
%(Supremum in \eqref{Khintchin} can be taken over \emph{finite} sets $A$.)   
We refer to $\kappa$ as the \emph{Khintchin} $(L^1_R \hookrightarrow L^2)$-\emph{constant}, and to '$\kappa < \infty$'  as the \emph{Khintchin} $(L^1_R \hookrightarrow L^2)$-\emph{inequality}.  That
 \begin{equation} \label{Khintcon}  
   \kappa \ = \ \sqrt{2}
 \end{equation} 
was proved in a master's thesis \citep{szarek1976best}. (See also \citet{Haagerup1981}.)

 So far that I can determine, Littlewood  \citep{Littlewood:1930} was first to state and prove
\begin{equation}\label{lit1}
\bigg(\sum_{\alpha \in A} |{\bf{x}}(\alpha)|^2 \bigg)^{1/2} \ \leq \ \sqrt{3} \ \big\|\sum_{\alpha \in A} {\bf{x}}(\alpha) r_{\alpha} \big\|_{L^1}, \ \ \ \ \text{finite sets} \  A, \ \  {\bf{x}} \in \mathbb{C}^A,
\end{equation}
i.e., that $\kappa \leq \sqrt{3}$. To this end, Littlewood 
 %-- apparently unaware that $(L^2_R \hookrightarrow L^{2n})$-inequalities had already been proved by Khintchin  \citep{khintchine1923dyadische} -- 
verified the $(L^2_R \hookrightarrow L^4)$-inequality ( $p = 4$ in \eqref{expsq1} ),
% , i.e., that the $L^2$- and $L^4$-norms are equivalent on the linear spans of Rademacher systems 
and then deduced  \eqref{lit1} from it by convexity, essentially via    
\begin{lemma}[cf. {\citet[p. 128]{rudin2011fourier}}] \label{trick1}  For  $E \subset \Gamma$, if for some $p > 2$,
\begin{equation} \label{trick2}
%\begin{split}
\kappa_2(E;p) \  \overset{\text{def}}{=}  \ \sup \big\{\|f\|_{L^p}: f \in \mathcal{S}_{L^2_E} \big \} \   < \ \infty,\\\
\end{equation}
then \\
\begin{equation} 
 \kappa_1(E;2) \ \overset{\text{def}}{=}  \ \sup \big\{1/\|U_{F}{\bf{x}} \|_{L^1(\widehat{\Gamma},\mathfrak{m}}): \ \text{finite sets} \ F \subset E, \ {\bf{x}} \in \mathcal{S}_{l^2(F)} \big\}  \  \leq \ [\kappa_2(E;p)]^{\frac{p}{p-2}}. \ \ \ \ \ \ \ 
\end{equation}\\
I.e., for $p > 2$,
\begin{equation}
L^2_E  \subset L^p(\widehat{\Gamma},\mathfrak{m}) \ \Rightarrow \ L^1_E  \subset L^2(\widehat{\Gamma},\mathfrak{m}).
\end{equation}
%(We refer to  '$\kappa(E;p)  < \infty$'  as an $(L^2_E \hookrightarrow L^p)$-inequality.) 
 \end{lemma}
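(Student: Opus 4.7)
The plan is to derive this via Lyapunov's inequality (log-convexity of $L^p$-norms). Let $K = \kappa_2(E;p)$, fix a finite $F \subset E$ and $\mathbf{x} \in \mathcal{S}_{l^2(F)}$, and set $f = U_F \mathbf{x}$. Since $F \subset E$, we have $f \in L^2_F \subset L^2_E$ with $\|f\|_{L^2} = \|\mathbf{x}\|_2 = 1$, so the hypothesis $L^2_E \subset L^p$ yields $\|f\|_{L^p} \leq K$. The goal is to lower-bound $\|f\|_{L^1}$.

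Write $\tfrac{1}{2}$ as the convex combination of the reciprocals $\tfrac{1}{1}$ and $\tfrac{1}{p}$: solve
\begin{equation*}
\frac{1}{2} \ = \ \theta \cdot 1 \ + \ (1-\theta) \cdot \frac{1}{p}, \quad \text{giving} \quad \theta \ = \ \frac{p-2}{2(p-1)}, \quad 1-\theta \ = \ \frac{p}{2(p-1)}.
\end{equation*}
Lyapunov's inequality (an immediate consequence of H\"older's inequality applied to the factorization $|f|^2 = |f|^{2\theta} \cdot |f|^{2(1-\theta)}$ with exponents $1/\theta$ and $1/(1-\theta)$, adapted to the correct scale) then yields
\begin{equation*}
\|f\|_{L^2} \ \leq \ \|f\|_{L^1}^{\theta} \ \|f\|_{L^p}^{1-\theta}.
\end{equation*}

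Substituting $\|f\|_{L^2} = 1$ and $\|f\|_{L^p} \leq K$ gives $1 \leq \|f\|_{L^1}^{\theta} K^{1-\theta}$, whence
\begin{equation*}
\|f\|_{L^1} \ \geq \ K^{-(1-\theta)/\theta} \ = \ K^{-p/(p-2)}.
\end{equation*}
Taking reciprocals and passing to the supremum over all admissible $F$ and $\mathbf{x}$ produces the claimed bound $\kappa_1(E;2) \leq K^{p/(p-2)} = [\kappa_2(E;p)]^{p/(p-2)}$.

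There is no genuine obstacle here: the only ingredient beyond the hypothesis is Lyapunov's inequality, which is standard. The one point to be attentive to is that the bound $\|f\|_{L^p} \leq K\|f\|_{L^2}$ must be applied to $f \in L^2_F$ for finite $F \subset E$, which is legitimate because $L^2_F \subset L^2_E$ and the constant $\kappa_2(E;p)$ is uniform over all of $L^2_E$.
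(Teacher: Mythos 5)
Your proof is correct and is essentially the paper's own argument: the paper's one-line H\"older estimate $1 = \int |f|^{\frac{p-2}{p-1}}|f|^{\frac{p}{p-1}}\,d\mathfrak{m} \leq \|f\|_{L^1}^{\frac{p-2}{p-1}}\|f\|_{L^p}^{\frac{p}{p-1}}$ is exactly Lyapunov's inequality for the triple $(1,2,p)$ with your $\theta = \frac{p-2}{2(p-1)}$, squared. The exponent bookkeeping and the final bound $[\kappa_2(E;p)]^{p/(p-2)}$ match the paper's.
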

 \begin{proof}\ 
 For ${\bf{x}} \in \mathcal{S}_{l^2(F)}$,  finite $F \subset E$, write $f = U_F{\bf{x}}$, and estimate (via H\"older)
 \begin{equation} \label{modi}
% \begin{split}
1 \ = \ \int_{\widehat{\Gamma}}|f|^{\frac{p-2}{p-1}} \ |f|^{\frac{p}{p-1}} \ d\mathfrak{m} \ \leq \ \big(\|f\|_{L^1}\big)^{\frac{p-2}{p-1}} \ \big(\|f\|_{L^p}\big)^{\frac{p}{p-1}} \ \leq \  \big(\|f\|_{L^1}\big)^{\frac{p-2}{p-1}} \ [\kappa_2(E;p)]^{\frac{p}{p-1}}. \ \ \ \ \ \ 
%&\leq \ \big(\|f\|_{L^1}\big)^{\frac{p-2}{p-1}} \ \big(\|f\|_{L^p}\big)^{\frac{p}{p-1}} \\\\
%&\leq \  \big(\|f\|_{L^1}\big)^{\frac{p-2}{p-1}} \ K^{\frac{p}{p-1}}.
%\end{split}
 \end{equation}\\
 \end{proof}
 \begin{remark} [$\Lambda(p)$-sets \citep{rudin1960trigonometric}] \label{Lambda} \ For $E \subset \Gamma$, $p > q > 0$, \  let 
 \begin{equation} \label{Lambdapq}
\sup \big\{\|f\|_{L^p}/\|f\|_{L^q}: \text{$\Gamma$-polynomials} \ f,  \ f \neq 0, \ \widehat{f} = 0 \ \text{on} \ \Gamma \setminus E \big\}  \ \overset{{\text{def}}}{=}  \ \kappa_q(E;p). \ \ \ \ \ \ \ \ \ \ 
\end{equation}\\
 As in \eqref{modi}, an application of H\"older implies
\begin{equation}
\kappa_q(E;p) < \infty \ \Rightarrow \ \kappa_s(E;p) < \infty, \ \ \ \ p > q > s>0.
\end{equation}
We refer to '$\kappa_q(E;p) < \infty$' as an $(L^q_E \hookrightarrow L^p)$-inequality.
\begin{definition} [{\citet[Theorem 1.4, Definition 1.5]{rudin1960trigonometric}}] \label{Lambda1}  $E \subset \Gamma$ is  $\Lambda(p)$ if  \ $\kappa_q(E;p) < \infty$ for some $q \in (0,p)$.  
\end{definition}
\noindent
E.g., \eqref{expsq1} implies that Rademacher systems are $\Lambda(p)$ for all $p \in (0,\infty).$ 
%\begin{equation}
% \kappa_2(R_A;p) \ \leq \ \sqrt{p}, \ \ \ \  \text{sets} \ A, \ \ p>2,
%\end{equation}
%and, in particular, that Rademacher systems are $\Lambda(p)$ for all $p \in (0,\infty).$ \ \\
 \end{remark}
 
\noindent
{\bf{ii.}} \ Littlewood used $\kappa < \infty$  in a derivation of a mixed-norm inequality:\ \\

 for all finite sets $A$ and $B$, and scalar-valued arrays $(a_{\alpha \beta})_{\alpha \in A, \beta \in B}$,
\begin{equation} \label{littlewood1}
\begin{split}
\big\|\sum_{\alpha \in A, \  \beta \in B} a_{\alpha \beta} \ r_{\alpha} \otimes r_{\beta}\big \|_{\infty} \ &\overset{\text{def}}{=} \ \sup_{\omega \in \Omega_A, \ \omega^{\prime} \in \Omega_B} \big|\sum_{\alpha \in A, \  \beta \in B} a_{\alpha \beta} \ r_{\alpha}(\omega)  r_{\beta}(\omega^{\prime}\big| \\\\
& \geq \ \frac{2}{\pi} \ \sup_{\omega \in \Omega_A} \sum_{\beta \in B} \big|\sum_{\alpha \in A} a_{\alpha \beta} \ r_{\alpha}(\omega) \big|\\\\
 &\geq \ \frac{2}{\pi} \ \sum_{\beta \in B} \int_{\omega \in \Omega_A}\big|\sum_{\alpha \in A} a_{\alpha \beta} \ r_{\alpha}(\omega)\big| \mathbb{P}_A(d\omega)\\\\
 & \geq \ \frac{2}{\pi \kappa} \ \sum_{\beta \in B} \bigg (\sum_{\alpha \in A} |a_{\alpha \beta}|^2 \bigg)^{\frac{1}{2}},
\end{split}
\end{equation}
where the second line follows from \eqref{Sid1}, and the fourth from \eqref{Khintchin}. 
Let  $\kappa_L$  be the infimum of $K > 0$ such that for all finite sets $A$ and $B$, and scalar-valued arrays $(a_{\alpha \beta})_{\alpha \in A, \beta \in B}$, 
\begin{equation} \label{littlewood}
\sum_{\beta \in B} \bigg (\sum_{\alpha \in A} |a_{\alpha \beta}|^2 \bigg)^{\frac{1}{2}} \ \leq \ K \ \big\|\sum_{\alpha \in A, \  \beta \in B} a_{\alpha \beta} \ r_{\alpha} \otimes r_{\beta}\big \|_{\infty},
\end{equation} 
whence from \eqref{littlewood1},
\begin{equation} \label{khilit}
\kappa_L  \ \leq \ \frac{\pi \kappa}{2}.
\end{equation}\ 

Littlewood's mixed-norm inequality 
\begin{equation} \label{litconst}
\kappa_L  \ < \ \infty
\end{equation}
is equivalent to the assertion 
\begin{equation} \label{lit2}
f \in C_{R_A \times R_B} \ \Rightarrow \  \widehat{f} \in l^{1,2}(A,B), \ \ \ \ \text{for all sets $A$ and $B$},
\end{equation}\\
and we refer to it as  $(C_{R \times R} \hookrightarrow l^{1,2})$-inequality.\\
%\end{remark}

%\end{remark}

\begin{remark} [a special case of the Grothendieck inequality] \label{special} \ The inequality in \eqref{littlewood}, restated as\\   
\begin{equation*}
\sup \big \{\big|\sum_{\alpha \in A, \  \beta \in B} a_{\alpha \beta} \  {\bf{e}}_{\alpha} \boldsymbol{\cdot} {\bf{y}}_{\beta}  \big| :  \ {\bf{y}}_{\beta} \in \mathcal{S}_{l^2(A)}, \ \beta \in B \big \} \ \leq \ K \ \big\|\sum_{\alpha \in A, \  \beta \in B} a_{\alpha \beta} \ r_{\alpha} \otimes r_{\beta}\big \|_{\infty},
\end{equation*}\\
%where $\{{\bf{e}}_{\alpha}, \ \alpha \in A\}$ is the canonical basis in $l^2(A)$, 
is an instance of \eqref{grothen}, whereby \footnote{\label{footnote5} Littlewood's result, which had appeared twenty or so years prior to Grothendieck's,  was indeed cited in the Resum\'{e}  \citep{Grothendieck:1956}.  Whether Littlewood's mixed-norm inequality at least partly inspired Grothendieck's \emph{fundamental theorem}, is open to speculation.}  
\begin{equation} \label{littlegr}
\kappa_L \ \leq \ \mathcal{K}_G.
\end{equation} 

%Littlewood's result, which had appeared twenty or so years prior to Grothendieck's,  was indeed cited in the Resum\'{e}  \citep{Grothendieck:1956}.  Whether Littlewood's mixed-norm inequality at least partly inspired Grothendieck's \emph{fundamental theorem}, is open to speculation.  
\end{remark} \  

\noindent
{\bf{iii.}} \ Let  $\kappa_O$ denote the infimum of  $K > 0$ over all finite sets $A$ and $B$, and  scalar-valued arrays $(a_{\alpha \beta})_{\alpha \in A, \beta \in B}$, such that
 \begin{equation} \label{orlicz1}
\bigg( \sum_{\alpha \in A} \big (\sum_{\beta \in A} |a_{\alpha \beta}|\big )^2 \bigg)^{\frac{1}{2}} \ \leq \ K \ \big\|\sum_{\alpha \in A, \  \beta \in A} a_{\alpha \beta} \ r_{\alpha} \otimes r_{\beta}\big \|_{\infty}.
\end{equation}
From \eqref{littlewood}, via Minkowski (the 'triangle inequality'),
\begin{equation} \label{orlicz}
\kappa_O \ \leq \ \kappa_L,
\end{equation}
and therefore, from \eqref{litconst},
\begin{equation} \label{orlicz2}
\kappa_O < \infty,
\end{equation}
which was deduced in \citet{orlicz1933unbedingte} independently of  \citet{Littlewood:1930}. We refer to \eqref{orlicz2} as the Orlicz $(C_{R \times R} \hookrightarrow l^{2,1})$-inequality; cf. \eqref{lit2}. 

\ \
 
\noindent
{\bf{iv.}}  \ The three inequalities are equivalent in the sense that 
\begin{equation} \label{equivalence3}
\kappa < \infty  \ \Leftrightarrow \ \kappa_L < \infty \ \Leftrightarrow \ \kappa_O < \infty,
\end{equation}
where $\kappa$ is defined in \eqref{Khintchin}, $\kappa_L$  via \eqref{littlewood}, and $\kappa_O$ via \eqref{orlicz1}.  

Following \eqref{khilit} and \eqref{orlicz} above, to prove \eqref{equivalence3} it suffices  to verify  \ $\kappa  \leq  \kappa_O$.   To this end, we assume $\kappa_O < \infty,$  and show that for a finite set $A,$  if ${\bf{x}} \in \mathbb{C}^A$ and 
\begin{equation} \label{normalize}
 \big\|\sum_{\alpha \in A}{\bf{x}}(\alpha) r_{\alpha}\big\|_{L^1(\Omega_A,\mathbb{P}_A)} \ = \ 1,
 \end{equation}   
then $\|{\bf{x}}\|_2 \leq \kappa_O$.  We fix $\omega^{\prime} \in \Omega_A$, and consider the \emph{Riesz product} 
\begin{equation}
\mathfrak{R}_A(\omega^{\prime}) \ = \ \prod_{\alpha \in A} \big(1 +  r_{\alpha}(\omega^{\prime}) r_{\alpha}\big).
\end{equation}
(E.g., see \S \ref{RieszP}.)  Because $\mathfrak{R}_A(\omega^{\prime}) \geq 0$, 
\begin{equation}
\|\mathfrak{R}_A(\omega^{\prime})\|_{L^1} \ = \ \int_{\Omega_A} \mathfrak{R}_A(\omega^{\prime}) d\mathbb{P}_A \ = \ 1, 
\end{equation}
which implies
\begin{equation} \label{above}
\begin{split}
\big\|\sum_{\alpha \in A}{\bf{x}}(\alpha)r_{\alpha}(\omega^{\prime}) r_{\alpha}\big\|_{L^1} \ &= \ \|\mathfrak{R}_A(\omega^{\prime}) \convolution \big(\sum_{\alpha \in A}{\bf{x}}(\alpha) r_{\alpha}\big)\big\|_{L^1}   \\\\
& \leq \ \big \|\mathfrak{R}_A(\omega^{\prime})\big\|_{L^1} \ \big\|\sum_{\alpha \in A}{\bf{x}}(\alpha) r_{\alpha}\big\|_{L^1} \ \leq \ 1,\\\
%& \ \leq 1.
\end{split}
\end{equation}
where  \ $\convolution$ \ denotes convolution. Then, by the duality  $\big(L^1(\Omega_A,\mathbb{P}_A)\big)^{\star} = L^{\infty}(\Omega_A,\mathbb{P}_A)$, \ and because $\Omega_A$ is a finite set,
\begin{equation}
\begin{split}
\big|\sum_{\omega \in \Omega_A}\bigg(\sum_{\alpha \in A}\frac{1}{2^{\#A}}{\bf{x}}(\alpha)r_{\alpha}(\omega)r_{\alpha}(\omega^{\prime})\bigg) \tilde{r}_{\omega}(u)\big| \ &\leq \ \big\|\sum_{\alpha \in A}{\bf{x}}(\alpha)r_{\alpha}(\omega^{\prime}) r_{\alpha}\big\|_{L^1} \ \leq \ 1,\\\
& \omega^{\prime} \in \Omega_A, \ u \in \Omega_{\Omega_A},
\end{split}
\end{equation}
where $$\{\tilde{r}_{\omega}: \omega \in \Omega_A\}  \ \overset{\text{def}}{=}  \ R_{\Omega_A} \ \ \ \ \text{(Rademacher system indexed by $\Omega_A$)}.$$\\
Applying the assumption $\kappa_O < \infty$ to \ $(a_{\alpha \omega})_{\alpha \in A, \omega \in \Omega_A} \ \in \ \mathbb{C}^{A \times \Omega_A}$, where
\begin{equation}
a_{\alpha \omega} \ = \ \frac{1}{2^{\#A}}{\bf{x}}(\alpha)r_{\alpha}(\omega), \ \ \ \ \alpha \in A, \ \omega \in \Omega_A,
\end{equation}
we obtain
\begin{equation}
\bigg(\sum_{\alpha \in A} \big(\sum_{\omega \in \Omega_A} \big|\frac{1}{2^{\#A}}{\bf{x}}(\alpha)r_{\alpha}(\omega)\big|\big)^2 \bigg)^{1/2} 
\ = \ \|{\bf{x}}\|_2 \ \leq \ \kappa_O, 
\end{equation}\\
which implies  (via \eqref{normalize} and the definition of $\kappa$) 
% in \eqref{Khintchin}, 
\begin{equation}
\kappa \ \leq \ \kappa_O.
\end{equation} 
We summarize:
\begin{proposition} [cf. {\citet[Ch. II \S 4]{Blei:2001}}] \label{derive}
\begin{equation} \label{samecon}
\kappa \ \leq \kappa_O \ \leq \ \kappa_L \ \leq \ \frac{\pi \kappa}{2}.
\end{equation}
\end{proposition}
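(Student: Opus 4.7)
The proposition amounts to a chain of three inequalities, of which two are essentially bookkeeping from computations already performed earlier in the section, while the third (and hardest) is the content of the Littlewood–Orlicz–Khintchin circle of ideas and relies on a Riesz-product trick. The plan is to verify the three inequalities in the order $\kappa_L \leq \pi\kappa/2$, then $\kappa_O \leq \kappa_L$, then $\kappa \leq \kappa_O$.

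For $\kappa_L \leq \pi\kappa/2$, I would just extract the derivation already displayed in \eqref{littlewood1}: starting from the supremum norm of $\sum a_{\alpha\beta} r_\alpha \otimes r_\beta$, apply the Sidon-type bound \eqref{Sid1} in the $\omega$-variable to lose a factor of $\pi/2$ and obtain $\sup_{\omega}\sum_\beta |\sum_\alpha a_{\alpha\beta} r_\alpha(\omega)|$; replace the outer supremum by an integral against $\mathbb{P}_A$; then apply the Khintchin $(L^1_R \hookrightarrow L^2)$-inequality $\|U_{R_A}{\bf{y}}\|_{L^1} \geq \kappa^{-1}\|{\bf{y}}\|_2$ to each $\beta$-slice. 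This gives the defining inequality for $\kappa_L$ with constant $\pi\kappa/2$.

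For $\kappa_O \leq \kappa_L$, the only input is Minkowski's inequality for the $l^2$-norm: setting $f_\beta(\alpha) = |a_{\alpha\beta}|$,
\begin{equation*}
\bigg(\sum_{\alpha} \big(\sum_{\beta} |a_{\alpha\beta}|\big)^2\bigg)^{1/2} \leq \sum_{\beta}\bigg(\sum_{\alpha}|a_{\alpha\beta}|^2\bigg)^{1/2},
\end{equation*}
and then dominate the right side via the defining inequality for $\kappa_L$.

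The main obstacle is $\kappa \leq \kappa_O$, where we must \emph{invert} the implications: deduce a scalar Khintchin bound from a matrix-level inequality. I would normalize ${\bf{x}} \in \mathbb{C}^A$ so that $\|\sum x(\alpha) r_\alpha\|_{L^1} = 1$ and aim to prove $\|{\bf{x}}\|_2 \leq \kappa_O$. The key device is the Riesz product $\mathfrak{R}_A(\omega') = \prod_\alpha(1+r_\alpha(\omega')r_\alpha)$, which is non-negative with $\|\mathfrak{R}_A(\omega')\|_{L^1}=1$; convolving it with $\sum x(\alpha)r_\alpha$ picks off the Rademacher level and twists coefficients by $r_\alpha(\omega')$, so Young's inequality yields $\|\sum x(\alpha) r_\alpha(\omega') r_\alpha\|_{L^1} \leq 1$ uniformly in $\omega' \in \Omega_A$. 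Expanding the $L^1$-norm on the finite space $\Omega_A$ as a supremum of signed sums, realized by evaluating the Rademacher system $R_{\Omega_A} = \{\tilde r_\omega\}$ at points $u \in \Omega_{\Omega_A}$, shows that the scalar array $a_{\alpha\omega} = 2^{-\#A}\,x(\alpha)r_\alpha(\omega)$ has supremum-norm (of its associated $R_A\otimes R_{\Omega_A}$-polynomial) at most $1$. Applying $\kappa_O < \infty$ to this array collapses the double sum
\begin{equation*}
\bigg(\sum_{\alpha}\Big(\sum_\omega 2^{-\#A}|x(\alpha)r_\alpha(\omega)|\Big)^2\bigg)^{1/2} = \|{\bf{x}}\|_2
\end{equation*}
because each inner sum is simply $|x(\alpha)|$. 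Hence $\|{\bf{x}}\|_2 \leq \kappa_O$, giving $\kappa \leq \kappa_O$ and closing the chain.
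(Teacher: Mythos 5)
Your proposal is correct and follows essentially the same route as the paper: the bound $\kappa_L \leq \pi\kappa/2$ via \eqref{Sid1} and the Khintchin inequality as in \eqref{littlewood1}, $\kappa_O \leq \kappa_L$ by Minkowski, and $\kappa \leq \kappa_O$ by the Riesz-product convolution argument applied to the array $a_{\alpha\omega} = 2^{-\#A}\,{\bf{x}}(\alpha)r_{\alpha}(\omega)$ tested against the Rademacher system $R_{\Omega_A}$. No gaps.
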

\noindent
Note:  if the underlying scalar field throughout is $\mathbb{R}$, then $\pi/2$ can be removed from \eqref{littlewood1}, i.e., in the 'real' case,
\begin{equation} \label{samecon1}
 \kappa_O  = \kappa_L  =  \kappa.
\end{equation}
\begin{remark}[generalizations] \ Maximizing  \eqref{grothen} over  ${\bf{y}}_{v} \in \mathcal{S}_{l^2(A)} \ (v \in B)$ implies\\
\begin{equation} \label{genLit}
 \ \sum_{v \in B}\bigg(\sum_{\alpha \in A} \big|\sum_{u \in B} a_{uv} \  {\bf{x}}_{u} (\alpha) \big |^2\bigg)^{1/2} \  \leq \ K\big\|\sum_{u \in B, \  v \in B} a_{uv} \ r_{u} \otimes r_{v}\big \|_{\infty}, \ \ \ \ \ {\bf{x}}_{u} \in \mathcal{S}_{l^2(A)}  \ \ (u \in B),
\end{equation}\\
and then by Minkowski,\\
\begin{equation} \label{genOrl}
\bigg(\sum_{\alpha \in A} \big(\sum_{v \in B}\big|\sum_{u \in B}  a_{uv} \  {\bf{x}}_{u} (\alpha) \big|\big)^2 \bigg)^{1/2} \  \leq \ K\big\|\sum_{u\in B, \  v \in B} a_{u v} \ r_{u} \otimes r_{v}\big \|_{\infty},  \ \ \ \ \ \ {\bf{x}}_{u} \in \mathcal{S}_{l^2(A)} \  \ (u \in B).
\end{equation}\\\
Inequalities \eqref{genLit} and \eqref{genOrl} were dubbed  \emph{the generalized} Littlewood inequality and \emph{the generalized} Orlicz inequality, respectively, in \citet[pp. 280-281]{Lindenstrauss:1968}.  The generalized Littlewood inequality is a restatement of  \eqref{grothen},
%, i.e.,
%\begin{equation}
%\sup \big \{\sum_{v \in B} \big\| \sum_{u \in B}a_{uv} {\bf{x}}_u\big\|_2: {\bf{x}}_u \in \mathcal{S}_{l^2(A)}, \ u \in B \big\}
%\end{equation}
whereas the generalized Orlicz inequality is the instance ${\bf{y}}_{v} = {\bf{y}}$ in \eqref{grothen}.
%, i.e., for ${\bf{x}}_{u} \in \mathcal{S}_{l^2(A)} \  \ (u \in B)$,
%\begin{equation}
%\begin{split}
%\bigg(\sum_{\alpha \in A} \big(\sum_{v \in B}\big|\sum_{u \in B}  a_{uv} \  {\bf{x}}_{u} (\alpha) \big|\big)^2 \bigg)^{1/2} \  \ &= \ \sup \big \{\big|\sum_{u \in B, \  v \in B} a_{u v} \  {\bf{x}}_{u}\boldsymbol{\cdot} {\bf{y}}  \big| : {\bf{y}} \in \mathcal{S}_{l^2(A)} \big \}\\\\
 %& \leq \ K \ \big\|\sum_{u \in B, \  v \in B} a_{u v} \ r_{v} \otimes r_{v}\big \|_{\infty}.
% \end{split}
%\end{equation}  \ \\
\end{remark}
\subsection{Interpolants} \label{DLit} \ \\

\noindent
{\bf{i  \ (Khintchin).}} \  The Khintchin inequality $\kappa < \infty$ implies (via Parseval) that for any set $A$ and ${\bf{x}} \in l^2(A)$, 
\begin{equation}
h \  \mapsto \ \sum_{\alpha \in A} \widehat{h}(r_{\alpha})  {\bf{x}}(\alpha), \ \ \ \ \ \text{Walsh polynomials}  \ \ h \in L^1_{R_A},
\end{equation}
determines a continuous linear functional on $L^1_{R_A}$ with norm bounded by $\kappa$. 
Then, by Hahn-Banach and $L^1(\Omega_A,\mathbb{P}_A)^{\star}  =  L^{\infty}(\Omega_A,\mathbb{P}_A)$, there exist  $G_A({\bf{x}})  \in L^{\infty}(\Omega_A,\mathbb{P}_A)$ such that
\begin{equation}
%\begin{split}
\widehat{G_A({\bf{x}})}(r_{\alpha}) \ = \ {\bf{x}}(\alpha),\ \ \ \alpha \in A,
%  \ \ \ \ \ \mathfrak{p}_A({\bf{x}})  \in  L^2_{W_A \setminus R_A},
%\mathfrak{p}_A({\bf{x}})  &\in  L^2_{W_A \setminus R_A}, 
%\end{split}
\end{equation}
and
\begin{equation} \label{con}
\|G_A({\bf{x}})\|_{L^{\infty}} \ \leq \ \kappa \|{\bf{x}}\|_2.
\end{equation}\\
That is, $l^2(A)$ and $L^{\infty}(\Omega_A,\mathbb{P}_A)/L^{\infty}_{W_A \setminus R_A}$ are isopmorphic as Banach spaces via the linear injection
% $\mathcal{G}_A$ from $l^2(A)$ \emph{onto} the quotient of  $L^{\infty}(\Omega_A,\mathbb{P}_A)$ modulo the equivalence
%\begin{equation} 
%f \ \equiv \ h \ \ \Leftrightarrow \ \ \widehat{f}|_{R_A} = \widehat{h}|_{R_A}, \ \ \ \ f \in L^{\infty}(\Omega_A,\mathbb{P}_A), \ h \in L^{\infty}(\Omega_A,\mathbb{P}_A),
%\end{equation} \ \\
%i.e.,
\begin{equation} \label{equiv3}
%\begin{split}
\mathcal{G}_A: \ l^2(A) \ \overset{onto}{\longrightarrow} \  L^{\infty}(\Omega_A,\mathbb{P}_A)/L^{\infty}_{W_A \setminus R_A},\\\\
%& \big( = \  \text{restrictions of $\widehat{f} \in L^{\infty}(\Omega_A,\mathbb{P}_A)^{\wedge}$ \  to $R_A \big),$}\\\
%\end{split}
\end{equation}
where
\begin{equation}\label{equiv2}
\mathcal{G}_A({\bf{x}}) \ = \ \big\{f \in L^{\infty}(\Omega_A,\mathbb{P}_A): \  \widehat{f}(r_{\alpha}) = {\bf{x}}(\alpha), \ \alpha \in A \big \}, \ \ \ \ \  \ {\bf{x}} \in l^2(A), 
\end{equation}
\begin{equation} \label{equiv4}
\|{\bf{x}}\|_2 \ \leq \ \vertiii{\mathcal{G}_A({\bf{x}})} \ \leq  \ \kappa \|{\bf{x}}\|_2, 
\end{equation}\\
and  \ $\vertiii{\boldsymbol{\cdot}}$ \ is the quotient norm,\\ 
\begin{equation} \label{equiv5}
\vertiii{\mathcal{G}_A({\bf{x}})} \ \overset{\text{def}}{=} \ \inf \big \{\|f\|_{L^{\infty}}: f \in \mathcal{G}_A({\bf{x}}) \big \}.
\end{equation}\\
Invoking the \emph{axiom of choice}  (with a small apology for the pedantry), we select for every ${\bf{x}} \in  \mathcal{S}_{l^2(A)}$, a class representative  $G_A({\bf{x}}) \in \mathcal{G}_A({\bf{x}})$  such that
\begin{equation} \label{Khint111}
\|G_A({\bf{x}})\|_{L^{\infty}} \ \leq \ \kappa.
\end{equation}
For ${\bf{x}} \in l^2(A) \setminus \mathcal{S}_{l^2(A)}$, define
\begin{equation} \label{extending2}
\begin{split}
G_A({\bf{x}}) \ &= \ \|{\bf{x}}\|_2 \ G_A(\boldsymbol{\sigma}{\bf{x}}), \ \ \ \ \  {\bf{x}} \neq {\bf{0}},\\\
G_A({\bf{0}}) \ &= \ {\bf{0}},
\end{split}
\end{equation}
where $\boldsymbol{\sigma}: \ {l^2(A)} \rightarrow \mathcal{S}_{l^2(A)} \cup \{ {\bf{0}}\}$  \ is given in \eqref{sig}.  The result is a \emph{choice function}\footnote{\label{footnote1} The \emph{axiom of choice} is invoked here not as a tool, but to signal that we use the 'simplest' and 'least efficient' algorithm to construct $G_A$.  Namely, for each ${\bf{x}} \in \mathcal{S}_{l^2(A)}$, we choose 'randomly' from $\mathcal{G}_A({\bf{x}})$, constrained only by \eqref{Khint111}; the resulting \emph{choice function} is a $(l^2(A) \hookrightarrow L^{\infty})$-interpolant uniformly bounded in $A$. Arguably, we could (should?) invoke here the \emph{probabilistic method} \citep{alon2016probabilistic}, rather than the \emph{axiom of choice}, or invoke both...  Subsequent algorithms -- in effect 'upgrades' of the Khintchin inequality -- will be designed to be 'more efficient,' as well as more versatile.} 
\begin{equation} \label{Khint10}
G_A  \ = \ U_{R_A} + \mathfrak{p}_A \ : \ {l^2(A)} \ \rightarrow \ L^{\infty}(\Omega_A,\mathbb{P}_A),
\end{equation}
where
\begin{equation} \label{Khint11}
\sup_{{\bf{x}} \in \mathcal{S}_{l^2(A)}}\|G_A({\bf{x}})\|_{L^{\infty}}  \ = \ \|G_A\|_{l^2 \hookrightarrow L^{\infty}} \ \leq \ \kappa,
\end{equation}
and
\begin{equation} \label{Khint13}
\mathfrak{p}_A:  \  {l^2(A)}  \ \rightarrow  \ \mathfrak{S}^2_{W_A \setminus R_A}, \ \ \ \ \text{as per \eqref{series2}},
\end{equation}
whence 
\begin{equation}  \label{e1}
\begin{split}
\|\mathfrak{p}_A({\bf{x}})\|_{L^2} \  \leq \   \sqrt{\kappa^2 \|{\bf{x}}\|_2^2  -  \|U_{R_A}{\bf{x}}\|_{L^2}^2} \  = \  \sqrt{\kappa^2 - 1}  \|{\bf{x}}\|_2 \ & = \   \|{\bf{x}}\|_2,  \ \ \ \ \ {\bf{x}} \in l^2(A)\\\
& (\kappa = \sqrt{2}).
\end{split}
\end{equation}
\begin{definition} \label{expand} \ For sets $A^{\prime} \supset A$, an injection
\begin{equation}
G_A: \ l^2(A) \ \hookrightarrow \ L^{\infty}(\Omega_{A^{\prime}},\mathbb{P}_{A^{\prime}})
\end{equation}
is a \emph{$(l^2 \hookrightarrow L^{\infty})$-interpolant on $R_A$} if
\begin{equation} \label{extending3}
\begin{split}
G_A({\bf{0}}) \ &= \ {\bf{0}},\\\
G_A({\bf{x}}) \ &= \ \|{\bf{x}}\|_2 \ G_A(\boldsymbol{\sigma}{\bf{x}}), \ \ \ \ \  {\bf{x}} \neq {\bf{0}},\\\
%G_{A}  \ &{=}   \ U_{R_A} +  \mathfrak{p}_{A^{\prime}},\\\
\text{and} \ \ \ \ \ \ \ \ \ \ \ \ \ \ \ \ \  \ \ \ \ \ \ \ \ & \\\
\widehat{G_A({\bf{x}})}(r_{\alpha}) \ &= \ {\bf{x}}(\alpha), \ \ \ \ {\bf{x}}\in l^2(A), \ \ \alpha \in A.
% \mathfrak{p}_{A^{\prime}}:  \ l^2(A)  \ &\rightarrow  \ L^2_{W_{A^{\prime}} \setminus R_{A}}.
 \end{split}
\end{equation}
Denote
\begin{equation}
\mathcal{I}_{l^2 \hookrightarrow L^{\infty}}(R_A) \ = \ \big\{\text{$(l^2 \hookrightarrow L^{\infty})$-interpolants on $R_A$} \big\},
\end{equation}
and let
\begin{equation} \label{Khint01}
\kappa^{\star} \ \overset{\text{def}}{=} \ \sup_A \ \inf \big \{ \|G_A\|_{l^2 \hookrightarrow L^{\infty}}: \  G_A \in \mathcal{I}_{l^2 \hookrightarrow L^{\infty}}(R_A) \big \}.
\end{equation}
\end{definition}
  % \citep{alon2016probabilistic}
\begin{proposition} \label{KhintD}
\begin{equation} \label{Khint12}
\kappa^{\star} \ =  \ \kappa \ ( \ = \sqrt{2} \ ).
%( \ = \sqrt{2}  \ \ \text{\citep{szarek1976best}}),
\end{equation} 
\end{proposition}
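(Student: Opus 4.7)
The plan is to establish the two inequalities $\kappa^{\star}\leq\kappa$ and $\kappa\leq\kappa^{\star}$ separately; the explicit value $\sqrt{2}$ in \eqref{Khint12} then follows from \eqref{Khintcon}.

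For $\kappa^{\star}\leq\kappa$, I would simply invoke the construction already carried out in \eqref{Khint10}--\eqref{Khint11}: starting from $\kappa<\infty$, Hahn--Banach together with $L^1$--$L^{\infty}$ duality produces, for every set $A$, a function $G_A\in\mathcal{I}_{l^2\hookrightarrow L^{\infty}}(R_A)$ (with $A^{\prime}=A$) satisfying $\|G_A\|_{l^2\hookrightarrow L^{\infty}}\leq\kappa$. Passing to the infimum over interpolants and then to the supremum over $A$ immediately yields $\kappa^{\star}\leq\kappa$.

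The reverse inequality $\kappa\leq\kappa^{\star}$ is the substance of the proposition, and my plan is to exploit Parseval together with H\"older duality on the enlarged probability space. Fix a finite $A$, a vector ${\bf x}\in\mathcal{S}_{l^2(A)}$, and an arbitrary $G_A\in\mathcal{I}_{l^2\hookrightarrow L^{\infty}}(R_A)$ with codomain $L^{\infty}(\Omega_{A^{\prime}},\mathbb{P}_{A^{\prime}})$. The function $U_{R_A}{\bf x}$ is lifted to $\Omega_{A^{\prime}}$ via the canonical measure-preserving projection, so its $L^1$-norm is unchanged. Using the interpolation identity $\widehat{G_A({\bf x})}(r_{\alpha})={\bf x}(\alpha)$ (together with $\overline{r_{\alpha}}=r_{\alpha}$, whence $\widehat{U_{R_A}{\bf x}}(r_{\alpha})={\bf x}(\alpha)$), Parseval gives
\begin{equation*}
\int_{\Omega_{A^{\prime}}}G_A({\bf x})\,\overline{U_{R_A}{\bf x}}\,d\mathbb{P}_{A^{\prime}}=\sum_{\alpha\in A}\widehat{G_A({\bf x})}(r_{\alpha})\,\overline{{\bf x}(\alpha)}=\|{\bf x}\|_2^2=1.
\end{equation*}
Estimating the left side by $\|G_A({\bf x})\|_{L^{\infty}}\|U_{R_A}{\bf x}\|_{L^1}$ yields $1/\|U_{R_A}{\bf x}\|_{L^1}\leq\|G_A\|_{l^2\hookrightarrow L^{\infty}}$, after which taking the supremum over ${\bf x}\in\mathcal{S}_{l^2(A)}$, the infimum over $G_A$, and the supremum over finite $A$ gives $\kappa\leq\kappa^{\star}$. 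No serious obstacle is involved; the essential content is that the interpolation constraint collapses the Parseval pairing on $\Omega_{A^{\prime}}$ to a sum indexed only by $R_A$, so that the 'extra' Walsh coefficients of $G_A({\bf x})$ on $W_{A^{\prime}}\setminus R_A$ play no role in the duality.
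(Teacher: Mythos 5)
Your proposal is correct and follows essentially the same route as the paper: the easy direction is the choice-function construction of \eqref{Khint10}--\eqref{Khint11}, and the reverse direction is the same Parseval--H\"older duality pairing of $G_A$ against $U_{R_A}{\bf{x}}$ on $\Omega_{A^{\prime}}$ (the paper pairs $U_{R_A}{\bf{x}}$ with $G_A({\bf{v}})$ for a dual unit vector ${\bf{v}}$, which for ${\bf{x}}\in\mathcal{S}_{l^2(A)}$ reduces to your choice ${\bf{v}}={\bf{x}}$). No gaps.
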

\begin{proof} \ Assuming $\kappa < \infty$, we produce the $(l^2 \hookrightarrow L^{\infty})$-interpolant $G_A$ \ in \eqref{Khint10} (a choice function) that satisfies \eqref{Khint11}, and thus $\kappa \geq \kappa^{\star}.$

  To verify the reverse inequality, assume $\kappa^{\star} < \infty$ and $A$ arbitrary.   For ${\bf{x}} \in l^2(A)$,  let ${\bf{v}} \in \mathcal{S}_{l^2(A)}$ be such that
\begin{equation}
\sum_{\alpha \in A} {\bf{x}}(\alpha)\overline{{\bf{v}}(\alpha)} \ = \ \|{\bf{x}}\|_2.
\end{equation}
Then (by Parseval and H\"older), for any $G_A \in \mathcal{I}_{l^2 \hookrightarrow L^{\infty}}(R_A)$, 
\begin{equation} \label{reverse}
\begin{split}
\|{\bf{x}}\|_2 \ = \ \big|\int_{\Omega_{A^{\prime}}} U_{R_A}{\bf{x}}\ \overline{G_A({\bf{v}})} \ d\mathbb{P}_{A^{\prime}}\big| \ \leq \ \|U_{R_A}{\bf{x}}\|_{L^1} \ \|G_A({\bf{v}})\|_{L^{\infty}} \ \leq \  \|U_{R_A}{\bf{x}}\|_{L^1} \ \|G_A\|_{l^2 \hookrightarrow L^{\infty}},\\
\end{split}
\end{equation}\\
which implies  \ $\kappa^{\star}  \geq  \kappa$.
\end{proof}
\begin{remark} [about  $A^{\prime} \supset A$] \ \label{sup} 
To define  \ $\kappa^{*}$, and then verify Proposition \ref{KhintD}, it suffices to take  infimum in \eqref{Khint01} over interpolants with  $A^{\prime} = A$. That is,
% let
%\begin{equation}
%\mathcal{I}_{l^2(A) \hookrightarrow L^{\infty}(\Omega_{A},\mathbb{P}_A)}(R_A) \ = \ \big\{\text{$\big(l^2(A) \hookrightarrow L^{\infty}(\Omega_A,\mathbb{P}_A)\big)$-interpolants on $R_A$} \big\},
%\end{equation}
%and note 
\begin{equation} \label{Khint03}
\kappa^{\star} \ {=} \ \sup_A \ \inf \big \{ \|G_A\|_{l^2 \hookrightarrow L^{\infty}}: \  \text{$\big(l^2(A) \hookrightarrow L^{\infty}(\Omega_A,\mathbb{P}_A)\big)$-interpolants $G_A$ on $R_A$} \big \}.
\end{equation}

 Taking $L^{\infty}(\Omega_{A^{\prime}},\mathbb{P}_{A^{\prime}})$-valued interpolants with $A^{\prime} \supset A$, as per Definition \ref{expand}, allows for \emph{upgrades} -- a notion explained and illustrated in the next section.  To wit, whereas the Khintchin inequality is an instance of the Grothendieck inequality (via Remark \ref{special} and Proposition \ref{derive}), the Grothendieck inequality can be viewed precisely as an upgrade of the Khintchin inequality, via $\big(l^2(A) \hookrightarrow L^{\infty}(\Omega_{A^{\prime}},\mathbb{P}_{A^{\prime}})\big)$-interpolants with  $A^{\prime} \supsetneq A$ (Remark \ref{upgrade2}).
% -- via $\big(l^2(A) \hookrightarrow L^{\infty}(\Omega_{A^{\prime}},\mathbb{P}_{A^{\prime}})\big)$-interpolants that are also $(l^2 \hookrightarrow L^{\infty})$-representations --
%(as per a later Remark \ref{upgrade2}). 
\end{remark}

\begin{remark} [template for later use] \label{temp} \  Given a $\big(l^2(A) \hookrightarrow L^{\infty}(\Omega_{A^{\prime}},\mathbb{P}_{A^{\prime}})\big)$-interpolant  $G_A$ on $R_A$, we write $G_A = U_{R_A} + \mathfrak{p}_{A^{\prime}}$, where 
\begin{equation}
\mathfrak{p}_{A^{\prime}}: \ l^2(A) \ \rightarrow \ \mathfrak{S}_{W_{A^{\prime}} \setminus R_A}^2, \ \ \ \ A^{\prime} \supset A, 
\end{equation}
and say that $\mathfrak{p}_{A^{\prime}}$ is an \emph{orthogonal $(l^2,L^{\infty})$-perturbation of $U_{R_A}$}.  By \eqref{extending3},
\begin{equation} \label{homog11}
\mathfrak{p}_{A^{\prime}}({\bf{x}}) \ = \ \|{\bf{x}}\|_2 \ \mathfrak{p}_{A^{\prime}}(\boldsymbol{\sigma}{\bf{x}}), \ \ \ \ \ {\bf{x}} \in l^2(A),
\end{equation}
and by the Khintchin $(L^2_R \hookrightarrow L^q)$-inequalities (Remark \ref{expsq}), 
\begin{equation}
\|\mathfrak{p}_{A^{\prime}}\|_{l^2 \rightarrow L^q}  \ = \ \mathcal{O}(\sqrt{q}), \ \ \ \ q \geq 2.
\end{equation}
\

We will consider in due course other types of interpolants and perturbations.   To avoid repetition -- and also to highlight a recurring theme -- we formalize the following template that we use throughout.  Let $E \subset $  discrete Abelian group  \ $\Gamma$, \  $\mathfrak{X} = $  normed linear subspace of \  $\mathbb{C}^E$,   \ $\mathfrak{B} = $ normed linear subspace of  \ $\mathfrak{S}_{\Gamma}$.  An injection
\begin{equation}
G_E: \ \mathfrak{X} \ \hookrightarrow \ \mathfrak{B}
\end{equation}
is a $(\mathfrak{X} \hookrightarrow \mathfrak{B})$-interpolant on $E$ if
\begin{equation} \label{temp0}
G_E({\bf{x}}) \ = \ \|{\bf{x}}\|_{\mathfrak{X}} \ G_E(\boldsymbol{\sigma}_{\mathfrak{X}}{\bf{x}}), \ \ \ \ \ \ {\bf{x}} \in \mathfrak{X},
\end{equation}
and
\begin{equation}
\widehat{G_E({\bf{x}})}(\gamma) \ = \ {\bf{x}}(\gamma), \ \ \ \ \gamma \in E, \ \ {\bf{x}} \in \mathfrak{X}.
\end{equation}\\
We denote
\begin{equation}
\mathcal{I}_{\mathfrak{X} \hookrightarrow \mathfrak{B}}(E) \ \overset{def}{=} \ \big\{\text{$(\mathfrak{X} \hookrightarrow \mathfrak{B})$-interpolants on $E$} \big\}.
\end{equation}\\
An orthogonal $(\mathfrak{X},\mathfrak{B})$-perturbation of  $U_E$  is a map  $\mathfrak{p}:  \mathfrak{X} \rightarrow   \mathfrak{S}_{\Gamma \setminus E}$ 
%\begin{equation}
%\mathfrak{p}: \ \mathfrak{X} \ \rightarrow  \ \mathfrak{S}_{\Gamma \setminus E}
%\end{equation}
such that  \  $U_E + \mathfrak{p}$ \  is a \  $(\mathfrak{X} \hookrightarrow \mathfrak{B})$-interpolant on $E$.  E.g., in Definition \ref{expand}, \  $\Gamma = W_{A^{\prime}}$, \  $E = R_A$, \ $\mathfrak{X} = l^2(A)$, \  $\mathfrak{B} = L^{\infty}(\Omega_{A^{\prime}},\mathbb{P}_{A^{\prime}}).$ \footnote{\label{footnote3} $L^{\infty}(\Omega_{A^{\prime}},\mathbb{P}_{A^{\prime}})$ \  is identified with  \  $\{\text{$W_A$-series representing elements in $L^{\infty}(\Omega_{A^{\prime}},\mathbb{P}_{A^{\prime}}$)}$\}.}
\end{remark}

\

 \begin{remark} [also for later use] \label{general} \  % As for the Khintchin $(L^1_R \hookrightarrow L^2)$-inequality, 
   If $E \subset W_A$ is  $\Lambda(2)$, i.e., if 
 \begin{equation}
 \kappa(E) \  \overset{\text{{def}}}{=}  \ \kappa_1(E,2) \  < \  \infty
 \end{equation}
 (Remark \ref{Lambda}),  then there is a linear injection 
 \begin{equation} 
 \mathcal{G}_E: \ l^2(E) \ \overset{onto}{\longrightarrow} \ L^{\infty}(\Omega_A,\mathbb{P}_A)/L^{\infty}_{W_A \setminus E},
 \end{equation}
where
\begin{equation} \label{equiv6}
\mathcal{G}_E({\bf{x}}) \ = \ \big \{g \in L^{\infty}(\Omega_A,\mathbb{P}_A): g - U_E{\bf{x}} \in L^2_{W_A \setminus E} \big \} \ \neq \ \emptyset,
\end{equation}
and
\begin{equation}
\|{\bf{x}}\|_2 \ \leq \ \vertiii{\mathcal{G}_E({\bf{x}})} \ \overset{\text{def}}{=} \ \inf  \big \{\|g\|_{L^{\infty}}: g \in \mathcal{G}_E({\bf{x}} \big \} \ \leq \ \kappa(E)   \|{\bf{x}}\|_2, \ \ \ \ \ {\bf{x}} \in l^2(E).
\end{equation}\\
(Cf. \eqref{equiv3} - \eqref{equiv5}.) Therefore, there exists a  $(l^2 \hookrightarrow L^{\infty})$-interpolant (a choice function) 
 \begin{equation}  \label{general1}
 G_E =  U_E +  \mathfrak{p} :  \ l^2(E)  \ \hookrightarrow \ L^{\infty}(\Omega_A,\mathbb{P}_A),
 \end{equation}
 where $\mathfrak{p}: \ l^2(E) \rightarrow l^2(W_A \setminus E)$ is an orthogonal $(l^2,L^{\infty})$-perturbation of $U_E$,
\begin{equation}
\|G_E\|_{l^2 \hookrightarrow L^{\infty}} \ \leq \ \kappa(E), \ \ \ \ \ \|\mathfrak{p}\|_{l^2 \rightarrow l^2} \ \leq \ \sqrt{[\kappa(E)]^2 - 1},
\end{equation} 
and
\begin{equation}
 \kappa^{\star}(E) \  \overset{{\text{def}}}{=} \  \inf \big \{\|G_E\|_{l^2 \hookrightarrow L^{\infty}}:  (l^2(E) \hookrightarrow L^{\infty}) \text{-interpolants}  \ G_E \big\} \ = \  \kappa(E).
 \end{equation}\\
(Cf. \eqref{Khint10} - \eqref{e1}, Proposition \ref{KhintD}, Remark \ref{temp}.)
  \end{remark}

 \ \\
 \noindent
{\bf{ii \ (Littlewood).}} \  From $\kappa_L < \infty$,  by duality (Parseval, Riesz-Kakutani, Hahn-Banach), for all sets $A$ and $B$, and $\phi \in l^{\infty,2}(A,B)$, i.e., 
\begin{equation}
\|\phi\|_{\infty,2} \ \overset{\text{def}}{=} \ \sup_{\alpha \in A} \big(\sum_{\beta \in B} |\phi(\alpha,\beta)|^2\big)^{\frac{1}{2}} \ < \ \infty, 
\end{equation}
there exist $\chi  \in M(\Omega_A \times \Omega_B),$ such that
\begin{equation}
\widehat{\chi}(r_{\alpha},r_{\beta}) \ = \ \phi(\alpha,\beta), \ \ \ \ \alpha \in A, \ \beta \in B,
\end{equation}
and
\begin{equation}
\|\chi\|_M \ \leq \ \kappa_L  \ \|\phi\|_{\infty,2}.
\end{equation}\\
That is, we have an injection\\ 
\begin{equation}
\begin{split}
\boldsymbol{\chi}_{AB}: \ l^{\infty,2}(A,B)& \ \rightarrow \ M(\Omega_A \times \Omega_B)/M_{(R_A \times R_B)^c}\\\\
&\big( \ (R_A \times R_B)^c \ =  \ W_A \times W_B \ \setminus  \ R_A \times R_B \ \big),
\end{split}
\end{equation}
given by  
\begin{equation}
\boldsymbol{\chi}_{AB}(\phi) \ = \ \big \{ \chi \in M(\Omega_A \times \Omega_B): \widehat{\chi}(r_{\alpha} \otimes r_{\beta}) = \phi(\alpha,\beta), \ (\alpha,\beta) \in A \times B \big\},
\end{equation}
such that 
\begin{equation}
\vertiii{\boldsymbol{\chi}_{AB}(\phi)} \ \leq \ \kappa_L \ \|\phi\|_{\infty,2}, \ \ \ \ \ \phi \in l^{\infty,2}(A,B).
\end{equation}\\
($\vertiii{\boldsymbol{\cdot}} = $ the quotient norm.)  Therefore, there exists a $\big(l^{\infty,2}(A,B) \hookrightarrow M(\Omega_A \times \Omega_B) \big)$-interpolant (\emph{choice function})
 \begin{equation} \label{mapL1}
 \chi_{AB} = U_{R_A \times R_B} + \mathfrak{p}_{AB}:  \ l^{\infty,2}(A,B) \ \rightarrow \ M(\Omega_A \times \Omega_B),
 \end{equation}
 where 
\begin{equation} \label{mapL}
\mathfrak{p}_{AB}: \ l^{\infty,2}(A,B) \ \rightarrow  \ \mathfrak{S}_{W_A \times W_B  \setminus   R_A \times R_B}^{\infty}
\end{equation}\\
is an orthogonal $\big(l^{\infty,2}(A,B),M(\Omega_A \times \Omega_B)\big)$-perturbation, and 
\begin{equation}
\|\chi_{AB}\|_{l^{\infty,2} \hookrightarrow M}  \ \leq\ \kappa_L. 
\end{equation}\\
(In Remark \ref{temp}, take $\Gamma = W_A \times W_B,$ \ $E = R_A \times R_B$, \ $\mathfrak{X} = l^{\infty,2}(A,B)$, \  $\mathfrak{B} = M(\Omega_A \times \Omega_B).$)

Denote
\begin{equation}
\mathcal{I}_{l^{\infty,2} \hookrightarrow M}(A,B) \ = \ \big\{\text{$(l^{\infty,2} \hookrightarrow M)$-interpolants on $R_A \times R_B$} \big\},
\end{equation}
and let
\begin{equation} 
\kappa_L^{\star} \ \overset{\text{def}}{=} \ \sup_{A,B} \ \inf \big \{ \|\chi_{AB}\|_{l^{\infty,2} \hookrightarrow M}: \  \chi_{AB} \in \mathcal{I}_{l^{\infty,2} \hookrightarrow M}(A,B) \big \}.
\end{equation}\

%\noindent 

\begin{proposition} \label{LittleD}
\begin{equation}
 \kappa_L^{\star} \ = \  \kappa_L.
\end{equation}
\end{proposition}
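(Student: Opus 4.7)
The plan is to mirror the proof of Proposition \ref{KhintD}, with Khintchin-style Hahn--Banach duality replaced by the Littlewood-style duality that already underlies the construction of $\chi_{AB}$ in \eqref{mapL1}--\eqref{mapL}. One direction is essentially free: the interpolant $\chi_{AB} = U_{R_A \times R_B} + \mathfrak{p}_{AB}$ produced from $\kappa_L < \infty$ satisfies $\|\chi_{AB}\|_{l^{\infty,2} \hookrightarrow M} \le \kappa_L$ for every pair of finite sets $(A,B)$, so the infimum defining $\kappa_L^\star$ is bounded by $\kappa_L$, giving $\kappa_L^\star \le \kappa_L$.

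For the reverse inequality $\kappa_L \le \kappa_L^\star$, I would fix finite sets $A$, $B$ and $(a_{\alpha\beta}) \in \mathbb{C}^{A \times B}$, and aim to bound $\sum_\beta (\sum_\alpha |a_{\alpha\beta}|^2)^{1/2}$ by $\kappa_L^\star \,\|\sum a_{\alpha\beta}\, r_\alpha \otimes r_\beta\|_\infty$ through a direct pairing argument. Setting $M_\beta = (\sum_\alpha |a_{\alpha\beta}|^2)^{1/2}$, I would introduce the dual coefficient array on $B \times A$ by
\begin{equation*}
\phi(\beta,\alpha) = \overline{a_{\alpha\beta}}/M_\beta \ \text{if} \ M_\beta > 0, \qquad \phi(\beta,\alpha) = 0 \ \text{otherwise},
\end{equation*}
so that $\|\phi\|_{l^{\infty,2}(B,A)} = 1$. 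Applying the definition of $\kappa_L^\star$ with the roles of $A$ and $B$ exchanged, I pick any $\big(l^{\infty,2}(B,A) \hookrightarrow M(\Omega_B \times \Omega_A)\big)$-interpolant $\chi_{BA}$ with $\|\chi_{BA}\|_{l^{\infty,2} \hookrightarrow M} \le \kappa_L^\star + \epsilon$, and set $\chi = \chi_{BA}(\phi)$. Since $R_B \times R_A$ consists of real-valued characters, Parseval applied to the $R_B \times R_A$-polynomial $g = \sum a_{\alpha\beta}\, r_\beta \otimes r_\alpha$ gives
\begin{equation*}
\int g\, d\chi \ = \ \sum_{(\alpha,\beta)} a_{\alpha\beta}\, \widehat{\chi}(r_\beta,r_\alpha) \ = \ \sum_{(\alpha,\beta)} a_{\alpha\beta}\, \phi(\beta,\alpha) \ = \ \sum_\beta M_\beta,
\end{equation*}
while the trivial estimate $|\int g\, d\chi| \le \|g\|_\infty \|\chi\|_M$ and the identity $\|g\|_\infty = \|\sum a_{\alpha\beta}\, r_\alpha \otimes r_\beta\|_\infty$ (by symmetry of the supremum in the two Walsh variables) combine to deliver \eqref{littlewood} with constant $\kappa_L^\star + \epsilon$. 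Letting $\epsilon \to 0$ yields $\kappa_L \le \kappa_L^\star$.

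I do not expect any substantial obstacle; the only nontrivial bookkeeping is the role-swap between $A$ and $B$ dictated by the paper's convention on $l^{\infty,2}$, where the first index carries the supremum while Littlewood's inequality \eqref{littlewood} places the $l^1$-summation on the second index. Once the two conventions are aligned, the argument is the direct analogue of the Parseval--H\"older chain used in \eqref{reverse} to verify $\kappa^\star \ge \kappa$.
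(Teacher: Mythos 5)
Your proof is correct and takes essentially the route the paper intends (the paper only says ``similar to Proposition \ref{KhintD}''): the easy direction $\kappa_L^{\star}\le\kappa_L$ comes from the choice-function interpolant already built from $\kappa_L<\infty$, and the reverse direction pairs a norming element of the dual mixed-norm unit ball with a near-optimal interpolant via Parseval and the total-variation bound, exactly as in \eqref{reverse}. The index transposition you flag is handled correctly, and is harmless because $\big\|\sum_{\alpha,\beta} a_{\alpha\beta}\, r_{\alpha}\otimes r_{\beta}\big\|_{\infty}$ is invariant under transposing the array.
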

\noindent
(Proof similar to that of Proposition \ref{KhintD}.)
\ \\

\noindent
{\bf{iii \ (Orlicz).}} \  For the dual statement of the Orlicz $(C_{R \times R} \hookrightarrow l^{2,1})$-inequality $\kappa_O < \infty$, replace $l^{\infty,2}(A,B)$ in {\bf{ii}}  with $l^{2}\big(A;l^{\infty}(B)\big) = l^{2,\infty}(A,B)$, and let
\begin{equation} 
 \kappa_O^{\star} \ \overset{\text{def}}{=}\ \sup_{A,B} \ \inf_{\chi_{AB}} \ \|\chi_{AB}\|_{l^{2,\infty} \hookrightarrow M},
 \end{equation}
 with supremum over sets $A$ and $B$, and infimum over $(l^{2,\infty} \hookrightarrow M)$-interpolants on $R_A \times R_B$.\

 \begin{proposition}  [cf. Propositions \ref{KhintD}, \ref{LittleD}] \label{OrliczD} 
 \begin{equation}
 \kappa_O^{\star} \ = \ \kappa_O.
\end{equation} 
 \end{proposition}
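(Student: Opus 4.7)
The plan is to mirror the template used for Propositions \ref{KhintD} and \ref{LittleD}, with the sole adjustments being the mixed-norm exponents: the dual pairing here is between $l^{2,1}(A,B)$ and $l^{2,\infty}(A,B)$, and the measure/continuous-function pairing is, as before, Riesz--Kakutani between $M(\Omega_A \times \Omega_B)$ and $C(\Omega_A \times \Omega_B)$. The key identity underlying both directions is Parseval on $\Omega_A \times \Omega_B$: for $\chi \in M(\Omega_A\times\Omega_B)$ and any $R_A\times R_B$-polynomial $f = \sum a_{\alpha\beta}\, r_\alpha \otimes r_\beta$, one has
\begin{equation*}
\int_{\Omega_A \times \Omega_B} f \, d\chi \;=\; \sum_{(\alpha,\beta) \in A \times B} a_{\alpha\beta}\, \widehat{\chi}(r_\alpha \otimes r_\beta),
\end{equation*}
since the Walsh characters $r_\alpha \otimes r_\beta$ are real-valued.

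For the inequality $\kappa_O^{\star} \leq \kappa_O$, I would assume $\kappa_O < \infty$ and, for an arbitrary $\phi \in l^{2,\infty}(A,B)$, define a linear functional on $R_A \times R_B$-polynomials by $T_\phi(h) = \sum_{\alpha,\beta} \widehat{h}(r_\alpha \otimes r_\beta)\, \phi(\alpha,\beta)$. The mixed-norm Hölder inequality $|\sum_{\alpha,\beta} a_{\alpha\beta}\,\phi(\alpha,\beta)| \leq \|a\|_{2,1}\,\|\phi\|_{2,\infty}$ combined with the Orlicz inequality $\|\widehat{h}\|_{2,1} \leq \kappa_O\,\|h\|_\infty$ yields $|T_\phi(h)| \leq \kappa_O\,\|\phi\|_{2,\infty}\,\|h\|_\infty$. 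Hahn--Banach extends $T_\phi$ to $C(\Omega_A \times \Omega_B)$ with the same bound, and Riesz--Kakutani (Theorem \ref{RK}) represents it as integration against some $\chi \in M(\Omega_A \times \Omega_B)$ with $\widehat{\chi}(r_\alpha \otimes r_\beta) = \phi(\alpha,\beta)$ and $\|\chi\|_M \leq \kappa_O\,\|\phi\|_{2,\infty}$. Invoking the axiom of choice to pick $\chi_{AB}(\phi)$ in each preimage class (exactly as in \eqref{Khint10} and \eqref{mapL1}, and extending from $\mathcal{S}_{l^{2,\infty}}$ by positive homogeneity as in \eqref{temp0}) produces a $(l^{2,\infty} \hookrightarrow M)$-interpolant on $R_A\times R_B$ with $\|\chi_{AB}\|_{l^{2,\infty} \hookrightarrow M} \leq \kappa_O$.

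For the reverse inequality $\kappa_O \leq \kappa_O^{\star}$, assume $\kappa_O^{\star} < \infty$ and fix a finite scalar array $(a_{\alpha\beta})_{(\alpha,\beta) \in A \times B}$. By the duality $(l^{2,1})^{\star} = l^{2,\infty}$ on finite index sets, I can choose $\phi \in \mathcal{S}_{l^{2,\infty}(A,B)}$ such that $\sum_{\alpha,\beta} a_{\alpha\beta}\, \phi(\alpha,\beta) = \|a\|_{2,1}$. Applying any $(l^{2,\infty} \hookrightarrow M)$-interpolant $\chi_{AB}$ on $R_A \times R_B$ to this $\phi$, Parseval gives
\begin{equation*}
\|a\|_{2,1} \;=\; \Big| \int_{\Omega_A \times \Omega_B} \big(\textstyle\sum_{\alpha,\beta} a_{\alpha\beta}\, r_\alpha \otimes r_\beta\big) \, d\chi_{AB}(\phi) \Big| \;\leq\; \big\|\textstyle\sum_{\alpha,\beta} a_{\alpha\beta}\, r_\alpha \otimes r_\beta\big\|_\infty \, \|\chi_{AB}\|_{l^{2,\infty} \hookrightarrow M},
\end{equation*}
and taking the infimum over $\chi_{AB}$ and supremum over $A,B,(a_{\alpha\beta})$ yields $\kappa_O \leq \kappa_O^{\star}$.

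The proof is essentially routine once the template of Proposition \ref{KhintD} is in hand; the only mild subtlety is verifying the correct mixed-norm duality $(l^{2,1})^\star = l^{2,\infty}$ (used to find the extremal $\phi$) and keeping track of the role of the real-valued Walsh characters so that the Parseval identity reads cleanly without complex conjugation. There is no analogue of the $\pi/2$ factor from \eqref{Sid1} entering here, so one gets equality rather than an inequality with a constant — consistent with the 'real case' line \eqref{samecon1} where $\kappa_O$ appears without such a factor.
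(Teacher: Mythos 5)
Your proposal is correct and is essentially the argument the paper intends: the paper gives no explicit proof for Proposition \ref{OrliczD}, directing the reader to repeat the template of Propositions \ref{KhintD} and \ref{LittleD} with $l^{2,\infty}(A,B)$ in place of $l^{\infty,2}(A,B)$, which is exactly what you do (Hahn--Banach plus Riesz--Kakutani and a choice function for $\kappa_O^{\star}\leq\kappa_O$; the dual-extremal $\phi\in\mathcal{S}_{l^{2,\infty}}$ with Parseval and H\"older for the reverse). Your attention to the mixed-norm duality $(l^{2,1})^{\star}=l^{2,\infty}$ and to the real-valuedness of the Walsh characters is exactly the right bookkeeping.
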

 
 \ 
 \begin{remark}['real' vs. 'complex'] \ Each constant considered here has two values depending on the underlying scalar field, and the relation between the two values is not always obvious.  For example, in the case of the Grothendieck inequality,
 \begin{equation} 
  \mathcal{K}_G^{\mathbb{C}} \ <  \  \mathcal{K}_G^{\mathbb{R}}
 \end{equation}
 (e.g., see \citet{pisier2012grothendieck} and \citet{braverman2011grothendieck}), whereas in the case of the Khintchin $(L^1_R \hookrightarrow L^2)$-inequality,
 \begin{equation} 
 \text{\emph{the 'complex'}} \  \kappa \ =  \ \text{\emph{the 'real'}} \  \kappa  \  \ ( \ =  \ \sqrt{2} \ ).
 \end{equation}
(See \citet{szarek1976best}.)  For our purposes, we will henceforth ignore distinctions between the 'real' and 'complex' constants, and will  assume the 'default' scalar field always to be $\mathbb{C}$, which, modulo 'best' constants, subsumes $\mathbb{R}$ in every case.  
That is, in every instance, 
 \begin{equation}
 \text{\emph{the 'complex' constant}} < \infty \ \Leftrightarrow \ \text{\emph{the 'real' constant}} < \infty.
 \end {equation}
 \end{remark}
 
\noindent
\subsection{Notion of an upgrade} \label{upgrade3} \ The inequalities 
\begin{equation} \label{eq2}
 \mathcal{K}_G < \infty, \ \  \kappa < \infty, \ \ \kappa_L < \infty, \ \  \kappa_O < \infty,
\end{equation}
in their equivalent dual forms
\begin{equation} \label{eq1}
\mathcal{K}^{\star} < \infty, \ \  \kappa^{\star} < \infty,\ \ \kappa_L^{\star}< \infty, \ \  \kappa_O^{\star} < \infty
\end{equation}
(Propositions  \ref{equivalence}, \ref{KhintD}, \ref{LittleD}, \ref{OrliczD}) imply  existence of uniformly bounded \emph{dual maps}:
\begin{equation}
\begin{split}
&(l^2 \hookrightarrow L^{\infty})\text{-representations  \ \ (Grothendieck),}\\\
&(l^2 \hookrightarrow L^{\infty})\text{-interpolants  \ \ (Khintchin),}\\\
&(l^{\infty,2} \hookrightarrow M)\text{-interpolants \ \ (Littlewood),}\\\
  \text{and}  \ \ \ &\\
&(l^{2,\infty} \hookrightarrow M)\text{-interpolants  \ \ (Orlicz).}
\end{split}
\end{equation} 
Namely, starting from an inequality in \eqref{eq1}, 
we obtain a dual map by selecting an element from each equivalence class in a quotient space, whereby the inequality guarantees that elements can be  
chosen to be uniformly bounded, and guarantees no more. An \emph{upgrade} is the feasibility of selecting an element  in each equivalence class so that the resulting dual map is bounded by an absolute constant, and possesses also an additional property not a priori guaranteed by the inequality proper.

\begin{remark} [more constants...] \ Typically, an upgrade is characterized by a constant greater than or equal to the inequality's constant in \eqref{eq1}.  
For example, existence of uniformly bounded $(l^2 \hookrightarrow L^{\infty})$-interpolants  
that are also $(l^2 \rightarrow L^2)$-continuous is conveyed by
\begin{equation} \label{whereas}
%\begin{split}
% \kappa^{\star,c}  \ < \infty,  \ \ \ \ \ \ \ & \\\
% where  \ \ \ \ \ \ \ \ \ \ \ \ \ \ \  \ \  \ \ \ \ \ \ \ \ \ \ \ \ \  \ \ \ \ \ \ \ \ \ \ \ \ \ \ \  \ & \ \\\
 \kappa^{\star,c} \ \overset{\text{def}}{=} \  \sup_{A}  \inf \big \{ \|G_A\|_{l^2 \hookrightarrow L^{\infty}}:   \text{continuous} \ (l^2 \hookrightarrow L^{\infty})\text{-interpolants} \  G_A  \big \} \ < \ \infty.
 %\end{split}
\end{equation}
A construction based on Riesz products  (\S \ref{RieszP}.{\bf{i}} below) verifies  $\kappa^{\star,c} < \infty$.  Obviously, \ $\kappa^{\star} \ \leq \ \kappa^{\star,c}$; see \eqref{Khint01}. But it is not known whether  \ $\kappa^{\star} \ < \ \kappa^{\star,c}$.
\end{remark}

Next we verify an upgrade of the Khintchin inequality used in the next section to prove $ \mathcal{K}^{\star} < \infty$.  To start, note that $\kappa = \kappa^* = \sqrt{2}$  \ implies for arbitrary set $A$, existence of orthogonal $(l^2, L^{\infty})$-perturbations   
\begin{equation} \label{best2} 
 \mathfrak{p}_A: \ l^2(A)  \ \rightarrow  \ \mathfrak{S}^2_{W_A \setminus R_A} \ \ (\  = L^2_{W_A \setminus R_A } \ ),
\end{equation}  
 with the norm estimate
 %\begin{equation} \label{best1}
 %\|G_A\|_{l^2 \hookrightarrow L^{\infty}} \ \leq \ \sqrt{2} \ \ \ \ \ (\kappa = \sqrt{2}),
 %\end{equation}
%and  
 \begin{equation} \label{best}
\|\mathfrak{p}_A\|_{l^2 \hookrightarrow L^2} \ \leq \ 1.
 \end{equation}\\
 (See \eqref{e1}.)  Can the estimate in \eqref{best} be improved?  In particular, can $\mathfrak{p}_A$  be chosen arbitrarily 'small?'
%can there be $(l^2 \hookrightarrow L^{\infty})$-interpolants that are    
%^uniformly bounded by an absolute constant, \emph{and} whose $L^2$-perturbations are %arbitrarily small? 
 To make the question precise, let
%vis-\`{a}-vis
\begin{equation} \label{equiv7}
\mathcal{G}_{A,\delta}({\bf{x}}) \ = \ \big\{g \in \mathcal{G}_A({\bf{x}}): \|g - U_{R_A}{\bf{x}}\|_{L^2} \leq \delta \big\}, \ \ \ \  {\bf{x}} \in \mathcal{S}_{l^2(A)}, \ \  \delta \geq 0,
\end{equation}
where $\mathcal{G}_A({\bf{x}})$ is defined in \eqref{equiv2}, and then 
\begin{equation} \label{uniform6}
\begin{split}
\beta(A,\delta) \ & \overset{\text{def}}{=}  \ \sup \bigg \{ \inf \big \{\|g\|_{L^{\infty}}: g \in \mathcal{G}_{A,\delta}({\bf{x}}) \big \}: {\bf{x}} \in  \mathcal{S}_{l^2(A)} \bigg \},\\\
%\text{and then}  \ \ \ \ \ \ \ \ \ \ \ \ \ \ \ & \\\
\beta_R(\delta) \ &\overset{\text{def}}{=}  \  \sup_{A} \beta(A,\delta), \ \ \ \ \ \delta \geq 0.
\end{split}
\end{equation}
The function   $\beta_R: \ [0,\infty) \rightarrow [0,\infty]$ \ is obviously non-increasing. \ 
 At $\delta = 0$, 
\begin{equation} \label{uniform7}
\beta_R(0) = \infty \ \ \  \text{(because $L^2_{R_A} = L^{\infty}_{R_A}$ only if $A$ is finite),}
\end{equation}
and for $\delta \geq 1$, 
\begin{equation} \label{uniform8}
\beta_R(\delta)  =  \sqrt{2}, \ \ \  \text{(because $\kappa =   \sqrt{2}$).}
\end{equation}
The question whether $(l^2,L^{\infty})$-perturbations can be chosen arbitrarily 'small' becomes:  is $\beta_R(\delta) < \infty$ for arbitrary $\delta \in (0,1)$?  
To resolve the issue, we use the Khintchin $(L^2_R \hookrightarrow L^p)$-inequalities (in their equivalent form, as per Remark \ref{expsq})   
\begin{equation} \label{Khinteq}
\sup \bigg\{\int_{\Omega_A} e^{ |U_{R_A}{\bf{x}}|^2} \ d\mathbb{P}_A : \text{sets} \ A, \  {\bf{x}} \in \mathcal{S}_{l^2(A)} \bigg\} \ \overset{\text{def}}{=} \ \mathfrak{K} \ < \infty,
\end{equation}
and the function
\begin{equation}
\mathfrak{e}(\xi) \ = \ 2\sqrt{\mathfrak{K}} \ \xi  \ e^{ -\frac{\xi^2}{2}}, \ \ \ \ \xi \in [1,\infty),
\end{equation}
with its inverse
\begin{equation}
\xi \ = \ \mathfrak{e}^{-1}(u), \ \ \ \ u \in (0, 2\sqrt{e\mathfrak{K}} \ ].
\end{equation}

\noindent 
\begin{lemma} \label{trunc2} \ For set  $A$, \ ${\bf{x}} \in \mathcal{S}_{l^2(A)}$,  and $\delta \in (0,1)$,  there exist  $g \in \mathcal{G}_{A,\delta}({\bf{x}})$,  such that
\begin{equation}
\|g\|_{L^{\infty}} \ \leq \  \mathfrak{e}^{-1}(\delta)+\frac{\delta}{\sqrt{2}}.
\end{equation} 
\end{lemma}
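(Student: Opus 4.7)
The plan is to combine a pointwise truncation of $f = U_{R_A}{\bf{x}}$ with a Khintchin $(l^2 \hookrightarrow L^{\infty})$-interpolant used to correct the Rademacher coefficients that the truncation destroys. Everything is organized so that $\|h\|_{L^2}$, where $h = f - \tilde f$, controls both the $L^2$ error and the $L^{\infty}$ over-shoot.

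First I would set $\xi = \mathfrak{e}^{-1}(\delta)$ (so $\xi \geq 1$ and $\delta = 2\sqrt{\mathfrak{K}}\,\xi\, e^{-\xi^2/2}$), form $\tilde f = f \cdot \mathbf{1}_{\{|f| \leq \xi\}}$ and $h = f - \tilde f$, so $\|\tilde f\|_{L^{\infty}} \leq \xi$. The key estimate is on $\|h\|_{L^2}^2 = \mathbb{E}[|f|^2 \mathbf{1}_{\{|f|>\xi\}}]$. For $\xi \geq 1$ and $|f| \geq \xi$, writing $u = |f|^2/\xi^2 - 1 \geq 0$, the elementary inequality $1 + u \leq e^{u} \leq e^{\xi^2 u}$ gives $|f|^2 \leq \xi^2 e^{|f|^2 - \xi^2}$ on $\{|f|>\xi\}$. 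Combined with \eqref{expsq2}, this yields
\begin{equation*}
\|h\|_{L^2}^2 \ \leq \ \xi^2 e^{-\xi^2} \int_{\Omega_A} e^{|f|^2}\, d\mathbb{P}_A \ \leq \ \mathfrak{K}\,\xi^2 e^{-\xi^2} \ = \ \delta^2/4,
\end{equation*}
so $\|h\|_{L^2} \leq \delta/2$.

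Next I would correct the missing Rademacher coefficients. Set ${\bf{y}}(\alpha) = \widehat{h}(r_{\alpha})$; by Bessel, $\|{\bf{y}}\|_2 \leq \|h\|_{L^2} \leq \delta/2$. Apply the Khintchin $(l^2 \hookrightarrow L^{\infty})$-interpolant $G_A = U_{R_A} + \mathfrak{p}_A$ from Proposition \ref{KhintD}, which by \eqref{Khint11} and \eqref{e1} satisfies $\|G_A({\bf{y}})\|_{L^{\infty}} \leq \sqrt{2}\,\|{\bf{y}}\|_2$ and $\|\mathfrak{p}_A({\bf{y}})\|_{L^2} \leq \|{\bf{y}}\|_2$. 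Define $g = \tilde f + G_A({\bf{y}})$. Since $\widehat{\tilde f}(r_\alpha) = {\bf{x}}(\alpha) - {\bf{y}}(\alpha)$ and $\widehat{G_A({\bf{y}})}(r_\alpha) = {\bf{y}}(\alpha)$, we get $\widehat{g}(r_\alpha) = {\bf{x}}(\alpha)$, hence $g \in \mathcal{G}_A({\bf{x}})$.

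For the $L^{\infty}$ bound:
\begin{equation*}
\|g\|_{L^{\infty}} \ \leq \ \|\tilde f\|_{L^{\infty}} + \|G_A({\bf{y}})\|_{L^{\infty}} \ \leq \ \xi + \sqrt{2}\cdot (\delta/2) \ = \ \mathfrak{e}^{-1}(\delta) + \delta/\sqrt{2}.
\end{equation*}
For the $L^2$ bound, decompose $h = U_{R_A}{\bf{y}} + h_{\perp}$ with $h_{\perp} \in L^2_{W_A \setminus R_A}$; then $g - f = \mathfrak{p}_A({\bf{y}}) - h_{\perp}$, both summands lying in $L^2_{W_A \setminus R_A}$. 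Using $\|\mathfrak{p}_A({\bf{y}})\|_{L^2} \leq \|{\bf{y}}\|_2 \leq \delta/2$ and $\|h_{\perp}\|_{L^2}^2 = \|h\|_{L^2}^2 - \|{\bf{y}}\|_2^2 \leq \delta^2/4$, the triangle inequality gives $\|g-f\|_{L^2} \leq \delta/2 + \delta/2 = \delta$, so $g \in \mathcal{G}_{A,\delta}({\bf{x}})$.

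The main obstacle is the sharp estimate on $\|h\|_{L^2}^2$: the naive layer-cake integration against the sub-Gaussian tail $\mathbb{P}(|f|>t) \leq e^{-t^2/2}$ yields only $(\xi^2 + 2)\,e^{-\xi^2/2}$, which carries the wrong exponent to match $\mathfrak{e}(\xi)^2/4 = \mathfrak{K}\,\xi^2 e^{-\xi^2}$. The essential step is to replace this by the pointwise domination $|f|^2 \mathbf{1}_{\{|f|>\xi\}} \leq \xi^2 e^{-\xi^2}\cdot e^{|f|^2}$ (valid for $\xi \geq 1$), which converts exp-square integrability into exactly the $\xi^2 e^{-\xi^2}$ decay encoded by $\mathfrak{e}$; every other piece of the argument is bookkeeping around $\kappa = \sqrt{2}$.
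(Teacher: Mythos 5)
Your proposal is correct and follows essentially the same route as the paper: truncate $U_{R_A}\mathbf{x}$ at level $\xi=\mathfrak{e}^{-1}(\delta)$, bound the $L^2$ norm of the tail by the pointwise domination $|f|^2\mathbf{1}_{\{|f|>\xi\}}\leq \xi^2e^{-\xi^2}e^{|f|^2}$ together with \eqref{Khinteq}, and restore the lost Rademacher coefficients with the Khintchin interpolant $G_A=U_{R_A}+\mathfrak{p}_A$ of \eqref{Khint10}, exactly as in \eqref{khint55}--\eqref{khint77}. The only cosmetic difference is that you phrase the key pointwise bound via $1+u\leq e^{\xi^2 u}$ while the paper uses the monotonicity of $t^2e^{-t^2}$ for $t\geq 1$; these are the same estimate.
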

\begin{proof}  \  Given $\delta \in (0,1)$, let 
 \begin{equation}
 \xi \ = \  \mathfrak{e}^{-1}(\delta).
 \end{equation}
 For ${\bf{x}} \in \mathcal{S}_{l^2(A)}$, \  let
\begin{equation} \label{khint55} 
h_{{\bf{x}},\xi} \  \overset{\text{def}}{=} \  \left \{
\begin{array}{lcc}
 U_{R_A}{\bf{x}}, & \ \ \   |U_{R_A}{\bf{x}}| \ \leq \ \xi  \\\\  0,   &\text{otherwise}, 
\end{array} \right.
\end{equation} \\
and  \ 
\begin{equation}
\phi_{{\bf{x}},\xi}  \ \overset{\text{def}}{=} \  U_{R_A}{\bf{x}} - h_{{\bf{x}},\xi}.
\end{equation}\\
 Using \eqref{Khinteq}, we estimate 
\begin{equation} \label{khint22}
\begin{split}
  \int_{\Omega_A} |\phi_{{\bf{x}},\xi}|^2 \ d\mathbb{P}_A \ &= \  \int_{\big\{|U_{R_A}{\bf{x}}| \ > \ \xi \big\}} e^{|U_{R_A}{\bf{x}}|^2} \  \big(|U_{R_A}{\bf{x}}|^2 \ e^{-|U_{R_A}{\bf{x}}|^2}\big)  \ d\mathbb{P}_A \\\\
 &\leq  \ \xi^{2}  e^{ -\xi^2} \ \mathfrak{K}.
 \end{split}
\end{equation}\\
Splitting $\phi_{{\bf{x}},\xi}$ into two Walsh series spanned by $R_A$ and $W_A \setminus R_A$,   
%\begin{equation}
%\phi_{{\bf{x}},\xi} \ =  \ \sum_{\alpha \in A}\widehat{\phi_{{\bf{x}},\xi}}(r_{\alpha}) r_{\alpha} \ + \ \sum_{w \in W_A \setminus R_A}\widehat{\phi_{{\bf{x}},\xi}}(w) w,
%\end{equation}
we obtain from  \eqref{khint22} \\
\begin{equation} \label{khint44}
\big\|\sum_{\alpha \in A}\widehat{\phi_{{\bf{x}},\xi}}(r_{\alpha}) r_{\alpha}\big\|_{L^2} \ \leq \ \xi e^{ -\frac{\xi^2}{2}} \ \sqrt{\mathfrak{K}},  \ \ \ \ \ \ \big\|\sum_{w \in W_A \setminus R_A}\widehat{\phi_{{\bf{x}},\xi}}(w) w \big\|_{L^2} \ \leq \ \xi e^{ -\frac{\xi^2}{2}} \ \sqrt{\mathfrak{K}}.
\end{equation}\\
Taking  $G_A = U_{R_A} + \mathfrak{p}_A$ \  in  \eqref{best2}, and applying it to ${\bf{v}}_{{\bf{x}}} \in l^2(A)$, where  $${\bf{v}}_{{\bf{x}}}(\alpha) \  \overset{\text{def}}{=}  \ \widehat{\phi_{{\bf{x}},\xi}}(r_{\alpha}), \ \ \alpha \in A,$$\\ 
we obtain \
\begin{equation}
G_A({\bf{v}}_{{\bf{x}}}) \ = \ \sum_{\alpha \in A}\widehat{\phi_{{\bf{x}},\xi}}(r_{\alpha}) r_{\alpha} \  + \ \mathfrak{p}_A({\bf{v}}_{{\bf{x}}}),
\end{equation}\\
\begin{equation} \label{khint333}
\|G_A({\bf{v}}_{{\bf{x}}})\|_{L^{\infty}} \ \leq  \ \xi e^{ -\frac{\xi^2}{2}} \ \sqrt{2\mathfrak{K}}, \ \ \ \ \ \ \ \|\mathfrak{p}_A({\bf{v}}_{{\bf{x}}}\|_{L^2} \ \leq \ \xi e^{ -\frac{\xi^2}{2}} \ \sqrt{\mathfrak{K}}.
\end{equation}\\
Putting it all together,  we have 
\begin{equation}
\begin{split}
g \ \overset{\text{def}}{=}  \ h_{{\bf{x}},\xi} + G_A({\bf{v}}_{{\bf{x}}}) \ &=  \ U_{R_A}{\bf{x}} \ + \ \bigg(\mathfrak{p}_A({\bf{v}}_{{\bf{x}}}) \ - \ \sum_{w \in W_A \setminus R_A}\widehat{\phi_{{\bf{x}},\xi}}(w) w \bigg)\\\
&  \overset{\text{def}}{=}  \ \ U_{R_A}{\bf{x}} \ + \ \widetilde{\mathfrak{p}_A}({\bf{x}}),
\end{split}
\end{equation}\\
whence, from \eqref{khint333}, \eqref{khint44}, and \eqref{khint55},\\
\begin{equation} \label{khint6}
\|g\|_{L^{\infty}} \ \leq \ \xi  +  \xi e^{ -\frac{\xi^2}{2}} \ \sqrt{2\mathfrak{K}} \ =  \  \mathfrak{e}^{-1}(\delta)+\frac{\delta}{\sqrt{2}},
\end{equation}
and
\begin{equation} \label{khint77}
\| g   -  U_{R_A}{\bf{x}}\|_{L^2} \ \leq \ 2\xi e^{ -\frac{\xi^2}{2}} \ \sqrt{\mathfrak{K}} \ = \delta.
\end{equation}\\
\end{proof}
\begin{corollary} [cf. {\citet[Corollary III.10]{Blei:2001}}] \label{trunc3} \ 
\begin{equation} \label{trunc1}
\sqrt{2} \  =  \ \kappa \ \leq \ \beta_R(\delta) \  \leq \  \mathfrak{e}^{-1}(\delta)+\frac{\delta}{\sqrt{2}}, \ \ \ \ \delta \in (0,1).
\end{equation} 
\end{corollary}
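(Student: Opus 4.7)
The right-hand inequality $\beta_R(\delta) \leq \mathfrak{e}^{-1}(\delta) + \delta/\sqrt{2}$ is a direct repackaging of Lemma \ref{trunc2}. Given any $\mathbf{x} \in \mathcal{S}_{l^2(A)}$, the function $g = h_{\mathbf{x},\xi} + G_A(\mathbf{v}_{\mathbf{x}})$ constructed there lies in $\mathcal{G}_{A,\delta}(\mathbf{x})$: its $R_A$-Walsh coefficients are $\mathbf{x}(\alpha)$ (the $R_A$-contributions of $h_{\mathbf{x},\xi}$ and of $\sum_\alpha \widehat{\phi_{\mathbf{x},\xi}}(r_\alpha) r_\alpha$ reassemble into $U_{R_A}\mathbf{x}$, while $\mathfrak{p}_A(\mathbf{v}_{\mathbf{x}})$ is supported off $R_A$), and \eqref{khint77} records $\|g - U_{R_A}\mathbf{x}\|_{L^2} \leq \delta$. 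Hence $\inf\{\|g\|_{L^\infty} : g \in \mathcal{G}_{A,\delta}(\mathbf{x})\} \leq \mathfrak{e}^{-1}(\delta) + \delta/\sqrt{2}$, and taking the supremum over $\mathbf{x}$ and then over $A$ yields the stated bound.

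For the reverse inequality $\kappa \leq \beta_R(\delta)$, the plan is to import the duality step from the proof of Proposition \ref{KhintD}, exploiting the trivial inclusion $\mathcal{G}_{A,\delta}(\mathbf{v}) \subseteq \mathcal{G}_A(\mathbf{v})$. Given $\epsilon > 0$, pick a finite set $A$ and $\mathbf{v} \in \mathcal{S}_{l^2(A)}$ with $\|U_{R_A}\mathbf{v}\|_{L^1} < 1/(\kappa - \epsilon)$, as permitted by the definition of $\kappa$ in \eqref{Khintchin}. For any $g \in \mathcal{G}_{A,\delta}(\mathbf{v})$ -- a class non-empty by Lemma \ref{trunc2} -- the identity $\widehat{g}(r_\alpha) = \mathbf{v}(\alpha)$ and Parseval give
$$1 \;=\; \|\mathbf{v}\|_2^2 \;=\; \sum_{\alpha \in A} \mathbf{v}(\alpha)\overline{\widehat{g}(r_\alpha)} \;=\; \int_{\Omega_A} U_{R_A}\mathbf{v}\,\overline{g}\,d\mathbb{P}_A,$$
and H\"older then forces $\|g\|_{L^\infty} \geq 1/\|U_{R_A}\mathbf{v}\|_{L^1} > \kappa - \epsilon$. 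Taking the infimum over $g$, the supremum over $(\mathbf{v}, A)$, and letting $\epsilon \downarrow 0$ gives $\beta_R(\delta) \geq \kappa = \sqrt{2}$.

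\textbf{Main obstacle.} Neither half presents a genuine difficulty: the construction side has already been performed in Lemma \ref{trunc2}, and the lower bound is the same Parseval--H\"older duality that underlies Proposition \ref{KhintD}, applied verbatim since $\mathcal{G}_{A,\delta}$ sits inside $\mathcal{G}_A$. The one bookkeeping point requiring care is to confirm that the $g$ supplied by Lemma \ref{trunc2} satisfies both the spectral condition $\widehat{g}(r_\alpha) = \mathbf{x}(\alpha)$ \emph{and} the $L^2$-proximity condition $\|g - U_{R_A}\mathbf{x}\|_{L^2} \leq \delta$, so that $\mathcal{G}_{A,\delta}(\mathbf{x}) \neq \emptyset$ and the duality bound in the second step is non-vacuous.
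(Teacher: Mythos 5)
Your proposal is correct and follows the route the paper intends: the upper bound is read off directly from Lemma \ref{trunc2} (which already delivers $g \in \mathcal{G}_{A,\delta}(\mathbf{x})$ with the stated $L^\infty$ bound), and the lower bound is the same Parseval--H\"older duality used in Proposition \ref{KhintD}, valid for every $g$ in $\mathcal{G}_{A,\delta}(\mathbf{x}) \subseteq \mathcal{G}_A(\mathbf{x})$. Nothing is missing.
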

 
\begin{remark} [uniformizability] \label{upgrade4}   \ Corollary \ref{trunc3} implies that for every $\delta > 0$, there exist \ $\beta  < \infty$, such that for each set $A$, there are orthogonal $(l^2,L^{\infty})$-perturbations $\mathfrak{p}_A$ (of $U_{R_A}$) satisfying
\begin{equation} \label{modulus1}
\begin{split}
 \|U_{R_A} + \mathfrak{p}_A\|_{l^2 \hookrightarrow L^{\infty}} \ \leq \ \beta \ \  \ \text{and} \ \  \  \|\mathfrak{p}_A\|_{l^2 \hookrightarrow L^2} \ \leq \ \delta .
\end{split}
\end{equation}
Succinctly put, 
\begin{equation} \label{unif7}
\beta_R(\delta) < \infty, \ \ \  \delta > 0,
\end{equation}
where $\beta_R$ is defined in \eqref{uniform6}.   We refer to the property expressed by  \eqref{unif7} as  $\Lambda(2)$-\emph{uniformizability} \citep{Blei:1977uq}, and to $\beta_R$ as the $\Lambda(2)$-\emph{modulus} of Rademacher systems.  Little is known about  $\beta_R$ beyond the estimate in \eqref{trunc1}.  (Is the estimate sharp? Is  $\beta_R$ everywhere continuous?) 
\end{remark}
 
 \begin{remark} [open questions] \label{open1} \ Suppose $E \subset W_A$ is $\Lambda(2)$, i.e.,  $\kappa(E) \overset{\text{def}} {=} \kappa_1(E,2) < \infty$ (Remark \ref{general}).  Let  
\begin{equation} \label{equiv8}
\mathcal{G}_{E,\delta}({\bf{x}}) \ \overset{\text{def}}{=} \ \big\{g \in \mathcal{G}_E({\bf{x}}): \|g - U_{E}{\bf{x}}\|_{L^2} \leq \delta \big\}, \ \ \ \  {\bf{x}} \in \mathcal{S}_{{l^2(E)}}, \ \  \delta \geq 0,
\end{equation}
 where $\mathcal{G}_E({\bf{x}})$ is given by \eqref{equiv6}, and define the $\Lambda(2)$-modulus of $E$ 
\begin{equation}
\beta_E(\delta) \ \overset{\text{def}}{=} \ \sup \bigg \{ \inf \big \{\|g\|_{L^{\infty}}: g \in \mathcal{G}_{E,\delta}({\bf{x}}) \big \}: {\bf{x}} \in \mathcal{S}_{l^2(E)} \bigg \}, \ \ \ \ \ \delta \geq 0.
\end{equation}
(Cf. \eqref{equiv7} and \eqref{uniform6}.) If $\beta_E(\delta) < \infty$ for all $\delta > 0$, then  $E$ is $\Lambda(2)$-\emph{uniformizable}. 

 Whether every $\Lambda(2)$-set is $\Lambda(2)$-uniformizable is an open problem. 
 The best known result in this direction is Lemma \ref{digress1} below (derived by modifying the proof of Lemma \ref{trunc2}), which implies that if  $E \subset W_A$ is $\Lambda(p)$ for some $p > 2$, i.e., 
\begin{equation} \label{digress2}
\kappa_2(E,p) \  < \  \infty,
\end{equation} 
then $E$ is $\Lambda(2)$-uniformizable.   Notably, whether every $\Lambda(2)$-set is $\Lambda(p)$ for some $p > 2$ is an open problem of long standing, known as the $\Lambda(2)$-\emph{set problem}.  (See \S \ref{open}.)
 
\begin{lemma} \label{digress1}  If $E \subset W_A$ is $\Lambda(p)$ for some $p > 2,$ then for all ${\bf{x}} \in l^2(E)$ and $\delta > 0$,  there exist \ $g \in \mathcal{G}_{E,\delta}({\bf{x}})$,  \ such that
\begin{equation}
\|g\|_{L^{\infty}} \ \leq \  C_p \ \delta^{\frac{2}{2-p}} + \delta/2,
\end{equation} 
where
\begin{equation} \label{Cp}
C_p \ = \ 4^{\frac{1}{p-2}} \  [\kappa_2(E,p)]^{\frac{p}{p-2}} \ [\kappa(E)]^{\frac{2}{p-2}}.
\end{equation} 

\end{lemma}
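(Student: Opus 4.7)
The plan is to carry out, in the $\Lambda(p)$ setting, the same truncation-and-interpolation scheme that proved Lemma \ref{trunc2}, replacing the exponential-square estimate of Remark \ref{expsq} by the $L^p$ bound supplied by the $\Lambda(p)$ hypothesis, and replacing the Khintchin-based correction by the $(l^2 \hookrightarrow L^\infty)$-interpolant furnished by Remark \ref{general}. Fix ${\bf{x}} \in \mathcal{S}_{l^2(E)}$ (the bound in the statement is meaningful only on the sphere), write $f = U_E{\bf{x}}$, and set $K_p = \kappa_2(E,p)$, so $\|f\|_{L^2} = 1$ and $\|f\|_{L^p} \leq K_p$.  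For a truncation level $\xi > 0$ to be tuned later, put $h = f \cdot \mathbf{1}_{\{|f| \leq \xi\}}$ and $\phi = f - h$.  A one-line Chebyshev-style computation,
\begin{equation*}
\|\phi\|_{L^2}^2 \ = \ \int_{\{|f| > \xi\}} |f|^2 \, d\mathbb{P}_A \ \leq \ \xi^{2-p} \int_{\Omega_A} |f|^p \, d\mathbb{P}_A \ \leq \ \xi^{2-p} K_p^p,
\end{equation*}
delivers $\|\phi\|_{L^2} \leq K_p^{p/2} \xi^{1-p/2}$, replacing the exponential-tail bound \eqref{khint22}.

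The next step restores the $E$-spectrum lost in the truncation.  Split $\phi = \phi_1 + \phi_2$ into its $L^2_E$ and $L^2_{W_A \setminus E}$ components, define ${\bf{v}} \in l^2(E)$ by ${\bf{v}}(\gamma) = \widehat{\phi}(\gamma)$, $\gamma \in E$, and apply the interpolant $G_E = U_E + \mathfrak{p}$ from Remark \ref{general} to ${\bf{v}}$: one has $G_E({\bf{v}}) = \phi_1 + \mathfrak{p}({\bf{v}})$, $\|G_E({\bf{v}})\|_{L^\infty} \leq \kappa(E) \|{\bf{v}}\|_2$, and $\|\mathfrak{p}({\bf{v}})\|_{L^2} \leq \sqrt{\kappa(E)^2 - 1}\,\|{\bf{v}}\|_2$.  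Set $g = h + G_E({\bf{v}})$.  A quick check of Walsh coefficients shows $\widehat{g}(\gamma) = {\bf{x}}(\gamma)$ for every $\gamma \in E$, so $g \in \mathcal{G}_E({\bf{x}})$; moreover $g - f = -\phi_2 + \mathfrak{p}({\bf{v}}) \in L^2_{W_A \setminus E}$ and
\begin{equation*}
\|g - f\|_{L^2} \ \leq \ \|\phi_2\|_{L^2} + \|\mathfrak{p}({\bf{v}})\|_{L^2} \ \leq \ 2\kappa(E)\,\|\phi\|_{L^2} \ \leq \ 2\kappa(E)\,K_p^{p/2}\,\xi^{1-p/2},
\end{equation*}
while $\|g\|_{L^\infty} \leq \|h\|_{L^\infty} + \|G_E({\bf{v}})\|_{L^\infty} \leq \xi + \kappa(E)\,K_p^{p/2}\,\xi^{1-p/2}$.

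Finally I tune $\xi$ so that the $L^2$-bound equals $\delta$, i.e. $\xi^{1-p/2} = \delta/\bigl(2\kappa(E)K_p^{p/2}\bigr)$; solving gives $\xi = 4^{1/(p-2)}\,\kappa(E)^{2/(p-2)}\,K_p^{p/(p-2)}\,\delta^{-2/(p-2)}$ and, simultaneously, $\kappa(E)\,K_p^{p/2}\,\xi^{1-p/2} = \delta/2$.  Substituting yields $\|g\|_{L^\infty} \leq \xi + \delta/2 = C_p\,\delta^{2/(2-p)} + \delta/2$ with exactly the announced constant $C_p$, together with $\|g - f\|_{L^2} \leq \delta$, so $g \in \mathcal{G}_{E,\delta}({\bf{x}})$.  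The one step that needs a little care -- and is the closest thing to a genuine obstacle -- is the Walsh-coefficient bookkeeping that ensures $G_E({\bf{v}})$ restores exactly the $E$-coefficients lost in the truncation without polluting the rest of the spectrum; this proceeds identically to the corresponding step in the proof of Lemma \ref{trunc2}, with Remark \ref{general} playing, for general $\Lambda(2)$-sets, the role that \eqref{best2}-\eqref{e1} played for Rademacher systems.
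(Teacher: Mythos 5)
Your proposal is correct and follows essentially the same route as the paper's proof: the same truncation of $U_E{\bf{x}}$ at level $\xi$, the same Chebyshev-type replacement of the exponential-square estimate by the $\Lambda(p)$ bound, the same restoration of the $E$-spectrum via the interpolant of Remark \ref{general}, and the same tuning of $\xi$ so that the orthogonal perturbation has $L^2$-norm $\delta$, yielding the constant $C_p$.
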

\begin{proof}  \  For $\xi > 1$ (to be specified), and for arbitrary ${\bf{x}} \in \mathcal{S}_{l^2(E)}$, \  let
\begin{equation} \label{khint555} 
h_{{\bf{x}},\xi} \  \overset{\text{def}}{=} \  \left \{
\begin{array}{lcc}
 U_{E}{\bf{x}}, & \ \ \   |U_{E}{\bf{x}}| \ \leq \ \xi  \\\\  0,   &\text{otherwise}, 
\end{array} \right.
\end{equation} \\
and  \ 
\begin{equation}
\phi_{{\bf{x}},\xi}  \ \overset{\text{def}}{=} \  U_{E}{\bf{x}} - h_{{\bf{x}},\xi}.
\end{equation}\\
 Applying \eqref{digress2}, we estimate 
\begin{equation} \label{khint222}
\begin{split}
  \int_{\Omega_A} |\phi_{{\bf{x}},\xi}|^2 \ d\mathbb{P}_A \ &= \  \int_{\big\{|U_{E}{\bf{x}}| \ > \ \xi \big\}} |U_{E}{\bf{x}}|^p \  |U_{E}{\bf{x}}|^{2-p}  \ d\mathbb{P}_A \\\\
 &\leq  \ \xi^{2-p} \ [\kappa_2(E,p)]^p,
 \end{split}
\end{equation}
and thus obtain\\
%We write $\phi_{{\bf{x}},\xi}$ as a sum of two pieces with spectra in $E$ and $W_A \setminus E$, and obtain from  \eqref{khint222} \\
\begin{equation} \label{khint444}
\big\|\sum_{w \in E}\widehat{\phi_{{\bf{x}},\xi}}(w) w\big\|_{L^2} \ \leq \ \xi^{\frac{2-p}{2}} \ [\kappa_2(E,p)]^{\frac{p}{2}} ,  \ \ \ \ \ \ \big\|\sum_{w \in W_A \setminus E}\widehat{\phi_{{\bf{x}},\xi}}(w) w \big\|_{L^2} \ \leq \  \xi^{\frac{2-p}{2}} \ [\kappa_2(E,p)]^{\frac{p}{2}}. \ \ \ \ \ \
\end{equation}\\
Noting that $E \subset W_A$ is \emph{a fortiori} $\Lambda(2)$, i.e., $\kappa(E) < \infty$ (Lemma \ref{trick1}), we can apply the interpolant $G_E$ in  \eqref{general1} to ${\bf{v}}_{{\bf{x}}} \in l^2(E)$, where  $${\bf{v}}_{{\bf{x}}}(w) \  \overset{\text{def}}{=}  \ \widehat{\phi_{{\bf{x}},\xi}}(w), \ \ w \in E,$$\\ 
and obtain \
\begin{equation}
G_E({\bf{v}}_{{\bf{x}}}) \ = \ \sum_{w \in E}\widehat{\phi_{{\bf{x}},\xi}}(w)\ w \  + \ \mathfrak{p}_E({\bf{v}}_{{\bf{x}}}),
\end{equation}\\
\begin{equation} \label{khint33}
\begin{split}
\|G_E({\bf{v}}_{{\bf{x}}})\|_{L^{\infty}} \ &\leq  \ \xi^{\frac{2-p}{2}} \ [\kappa_2(E,p)]^{\frac{p}{2}} \ \kappa(E),\\\\
 \|\mathfrak{p}_E({\bf{v}}_{{\bf{x}}}\|_{L^2} \ & \leq  \ \xi^{\frac{2-p}{2}} \ [\kappa_2(E,p)]^{\frac{p}{2}} \ \big(\sqrt{[\kappa(E)]^2 - 1)}\big).
 \end{split}
\end{equation}\\
Let 
\begin{equation}
\begin{split}
g \ \overset{\text{def}}{=}  \ h_{{\bf{x}},\xi} + G_E({\bf{v}}_{{\bf{x}}}) \ &=  \ U_{E}{\bf{x}} \ + \ \bigg(\mathfrak{p}_E({\bf{v}}_{{\bf{x}}}) \ - \ \sum_{w \in W_A \setminus E}\widehat{\phi_{{\bf{x}},\xi}}(w) w \bigg)\\\
&  \overset{\text{def}}{=}  \ \ U_{E}{\bf{x}} \ + \ \widetilde{\mathfrak{p}_E}({\bf{x}}),
\end{split}
\end{equation}\\
and deduce from \eqref{khint33}, \eqref{khint444}, and \eqref{khint555},\\
\begin{equation} \label{khint66}
\|g\|_{L^{\infty}} \ \leq \ \xi  +  \xi^{\frac{2-p}{2}} \ [\kappa_2(E,p)]^{\frac{p}{2}} \ \kappa(E),
\end{equation}
and
\begin{equation} \label{khint777}
\begin{split}
\| \widetilde{\mathfrak{p}_E}({\bf{x}})\|_{L^2} \ &\leq \ \xi^{\frac{2-p}{2}} \ [\kappa_2(E,p)]^{\frac{p}{2}} \bigg(\sqrt{[\kappa(E)]^2 - 1} \ +  \ 1\bigg)\\\\
& \leq \ 2\xi^{\frac{2-p}{2}} \ [\kappa_2(E,p)]^{\frac{p}{2}} \ \kappa(E).\\\\
%\leq \ \xi^{\frac{2-p}{2}} \ [\kappa_2(E,p)]^{\frac{p}{2}} \ \kappa(E).
\end{split}
\end{equation}\\
Putting  
\begin{equation}
\xi \ = \ \big(\delta/2[\kappa_2(E,p)]^{\frac{p}{2}} \ \kappa(E)\big)^{\frac{2}{p-2}} 
\end{equation}\\
in \eqref{khint66} and \eqref{khint777} completes the proof.
\end{proof}
\end{remark}

\begin{remark}[ultra-interpolants] \label{upgrade2} \ In the next section we produce $A^{\prime} \supset A$, and  $\big(l^2(A) \hookrightarrow L^{\infty}(\Omega_{A^{\prime}},\mathbb{P}_{A^{\prime}})\big)$-interpolants  $\Phi_{A}$ (on $R_A$) that satisfy
\begin{equation} \label{as}
{\bf{x}} \boldsymbol{\cdot} {\bf{y}} \ = \ \int_{\Omega_{A^{\prime}}} \Phi_{A}({\bf{x}}) \ \Phi_{A}({\bf{y}}) \ d\mathbb{P}_{A^{\prime}}, \ \ \ \ \ {\bf{x}} \in l^2(A), \ \ {\bf{y}} \in l^2(A),
\end{equation}\\
and
\begin{equation}
\|\Phi_{A}({\bf{x}}\|_{L^{\infty}} \ \leq \ K  \|{\bf{x}}\|_2, \ \ \ \ \ {\bf{x}} \in l^2(A),
\end{equation}\\
where  $K > 1$ does not depend on $A$. (See Theorem \ref{MT1} and Corollary \ref{ultraG}.)  Existence of $(l^2 \hookrightarrow L^{\infty})$-interpolants that are also $(l^2 \hookrightarrow L^{\infty})$-representations upgrades both the Khintchin inequality and the Grothendieck inequality in the sense stated at the start of the section. We refer to such maps as \emph{ultra-interpolants.}

\noindent
% Whether the existence of ultra-interpolants is equivalent, in some sense, to the $\Lambda(2)$-uniformizability property is an open(-ended) question.  (See Remark \ref{ultra}.) 
%In particular,   
%\begin{equation}
%\mathcal{K}^{\star} \ < \ K,
%\end{equation}
%and hence $\mathcal{K}_G < \infty$, as per Proposition \ref{equivalence}. 
%: for, the aforementioned $\Phi_A$ use \emph{uniform} probability measures $\mathbb{P}_{A^{\prime}}$, whereas the Grothendieck inequality (via Riesz-Kakutani and Hahn-Banach) does not  \emph{a priori} guarantee that 'representing' measures are uniform;  see \eqref{funct11} and \eqref{dotrep} in the proof of Proposition \ref{equivalence}. (See Remark \ref{ultra} in the next section.) 

%The feasibility of ultra-interpolants
%can be viewed also as an upgrade of the Grothendieck inequality:  , for it establishes the feasibility of $L^{\infty}(\mathcal{X}, \mu)$-representations with a canonically uniform probability space $(\mathcal{X},\mu)$.  

%\begin{definition} \label{ultra} \ A $(l^2(A) \rightarrow L^{\infty})$-interpolant that is a $(l^2(A) \hookrightarrow L^{\infty}$)-representation is a $(l^2(A) \rightarrow L^{\infty})$-\emph{ultra-interpolant}. 
%A $(l^2(A) \rightarrow L^{\infty})$-representation that is a $(l^2(A) \hookrightarrow L^{\infty}$)-interpolant is  a $(l^2(A) \rightarrow L^{\infty})$-\emph{ultra-representation}.
%\end{definition}
\end{remark} 

\section{\bf{The Grothendieck inequality: upgrades and extensions}}
\subsection{From Parseval to Grothendieck (via Khintchin)} \label{rev} \  An interpolant  
\begin{equation}
G_A = U_{R_A} + \mathfrak{p}_{A^{\prime}}: \ l^2(A) \rightarrow L^{\infty}(\Omega_{A^{\prime}},\mathbb{P}_{A^{\prime}}), \ \ \ \ A^{\prime} \supset A,
\end{equation} 
is an ultra-interpolant if and only if for all  \ ${\bf{x}}$  \ and  ${\bf{y}}$  in  $l^2(A)$,
\begin{equation} \label{short11}
\widehat{\mathfrak{p}_{A^{\prime}}({\bf{x}})} \boldsymbol{\cdot} \widehat{\mathfrak{p}_{A^{\prime}}({\bf{y}})} \  = \ \int_{\Omega_{A^{\prime}}} \mathfrak{p}_{A^{\prime}}({\bf{x}}) \ \mathfrak{p}_{A^{\prime}}({\bf{y}}) \ d\mathbb{P}_{A^{\prime}} \ = \  0.
\end{equation} 
Namely, the Parseval formula
 \begin{equation} \label{short1}
\begin{split}
\int_{\Omega_{A^{\prime}}} G_A({\bf{x}})  \ G_A({\bf{y}}) \ d\mathbb{P}_{A^{\prime}} \ & = \  \int_{\Omega_{A^{\prime}}} U_{R_A}{\bf{x}} \ U_{R_A}{\bf{y}} \ d\mathbb{P}_{A^{\prime}} \ + \  \int_{\Omega_{A^{\prime}}} \mathfrak{p}_{A^{\prime}}({\bf{x}}) \ \mathfrak{p}_{A^{\prime}}({\bf{y}}) \ d\mathbb{P}_{A^{\prime}},\\\\
 = \  {\bf{x}} \boldsymbol{\cdot} {\bf{y}} \ &+ \ \sum_{w \in W_{A^{\prime}} \setminus R_A} \widehat{\mathfrak{p}_{A^{\prime}}({\bf{x}})}(w) \ \widehat{\mathfrak{p}_{A^{\prime}}({\bf{y}})}(w), \ \ \ \  \ \ {\bf{x}} \in l^2(A), \ \ {\bf{y}} \in l^2(A),
\end{split}
\end{equation}  
morphs into the Parseval-like formula in \eqref{parseval2} precisely when \eqref{short11} holds for all ${\bf{x}}$  and   ${\bf{y}}$  in  $l^2(A)$.
This simple observation is the motivation behind the recursive construction of the ultra-interpolants below.

To start, given an arbitrary set $A$, we initialize $A_1 = A$, and generate from it a sequence of disjoint sets 
\begin{equation} \label{rec3}
A_{j+1} \ \overset{\text{def}}{=} \ W_{A_j} \setminus (R_{A_j} \cup \{r_0\}), \ \ \ \ j = 1, \ldots. \ 
\end{equation}\\
(We refer to $\big(A_j\big)_j$ as a \emph{set-cascade generated by $A$}.) Fix  $\delta \in (0,1)$, and let $$\beta = \beta_R(\delta) < \infty$$ (Corollary \ref{trunc3}).  For $j = 1, \ldots,$ select $\big(l^2(A_j) \hookrightarrow L^{\infty}(\Omega_{A_j},\mathbb{P}_{A_j})\big)$-interpolants 
  \begin{equation}
 G_{A_j,\delta} = U_{R_{A_j}} + \mathfrak{p}_{A_j,\delta},
 \end{equation}
 such that for ${\bf{x}} \in l^2(A_j)$,
\begin{equation} \label{rec2}
 \|G_{A_j,\delta}({\bf{x}})\|_{L^{\infty}} \ \leq \ \beta\|{\bf{x}}\|_2 \ \ \ \ \text{and} \ \ \ \ \|\mathfrak{p}_{A_j,\delta}({\bf{x}})\|_{L^2} \ \leq \ \delta\|{\bf{x}}\|_2 \ .
 \end{equation}
 We can assume 
 \begin{equation} \label{symmetry}
 \int_{\Omega_{A_j}}G_{A_j,\delta}({\bf{x}}) \ d\mathbb{P}_{A_j} \ = \ 0, \ \ \ \ \ {\bf{x}} \in l^2(A), \ \ j = 1, \ldots \ .
 \end{equation}
 Let
 \begin{equation} \label{disjoint}
 A^{\prime} \ = \ \bigcup_{j=1}^{\infty} A_j,
 \end{equation}
 and consider the product space (a compact abelian group)
 \begin{equation} \label{disjoint1}
 \Omega_{A^{\prime}} \ = \ \bigtimes_{j=1}^{\infty} \Omega_{A_j},
 \end{equation}
 with the uniform probability measure on it (normalized Haar measure)
 \begin{equation} \label{disjoint2}
 \mathbb{P}_{A^{\prime}} \ = \ \bigtimes_{j=1}^{\infty} \mathbb{P}_{A_j}.
 \end{equation}
 We take the independent projections
 \begin{equation}
\pi_{A_j}: \  \Omega_{A^{\prime}} \ \rightarrow \ \Omega_{A_j}, \ \ \ j = 1, \ldots,
 \end{equation}
 given by
 \begin{equation}
 \pi_{A_j}(\boldsymbol{\omega}) \ = \ \big(\boldsymbol{\omega}(\alpha)\big)_{\alpha \in A_j}, \ \ \ \ \ \boldsymbol{\omega}  \in   \{-1,+1\}^{A^{\prime}},
 %{\cup_{j=1}^{\infty}A_j},
 \end{equation}
and view the $G_{A_j,\delta}$ as independent $\big(l^2(A_j) \hookrightarrow L^{\infty}(\Omega_{A^{\prime}},\mathbb{P}_{A^{\prime}})\big)$-interpolants  $G_{j,\delta}$, where\\
\begin{equation} \label{view}
G_{j,\delta}({\bf{x}}) \ = \ G_{A_j,\delta}({\bf{x}}) \circ \pi_{A_j} \ = \ U_{R_{A_j}}{\bf{x}} \ + \ \mathfrak{p}_{j,\delta}({\bf{x}}), \ \ \ \ {\bf{x}} \in l^2(A_j), \ \ j = 1, \ldots \ ,
\end{equation}
and
\begin{equation}
\mathfrak{p}_{j,\delta}({\bf{x}}) \ = \ \mathfrak{p}_{A_j,\delta}({\bf{x}})\circ \pi_{A_j}.
\end{equation}\\
For $j = 1, \ldots,$ and ${\bf{x}} \in l^2(A_j)$,\\
\begin{equation} \label{transform0}
\widehat{G_{j,\delta}({\bf{x}})}(r_{\alpha}) \ = \ {\bf{x}}(\alpha), \ \ \ \ \alpha \in A_j,
\end{equation}
and
\begin{equation} \label{support}
\big \{ w \in W_{A^{\prime}}:  \widehat{G_{j,\delta}({\bf{x}})}(w) \ \neq  \ 0 \big \} \ \subset W_{A_j}.
\end{equation}\\
In particular, for  \ $ j_1 \neq j_2, \ \ {\bf{x}}_1 \in l^2(A_{j_1}),   \ {\bf{x}}_2 \in l^2(A_{j_2})$,\\ 
\begin{equation} \label{orthog}
\int_{\Omega_{A^{\prime}}} G_{j_1,\delta}({\bf{x}}_1) \ G_{j_2,\delta}({\bf{x}}_2) \ d\mathbb{P}_{A^{\prime}} \ = \ 0.
\end{equation}\\

Next, for arbitrary ${\bf{x}} \in {l^2(A)}$, we initialize ${\bf{x}}^{(1)} = {\bf{x}}$, and generate a sequence of vectors ${\bf{x}}^{(j)} \in l^2(A_j),$ defined recursively by
\begin{equation} \label{rec1}
{\bf{x}}^{(j+1)}(\alpha) \ = \ \mathfrak{p}_{j,\delta}({\bf{x}}^{(j)})^{\wedge}(\alpha), \ \ \ \ \alpha \in A_{j+1},  \ \ j = 1, \ldots \ .
\end{equation} 
(We refer to $({\bf{x}}^{(j)})_j$ as a \emph{vector-cascade} generated by ${\bf{x}} \in l^2(A)$.)

\begin{lemma} \label{rec10} \ For ${\bf{x}} \in {l^2(A)}$,
\begin{equation} \label{rec8}
\|{\bf{x}}^{(j)}\|_2 \ \leq \ \delta^{j-1} \ \|{\bf{x}}\|_2, \ \ \ \ j = 1, \ldots \ .
\end{equation}
\end{lemma}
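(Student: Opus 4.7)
The plan is to prove the estimate by straightforward induction on $j$, with the key input being a Parseval computation that compares the $l^2$-norm of ${\bf{x}}^{(j+1)}$ on $A_{j+1}$ to the $L^2$-norm of the perturbation $\mathfrak{p}_{A_j,\delta}({\bf{x}}^{(j)})$ on $(\Omega_{A_j}, \mathbb{P}_{A_j})$.

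The base case $j = 1$ is trivial since ${\bf{x}}^{(1)} = {\bf{x}}$ and $\delta^0 = 1$. For the inductive step, I will use the observation that the Walsh spectrum of $\mathfrak{p}_{A_j,\delta}({\bf{x}}^{(j)})$ lies entirely in the index set $A_{j+1}$. Two facts combine to give this. First, from the template in Remark \ref{temp} and the construction preceding \eqref{rec2}, $\mathfrak{p}_{A_j,\delta}$ is an orthogonal $(l^2,L^{\infty})$-perturbation of $U_{R_{A_j}}$, so its spectrum is contained in $W_{A_j}\setminus R_{A_j}$. Second, the centering assumption \eqref{symmetry} gives $\widehat{G_{A_j,\delta}({\bf{x}}^{(j)})}(r_0) = 0$, and since $r_0 \notin R_{A_j}$ the Rademacher part $U_{R_{A_j}}{\bf{x}}^{(j)}$ also vanishes at $r_0$; hence $\widehat{\mathfrak{p}_{A_j,\delta}({\bf{x}}^{(j)})}(r_0) = 0$. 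Thus the spectrum of $\mathfrak{p}_{A_j,\delta}({\bf{x}}^{(j)})$ lies in $W_{A_j}\setminus(R_{A_j}\cup\{r_0\}) = A_{j+1}$, as claimed.

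Given this, Parseval's identity on $L^2(\Omega_{A_j},\mathbb{P}_{A_j})$, combined with the definition \eqref{rec1} of ${\bf{x}}^{(j+1)}$, yields
\begin{equation*}
\|{\bf{x}}^{(j+1)}\|_2^2 \ = \ \sum_{\alpha \in A_{j+1}}\big|\widehat{\mathfrak{p}_{A_j,\delta}({\bf{x}}^{(j)})}(\alpha)\big|^2 \ = \ \sum_{w \in W_{A_j}}\big|\widehat{\mathfrak{p}_{A_j,\delta}({\bf{x}}^{(j)})}(w)\big|^2 \ = \ \|\mathfrak{p}_{A_j,\delta}({\bf{x}}^{(j)})\|_{L^2}^2.
\end{equation*}
The right-hand estimate \eqref{rec2} then gives $\|{\bf{x}}^{(j+1)}\|_2 \leq \delta\,\|{\bf{x}}^{(j)}\|_2$, and combining with the inductive hypothesis $\|{\bf{x}}^{(j)}\|_2 \leq \delta^{j-1}\|{\bf{x}}\|_2$ produces $\|{\bf{x}}^{(j+1)}\|_2 \leq \delta^j\|{\bf{x}}\|_2$, completing the induction.

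There is essentially no obstacle here; the only subtlety worth flagging is the bookkeeping about \emph{which} probability space the Walsh transform is computed on. Although $\mathfrak{p}_{j,\delta}({\bf{x}}^{(j)}) = \mathfrak{p}_{A_j,\delta}({\bf{x}}^{(j)}) \circ \pi_{A_j}$ lives on $(\Omega_{A'},\mathbb{P}_{A'})$, the lift through the projection $\pi_{A_j}$ preserves Walsh coefficients indexed by $W_{A_j}$, so the Parseval step may equivalently be carried out on $(\Omega_{A_j},\mathbb{P}_{A_j})$ without loss. The inclusion of $r_0$ in the set removed to form $A_{j+1}$, together with the centering condition \eqref{symmetry}, is precisely what makes the Walsh-Parseval identity collapse to an $l^2$-sum over $A_{j+1}$; without this centering one would pick up a stray constant term and the recursion would not be clean.
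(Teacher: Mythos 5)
Your proof is correct and follows essentially the same route as the paper: induction on $j$, with the identity $\|{\bf{x}}^{(j+1)}\|_2 = \|\mathfrak{p}_{j,\delta}({\bf{x}}^{(j)})\|_{L^2}$ (via \eqref{rec1} and Parseval) followed by the bound \eqref{rec2} and the induction hypothesis. The extra detail you supply — that the spectrum of the perturbation sits in $A_{j+1} = W_{A_j}\setminus(R_{A_j}\cup\{r_0\})$, using the centering \eqref{symmetry} — is exactly what the paper leaves implicit when it cites \eqref{rec1} for the leftmost equality.
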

\begin{proof}[Proof (by induction on $j$)]  \ The case $j=1$ is the initial assignment ${\bf{x}}^{(1)} = {\bf{x}}$.  For $j \geq 1$,\\
\begin{equation}
%\begin{split}
\|{\bf{x}}^{(j+1)}\|_2 \ = \ \| \mathfrak{p}_{j,\delta}({\bf{x}}^{(j)})\|_{L^2} \ \leq \ \delta \|{\bf{x}}^{(j)}\|_2 \leq \ \delta \cdot \delta^{j-1} \ \|{\bf{x}}\|_2, \\\\
%&\leq \ \delta \|{\bf{x}}^{(j)}\|_2\\\\
%& \leq \ \delta \cdot \delta^{j-1} \ \|{\bf{x}}\|_2,\\
%\end{split}
\end{equation}\\
where the equality on the left follows from \eqref{rec1}, the first inequality from \eqref{rec2}, and the second from the induction hypothesis.
\end{proof} 
\begin{lemma} \ For ${\bf{x}}$ and ${\bf{y}}$ in ${l^2(A)}$, 
\begin{equation} \label{rec4} 
\begin{split}
\sum_{j=1}^{n}&(-1)^{j-1} \  \int_{\Omega_{A^{\prime}}} G_{j,\delta}({\bf{x}}^{(j)}) \  G_{j,\delta}({\bf{y}}^{(j)}) \ d\mathbb{P}_{A^{\prime}} \\\\
&= \  {\bf{x}}\boldsymbol{\cdot}{\bf{y}} \ +(-1)^{n-1}  \ \int_{\Omega_{A^{\prime}}}\mathfrak{p}_{n,\delta}({\bf{x}}^{(n)}) \ \mathfrak{p}_{n,\delta}({\bf{y}}^{(n)}) \ d\mathbb{P}_{A^{\prime}}, \ \ \ \  \ \ n = 1, \ldots \ .
\end{split} 
\end{equation}
 %and series in \eqref{rec4} are absolutely convergent, uniformly on bounded subsets of $l^2(A)$.
\end{lemma}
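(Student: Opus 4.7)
The natural approach is to compute each summand on the left-hand side in isolation and then show that the alternating sum telescopes. Concretely, I would first establish the following single-level identity: for every $j \geq 1$ and all ${\bf{x}}, {\bf{y}} \in l^2(A)$,
\begin{equation} \label{keylocal}
\int_{\Omega_{A^{\prime}}} G_{j,\delta}({\bf{x}}^{(j)}) \ G_{j,\delta}({\bf{y}}^{(j)}) \ d\mathbb{P}_{A^{\prime}} \ = \ {\bf{x}}^{(j)}\boldsymbol{\cdot}{\bf{y}}^{(j)} \ + \ {\bf{x}}^{(j+1)}\boldsymbol{\cdot}{\bf{y}}^{(j+1)}.
\end{equation}
Writing $G_{j,\delta}({\bf{x}}^{(j)}) = U_{R_{A_j}}{\bf{x}}^{(j)} \circ \pi_{A_j} + \mathfrak{p}_{j,\delta}({\bf{x}}^{(j)})$ and expanding the product, the two cross terms vanish because $U_{R_{A_j}}{\bf{x}}^{(j)}$ has Walsh spectrum in $R_{A_j}$ while $\mathfrak{p}_{j,\delta}({\bf{y}}^{(j)})$ has Walsh spectrum in $W_{A_j}\setminus(R_{A_j}\cup\{r_0\})$ (the $\{r_0\}$ exclusion uses the normalization \eqref{symmetry}); hence Parseval on $\Omega_{A_j}$ kills them. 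The surviving diagonal terms give $\sum_{\alpha\in A_j}{\bf{x}}^{(j)}(\alpha){\bf{y}}^{(j)}(\alpha) = {\bf{x}}^{(j)}\boldsymbol{\cdot}{\bf{y}}^{(j)}$ from the Rademacher part, and $\sum_{\alpha\in A_{j+1}}\widehat{\mathfrak{p}_{j,\delta}({\bf{x}}^{(j)})}(\alpha)\,\widehat{\mathfrak{p}_{j,\delta}({\bf{y}}^{(j)})}(\alpha) = {\bf{x}}^{(j+1)}\boldsymbol{\cdot}{\bf{y}}^{(j+1)}$ from the perturbation part, where the last equality is precisely the recursive definition \eqref{rec1} combined with the spectral identification $A_{j+1} = W_{A_j}\setminus(R_{A_j}\cup\{r_0\})$ coming from \eqref{rec3}.

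With \eqref{keylocal} in hand, setting $a_j \overset{\text{def}}{=} {\bf{x}}^{(j)}\boldsymbol{\cdot}{\bf{y}}^{(j)}$, the left-hand side of \eqref{rec4} becomes $\sum_{j=1}^n (-1)^{j-1}(a_j + a_{j+1})$, a classical telescoping alternating sum that collapses to $a_1 + (-1)^{n-1}a_{n+1} = {\bf{x}}\boldsymbol{\cdot}{\bf{y}} + (-1)^{n-1}{\bf{x}}^{(n+1)}\boldsymbol{\cdot}{\bf{y}}^{(n+1)}$. Finally, one more application of Parseval to the spectrum of $\mathfrak{p}_{n,\delta}({\bf{x}}^{(n)})$ rewrites ${\bf{x}}^{(n+1)}\boldsymbol{\cdot}{\bf{y}}^{(n+1)}$ as $\int_{\Omega_{A^{\prime}}}\mathfrak{p}_{n,\delta}({\bf{x}}^{(n)})\,\mathfrak{p}_{n,\delta}({\bf{y}}^{(n)})\,d\mathbb{P}_{A^{\prime}}$, yielding exactly the right-hand side of \eqref{rec4}. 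Equivalently, one could organize this as an induction on $n$: the base case $n=1$ is \eqref{keylocal}, and the inductive step is a one-line algebraic manipulation using \eqref{keylocal} applied at level $n+1$ together with the cancellation $(-1)^{n-1}+(-1)^n = 0$.

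The only conceptually delicate point—and the step I would single out as requiring care—is the spectral bookkeeping supporting \eqref{keylocal}. One must verify that the $G_{j,\delta}$, constructed on $\Omega_{A_j}$ and lifted via the independent projection $\pi_{A_j}$, can legitimately be analyzed via Parseval on the full product space $(\Omega_{A^{\prime}},\mathbb{P}_{A^{\prime}})$; this is immediate from the independence of the factors in \eqref{disjoint1}-\eqref{disjoint2} and the fact that each $G_{j,\delta}$ depends only on the $A_j$-coordinate. Similarly, the identification of $A_{j+1}$ simultaneously as an index set (in $l^2(A_{j+1})$) and as a set of Walsh characters of $\Omega_{A_j}$ must be used consistently—but this is exactly the construction \eqref{rec3} together with the normalization \eqref{symmetry} ensuring that no mass escapes into the constant character.
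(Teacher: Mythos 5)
Your proposal is correct and follows essentially the same route as the paper: your single-level identity is precisely the paper's key computation in \eqref{rec6} (the cross terms killed by disjoint Walsh spectra, the perturbation--perturbation integral identified with ${\bf{x}}^{(j+1)}\boldsymbol{\cdot}{\bf{y}}^{(j+1)}$ via \eqref{rec1}, \eqref{rec3}, and the normalization \eqref{symmetry}), and your telescoping of the alternating sum is just the paper's induction on $n$ unrolled, as you yourself observe. No gaps.
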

\begin{proof}[Proof (by induction on $n$)] \ The case $n = 1$  is \eqref{short1}. For $n > 1,$
\begin{equation} \label{rec5}
\begin{split}
& \sum_{j=1}^{n}(-1)^{j-1}  \int_{\Omega_A^{\prime}} G_{j,\delta}({\bf{x}}^{(j)}) \  G_{j,\delta}({\bf{y}}^{(j)}) \ d\mathbb{P}_{A^{\prime}} \ = \  {\bf{x}}\boldsymbol{\cdot}{\bf{y}}  \\\\
& +(-1)^{n-2}   \int_{\Omega_{A^{\prime}}}\mathfrak{p}_{n-1,\delta}({\bf{x}}^{(n-1)}) \ \mathfrak{p}_{n-1,\delta}({\bf{y}}^{(n-1)}) \ d\mathbb{P}_A \ + \ (-1)^{n-1} \int_{\Omega_{A^{\prime}}}G_{n,\delta}({\bf{x}}^{(n)}) \  G_{n,\delta}({\bf{y}}^{(n)}) \ d\mathbb{P}_{A^{\prime}} \\\\
 &\ \ \ \ \ \ \ \ \ \ \ \ \ \ \ \ \ \ \ \ \ \ \ \ \ \ \ \text{(induction hypothesis}).\ 
\end{split}
\end{equation}
By \eqref{short1}, \eqref{rec3}, and \eqref{rec1} (via Parseval),
\begin{equation} \label{rec6}
\begin{split}
\int_{\Omega_{A^{\prime}}}&G_{n,\delta}({\bf{x}}^{(n)}) \  G_{n,\delta}({\bf{y}}^{(n)}) \ d\mathbb{P}_{A^{\prime}} \ = \ {\bf{x}}^{(n)} \boldsymbol{\cdot} {\bf{y}}^{(n)} \ + \  \int_{\Omega_{A^{\prime}}}\mathfrak{p}_{n,\delta}({\bf{x}}^{(n)}) \ \mathfrak{p}_{n,\delta}({\bf{y}}^{(n)}) \ d\mathbb{P}_A\\\\
&= \ \int_{\Omega_{A^{\prime}}}\mathfrak{p}_{n-1,\delta}({\bf{x}}^{(n-1)}) \ \mathfrak{p}_{n-1,\delta}({\bf{y}}^{(n-1)}) \ d\mathbb{P}_{A^{\prime}} \ + \ \int_{\Omega_{A^{\prime}}}\mathfrak{p}_{n,\delta}({\bf{x}}^{(n)}) \ \mathfrak{p}_{n,\delta}({\bf{y}}^{(n)}) \ d\mathbb{P}_{A^{\prime}},
\end{split}
\end{equation}\\
which, put in \eqref{rec5}, implies \eqref{rec4}.\\ 
\end{proof}

\begin{lemma} \label{Rep} \  \ For ${\bf{x}} \in {l^2(A)}$ and ${\bf{y}} \in {l^2(A)}$,
\begin{equation} \label{abcon}
\sum_{j=1}^{\infty}  \int_{\Omega_{A^{\prime}}} \big|G_{j,\delta}({\bf{x}}^{(j)}) \  G_{j,\delta}({\bf{y}}^{(j)})\big| \ d\mathbb{P}_{A^{\prime}} \ \leq \ \frac{\beta^2}{1-\delta^2} \  \|{\bf{x}}\|_2 \|{\bf{y}}\|_2,
\end{equation}
and
\begin{equation}  
\sum_{j=1}^{\infty}(-1)^{j-1} \  \int_{\Omega_{A^{\prime}}} G_{j,\delta}({\bf{x}}^{(j)}) \  G_{j,\delta}({\bf{y}}^{(j)}) \ d\mathbb{P}_{A^{\prime}} \ = \  {\bf{x}}\boldsymbol{\cdot}{\bf{y}}.
\end{equation}
\end{lemma}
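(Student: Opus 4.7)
The plan is to prove both assertions by combining the bounds already built into the construction, namely the uniform $L^\infty$-bound $\beta$ and the geometric decay of the vector-cascade $({\bf{x}}^{(j)})_j$ established in Lemma \ref{rec10}.

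First I would dispatch \eqref{abcon}. For each fixed $j$, Cauchy--Schwarz gives
\begin{equation*}
\int_{\Omega_{A'}} \bigl|G_{j,\delta}({\bf{x}}^{(j)})\ G_{j,\delta}({\bf{y}}^{(j)})\bigr|\ d\mathbb{P}_{A'} \ \leq \ \|G_{j,\delta}({\bf{x}}^{(j)})\|_{L^2}\ \|G_{j,\delta}({\bf{y}}^{(j)})\|_{L^2},
\end{equation*}
and by \eqref{rec2} each $L^2$-factor is at most $\|G_{j,\delta}(\cdot)\|_{L^\infty}\leq \beta\|\cdot\|_2$. Applying Lemma \ref{rec10} to both ${\bf{x}}^{(j)}$ and ${\bf{y}}^{(j)}$ yields
\begin{equation*}
\int_{\Omega_{A'}} \bigl|G_{j,\delta}({\bf{x}}^{(j)})\ G_{j,\delta}({\bf{y}}^{(j)})\bigr|\ d\mathbb{P}_{A'} \ \leq \ \beta^2\,\delta^{2(j-1)}\,\|{\bf{x}}\|_2\|{\bf{y}}\|_2,
\end{equation*}
and summing the geometric series (which converges because $\delta<1$) produces the factor $\beta^2/(1-\delta^2)$ in \eqref{abcon}. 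This absolute convergence legitimizes the infinite alternating sum in the second assertion.

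Next I would take $n\to\infty$ in the identity \eqref{rec4}, which was established by induction in the previous lemma. The remainder term is
\begin{equation*}
(-1)^{n-1}\int_{\Omega_{A'}}\mathfrak{p}_{n,\delta}({\bf{x}}^{(n)})\ \mathfrak{p}_{n,\delta}({\bf{y}}^{(n)})\ d\mathbb{P}_{A'},
\end{equation*}
and by Cauchy--Schwarz combined with the $L^2$-bound $\|\mathfrak{p}_{j,\delta}(\cdot)\|_{L^2}\leq\delta\|\cdot\|_2$ from \eqref{rec2} and Lemma \ref{rec10}, its modulus is bounded by $\delta^2\cdot\delta^{2(n-1)}\|{\bf{x}}\|_2\|{\bf{y}}\|_2=\delta^{2n}\|{\bf{x}}\|_2\|{\bf{y}}\|_2$, which tends to $0$. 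Passing to the limit in \eqref{rec4} then delivers the second identity.

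I do not see a genuine obstacle here: the construction has been engineered precisely so that the recursion \eqref{rec4} produces a telescoping-like expansion whose remainder is controlled by the geometrically-shrinking $L^2$-norm of $\mathfrak{p}_{n,\delta}({\bf{x}}^{(n)})$. The mildly delicate point is simply to note that the two bounds in \eqref{rec2} play complementary roles: the $L^\infty$-bound $\beta$ controls absolute convergence in \eqref{abcon}, while the $L^2$-bound $\delta$ drives the remainder of \eqref{rec4} to zero and is responsible for the convergence of the alternating series to ${\bf{x}}\boldsymbol{\cdot}{\bf{y}}$.
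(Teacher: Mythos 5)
Your proposal is correct and follows essentially the same route as the paper: bound each term of \eqref{abcon} by $\beta^2\delta^{2(j-1)}\|{\bf{x}}\|_2\|{\bf{y}}\|_2$ via \eqref{rec2} and Lemma \ref{rec10}, sum the geometric series, then let $n\to\infty$ in \eqref{rec4} after showing the remainder is at most $\delta^{2n}\|{\bf{x}}\|_2\|{\bf{y}}\|_2$. The only cosmetic difference is that you pass through the $L^2$-norms of the $G_{j,\delta}$ before invoking the $L^\infty$-bound, whereas the paper applies H\"older directly to the product of $L^\infty$-norms; the resulting estimates are identical.
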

\begin{proof} By H\"older, Cauchy-Schwarz,  \eqref{rec2}, and \eqref{rec8},\\
\begin{equation} \label{abcon1}
\begin{split}
\sum_{j=1}^{n} \int_{\Omega_{A^{\prime}}} \big|G_{j,\delta}({\bf{x}}^{(j)}) \  G_{j,\delta}({\bf{y}}^{(j)})\big| \ d\mathbb{P}_{A^{\prime}}\big| \ &\leq \ \sum_{j=1}^{n}  \ \|G_{j,\delta}({\bf{x}}^{(j)})\|_{L^{\infty}} \ \|G_{j,\delta}({\bf{y}}^{(j)})\|_{L^{\infty}}\\\\
&\leq \ \beta^2 \ \|{\bf{x}}\|_2 \  \|{\bf{y}}\|_2 \ \sum_{j=1}^{n}  \ \delta^{2(j-1)},
\end{split}
\end{equation}
and
\begin{equation} \label{rec9}
\begin{split}
\bigg|\int_{\Omega_{A^{\prime}}}\mathfrak{p}_{n,\delta}({\bf{x}}^{(n)}) \ \mathfrak{p}_{n,\delta}({\bf{y}}^{(n)}) \ d\mathbb{P}_{A^{\prime}}\bigg| \ &\leq \ \|\mathfrak{p}_{n,\delta}({\bf{x}}^{(n)})\|_{L^2} \  \|\mathfrak{p}_{n,\delta}({\bf{y}}^{(n)})\|_{L^2}\\\\
&\leq \ \delta^2 \ \|{\bf{x}}^{(n)}\|_2 \  \|{\bf{y}}^{(n)}\|_2 \\\\
&\leq \ \delta^{2n} \ \|{\bf{x}}\|_2 \  \|{\bf{y}}\|_2 \ .\\
\end{split}
\end{equation}\\
The lemma follows by letting $n \rightarrow \infty$ in \eqref{abcon1} and \eqref{rec4}.
\end{proof}
Putting it all together, we obtain

\begin{theorem} \label{MT1}  \ For $\delta \in (0,1)$ and set $A$,  
\begin{equation} \label{rec11}
  \Phi_{A,\delta}({\bf{x}}) \ \overset{\text{def}}{=} \ \ \sum_{j=1}^{\infty} \  \mathfrak{i}^{j-1} \ G_{j,\delta}({\bf{x}}^{(j)}), \ \ \ \ \ {\bf{x}} \in l^2(A)  \ \ \ \ (\mathfrak{i} = \sqrt{-1}),
\end{equation} 
converges absolutely in $L^{\infty}(\Omega_{A^{\prime}},\mathbb{P}_{A^{\prime}})$ and uniformly on bounded subsets of  \ $l^2(A)$.  Moreover, $\Phi_{A,\delta}$ is a $\big(l^2(A) \hookrightarrow L^{\infty}(\Omega_{A^{\prime}},\mathbb{P}_{A^{\prime}})\big)$-ultra-interpolant on $R_A$,
%, which is also a $(l^2(A) \hookrightarrow L^{\infty}(\Omega_{A^{\prime}},\mathbb{P}_{A^{\prime}})\big)$-representation, i.e., $\Phi_{A,\delta}$ is an ultra-interpolant,  
and
\begin{equation} \label{rec13}
\|\Phi_{A,\delta}\|_{\l^2 \hookrightarrow L^{\infty}} \ \leq \ \frac{\beta_R(\delta)}{1-\delta}.
\end{equation}
%where $\beta_R$ is the $\Lambda(2)$-modulus in \eqref{uniform6}.
\end{theorem}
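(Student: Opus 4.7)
The plan is to verify the three assertions of the theorem — absolute convergence with the norm bound, the interpolant conditions, and the Parseval identity — by assembling the preparatory lemmas already established.

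\emph{Step 1 (convergence and norm bound).} The uniform estimate $\|G_{j,\delta}({\bf{x}}^{(j)})\|_{L^\infty} \leq \beta_R(\delta) \|{\bf{x}}^{(j)}\|_2$ from \eqref{rec2}, combined with the geometric decay $\|{\bf{x}}^{(j)}\|_2 \leq \delta^{j-1}\|{\bf{x}}\|_2$ of Lemma \ref{rec10}, gives $\|G_{j,\delta}({\bf{x}}^{(j)})\|_{L^\infty} \leq \beta_R(\delta)\, \delta^{j-1}\|{\bf{x}}\|_2$. Since $\delta < 1$, summing the geometric majorant in $L^\infty(\Omega_{A^{\prime}},\mathbb{P}_{A^{\prime}})$ shows that the series \eqref{rec11} converges absolutely, and uniformly on every bounded subset of $l^2(A)$, and yields at once the bound $\|\Phi_{A,\delta}\|_{l^2 \hookrightarrow L^\infty} \leq \beta_R(\delta)/(1-\delta)$ claimed in \eqref{rec13}.

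\emph{Step 2 (interpolant conditions).} For the homogeneity requirement in \eqref{extending3}, I observe that the recursion \eqref{rec1} is positively homogeneous in the seed vector (each $\mathfrak{p}_{j,\delta}$ is, by \eqref{view} and \eqref{extending3} applied to $G_{A_j,\delta}$), so the cascade generated by $c{\bf{x}}$ with $c>0$ is $c$ times the cascade generated by ${\bf{x}}$; combining this with the positive homogeneity of each $G_{j,\delta}$ gives $\Phi_{A,\delta}(c{\bf{x}}) = c\, \Phi_{A,\delta}({\bf{x}})$, which is equivalent to $\Phi_{A,\delta}({\bf{x}}) = \|{\bf{x}}\|_2\, \Phi_{A,\delta}(\boldsymbol{\sigma}{\bf{x}})$. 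For the transform condition on $R_A$, I exploit that each $G_{j,\delta}({\bf{x}}^{(j)})$ is spectrally supported in $W_{A_j}$ by \eqref{support}, and that the sets $A_1,A_2,\ldots$ are pairwise disjoint by construction \eqref{rec3}; hence for $\alpha \in A = A_1$, only the $j=1$ summand in \eqref{rec11} contributes to $\widehat{\Phi_{A,\delta}({\bf{x}})}(r_\alpha)$, and that contribution equals ${\bf{x}}^{(1)}(\alpha) = {\bf{x}}(\alpha)$ by \eqref{transform0}. Injectivity is then immediate from the transform condition.

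\emph{Step 3 (ultra-interpolant via Parseval).} To promote $\Phi_{A,\delta}$ from an interpolant to an $L^\infty$-representation, I expand
\[
\int_{\Omega_{A^{\prime}}} \Phi_{A,\delta}({\bf{x}})\, \Phi_{A,\delta}({\bf{y}})\, d\mathbb{P}_{A^{\prime}} \;=\; \sum_{j,k \geq 1} \mathfrak{i}^{\, j+k-2} \int_{\Omega_{A^{\prime}}} G_{j,\delta}({\bf{x}}^{(j)})\, G_{k,\delta}({\bf{y}}^{(k)})\, d\mathbb{P}_{A^{\prime}},
\]
the interchange of sum and integral being justified by the absolute $L^\infty$ convergence of Step 1 (and by the absolute convergence of the integral bounds in Lemma \ref{Rep}). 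The off-diagonal terms with $j \neq k$ vanish by the orthogonality \eqref{orthog}, which itself rests on the independence of the projections $\pi_{A_j}$ together with the zero-mean normalization \eqref{symmetry}. On the diagonal $\mathfrak{i}^{\, 2(j-1)} = (-1)^{j-1}$, and the resulting alternating series is exactly the one evaluated in Lemma \ref{Rep} as ${\bf{x}}\boldsymbol{\cdot}{\bf{y}}$, completing the proof.

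No step is a serious obstacle: the analytic heavy lifting has already been done in Lemmas \ref{rec10} and \ref{Rep}, and the orthogonality \eqref{orthog} built into the cascade construction \eqref{rec3}--\eqref{view}. The one bookkeeping subtlety worth care is tracking which $A_j$ indexes what, so that both the spectral disjointness needed in Step 2 and the integral orthogonality needed in Step 3 are unambiguously in force.
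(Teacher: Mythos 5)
Your proposal is correct and follows essentially the same route as the paper's proof: the geometric majorant from \eqref{rec2} and Lemma \ref{rec10} for absolute convergence and the bound \eqref{rec13}, spectral disjointness \eqref{support} plus \eqref{transform0} for the interpolation property, and Lemma \ref{Rep} with the orthogonality \eqref{orthog} killing the off-diagonal terms for the Parseval identity. Your Step 2 is in fact slightly more complete than the paper's, since you explicitly check the homogeneity condition $\Phi_{A,\delta}({\bf{x}}) = \|{\bf{x}}\|_2\,\Phi_{A,\delta}(\boldsymbol{\sigma}{\bf{x}})$ required by Definition \ref{expand}, which the paper leaves implicit.
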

\begin{proof} \ By \eqref{rec2} and  \eqref{rec8}, 
\begin{equation} \label{rec12}
\begin{split}
\sum_{j=1}^{\infty}  \  \|G_{j,\delta}({\bf{x}}^{(j)})\|_{L^{\infty}} \ \leq \ \sum_{j=1}^{\infty}\beta \ \|{\bf{x}}^{(j)}\|_2  \ \leq \ \beta \ \sum_{j=1}^{\infty}\delta^{j-1} \ \|{\bf{x}}\|_2  \ & = \ \frac{\beta}{1-\delta} \  \|{\bf{x}}\|_2,,\\\
& \ \ \ {\bf{x}} \in l^2(A), \ \ \ \beta = \beta_R(\delta),\\\
\end{split}
\end{equation}
implying absolute convergence of the series in \eqref{rec11}, uniformly for ${\bf{x}}$  in bounded subsets of  \ $l^2(A)$.  

By \eqref{rec12}, \eqref{transform0}, and \eqref{support}, for ${\bf{x}} \in l^2(A)$,\\
\begin{equation}
\begin{split}
\widehat{\Phi_{A,\delta}({\bf{x}})}(r_{\alpha}) \ &= \ G_{1,\delta}(r_{\alpha})\\\\
&= \ {\bf{x}}(\alpha), \ \ \ \ \alpha \in A_1 \  ( \ =  A \ ),\\\
\end{split}
\end{equation}
implying that $\Phi_{A,\delta}$ is a \big($l^2(A) \hookrightarrow L^{\infty}(\Omega_{A^{\prime}},\mathbb{P}_{A^{\prime}}) \big)$-interpolant on $R_A$.

By Lemma \ref{Rep}, and \eqref{orthog}, for ${\bf{x}} \in l^2(A),  \ {\bf{y}} \in l^2(A)$,\\
\begin{equation}
\begin{split}
\int_{\Omega_{A^{\prime}}} \Phi_{A,\delta}({\bf{x}}) \ &\Phi_{A,\delta}({\bf{y}}) \ d\mathbb{P}_{A^{\prime}} \ = \ \int_{\Omega_{A^{\prime}}}(-1)^{j-1} \bigg( \sum_{j=1}^{\infty} \  G_{j,\delta}({\bf{x}}^{(j)}) \  \sum_{j=1}^{\infty} \   G_{j,\delta}({\bf{y}}^{(j)}) \bigg) \ d \mathbb{P}_{A^{\prime}}\\\\
&= \ \sum_{j=1}^{\infty} (-1)^j \int_{\Omega_{A^{\prime}}} G_{j,\delta}({\bf{x}}^{(j)}) \  G_{j,\delta}({\bf{y}}^{(j)}) \ d\mathbb{P}_{A^{\prime}} \ = \ {\bf{x}} \boldsymbol{\cdot} {\bf{y}},\\\
\end{split} 
\end{equation}
implying that  $\Phi_{A,\delta}$ is a \big($l^2(A) \hookrightarrow L^{\infty}(\Omega_{A^{\prime}},\mathbb{P}_{A^{\prime}}) \big)$-representation.  The estimate in \eqref{rec13} follows from \eqref{rec12}.
\end{proof}
  \ 
  \begin{corollary} \label{ultraG} \ For all sets $A$ and $\epsilon > 0$, there exist ultra-interpolants 
  \begin{equation}
 G_A \ = \ U_{R_A} + \mathfrak{p}_{A^{\prime}}: \ l^2(A) \ \hookrightarrow \ L^{\infty}(\Omega_{A^{\prime}},\mathbb{P}_{A^{\prime}}),  
  \end{equation}
 such that 
\begin{equation}
\|G_A\|_{l^2 \hookrightarrow L^{\infty}} \leq K, \ \  \ \ \ \|\mathfrak{p}_{A^{\prime}}\|_{l^2 \hookrightarrow L^{2}} \leq \epsilon,
\end{equation}
where $K > 1$ depends only on $\epsilon$, 
$$A^{\prime} \ = \ A  \ \cup \  \bigcup_{j=2}A_j,$$ and $\big(A_j\big)_j$ is the set-cascade generated by $A = A_1$, as per \eqref{rec3}.    
  \end{corollary}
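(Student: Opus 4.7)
The natural candidate is $G_A := \Phi_{A,\delta}$ from Theorem \ref{MT1}, for a sufficiently small $\delta = \delta(\epsilon) \in (0,1)$ to be chosen. The ultra-interpolant property and the bound $\|G_A\|_{l^2 \hookrightarrow L^{\infty}} \leq \beta_R(\delta)/(1-\delta) =: K$ are already supplied by Theorem \ref{MT1} via \eqref{rec13}; since this constant depends only on $\delta$, hence only on $\epsilon$, the only thing left to check is $\|\mathfrak{p}_{A^{\prime}}\|_{l^2 \hookrightarrow L^{2}} \leq \epsilon$.

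First I would isolate the perturbation. Using $A_1 = A$ and $G_{1,\delta}({\bf x}) = U_{R_A}{\bf x} + \mathfrak{p}_{1,\delta}({\bf x})$ in the series \eqref{rec11} gives
\begin{equation*}
\mathfrak{p}_{A^{\prime}}({\bf x}) \ = \ \Phi_{A,\delta}({\bf x}) - U_{R_A}{\bf x} \ = \ \mathfrak{p}_{1,\delta}({\bf x}) \ + \ \sum_{j=2}^{\infty} \mathfrak{i}^{j-1}\, G_{j,\delta}({\bf x}^{(j)}).
\end{equation*}
By \eqref{support} the Walsh spectrum of $G_{j,\delta}({\bf x}^{(j)})$ lies in $W_{A_j}$, while the spectrum of $\mathfrak{p}_{1,\delta}({\bf x})$ lies in $W_{A_1}\setminus R_{A_1}$. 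Because the sets $A_j$ are pairwise disjoint by \eqref{rec3} and $\Omega_{A^{\prime}}$ carries the product structure \eqref{disjoint1}--\eqref{disjoint2}, these summands are mutually orthogonal in $L^2(\Omega_{A^{\prime}}, \mathbb{P}_{A^{\prime}})$, and the square of the $L^2$-norm of $\mathfrak{p}_{A^{\prime}}({\bf x})$ collapses to a sum of the squared $L^2$-norms of the summands.

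Next I would bound each summand. The estimate \eqref{rec2} gives $\|\mathfrak{p}_{1,\delta}({\bf x})\|_{L^2} \leq \delta\|{\bf x}\|_2$; and for $j \geq 2$, a further Parseval split $G_{j,\delta}({\bf x}^{(j)}) = U_{R_{A_j}}{\bf x}^{(j)} + \mathfrak{p}_{j,\delta}({\bf x}^{(j)})$ yields $\|G_{j,\delta}({\bf x}^{(j)})\|_{L^2}^2 \leq (1+\delta^2)\|{\bf x}^{(j)}\|_2^2$, while Lemma \ref{rec10} gives $\|{\bf x}^{(j)}\|_2 \leq \delta^{j-1}\|{\bf x}\|_2$. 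Summing the resulting geometric series produces $\|\mathfrak{p}_{A^{\prime}}({\bf x})\|_{L^2}^2 \leq \tfrac{2\delta^2}{1-\delta^2}\|{\bf x}\|_2^2$, so choosing $\delta \in (0,1)$ with $\delta\sqrt{2/(1-\delta^2)} \leq \epsilon$ closes the argument and returns $K = \beta_R(\delta)/(1-\delta)$ depending only on $\epsilon$.

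I do not foresee a genuine obstacle here: the corollary is essentially a repackaging of Theorem \ref{MT1}. The only place requiring mild care is the pairwise $L^2$-orthogonality of the terms in the decomposition of $\Phi_{A,\delta}$, which rests on the disjoint-union structure of $A^{\prime}$ combined with the spectral localization \eqref{support}; without this the geometric bound would be contaminated by cross terms and the $L^2$-control of the perturbation could not be made arbitrarily small.
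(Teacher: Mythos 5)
Your proposal is correct and follows essentially the same route as the paper: take $G_A = \Phi_{A,\delta}$, isolate $\mathfrak{p}_{A^{\prime}}({\bf{x}}) = \mathfrak{p}_{1,\delta}({\bf{x}}) + \sum_{j \geq 2} \mathfrak{i}^{j-1} G_{j,\delta}({\bf{x}}^{(j)})$, and use the pairwise $L^2$-orthogonality of the summands together with \eqref{rec2} and Lemma \ref{rec10} to get $\|\mathfrak{p}_{A^{\prime}}({\bf{x}})\|_{L^2}^2 \leq \frac{2\delta^2}{1-\delta^2}\|{\bf{x}}\|_2^2$, then tune $\delta$ (the paper takes $\delta_{\epsilon} = \epsilon/\sqrt{2+\epsilon^2}$). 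Your explicit flagging of the orthogonality step, which the paper leaves implicit, is exactly the right point of care.
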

  \begin{proof} \ For $\delta > 0$ (to be determined) and ${\bf{x}} \in \mathcal{S}_{l^2(A)}$, rewrite \eqref{rec11} 
  \begin{equation}
  \begin{split}
  \Phi_{A,\delta}({\bf{x}}) \ &= \  \sum_{j=1}^{\infty} \  \mathfrak{i}^{j-1} \ G_{j,\delta}({\bf{x}}^{(j)})\\\
  &= \ U_{R_{A}}{\bf{x}} \ + \ \bigg(\mathfrak{p}_{1,\delta}({\bf{x}}) \ + \ \sum_{j=2}^{\infty} \  \mathfrak{i}^{j-1} \ G_{j,\delta}({\bf{x}}^{(j)}) \bigg),
  \end{split} 
  \end{equation}
  where $({\bf{x}}^{(j)})_j$ is the vector-cascade given by \eqref{rec1}.  Let
 \begin{equation}
 \mathfrak{p}_{A^{\prime}}({\bf{x}}) \ \overset{def}{=} \ \mathfrak{p}_{1,\delta}({\bf{x}}) \ + \ \sum_{j=2}^{\infty} \  \mathfrak{i}^{j-1} \ G_{j,\delta}({\bf{x}}^{(j)}),
 \end{equation} 
 and use \eqref{rec2},  \eqref{rec1}, and \eqref{rec8} to deduce
  \begin{equation}
  \begin{split}
  \|\mathfrak{p}_{A^{\prime}}({\bf{x}}) \|_{L^2} \ \leq \ \bigg(\delta^2 \ + \ \sum_{j=2}^{\infty} \delta^{2j-2} + \delta^{2j}\bigg)^{\frac{1}{2}} \ \leq \ \delta \sqrt{\frac{2}{1-\delta^2}} \ .
   \end{split}
  \end{equation}
 For $\epsilon > 0$, let 
 \begin{equation}
 \delta \ = \ \delta_{\epsilon} \ = \ \frac{\epsilon}{\sqrt{2+\epsilon^2}} \ ,  \ \ \ \ \ K = \frac{\beta_R(\delta_{\epsilon})}{1-\delta_{\epsilon}} \ ,
 \end{equation} 
 and
 \begin{equation}
 G_A({\bf{x}}) \ = \ \Phi_{A,\delta_{\epsilon}}({\bf{x}}), \ \ \ \ \ {\bf{x}} \in l^2(A).
 \end{equation} 
  \end{proof}
 \begin{remark}[more about constants...] \label{ultra} \ Consider 
 %the \emph{ultra-interpolation constant} of Rademacher systems
\begin{equation} 
\kappa^{\star,u} \ \overset{\text{def}}{=} \ \sup_A \ \inf \big \{  \|\Phi_A\|_{l^2 \hookrightarrow L^{\infty}}: \text{$(l^2 \hookrightarrow L^{\infty})$-ultra-interpolants $\Phi_A$ (on $R_A$)} \big \}.
\end{equation}
%where supremum is over all sets $A$, and infimum is over all  $(l^2 \hookrightarrow L^{\infty})$-ultra-interpolants $\Phi_A$ (on $R_A$).  
(Cf.  \eqref{dualg} and \eqref{Khint01}.) From definitions and Theorem \ref{MT1},
\begin{equation} \label{optimal?}
\mathcal{K}^{\star}  \ \leq \ \kappa^{\star,u}  \ \leq \ \frac{\beta_R(\delta)}{1-\delta}, \ \ \ \ \ \delta \in (0,1). 
%\ \ \ \ \  ( \ \sqrt{2} \  = \ )  \ \kappa^{\star} \ \leq \  \kappa^{\star,u},
\end{equation}
%which are,  respectively, upgrades of the Grothendieck and Khintchin inequalities (previewed in Remark \ref{upgrade2}), 
%and from Theorem \ref{MT1},
%\begin{equation} \label{optimal?}
% \kappa^{\star,u} \ \leq \ \frac{\beta_R(\delta)}{1-\delta}, \ \ \ \ \ \delta \in (0,1).
%\end{equation}
The assertion
\begin{equation} \label{upgrade5}
\kappa^{\star,u}  \ <  \ \infty
\end{equation}
is indeed an upgrade of the Grothendieck inequality as well as the Khintchin inequalty.

 In particular, the Grothendieck inequality is obtained from its upgrade in \eqref{optimal?} via  \eqref{conn2},
\begin{equation} \label{bypass}
\mathcal{K}_G \ \leq \ \big(\mathcal{K}^{\star}\big)^2 \ \leq \ \big(\kappa^{\star,u} \big)^2 \ \leq \ \bigg(\frac{\beta_R(\delta)}{1-\delta}\bigg)^2, \ \ \ \ \ \delta \in (0,1). 
\end{equation}
%which, in particular, implies the Grothendieck inequality.  
%By elaying the construction of the ultra-interpolants in Theorem \ref{MT1}, 
We can deduce $\mathcal{K}_G < \infty$  directly from Lemma \ref{Rep}, bypassing its upgrade, thereby obtaining an estimate of $\mathcal{K}_G$ sharper than \eqref{bypass}.  Namely,  from Lemma \ref{Rep}, for a set $A$, finite set $B$, scalar array  ${\bf{a}} = (a_{uv})_{(u,v) \in B \times B},$ \ ${\bf{x}}_{u} \in \mathcal{S}_{l^2(A)}$ and  \  ${\bf{y}}_{v} \in \mathcal{S}_{l^2(A)}$, \ $(u,v) \in B \times B$,  
\begin{equation}
\begin{split}
\big|&\sum_{(u,v)  \in  B \times B} a_{uv} {\bf{x}}_{u} \boldsymbol{\cdot} {\bf{y}}_{v} \big|  \\\
&= \ \big| \sum_{(u,v) \in B \times B} a_{uv} \  \sum_{j=1}^{\infty}(-1)^{j-1} \  \int_{\Omega_{A^{\prime}}} G_{j,\delta}({\bf{x}}_u^{(j)}) \  G_{j,\delta}({\bf{y}}_v^{(j)}) \ d\mathbb{P}_{A^{\prime}} \big| \\\
& \leq \    \sum_{j=1}^{\infty}  \ \int_{\Omega_{A^{\prime}}}\big| \sum_{(u,v) \in B \times B} a_{uv} \  G_{j,\delta}({\bf{x}}_u^{(j)}) \  G_{j,\delta}({\bf{y}}_v^{(j)})\big| \ d\mathbb{P}_{A^{\prime}} \\\
%& \leq \ \frac{\pi^2}{4} \sum_{j=0}^{\infty} \bigg(\kappa_R^{(\mathbb{R})}(\delta_{j+1})\prod_{k=0}^{j} \delta_k\bigg)^2 \ \|{\bf{a}}\|_{{\vee \atop \otimes}_{\mathbb{R}}}    
 % \ \ \ \ \ \text{in the 'real' case,} \\\
%  \text{and} & \ \\\
 & \leq \  \sum_{j=1}^{\infty}  \ \beta^2 \ \delta^{2(j-1)} \  \|{\bf{a}}\|_{{\vee \atop \otimes}} \ = \ \bigg(\frac{\beta^2}{1-\delta^2} \bigg)\|{\bf{a}}\|_{{\vee \atop \otimes}}, \ \ \ \ \ \ \beta = \beta_R(\delta),
\end{split}
\end{equation} \\
and hence
\begin{equation} \label{optimal}
\mathcal{K}_G \ \leq \ \frac{\big(\beta_R^2(\delta) \big)^2}{1-\delta^2} \ < \ \bigg(\frac{\beta_R(\delta)}{1-\delta}\bigg)^2,\ \ \ \ \ \delta \in (0,1).
\end{equation}\\

Whether $\Lambda(2)$-uniformizability -- of Rademacher or other independent system, e.g., Steinhaus, Gaussian -- could lead to a determination of \ $\mathcal{K}_G$ is open to speculation.  (Cf.  Lemma \ref{trunc2} and Remark \ref{upgrade4}.)
\end{remark}

\begin{remark} [continuity] \ To construct $(l^2 \rightarrow L^2)$-continuous ultra-interpolants, we need $(l^2,L^{\infty})$-perturbations that are  'small'  as well as  $(l^2 \rightarrow L^2)$-continuous.  
%(For motivation, see  \ \S  \ref{upgrade1} \ \emph{A brief preview}.)   
For a set $A$ and $\delta > 0$,  let
\begin{equation}
\begin{split}
 \beta^c(A,\delta)  \ \overset{\text{def}}{=} \ \inf \big \{\|U_{R_A} +  \mathfrak{p}_A)\|_{l^2 \hookrightarrow L^{\infty}}: (l^2 \rightarrow L^2)\text{-continuous} \ \mathfrak{p}_A: \mathcal{S}_{l^2(A)} \rightarrow \mathcal{B}_{\delta, L^2_{W_A \setminus R_A}} \big \},\\\ \ \ \ \ \ \ \ \ \ \ 
 \end{split} 
 \end{equation}
where $ \mathcal{B}_{\delta, L^2_{W_A \setminus R_A}}$ is the $\delta$-ball in $L^2_{W_A \setminus R_A}(\Omega_A,\mathbb{P}_A)$, and then
\begin{equation}  \label{unifindex}
\beta^c_{R}(\delta) \  \overset{\text{def}}{=} \ \sup_{A}\beta^c(A,\delta) \ .
\end{equation} 
% Note (\emph{a fortiori})
%\begin{equation}
%\kappa_R(\delta) \ \leq \ u_{R}(\delta), \ \  \ \ \ \delta > 0.
%\end{equation}\
\begin{proposition} \label{contunif}
\begin{equation}
\kappa \ \leq \ \beta_{R}(\delta) \ \leq  \ \beta^c_{R}(\delta) \  = \mathcal{O}\big(1/\sqrt{\delta}), \ \  \ \ \ 0 <\delta < 1.
\end{equation}
\end{proposition}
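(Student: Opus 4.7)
The first two inequalities are immediate: $\kappa \leq \beta_R(\delta)$ is Corollary \ref{trunc3}, and $\beta_R(\delta) \leq \beta^c_R(\delta)$ because the infimum defining $\beta^c_R(\delta)$ runs over a subclass of the perturbations defining $\beta_R(\delta)$. The content is the rate $\beta^c_R(\delta) = \mathcal{O}(1/\sqrt{\delta})$, which I would obtain by re-running the construction of Lemma \ref{trunc2} with every step made $(l^2 \to L^2)$-continuous.

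Fix a set $A$ and $\delta \in (0,1)$, and replace the hard truncation in \eqref{khint55} by the Lipschitz radial projection $\tau_\xi : \mathbb{C} \to \mathbb{C}$ given by $\tau_\xi(z) = z$ for $|z| \leq \xi$ and $\tau_\xi(z) = \xi z/|z|$ for $|z| > \xi$. Setting $\tilde h_{\mathbf{x},\xi} = \tau_\xi \circ U_{R_A}\mathbf{x}$ and $\tilde\phi_{\mathbf{x},\xi} = U_{R_A}\mathbf{x} - \tilde h_{\mathbf{x},\xi}$, the map $\mathbf{x} \mapsto \tilde h_{\mathbf{x},\xi}$ is $(l^2 \to L^2)$-Lipschitz (since $\tau_\xi$ is $1$-Lipschitz), $\|\tilde h_{\mathbf{x},\xi}\|_{L^\infty} \leq \xi$, and the pointwise inequality $|z - \tau_\xi(z)| \leq |z|\mathbf{1}_{\{|z| > \xi\}}$ reduces the analysis of $\tilde\phi_{\mathbf{x},\xi}$ to exactly \eqref{khint22}, yielding $\|\tilde\phi_{\mathbf{x},\xi}\|_{L^2}^2 \leq \xi^2 e^{-\xi^2} \mathfrak{K}$ (for $\xi \geq 1$). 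Put $\mathbf{v}_{\mathbf{x}}(\alpha) = \widehat{\tilde\phi_{\mathbf{x},\xi}}(r_\alpha)$, so that $\mathbf{x} \mapsto \mathbf{v}_{\mathbf{x}}$ is $(l^2 \to l^2)$-continuous with $\|\mathbf{v}_{\mathbf{x}}\|_2 \leq \|\tilde\phi_{\mathbf{x},\xi}\|_{L^2}$, and apply a $(l^2 \to L^2)$-continuous $(l^2(A) \hookrightarrow L^\infty(\Omega_A,\mathbb{P}_A))$-interpolant $G_A^{(c)}$ on $R_A$ with $\|G_A^{(c)}\|_{l^2 \hookrightarrow L^\infty} \leq \kappa^{\star,c}$, whose existence is supplied by the Riesz product construction of \S\ref{RieszP}. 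Define $g(\mathbf{x}) = \tilde h_{\mathbf{x},\xi} + G_A^{(c)}(\mathbf{v}_{\mathbf{x}})$ and $\mathfrak{p}_A(\mathbf{x}) = g(\mathbf{x}) - U_{R_A}\mathbf{x}$; cancellation of the Rademacher spectra forces $\widehat{g(\mathbf{x})}(r_\alpha) = \mathbf{x}(\alpha)$, so $\mathfrak{p}_A$ is a $(l^2 \to L^2)$-continuous map into $L^2_{W_A \setminus R_A}$, and the triangle inequality gives $\|\mathfrak{p}_A(\mathbf{x})\|_{L^2} \leq (1 + \kappa^{\star,c}) \|\tilde\phi_{\mathbf{x},\xi}\|_{L^2}$ and $\|g(\mathbf{x})\|_{L^\infty} \leq \xi + \kappa^{\star,c} \|\mathbf{v}_{\mathbf{x}}\|_2$.

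Choosing $\xi = C/\sqrt{\delta}$ for a sufficiently large absolute constant $C$ drives the first bound below $\delta$ (with exponential slack, since $\xi e^{-\xi^2/2}$ is super-polynomially small in $1/\delta$), while keeping $\|g(\mathbf{x})\|_{L^\infty} = \mathcal{O}(1/\sqrt{\delta})$ as asserted. The principal difficulty is not the truncation step but the correction step: the choice-function interpolants used in Lemma \ref{trunc2} carry no continuity guarantee, so one must substitute a genuinely continuous interpolant, and that substitution is exactly what the forthcoming Riesz product construction in \S\ref{RieszP}.\textbf{i} accomplishes. Granting that input, the rest is a routine Lipschitz-truncation refinement of Lemma \ref{trunc2}; the stated rate $1/\sqrt{\delta}$ is a convenient, non-sharp consequence of \eqref{Khinteq} (an optimization of $\xi$ against the exp-square tail would in fact yield $\mathcal{O}(\sqrt{\log(1/\delta)})$).
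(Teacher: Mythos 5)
Your proposal is correct in substance, but it is not the paper's argument. The paper's proof is a one-step application of the $L^{\infty}$-valued Riesz product of \S \ref{RieszP}.\textbf{i}: taking $\epsilon = \sqrt{\delta}$, the interpolant $Q_{A,\epsilon}$ already comes with a $(l^2 \rightarrow L^2)$-continuous orthogonal perturbation $g_{\epsilon}$ satisfying $\|g_{\epsilon}({\bf{x}})\|_{L^2} \leq \epsilon^2 \|{\bf{x}}\|_2 = \delta\|{\bf{x}}\|_2$ by \eqref{estimate4}, while $\|Q_{A,\epsilon}({\bf{x}})\|_{L^{\infty}} \leq (e^{\delta/2}/\sqrt{\delta})\|{\bf{x}}\|_2$ by \eqref{estimate3}; no truncation enters, and the $1/\sqrt{\delta}$ rate is exactly what the single parameter $\epsilon$ delivers. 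Your route instead reruns the decomposition of Lemma \ref{trunc2} with the hard cut-off replaced by the radial Lipschitz projection, and uses a Riesz-product interpolant at a \emph{fixed} $\epsilon$ only as the continuous corrector of the tail; the spectral cancellation and the norm estimates you give are sound (minor point: the corrector should carry a concrete bound such as $\|Q_{A,1}\|_{l^2 \hookrightarrow L^{\infty}} \leq 2\sqrt{e}$ rather than the infimum $\kappa^{\star,c}$ itself). What your approach buys is precisely what you note at the end: optimizing $\xi$ against the exponential-square tail \eqref{Khinteq} gives $\beta^c_{R}(\delta) = \mathcal{O}\big(\sqrt{\log(1/\delta)}\big)$, matching the non-continuous bound of Corollary \ref{trunc3} and improving the $\mathcal{O}(1/\sqrt{\delta})$ estimate stated here --- which bears directly on the question raised in Remark \ref{extremal} of whether that estimate is optimal. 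The price is that your construction is two-step and still requires the Riesz product as input, so it is not more elementary, only sharper.
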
\

\noindent
(Cf. \eqref{whereas}.)  A proof using Riesz products is outlined in \S \ref{RieszP}.{\bf{i}} below.\\
\end{remark}

\subsection{Riesz products} \label{RieszP} \ For a set $A$ and ${\bf{x}} \in \mathbb{C}^A$,  define
 \begin{equation} \label{Riesz1}
 \mathfrak{R}_A({\bf{x}}) \ = \ \prod_{\alpha \in A} \big(r_0+ {\bf{x}}(\alpha) r_{\alpha} \big)
\end{equation} 
to be the Walsh series
\begin{equation} \label{Riesz2}
\mathfrak{R}_A({\bf{x}}) \ \overset{\text{def}}{=} \ r_0 + \sum_{k=1}^{\infty} \bigg(\sum_{\{\alpha_1,\ldots, \alpha_k\} \subset A} {\bf{x}}(\alpha_1) \cdots  {\bf{x}}(\alpha_k) \ r_{\alpha_1} \cdots r_{\alpha_k}\bigg).
\end{equation}\\  
%Precisely put, $\mathfrak{R}_A$ is a Walsh series-valued function defined on $\mathbb{C}^A$ by \eqref{Riesz2}.  
At the very outset, $\mathfrak{R}_A$ is merely a function on $\mathbb{C}^A$, whose range comprises $W_A$-series formally given by \eqref{Riesz2}.
%;  the nature of objects on $\Omega_A$ represented by  the $W_A$-series in \eqref{Riesz2} will vary with $A$ and ${\bf{x}} \in \mathbb{C}^A$. 
If $A$ is a finite set, then \eqref{Riesz1} is a bona fide product representing a Walsh polynomial on $\Omega_A$.   But if $A$ is infinite, then the object represented by  \eqref{Riesz2} will depend on ${\bf{x}} \in \mathbb{C}^A$, as well as the mode of convergence of the series in  \eqref{Riesz2}.  The construct in \eqref{Riesz1}, which appeared first in the circle group setting -- with lacunary exponentials in place of Rademacher characters \citep{Riesz:1918} -- is known as a  \emph{Riesz product}. \ \\

 The following general properties of $\mathfrak{R}_A$ are key: \  \\ 
 
 \noindent
 {\bf{1}}. \ $\mathfrak{R}_A$ is a $(\mathbb{C}^A \hookrightarrow \mathfrak{S}_{W_A})$-interpolant on $R_A$, i.e.,
 \begin{equation} \label{interop}
\big( \mathfrak{R}_A({\bf{x}}) \big)^{\wedge}(r_{\alpha}) \ = \ {\bf{x}}(\alpha),\ \ \ \ \ {\bf{x}} \in \mathbb{C}^A, \ \ \alpha \in A,
\end{equation}
 with an orthogonal perturbation
 \begin{equation}
 \mathfrak{p}_A({\bf{x}}) \ = \ r_0 + \sum_{k=2}^{\infty} \bigg(\sum_{\{\alpha_1,\ldots, \alpha_k\} \subset A} {\bf{x}}(\alpha_1) \cdots  {\bf{x}}(\alpha_k) \ r_{\alpha_1} \cdots r_{\alpha_k}\bigg), \ \ \ \ \ {\bf{x}} \in \mathbb{C}^A.
 \end{equation}
 (But for the presence of norms and \eqref{temp0}, $ \mathfrak{R}_A$ satisfies requirements in Remark \ref{temp} with $\Gamma = W_A$, \ $E = R_A$, \ $\mathfrak{X} = \mathbb{C}^A$, \ $\mathfrak{B} = \mathfrak{S}_{W_A}.$)\\

 \noindent
 {\bf{2}}. \ Restrictions of  $\widehat{\mathfrak{R}}_A$ to $W_{A,k}$ are $k$-homogeneous.  Namely,  for scalars $c$ and ${\bf{x}} \in \mathbb{C}^A$, 
\begin{equation}
\mathfrak{R}_A(c {\bf{x}}) \ = \ r_0 \ + \ \sum_{k=1}^{\infty} \bigg(  \sum_{\{\alpha_1,\ldots, \alpha_k\} \subset A} c^{k} \ {\bf{x}}(\alpha_1) \cdots  {\bf{x}}(\alpha_k) \  r_{\alpha_1} \cdots r_{\alpha_k}\bigg),
\end{equation}
and therefore,
\begin{equation} \label{khomog}
\begin{split}
\big(\mathfrak{R}_A(c {\bf{x}}) \big)^{\wedge}(w) \ & = \ c^{k} \ {\bf{x}}(\alpha_1) \cdots  {\bf{x}}(\alpha_k) \ = \ c^k \big(\mathfrak{R}_A({\bf{x}}) \big)^{\wedge}(w),\\\\
& \ \ \ \ \ \ \ \ \ \ \ \ \ \ \ \ \ \ \ \ \ \ \ \ \  w \in W_{A,k}, \ \ w = r_{\alpha_1} \cdots r_{\alpha_k}, \ \ k = 1,2, \ldots \ .\\
\end{split}
\end{equation} \ \\

\noindent 
{\bf{3}}. \ For $p \in (0,\infty)$, 
\begin{equation} \label{extendp}
\begin{split}
 \big \|(\mathfrak{R}_A({\bf{x}})\big)^{\wedge}\big \|_p^p \ & = \  1 + \sum_{k=1}^{\infty} \ \sum_{\{\alpha_1,\ldots, \alpha_k\} \subset A} |{\bf{x}}(\alpha_1) \cdots {\bf{x}}(\alpha_k)|^p, \\\\
&  \leq \ \sum_{k=0}^{\infty}\frac{1}{k!} \bigg(\sum_{\alpha \in A} |{\bf{x}}(\alpha)|^p\bigg)^k\  = \  e^{\|{\bf{x}}\|_p^p}, \ \ \ \ {\bf{x}} \in \mathbb{C}^A.
\end{split}
\end{equation} \ \\

\noindent 
{\bf{4}}. \ For  ${\bf{x}}$ and  ${\bf{y}}$  in $\mathbb{C}^A$, the product ${\bf{x}} {\bf{y}}$ is the usual point-wise product of functions, i.e.,
\begin{equation}
({\bf{x}}{\bf{y}})(\alpha) \ \overset{\text{def}}{=} \  {\bf{x}}(\alpha){\bf{y}}(\alpha), \ \ \ \ \ \alpha \in A. 
\end{equation}\\
For \  ${\bf{a}}$  and \ ${\bf{b}}$  in $\mathbb{C}^{W_A},$ \  
the \emph{convolution} of the Walsh series\\
\begin{equation}
\mathcal{S}_A[{\bf{a}}]  \ = \  \sum_{w \in W_A} {\bf{a}}(w) w, \ \ \ \ \ \mathcal{S}_A[{\bf{b}}]  \ = \  \sum_{w \in W_A} {\bf{b}}(w) w,
\end{equation}\\  
is defined by\\ 
\begin{equation}  \label{convolution}
\mathcal{S}_A[{\bf{a}}] \convolution \mathcal{S}_A[{\bf{b}}] \ \overset{\text{def}}{=} \ \mathcal{S}_A[{\bf{a}}{\bf{b}}] \ = \  \sum_{w \in W_A} {\bf{a}}(w){\bf{b}}(w) w, 
\end{equation}\\
which is consistent with usual convolution in classical spaces.\footnote{\label{footnote2} 
For  $\mu$ and $\nu$ in $M(\Omega_A)$, the convolution $\mu \convolution \nu \in M(\Omega_A)$ is defined via the duality  $C(\Omega_A)^{\star} = M(\Omega_A)$ (Riesz-Kakutani), and satisfies
\begin{equation} \label{con22}
\mu \convolution \nu \ = \ \mathcal{S}_A[\widehat{\mu}] \convolution \mathcal{S}_A[\widehat{\mu}], 
\end{equation}  
where convolution on the right-hand side is defined by \eqref{convolution}.}   Then, from \eqref{Riesz2} and \eqref{convolution},  
\begin{equation} \label{conv22}
\mathfrak{R}_A({\bf{x}}) \convolution \mathfrak{R}_A({\bf{y}}) \ = \ \mathfrak{R}_A({\bf{x}} {\bf{y}}), \ \ \ \ \ {\bf{x}} \in \mathbb{C}^A, \  {\bf{y}} \in \mathbb{C}^A.
\end{equation}\\

\begin{remark} [a broader view] \ Let  $\mathcal{Z}$ be a \emph{unital} normed algebra $\mathcal{Z}$ over the complex scalars $\mathbb{C}$.  Mimicking  \eqref{Riesz2}, we can take  $\mathfrak{R}_A$ to be the function on $\mathcal{Z}^A$, whose values are the $\mathcal{Z}$-valued $W_A$-series 
\begin{equation*} 
\mathfrak{R}_A({\bf{z}}) \ \overset{\text{def}}{=} \ {\bf{z}}_0r_0 + \sum_{k=1}^{\infty} \sum_{\{\alpha_1,\ldots, \alpha_k\} \subset A} \frac{1}{k!} \bigg(\sum_{\pi \in \mathcal{S}_k} {\bf{z}}(\alpha_{\pi(1)}) \cdots {\bf{z}}(\alpha_{\pi(k)}) \bigg)\ r_{\alpha_1} \cdots r_{\alpha_k}, \ \ \ \ \ {\bf{z}} \in \mathcal{Z}^A,
\end{equation*} 
where ${\bf{z}}_0 = $ multiplicative unit of $\mathcal{Z}$, and $\mathcal{S}_k$ is the permutation group of $\{1,\ldots,k\}$.  The interpolation property in \eqref{interop} and the $k$-homogeneity property in \eqref{khomog} carry over verbatim, and the norm estimate in \eqref{extendp} becomes 
\begin{equation}\label{Riesz6}
\sum_{w \in W_A} \|\big(\mathfrak{R}_A({\bf{z}})\big)^{\wedge}(w)\|^p \ \leq \ \exp\big(\sum_{\alpha}\|{\bf{z}}(\alpha)\|^p\big),  \ \ \ \ {\bf{z}} \in \mathcal{Z}^A,
\end{equation}
 where  $\| \boldsymbol{\cdot}\|$ is the $\mathcal{Z}$-norm. For commutative $\mathcal{Z}$, \begin{equation} \label{Riesz5}
\mathfrak{R}_A({\bf{z}}) \ = \ {\bf{z}}_0r_0 + \sum_{k=1}^{\infty} \sum_{\{\alpha_1,\ldots, \alpha_k\} \subset A} {\bf{z}}(\alpha_1) \cdots {\bf{z}}(\alpha_k) \ r_{\alpha_1} \cdots r_{\alpha_k}, \ \ \ \ \ {\bf{z}} \in \mathcal{Z}^A,
\end{equation}
define the products
\begin{equation} \label{Riesz4} 
\mathfrak{R}_A({\bf{z}}) \  {=} \ \prod_{\alpha \in A} \big({\bf{z}}_0r_0 + {\bf{z}}(\alpha) r_{\alpha} \big), \ \ \ \ \ {\bf{z}} \in \mathcal{Z}^A,
\end{equation} 
 in which case also the relation in \eqref{conv22} carries over, where convolution is defined  by \eqref{convolution} with  $\mathcal{Z}$ in place of $\mathbb{C}$.  In this work, $\mathcal{Z}$ will be the real and complex fields, spaces of essentially bounded measurable functions (with point-wise multiplication), and spaces of measures (with convolution). In the latter two cases, $\mathcal{Z} = L^{\infty}$ and $\mathcal{Z} = M$, we will refer to \emph{product amalgams} and \emph{convolution amalgams}, respectively.  
 A recurring task will be to identify objects represented by these products, either in their scalar form \eqref{Riesz2}, or in their more general guise \eqref{Riesz5}.

\end{remark} \ 

 We make use of two classical scenarios that originated in \citet{salem1947lacunary} and \citet{Riesz:1918}. \ \\ 

\noindent
{\bf{i}} \ ($L^{\infty}$-valued Riesz products). \ For ${\bf{x}} \in l^2_{\mathbb{R}}(A)$ (a real-valued Euclidean vector) and $\epsilon > 0$, let\\\
\begin{equation} \label{Riesz3}
\begin{split}
Q_{A,\epsilon}&({\bf{x}}) \ \overset{\text{def}}{=} \ \frak{Im} \ \frac{\|{\bf{x}}\|_2}{\epsilon} \ {\mathfrak{R}_A(\mathfrak{i}\epsilon \boldsymbol{\sigma}{\bf{x}})} \ \ \ \ \ \ \  \text{($\mathfrak{i} = \sqrt{-1}$, \ \ $\frak{Im} = $   \emph{imaginary part})}\\\\
 &= \ \sum_{\alpha \in A}{\bf{x}}(\alpha) r_{\alpha} + \sum_{k=1}^{\infty}(-1)^k \bigg(\frac{\epsilon}{\|{\bf{x}}\|_2}\bigg)^{2k} \bigg(\sum_{\{\alpha_1,\ldots, \alpha_{2k+1}\} \subset A}{\bf{x}}(\alpha_1) \cdots  {\bf{x}}(\alpha_{2k+1}) r_{\alpha_1} \cdots r_{\alpha_{2k+1}}\bigg) \\\\
 &= \  U_{R_A}{\bf{x}} + g_{\epsilon}({\bf{x}}).
 \end{split}
  \end{equation}
  whence
  \begin{equation} \label{interp10}
  \widehat{Q_{A,\epsilon}({\bf{x}})}(r_{\alpha}) \ = \ {\bf{x}}(\alpha), \ \ \ \ \alpha \in A.
  \end{equation}\\
  By a computation similar to  \eqref{extendp},\\
   \begin{equation} \label{estimate4}
   \begin{split}
  \|g_{\epsilon}({\bf{x}})\|_{L^2} \ &\leq \ \bigg(\frac{ \sqrt{\sinh \epsilon^2 -\epsilon^2}}{\epsilon} \bigg)\|{\bf{x}}\|_2 \\\\
  & \leq \ \epsilon^2 \|{\bf{x}}\|_2, \ \ \ \ \ 0 < \epsilon < 1. 
  \end{split} 
  \end{equation}\\
 If $E \subset A$ is finite, then
\begin{equation} \label{es3}
\max_{\xi \in \Omega_E} \big|\big({\mathfrak{R}_E(\mathfrak{i}\epsilon  \boldsymbol{\sigma}{\bf{x}})}\big)(\xi)\big | \ = \  \prod_{\alpha \in E} \bigg(1 + \epsilon^2|{\bf{x}}(\alpha)|^2/\|{\bf{x}}\|_2^2\bigg)^{1/2} \ \leq \ e^{\epsilon^2/2}.
\end{equation}
Therefore (e.g.,  \citet[Lemma 4.3]{blei2014grothendieck}), \ $Q_{A,\epsilon}({\bf{x}}) \in L^{\infty}(\Omega_A,\mathbb{P}_A)$, and 
\begin{equation} \label{estimate3}
  \|Q_{A,\epsilon}({\bf{x}})\|_{L^{\infty}} \leq \big(\frac{e^{\epsilon^2/2}}{\epsilon}\big)\|{\bf{x}}\|_2 \ .  \end{equation}
 For  ${\bf{x}} = {\bf{u}} + \mathfrak{i} {\bf{v}}, \ {\bf{u}} \in l^2_{\mathbb{R}}(A), \ {\bf{v}} \in l^2_{\mathbb{R}}(A)$, let
\begin{equation} \label{imagi}
Q_{A,\epsilon}({\bf{x}}) \ \overset{\text{def}}{=} \ Q_{A,\epsilon}({\bf{u}}) \ + \ \mathfrak{i} Q_{A,\epsilon}({\bf{v}}),
\end{equation}
 and  $$g_{\epsilon}({\bf{x}}) \ \overset{\text{def}}{=}  \ Q_{A,\epsilon}({\bf{x}}) - U_{R_A}{\bf{x}}.$$   Then,  $Q_{A,\epsilon}$ is a $(l^2 \hookrightarrow L^{\infty})$-interpolant with orthogonal $(l^2,L^{\infty})$-perturbation $g_{\epsilon}$. 
 
 \begin{proof}[Proof of Proposition \ref{contunif}] Apply  \eqref{estimate3},  \eqref{estimate4}, and that
 \begin{equation}
 g_{\epsilon}: \mathcal{S}_{l^2(A)} \rightarrow L^2_{W_A \setminus R_A} 
 \end{equation} 
 is $(l^2 \rightarrow L^2)$-continuous (e.g., \citet[Lemma 4.4]{blei2014grothendieck}).\\
 \end{proof}

 \noindent
 {\bf{ii}} \ ($M$-valued Riesz products). \  
 %we have the $M(\Omega_A)$-valued Riesz products; cf.  \cite[(4.5)]{blei2014grothendieck}. 
 For  ${\bf{x}} \in l^{\infty}_{\mathbb{R}}(A)$ \ and  \ $0 < \epsilon \leq 2$, \  let\\
\begin{equation} \label{RieszM}
\begin{split}
&P_{A,\epsilon}({\bf{x}}) \ 
 \overset{\text{def}}{=} \ \frac{ \|{\bf{x}}\|_{\infty}}{\epsilon}\bigg( \mathfrak{R}_A\big(\frac{\epsilon \boldsymbol{\sigma}_{\infty}{\bf{x}}}{2}\big) \ - \  \mathfrak{R}_A\big(-\frac{\epsilon \boldsymbol{\sigma}_{\infty}{\bf{x}}}{2}\big)\bigg) \ \ \ \ \ \ \ \ \ \ \ \ \ \\\\
 &=  \  \sum_{k=0}^{\infty} \bigg(\frac{\epsilon}{2 \|{\bf{x}}\|_{\infty}}\bigg)^{2k} \bigg(\sum_{\{\alpha_1,\ldots, \alpha_{2k+1}\} \subset A}{\bf{x}}(\alpha_1) \cdots  {\bf{x}}(\alpha_{2k+1}) r_{\alpha_1} \cdots r_{\alpha_{2k+1}}\bigg), \ \ \ \ \ {\bf{x}} \neq {\bf{0}}.
\end{split}
\end{equation}
and write
\begin{equation}
P_{A,\epsilon}({\bf{x}}) \ = \ U_{R_A}{\bf{x}} + h_{\epsilon}({\bf{x}}),
\end{equation}
i.e.,
\begin{equation}
h_{\epsilon}({\bf{x}}) \ = \ \sum_{k=1}^{\infty} \bigg(\frac{\epsilon}{2 \|{\bf{x}}\|_{\infty}}\bigg)^{2k} \bigg(\sum_{\{\alpha_1,\ldots, \alpha_{2k+1}\} \subset A}{\bf{x}}(\alpha_1) \cdots  {\bf{x}}(\alpha_{2k+1}) r_{\alpha_1} \cdots r_{\alpha_{2k+1}}\bigg), \ \ \ \ {\bf{x}} \neq {\bf{0}}.
\end{equation} 
Then,
\begin{equation} \label{interp11}
\widehat{P_{A,\epsilon}({\bf{x}})}(r_{\alpha}) \ = \ {\bf{x}}(\alpha), \ \ \ \ \ \alpha \in A,
\end{equation}
and 
\begin{equation}
 \|\widehat{h_{\epsilon}({\bf{x}})}\|_{\infty} \leq \big(\frac{\epsilon^2}{4}\big) \|{\bf{x}}\|_{\infty}.
 \end{equation}\\ 
Moreover, if $E \subset A$ is finite, and \ $f = \sum_{w \in W_E} \widehat{f}(w)w$, \ then\\
\begin{equation}
\begin{split}
&\bigg| \sum_{w \in W_A}\widehat{P_{A,\epsilon}({\bf{x}})} \widehat{f}(w)\bigg| \ = \ \bigg| \sum_{w \in W_E}\widehat{P_{E,\epsilon}({\bf{x}})} \widehat{f}(w)\bigg| \ = \ \bigg|\int_{\Omega_E} f \ P_{E,\epsilon}({\bf{x}}) \  d \mathbb{P}_E\bigg| \ \leq \ \|f\|_{\infty} \ \|P_{E,\epsilon}({\bf{x}})\|_{L^1}\\\\
& = \ \|f\|_{\infty} \ \frac{ \|{\bf{x}}\|_{\infty}}{\epsilon} \bigg( \int_{\Omega_E}  \mathfrak{R}_E\big(\frac{\epsilon \boldsymbol{\sigma}_{\infty}{\bf{x}}}{2}\big) d \mathbb{P}_E  +   \int_{\Omega_E}  \mathfrak{R}_E\big(- \frac{\epsilon \boldsymbol{\sigma}_{\infty}{\bf{x}}}{2}\big)d \mathbb{P}_E \bigg) \  = \  \|f\|_{\infty} \ \frac{2 \|{\bf{x}}\|_{\infty}}{\epsilon}\\\\
& \ \ \ \ \ \ \ \ \ \ \text{(because $ \mathfrak{R}_E\big(\pm \frac{\epsilon \boldsymbol{\sigma}_{\infty}{\bf{x}}}{2}\big) \geq 0$}).
\end{split}
\end{equation}\\
From the norm density of Walsh polynomials in $C(\Omega_A)$, and the duality $$C(\Omega_A)^{\star}  \ =  \ M(\Omega_A) \ \  \ \text{(Riesz-Kakutani)},$$ we obtain  $P_{A,\epsilon}({\bf{x}}) \in M(\Omega_A)$, and 
 \begin{equation} \label{norm11}
 \big \|P_{A,\epsilon}({\bf{x}})\big \|_M \leq \big(\frac{2}{\epsilon}\big) \|{\bf{x}}\|_{\infty}.\\\
 \end{equation}\\\
  For arbitrary ${\bf{x}} \in l^{\infty}(A)$, define $P_{A,\epsilon}({\bf{x}})$ by applying $P_{A,\epsilon}$ in \eqref{RieszM} separately to the real and imaginary parts of  ${\bf{x}}$. (See \eqref{imagi}.) \ \\
  
  In summary, $P_{A,\epsilon}$  \ is a  $(l^{\infty} \hookrightarrow M)$-interpolant on $R_A$, with  orthogonal   $(l^{\infty},M)$-perturbation $h_{\epsilon}$ of $U_{R_A}$.
    
\begin{remark} [$\mathbb{R}$-homogeneity] \label{homog1} \  Spectra of \ $Q_{A,\epsilon}$ and \  $P_{A,\epsilon}$ comprise Walsh characters of odd order, i.e.,  
\begin{equation}
\widehat{Q_{A,\epsilon}({\bf{x}})}(w)  =  \widehat{P_{A,\epsilon}({\bf{x}})}(w) = 0, \ \ \ \ w \in W_{A,even}, \ \ {\bf{x}} \in l^2(A). 
\end{equation}
(See \eqref{odd}.)  In particular,
\begin{equation}
Q_{A,\epsilon}(-{\bf{x}}) \ = \ - Q_{A,\epsilon}({\bf{x}}),   \ \ \ \ \ 
P_{A,\epsilon}(-{\bf{x}}) \ = \ - P_{A,\epsilon}({\bf{x}}),
\end{equation}\
and therefore,\\
\begin{equation}
Q_{A,\epsilon}(c{\bf{x}})  =  c Q_{A,\epsilon}({\bf{x}})   \ \ \ \text{and} \ \ \  
P_{A,\epsilon}(c{\bf{x}})  =  c P_{A,\epsilon}({\bf{x}}), \ \ \ \   {\bf{x}} \in l^{\infty}(A), \ \ c \in \mathbb{R}.
\end{equation} \\
\end{remark}
  
\subsection{$L^{\infty}_{(s)}$-valued and $M_{(s)}$-valued Riesz products} \label{RieszP0} \  
For $s \in [1,\infty]$, consider 
\begin{equation} \label{xtendp}
\begin{split}
L^{p}_{(s)}(\Omega_A, \mathbb{P}_A) \ &\overset{\text{def}}{=} \  \big \{f \in L^{p}(\Omega_A, \mathbb{P}_A): \widehat{f} \in l^s(W_A) \big \}, \ \ \ \ 1 \leq p \leq \infty,\\\
and  \ \ \ \ \ \ \ \ \ \ \ \ \ \ \ \ \ \ \ \ \ \ \ \ \ \ \ \ \ & \\\ 
M_{(s)}(\Omega_A) \ &\overset{\text{def}}{=} \ \big \{\mu \in M(\Omega_A): \widehat{\mu} \in l^s(W_A) \big \}, 
\end{split}
\end{equation}\\
equipped with 
\begin{equation} \label{quasi}
\begin{split}
\|f\|_{L^{p}_{(s)}} \ & \overset{\text{def}}{=} \ \max\{\|f\|_{L^{p}}, \|\widehat{f}\|_s \}, \ \ \ \ f \in L^{p}_{(s)}(\Omega_A, \mathbb{P}_A), \\\\
%\end{equation}\\
%& \text{and} \\\
%\begin{equation} \label{quasi1}
\|\mu\|_{M_{(s)}}  \ & \overset{\text{def}}{=} \ \max\{\|\mu\|_M, \|\widehat{\mu}\|_s \}, \ \ \ \ \mu \in M_{(s)}(\Omega_A).
\end{split}
\end{equation}\\
The aforementioned are Banach spaces normed by \eqref{quasi}, where\\  
\begin{equation}
\begin{split}
 L^{\infty}_{(2)}(\Omega_A, \mathbb{P}_A) \ &= \ L^{\infty}(\Omega_A, \mathbb{P}_A),\\\\
 % \  = \ L^{\infty}_{(s)}(\Omega_A, \mathbb{P}_A), \ \ \ \ \ 2 \leq s \leq \infty,\\\\
L^{\infty}_{(1)}(\Omega_A, \mathbb{P}_A)  \  &= \ M_{(1)}(\Omega_A) \  = \ \mathbb{A}(\Omega_A) \ \ (\ {=} \ \text{absolutely convergent Walsh series}),\\\\
 M_{(2)}(\Omega_A)  \ &= \ L^2(\Omega_A,\mathbb{P}_A), \ \ \ \ \ M_{(\infty)}(\Omega_A)  \ =  \ M(\Omega_A).\\
%M_{(\infty)}(\Omega_A) & \ =  \ M(\Omega_A).
\end{split}
\end{equation}
Also, by Hausdorff-Young, 
\begin{equation}
L^p_{(q)}(\Omega_A,\mathbb{P}_A) \ = \ L^p(\Omega_A,\mathbb{P}_A), \ \ \ \ 1 \leq p \leq 2, \ \ \ q = \frac{p}{p-1}.
\end{equation}\\
%The measurements in \eqref{quasi} and \eqref{quasi1} are norms for $s \in [1,\infty]$, and  \emph{quasi-norms} for $s \in (0,1).$  
%Our focus here will be on $s \in [1, \infty]$, but  $s \in (0,1)$ will also be included whenever feasible.
If  \ $1  \leq s  \leq  2   \leq s^{\star}  \overset{\text{def}}{=}   \frac{s}{s-1}  \leq \infty,$ \  $f \in L^{\infty}_{(s)}(\Omega_A, \mathbb{P}_A)$, and  $\mu \in M_{(s^{\star})}(\Omega_A)$, then  the convolution \ $f \convolution \mu$ \ is in $\mathbb{A}(\Omega_A)$, and (therefore)
\begin{equation} \label{Par1}
\int_{\omega \in \Omega_A} f(\omega)  \mu(d\omega) = \sum_{w \in W_A} \widehat{f}(w) \widehat{\mu}(w) \ \ \ \ \ \text{(Parseval formula)}.
\end{equation}
In particular, the integral on the left side of \eqref{Par1} is well-defined, and
\begin{equation}
\bigg|\int_{\omega \in \Omega_A} f(\omega)  \mu(d\omega) \bigg| \ \leq \ \|f\|_{L^{\infty}_{(s)}}\|\mu\|_{M_{(s^{\star})}}.
\end{equation}\\

For $\epsilon > 0 $,\ \ $s \in [1, \infty]$, \ \ ${\bf{x}} \in l^s_{\mathbb{R}}(A), \ \ {\bf{x}} \neq {\bf{0}}$, \ let \ \\
%\begin{equation} \label{base}
%Q_{A,\epsilon}^{(1)}({\bf{x}}) \ = \  P_{A,\epsilon}^{(1)}({\bf{x}}) \ \overset{\text{def}}{=} \ U_{R_A}{\bf{x}} \ = \ \sum_{\alpha \in A} {\bf{x}}(\alpha) r_{\alpha}, \ \ \ \ \ s = 1,
%\end{equation}
\begin{equation} \label{Riesz32}
Q_{A,\epsilon}^{(s)}({\bf{x}}) \  \overset{\text{def}}{=} \ \frac{\|{\bf{x}}\|_s}{\epsilon}  \ \frak{Im} \  \mathfrak{R}_A\big(\mathfrak{i} \epsilon \boldsymbol{\sigma}_s{\bf{x}}\big), 
\end{equation}
and
\begin{equation}\label{Riesz33}
 P_{A,\epsilon}^{(s)}({\bf{x}}) \  \overset{\text{def}}{=}  \ \frac{\|{\bf{x}}\|_s}{\epsilon} \  \bigg(\mathfrak{R}_A\big(\frac{\epsilon \boldsymbol{\sigma}_{s}{\bf{x}}}{2}\big)  -   \ \mathfrak{R}_A\big(- \frac{\epsilon \boldsymbol{\sigma}_{s}{\bf{x}}}{2}\big)\bigg).
\end{equation}
%and
%\begin{equation}
%Q_{A,\epsilon}^{(s)}({\bf{0}})  = {\bf{0}}, \ \ \ \ \ \ P_{A,\epsilon}^{(s)}({\bf{0}})  = {\bf{0}}.\ \ \ \ \ \ \ \ \ \ \ \ \ \  \ \ \ \ \ \ \ \ \ \ \ \ \ \ \\\\
%\end{equation}\\
%For ${\bf{x}} = {\bf{u}} + i{\bf{v}} \in l^s(A), \ {\bf{u}} \in l^s_{\mathbb{R}}(A), \ {\bf{v}} \in l^s_{\mathbb{R}}(A)$, define $$Q_{A,\epsilon}^{(s)}({\bf{x}}) \ = \ Q_{A,\epsilon}^{(s)}({\bf{u}}) + i Q_{A,\epsilon}^{(s)}({\bf{v}}),$$ and $$P_{A,\epsilon}^{(s)}({\bf{x}}) \ = \ P_{A,\epsilon}^{(s)}({\bf{u}}) + i P_{A,\epsilon}^{(s)}({\bf{v}}).$$ 

\begin{lemma}  \label{Key} \ If \  $\epsilon \in (0,2]$,\ \ $s \in [1,\infty]$, \ and \ ${\bf{x}} \in l^s_{\mathbb{R}}(A)$, then:\ \\ 

\noindent
For $s = 1$,
\begin{equation} \label{as1}
\begin{split}
Q_{A,\epsilon}^{(1)}({\bf{x}}) &\in \mathbb{A}(\Omega_A), \ \ \ \ \ \|Q_{A,\epsilon}^{(1)}({\bf{x}})\|_{\mathbb{A}} \ \leq \ \big(\frac{\sinh \epsilon}{\epsilon} \big) \|{\bf{x}}\|_1,\\\\
P_{A,\epsilon}^{(1)}({\bf{x}}) &\in \mathbb{A}(\Omega_A), \ \ \ \ \ \|P_{A,\epsilon}^{(1)}({\bf{x}})\|_{\mathbb{A}} \ \leq \ \big(\frac{2\sinh \frac{\epsilon}{2}}{\epsilon} \big) \|{\bf{x}}\|_1.
\end{split}
\end{equation}\\
For $s \in (1,2]$, 
\begin{equation} \label{as11}
\begin{split}
Q_{A,\epsilon}^{(s)}({\bf{x}}) &\in L^{\infty}_{(s)}(\Omega_A, \mathbb{P}_A), \ \ \ \ \ \|Q_{A,\epsilon}^{(s)}({\bf{x}})\|_{L^{\infty}_{(s)}} \ \leq \ \frac{1}{\epsilon}  \ \exp \bigg(\frac{\epsilon\|{\bf{x}}\|_2}{\sqrt{2}\|{\bf{x}}\|_s}\bigg)^2 \  \|{\bf{x}}\|_s,\\\\
P_{A,\epsilon}^{(s)}({\bf{x}})  & \in  \bigcap_{2 \leq p <  \infty} L^p_{(s)}(\Omega_A,\mathbb{P}_A), \ \ \ \ \|P_{A,\epsilon}^{(s)}({\bf{x}})\|_{L^p} \  \leq \  \frac{2}{\epsilon}  \ \sinh\bigg(\frac{\epsilon \sqrt{p}\|{\bf{x}}\|_2}{2\|{\bf{x}}\|_s}\bigg) \ \|{\bf{x}}\|_s,  \ \  \ p  \geq 2.
\end{split}
\end{equation}\\
For $s \in (2,\infty]$,
\begin{equation} \label{as111}
\begin{split}
P_{A,\epsilon}^{(s)}({\bf{x}}) &\in M_{(s)}(\Omega_A), \ \ \ \ \ \|P_{A,\epsilon}^{(s)}({\bf{x}})\|_{M_{(s)}} \ \leq \  \max \bigg\{\bigg(\frac{2^s\sinh \big(\frac{\epsilon}{2}\big)^s}{\epsilon^s}\bigg)^{1/s}, \ \frac{2}{\epsilon} \bigg\} \|{\bf{x}}\|_s.\\\\
\end{split}
\end{equation}
Also,
\begin{equation} \label{interp3}
\big(Q_{A,\epsilon}^{(s)}({\bf{x}})\big)^{\wedge}(r_{\alpha})  =  \big(P_{A,\epsilon}^{(s)}({\bf{x}})\big)^{\wedge}(r_{\alpha}) \ = \ {\bf{x}}(\alpha), \ \ \ \ \alpha \in A,
%{\bf{x}}(\alpha) \ =  \left \{
%\begin{array}{lcc}
%\big(Q_A^{(s)}({\bf{x}})\big)^{\wedge}(r_{\alpha}),  & \quad  s \in [1,2], \ \ \alpha \in A  \\\\
 %\big(P_A^{(s)}({\bf{x}})\big)^{\wedge}(r_{\alpha}),&  \quad  s \in [2,\infty], \ \ \alpha \in A ,
 %\end{array} \right.
\end{equation}\\
\begin{equation} \label{as2}
\begin{split}
\big\|\big(Q_{A,\epsilon}^{(s)}({\bf{x}})\big)^{\wedge}\big|_{W_A \setminus R_A}\big\|_s \ &\leq \ 
\frac{(\sinh \epsilon^s - \epsilon^s)^{1/s}}{\epsilon} \ \|{\bf{x}}\|_s, \ \ \ \ s \in [1,\infty),\\\\
\big\|\big(P_{A,\epsilon}^{(s)}({\bf{x}})\big)^{\wedge}\big|_{W_A \setminus R_A}\big\|_s \ &\leq  \
 \frac{ \big(\sinh \big(\epsilon/2)^s - (\epsilon/2)^s \big)^{1/s}}{\epsilon} \  \|{\bf{x}}\|_s, \ \ \ \ \  s \in [1,\infty]. 
\end{split}
\end{equation}\\\\
Moreover,  the transform maps $$\big(Q_{A,\epsilon}^{(s)}({\boldsymbol{\cdot}})\big)^{\wedge}: \ l^s(A) \ \rightarrow \ l^s(W_A), \ \ \ \ \ \big(P_{A,\epsilon}^{(s)}({\boldsymbol{\cdot}})\big)^{\wedge}: \ l^s(A) \ \rightarrow \ l^s(W_A)$$\\ are $l^s$-norm-continuous.
\end{lemma}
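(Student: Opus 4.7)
I would prove the lemma by expanding the Riesz products explicitly via \eqref{Riesz2}, separating terms by parity (which is precisely what $\mathrm{Im}$ and the symmetrization $\mathfrak{R}_A(\cdot) - \mathfrak{R}_A(-\cdot)$ accomplish), and then estimating the three types of norms ($\mathbb{A}$, $L^{\infty}_{(s)}$, $M_{(s)}$, $L^p$) by elementary computations. The interpolation identities \eqref{interp3} and the tail estimates \eqref{as2} follow immediately from the expansion: indeed, the $k$-th order term in $\mathfrak{R}_A(\mathfrak{i}\epsilon\boldsymbol{\sigma}_s{\bf{x}})$ has Walsh coefficient $(\mathfrak{i}\epsilon)^k\prod_{\alpha\in F}\boldsymbol{\sigma}_s{\bf{x}}(\alpha)$ on $W_{A,k}$, so taking $\mathrm{Im}$ extracts precisely the odd-order chaos. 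The overall scaling $\|{\bf{x}}\|_s/\epsilon$ is calibrated so that the order-$1$ (Rademacher) coefficient reduces to ${\bf{x}}(\alpha)$.

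For the $l^s$-norm of $\widehat{Q_{A,\epsilon}^{(s)}({\bf{x}})}$ and $\widehat{P_{A,\epsilon}^{(s)}({\bf{x}})}$, and the bound \eqref{as1} for $s=1$, I would use the elementary-symmetric-polynomial estimate $\sum_{|F|=k}\prod_{\alpha\in F}|{\bf{y}}(\alpha)|^s \leq \|{\bf{y}}\|_s^{sk}/k!$ applied to ${\bf{y}} = \boldsymbol{\sigma}_s{\bf{x}}$ (so $\|{\bf{y}}\|_s = 1$), and sum over odd $k = 2j+1$ to recover expressions involving $\sinh$. The same identity drives the transform-continuity assertion: each partial sum in \eqref{Riesz2} is a continuous polynomial in the components of ${\bf{x}}$, and the geometric-like tail bound by $e^{\|{\bf{y}}\|_s^s}$ from \eqref{extendp} gives uniform convergence on $l^s$-bounded sets, hence $l^s$-continuity of the transform maps.

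The $L^{\infty}$-bound in \eqref{as11} comes from the finite-set factorization $|\mathfrak{R}_E(\mathfrak{i}\epsilon\boldsymbol{\sigma}_s{\bf{x}})(\omega)| = \prod_{\alpha\in E}\sqrt{1+\epsilon^2\boldsymbol{\sigma}_s{\bf{x}}(\alpha)^2}$ (as in \eqref{es3}), majorized by $\exp\!\bigl(\tfrac{1}{2}\epsilon^2\|\boldsymbol{\sigma}_s{\bf{x}}\|_2^2\bigr) = \exp\!\bigl(\tfrac{1}{2}(\epsilon\|{\bf{x}}\|_2/\|{\bf{x}}\|_s)^2\bigr)$; this uses $\|{\bf{x}}\|_2<\infty$ (valid because $l^s\subset l^2$ for $s\leq 2$) and is passed from finite $E$ to all of $A$ by the limiting argument accompanying \eqref{estimate3}. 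For the $M_{(s)}$-bound in \eqref{as111}, I would exploit that for $s\geq 1$ the normalization $\|\boldsymbol{\sigma}_s{\bf{x}}\|_{\infty}\leq 1$ makes each factor $1 \pm (\epsilon/2)\boldsymbol{\sigma}_s{\bf{x}}(\alpha)r_\alpha$ nonnegative when $\epsilon\leq 2$, so $\mathfrak{R}_A(\pm \epsilon\boldsymbol{\sigma}_s{\bf{x}}/2)$ are probability measures on $\Omega_A$; hence $\|P_{A,\epsilon}^{(s)}({\bf{x}})\|_M \leq (2/\epsilon)\|{\bf{x}}\|_s$, and this is combined with the $l^s$-coefficient bound already derived.

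The main obstacle, I expect, is the $L^p$-estimate for $P_{A,\epsilon}^{(s)}({\bf{x}})$ in \eqref{as11}, because one cannot simply use the crude $M$-bound together with $L^p\hookrightarrow M$ — the factor $\sinh(\epsilon\sqrt{p}\|{\bf{x}}\|_2/(2\|{\bf{x}}\|_s))$ depends on the ratio $\|{\bf{x}}\|_2/\|{\bf{x}}\|_s$, reflecting sharper second-moment information on each chaos level. My plan is to bound the $L^p$-norm of the degree-$(2j+1)$ chaos component
\[
a_j \;=\; \frac{\|{\bf{x}}\|_s}{\epsilon}\bigl(\epsilon/2\bigr)^{2j+1}\sum_{|F|=2j+1}\prod_{\alpha\in F}\boldsymbol{\sigma}_s{\bf{x}}(\alpha)\,r_\alpha
\]
by applying a Khintchin/Bonami-type hypercontractive estimate $\|a_j\|_{L^p} \lesssim (\sqrt{p})^{2j+1}\|a_j\|_{L^2}$ and then using the elementary symmetric bound $e_{2j+1}(\boldsymbol{\sigma}_s{\bf{x}}^2)\leq \|\boldsymbol{\sigma}_s{\bf{x}}\|_2^{2(2j+1)}/(2j+1)!$ to convert into a factorial denominator; summing the resulting odd powers over $j$ yields the $\sinh$. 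Assembling these pieces gives the full statement.
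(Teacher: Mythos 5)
Your proposal is correct and follows essentially the same route as the paper's (sketched) proof: spectral expansion of the Riesz products with the elementary-symmetric-function bound $e_k \leq \|\cdot\|_s^{sk}/k!$ for the transform/$\mathbb{A}$ estimates and \eqref{as2}, the finite-product factorization as in \eqref{es3} for the $L^{\infty}$ bound, nonnegativity of $\mathfrak{R}_A(\pm\epsilon\boldsymbol{\sigma}_s{\bf{x}}/2)$ for the $M$ bound, and chaos-level Khintchin/hypercontractive estimates summed via Minkowski to produce the $\sinh$ in the $L^p$ bound. You correctly identify that last step as the only nontrivial one, and your treatment of it matches the paper's.
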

\begin{proof}[Sketch of proof] \ \eqref{as1} and the first line in \eqref{as11}, along with  \eqref{interp3}, \eqref{as2}, and the $l^s$-norm-continuity of the Walsh transforms, follow from spectral analysis and norm estimates applied to Riesz products;  cf. \eqref{extendp}, \cite[Lemmas 4.3, 4.4, and (4.5)]{blei2014grothendieck}.

The second line in \eqref{as11} is proved by applying the Khintchin $(L^2_R \hookrightarrow L^p)$-inequalities (Remark \ref{expsq}) to each of the summands in
\begin{equation} 
P_{A,\epsilon}^{(s)}({\bf{x}}) \ = \ \sum_{k=0}^{\infty} \bigg(\frac{\epsilon}{2 \|{\bf{x}}\|_s}\bigg)^{2k} \bigg(\sum_{\{\alpha_1,\ldots, \alpha_{2k+1}\} \subset A}{\bf{x}}(\alpha_1) \cdots {\bf{x}}(\alpha_{2k+1}) r_{\alpha_1} \cdots r_{\alpha_{2k+1}}\bigg).
\end{equation}
Namely, for $s \in (1,2]$, \  $p > 2$, \ $k = 0, 1, \ldots ,$
\begin{equation} 
%\begin{split}
\big \| \sum_{_{\{\alpha_1,\ldots, \alpha_{2k+1}\} \subset A}}{\bf{x}}(\alpha_1) \cdots {\bf{x}}(\alpha_{2k+1}) r_{\alpha_1} \cdots r_{\alpha_{2k+1}} \big \|_{L^p} \  \leq \ \frac{(\sqrt{p})^{2k+1}}{(2k+1)!} \ \|{\bf{x}}\|_s^{2k+1},
% & \leq \ (\sqrt{p})^{2k+1}\bigg (\sum_{\{\alpha_1, \ \ldots, \alpha _{2k+1}\} \subset A} |{\bf{x}}(\alpha_1) \cdots {\bf{x}}(\alpha_{2k+1})|^2\bigg)^{1/2}\\\\
%& \leq \ \frac{(\sqrt{p})^{2k+1}}{(2k+1)!} \ \|{\bf{x}}\|_2^{2k+1} \ \leq \ \frac{(\sqrt{p})^{2k+1}}{(2k+1)!} \ \|{\bf{x}}\|_s^{2k+1}, \ \ \ \ \ \ k = 0, 1, \ldots \ ,
%\end{split}
\end{equation}\\
which we sum over $k$, and then apply Minkowski's inequality.\\
\end{proof}
To define  $Q^{(s)}_{A,\epsilon}({\bf{x}})$ and $P^{(s)}_{A,\epsilon}({\bf{x}})$ for ${\bf{x}} \in l^s(A)$,  apply  \eqref{Riesz32} and \eqref{Riesz33} to the real and imaginary parts of ${\bf{x}}$ separately. (See \eqref{imagi}.) The norm estimates are (at most) "doubled," and all other properties remain intact.  

In the extremal instances \ $\epsilon = 1$, \  $s = 2$, and   $s = \infty$, \ we write 
\begin{equation} \label{ease1}
\begin{split}
Q^{(s)}_{A} \ \ \text{for} \  \  Q^{(s)}_{A,1},  \ \ \ \ \  P^{(s)}_{A} \ \ \text{for} \  \ P^{(s)}_{A,1},&  \ \ \ \ \ Q_{A,\epsilon} \ \ \text{for} \  \ Q^{(2)}_{A,\epsilon},  \ \ \ \ \ P_{A,\epsilon}  \ \ \text{for} \  \  P^{(\infty)}_{A,\epsilon},\\\\
Q_A  \ \ \text{for} \  \  Q_{A,1}, &\ \ \ \text{and} \  \ \ P_A  \ \ \text{for} \  \ P_{A,1}.
\end{split} 
\end{equation}  

From \eqref{conv22}, \eqref{Riesz32} and \eqref{Riesz33}, for $1 \leq  s, \ t  \leq  \infty,$ \  $u \geq \frac{st}{s+t}$, \ ${\bf{x}} \in l_{\mathbb{R}}^s(A)$, \  and \ ${\bf{y}} \in l_{\mathbb{R}}^{t}(A)$,\\
\begin{equation} \label{conv11}
\begin{split}
Q_A^{(s)}({\bf{x}}) \convolution P_A^{(t)}({\bf{y}}) \  = \ P_A^{(s)}({\bf{x}}) &\convolution Q_A^{(t)}({\bf{y}}) \ = \ Q_{A,\epsilon/4}^{(u)}({\bf{x}}  {\bf{y}}),\\\\
P_A^{(s)}({\bf{x}}) \convolution P_A^{(t)}({\bf{y}}) \ & = \ P_{A,\epsilon/2}^{(u)}({\bf{x}}  {\bf{y}}),\\\\
 Q_A^{(s)}({\bf{x}}) \convolution Q_A^{(t)}({\bf{y}}) \  &= \ P_{A,\epsilon}^{(u)}({\bf{x}} {\bf{y}}),\\\\
   \text{where} \ \  \ &\epsilon = \epsilon \big({\bf{x}},{\bf{y}};u,s,t\big) = \frac{\|{\bf{x}} {\bf{y}}\|_u}{\|{\bf{x}}\|_s\|{\bf{y}}\|_t} \ \leq \ 1.\\\\
 \end{split}
\end{equation}

\noindent
We summarize, and record for future use (omitting proof):

\begin{lemma} [Parseval formulae; cf. \eqref{Par1}] \label{key11} \ For \ $1  \leq  s \leq 2 \leq t
  \leq \frac{s}{s-1}$, \ if \ ${\bf{x}} \in l^s(A)$ and  ${\bf{y}} \in l^{t}(A)$, then\\ 
\begin{equation} \label{conv33}
\begin{split}
\int_{\omega \in \Omega_A} Q_A^{(s)}({\bf{x}})(\omega) \  P_A^{(t)}({\bf{y}})(d\omega) \ &= \ \sum_{\alpha \in A} {\bf{x}}(\alpha){\bf{y}}(\alpha) \ + \ \sum_{w \in W_A \setminus R_A} \big(Q_A^{(s)}({\bf{x}})\big)^{\wedge}(w)\big(P_A^{(t)}({\bf{y}})\big)^{\wedge}(w),\\\\
\int_{\omega \in \Omega_A}P_A^{(s)}({\bf{x}})(\omega) \  P_A^{(t)}({\bf{y}})(d\omega) \ &= \ \sum_{\alpha \in A} {\bf{x}}(\alpha){\bf{y}}(\alpha) \ + \ \sum_{w \in W_A \setminus R_A} \big(P_A^{(s)}({\bf{x}})\big)^{\wedge}(w)\big(P_A^{(t)}({\bf{y}})\big)^{\wedge}(w),\\\\
\int_{\omega \in \Omega_A} Q_A^{(s)}({\bf{x}})(\omega) \  Q_A^{(t)}({\bf{y}})(d\omega) \ &= \ \sum_{\alpha \in A} {\bf{x}}(\alpha){\bf{y}}(\alpha) \ + \ \sum_{w \in W_A \setminus R_A} \big(Q_A^{(s)}({\bf{x}})\big)^{\wedge}(w)\big(Q_A^{(t)}({\bf{y}})\big)^{\wedge}(w),
\end{split}
\end{equation}\\
where all sums on the right are absolutely convergent.  
\end{lemma}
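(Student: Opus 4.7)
The plan is to reduce each identity in \eqref{conv33} to the generalized Parseval formula \eqref{Par1}, and then split the resulting sum $\sum_{w \in W_A}$ into its $R_A$-part and its $W_A \setminus R_A$-part. By the interpolation property \eqref{interp3}, the $R_A$-part contributes $\sum_{\alpha \in A} {\bf{x}}(\alpha){\bf{y}}(\alpha)$, while the remainder is exactly what appears on the right of \eqref{conv33}.

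First I check that each Riesz-product transform lies in an appropriate sequence space. The interpolation property gives $\widehat{Q_A^{(s)}({\bf{x}})}|_{R_A} = {\bf{x}} \in l^s(A)$, while \eqref{as2} controls the restriction to $W_A \setminus R_A$ in $l^s$; hence $\widehat{Q_A^{(s)}({\bf{x}})} \in l^s(W_A)$ for $s \in [1,\infty)$, and the same reasoning gives $\widehat{P_A^{(s)}({\bf{x}})} \in l^s(W_A)$ for $s \in [1,\infty]$. The hypothesis $t \leq s/(s-1)$ is equivalent to $1/s + 1/t \geq 1$, i.e.\ $t \leq s^{\star}$, so $l^t(W_A) \subset l^{s^{\star}}(W_A)$. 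H\"older's inequality then yields
\[
\sum_{w \in W_A} \bigl|\widehat{f}(w)\,\widehat{g}(w)\bigr| \;\leq\; \|\widehat{f}\|_s \, \|\widehat{g}\|_t \;<\; \infty
\]
for every pair $(f,g)$ drawn from $\{Q_A^{(s)}({\bf{x}}),P_A^{(s)}({\bf{x}})\} \times \{Q_A^{(t)}({\bf{y}}),P_A^{(t)}({\bf{y}})\}$. This establishes the absolute-convergence claim and makes the right-hand sides of \eqref{conv33} unambiguous.

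Next I carry out each Parseval computation. For the first identity, Lemma \ref{Key} places $Q_A^{(s)}({\bf{x}}) \in L^{\infty}_{(s)}(\Omega_A,\mathbb{P}_A)$ (reading $\mathbb{A}(\Omega_A)$ when $s=1$) and $P_A^{(t)}({\bf{y}}) \in M_{(t)}(\Omega_A) \subset M_{(s^{\star})}(\Omega_A)$ (or, when $t \leq 2$, an appropriate $L^p_{(t)}$ space). The pairing $(L^{\infty}_{(s)}, M_{(s^{\star})})$ is precisely the setting of \eqref{Par1}, yielding
\[
\int_{\Omega_A} Q_A^{(s)}({\bf{x}})(\omega)\, P_A^{(t)}({\bf{y}})(d\omega) \;=\; \sum_{w \in W_A} \widehat{Q_A^{(s)}({\bf{x}})}(w)\, \widehat{P_A^{(t)}({\bf{y}})}(w),
\]
and splitting the sum over $R_A$ and $W_A \setminus R_A$ completes the first identity. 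The second follows identically, using the $L^p_{(s)}$-bound of \eqref{as11} in place of the $L^{\infty}_{(s)}$-bound.

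The main obstacle is the third identity, where both factors are of $Q$-type and Lemma \ref{Key} does not directly identify $Q_A^{(t)}({\bf{y}})$ as an element of $M(\Omega_A)$ when $t > 2$. There are two ways to close it. The cleaner one is to \emph{define} the left-hand integral by the absolutely convergent Parseval sum on the right; this agrees with the classical reading whenever $s = t = 2$, which is the only regime in which both factors automatically lie in $L^2(\Omega_A,\mathbb{P}_A)$. Alternatively, one can establish $Q_A^{(t)}({\bf{y}}) \in M_{(t)}(\Omega_A)$ for $t > 2$ by bounding the imaginary-part Riesz product directly, in the spirit of the $M_{(s)}$-estimate \eqref{as111}, after which the third identity again reduces to \eqref{Par1} as in the first two cases. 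Either route terminates the proof; the decisive structural ingredient throughout is the off-$R_A$ spectral control \eqref{as2}, which alone forces the cross-pairing $(s,t)$-Parseval sums to converge absolutely.
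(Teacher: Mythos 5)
The paper records this lemma ``omitting proof,'' so there is no official argument to compare against; your route --- absolute convergence by H\"older using \eqref{interp3} and \eqref{as2}, then the Parseval pairing \eqref{Par1} of an $L^{\infty}_{(s)}$ element against an $M_{(s^{\star})}$ element, then splitting the sum over $R_A$ and $W_A \setminus R_A$ --- is clearly the intended one, and the first identity and the convergence claim are correctly handled. Two points need attention. First, your ``alternative'' way of closing the third identity, namely establishing $Q_A^{(t)}({\bf{y}}) \in M_{(t)}(\Omega_A)$ for $t > 2$, is not available: Remark \ref{fide}, immediately following the lemma, poses exactly this as an open question and states that for $1 \leq s < 2 < t \leq \infty$ the left side of the third line is \emph{defined} by the sums on the right. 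So only your first, definitional reading survives; since you make that the primary route, the third identity is fine, but you should not claim that ``either route terminates the proof.''

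Second, for the second identity with $s \in (1,2)$ the hypothesis of \eqref{Par1} is not literally met: Lemma \ref{Key} places $P_A^{(s)}({\bf{x}})$ only in $\bigcap_{p < \infty} L^p_{(s)}$, not in $L^{\infty}_{(s)}$, and an $L^p$-class function cannot in general be integrated against a possibly singular measure in $M_{(t)}$. Saying the argument ``follows identically, using the $L^p_{(s)}$-bound'' does not close this. The repair consistent with the paper's own later usage (cf. \eqref{conv11}, \eqref{trivial1}, and Lemma \ref{lem22}) is to observe that $1/s + 1/t \geq 1$ forces ${\bf{x}}{\bf{y}} \in l^1(A)$, so by \eqref{conv11} the convolution $P_A^{(s)}({\bf{x}}) \convolution P_A^{(t)}({\bf{y}})$ is a $P^{(1)}$-type Riesz product of an $l^1$ vector, hence lies in $\mathbb{A}(\Omega_A)$ by \eqref{as1}; the pairing is then the value of this convolution at the identity, i.e.\ the absolutely convergent sum $\sum_{w} \big(P_A^{(s)}({\bf{x}})\big)^{\wedge}(w)\big(P_A^{(t)}({\bf{y}})\big)^{\wedge}(w)$, which one splits as before. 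With these two adjustments your proof is complete.
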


\begin{remark} [bona fide integrals?] \label{fide} \ 
The left sides of the first two lines of \eqref{conv33} are well-defined Lebesgue integrals. In the case $s = t = 2$, the left side of the third line is a bona fide integral with respect to $$Q_A^{(2)}({\bf{y}})(d\omega) = Q_A^{(2)}({\bf{y}})(\omega) \ \mathbb{P}_A(d\omega),$$ but otherwise, for $1 \leq s < 2 < t \leq \infty$, the left side is defined by the sums on the right. 

\noindent
\begin{question}  What can be said about  $Q_A^{(t)}({\bf{y}})$ for ${\bf{y}} \in l^t_{\mathbb{R}}(A)$ and $t \in (2,\infty]?$ E.g., does $Q_A^{(t)}({\bf{y}})$ represent a measure?
\end{question} 
\end{remark}

\subsection{More interpolants} \label{more}   \ For $A^{\prime} \supset A$, let
 \begin{equation*} 
\mathfrak{P}^{(s)}(A^{\prime} \supset A) \ \overset{\text{def}}{= } \   \left \{
\begin{array}{lcc}
\big \{\text{orthogonal} \  \big(l^2(A),L^{\infty}_{(s)}(\Omega_{A^{\prime}},\mathbb{P}_{A^{\prime}})\big)\text{-perturbations of} \ U_{R_A}\big\}, & s \in [1,2],  \\\\
 \big \{\text{orthogonal} \  \big(l^2(A),M_{(s)}(\Omega_{A^{\prime}})\big)\text{-perturbations of} \ U_{R_A}\big\}, & s \in (2,\infty].  
\end{array} \right.
\end{equation*}
(See Remark \ref{temp}.) We write $\mathfrak{P}^{(s)}(A)$ for $\mathfrak{P}^{(s)}(A \supset A)$.  (See Remark \ref{sup}.)  Let \\
%\begin{equation} \label{Khint02}
 %\kappa^{\star,(s)}  \ \overset{\text{def}}{=} \   \left \{
%\begin{array}{lcc}
%\sup_A \inf \big \{ \|U_{R_A} + \mathfrak{p}\|_{l^s \hookrightarrow L^{\infty}_{(s)}}: \mathfrak{p} \in \mathfrak{P}^{(s)}(A) \big \}, & s \in [1,2],  \\\\
 %\sup_A  \inf \big \{ \|U_{R_A} + \mathfrak{p}\|_{l^s \hookrightarrow M_{(s)}}: \mathfrak{p} \in \mathfrak{P}^{(s)}(A) \big \}, & s \in (2,\infty],  
%\end{array} \right.
%\end{equation}
\begin{equation} \label{Khint02}
\begin{split}
\kappa^{\star,(s)} \ &\overset{\text{def}}{=} \ \sup_A \ \inf \big \{ \|U_{R_A} + \mathfrak{p}_{A}\|_{l^s \hookrightarrow L^{\infty}_{(s)}}: \mathfrak{p}_{A} \in \mathfrak{P}^{(s)}(A) \big \}, \ \ \ \ s \in [1,2],\\\ 
\sigma^{(s)} \ &\overset{\text{def}}{=} \ \sup_A \ \inf \big \{ \|U_{R_A} + \mathfrak{p}_{A}\|_{l^s \hookrightarrow M_{(s)}}: \mathfrak{p}_{A} \in \mathfrak{P}^{(s)}(A) \big \},\ \ \ \ s \in (2,\infty].
\end{split}
\end{equation} 
For $\delta \geq 0$, let 
\begin{equation*} 
 \beta^{(s)}(A,\delta)  \ \overset{\text{def}}{=} \   \left \{
\begin{array}{lcc}
\inf \big \{\|U_{R_A} + \mathfrak{p}\|_{l^s \hookrightarrow L^{\infty}_{(s)}}: \mathfrak{p} \in \mathfrak{P}^{(s)}(A), \ \|\widehat{\mathfrak{p}}\|_{l^s \hookrightarrow l^s}  \leq 
 \delta \big\}, & s \in [1,2],  \\\\
 \inf \big \{\|U_{R_A} + \mathfrak{p}\|_{l^s \hookrightarrow M_{(s)}}: \mathfrak{p} \in \mathfrak{P}^{(s)}(A), \ \|\widehat{\mathfrak{p}}\|_{l^s \hookrightarrow l^s}  \leq 
 \delta \big\}, & s \in (2,\infty],  
\end{array} \right.
\end{equation*}
and then\\
\begin{equation} \label{mod2} 
 \beta_R^{(s)}(\delta) \ \overset{\text{def}}{=}  \ \sup_{A} \beta^{(s)}(A,\delta), \ \ \ \ \  s \in [1,\infty].
\end{equation}\\
(Cf. \eqref{uniform6}.) Adding $\big(l^s(A) \rightarrow l^s(W_{A^{\prime}})\big)$-continuity, we let 
\begin{equation}
\begin{split}
\mathfrak{P}^{(s),c}(A^{\prime} \supset A) \ \overset{\text{def}}{=} \ \big\{\big(\ l^s(A) \rightarrow  l^s(W_{A^{\prime}} \setminus  R_A)\big) \text{-continuous} \  \mathfrak{p}_{A^{\prime}} \in \mathfrak{P}^{(s)}(A^{\prime} \supset A)  \big\}, \ \ \ \ \ \ \ 
\end{split}
\end{equation}
and for $\delta \geq 0$,\\
\begin{equation} \label{mod4} 
\begin{split}
 \beta^{(s),c}(A,\delta)&  \\\
 &\overset{\text{def}}{=} \ \inf \big \{\|U_{R_A} +  \mathfrak{p}_{A^{\prime}})\|_{l^s \hookrightarrow L^{\infty}_{(s)}}: \mathfrak{p}_{A^{\prime}} \in \ \mathfrak{P}^{(s),c}(A^{\prime} \supset A), \  \|\widehat{\mathfrak{p}_{A}}\|_{l^s \hookrightarrow l^s}  \leq 
 \delta \big \}, \ \ \ s \in [1,2],\\\
  & \overset{\text{def}}{=} \ \inf \big \{\|U_{R_A} +  \mathfrak{p}_{A^{\prime}})\|_{l^s \hookrightarrow M_{(s)}}: \mathfrak{p}_{A^{\prime}} \in \ \mathfrak{P}^{(s),c}(A^{\prime} \supset A), \  \|\widehat{\mathfrak{p}_{A}}\|_{l^s \hookrightarrow l^s}  \leq 
 \delta \big \}, \ \ \ s \in (2,\infty],\\\\
&\ \ \ \ \ \ \ \ \ \  \beta_R^{(s),c}(\delta) \ \overset{\text{def}}{=}  \  \sup_{A} \beta^{(s),c}(A,\delta), \ \ \ \ \  s \in [1,\infty].
\end{split}
\end{equation}
Lemma \ref{Key} implies
\begin{corollary} [cf. Corollary \ref{trunc3}, Proposition \ref{contunif}] \label{Key7}  \ For every $s \in (1,\infty],$ there exist $K_s > 0$ such that for $\delta \in (0,1),$
%\begin{equation}  \label{Key0}
%K_s/\sqrt{\delta} \  \geq \ \beta_R^{(s),c}(\delta) \ \geq \ \beta_R^{(s)}(\delta) \  \geq \ \kappa^{(s)}, \ \ \ \  s \in (1,\infty].
%\end{equation}
\begin{equation}  \label{Key0}
 K_s/\sqrt{\delta} \  \geq \ \beta_R^{(s),c}(\delta) \ \geq \ \beta_R^{(s)}(\delta) \  \geq \left \{
\begin{array}{lcc}
\kappa^{(s)}, & s \in [1,2],  \\\\
\sigma^{(s)}, & s \in (2,\infty].  
\end{array} \right.
\end{equation}
\end{corollary}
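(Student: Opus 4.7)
The plan is to feed the Riesz-product interpolants assembled in Lemma \ref{Key} into the definitions \eqref{mod2} and \eqref{mod4}, and to deduce the last two inequalities of \eqref{Key0} by monotonicity. For $s \in (1,2]$ I would use $Q^{(s)}_{A,\epsilon}$ and consider the perturbation $\mathfrak{p}_{A,\epsilon} \overset{\text{def}}{=} Q^{(s)}_{A,\epsilon} - U_{R_A}$; by the interpolation property \eqref{interp3} this is orthogonal, so $\mathfrak{p}_{A,\epsilon} \in \mathfrak{P}^{(s)}(A)$, and the $l^s$-norm-continuity of the transform asserted at the end of Lemma \ref{Key} places it in the smaller class $\mathfrak{P}^{(s),c}(A)$. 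For $s \in (2,\infty]$ I would do the same with $P^{(s)}_{A,\epsilon}$ in place of $Q^{(s)}_{A,\epsilon}$, landing in $M_{(s)}(\Omega_A)$ instead of $L^{\infty}_{(s)}(\Omega_A,\mathbb{P}_A)$.

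The quantitative step is to calibrate $\epsilon$ against $\delta$. The Taylor expansion $\sinh x - x = x^3/6 + O(x^5)$ applied inside the estimates \eqref{as2} yields, uniformly for small $\epsilon$,
\begin{equation*}
\|\widehat{\mathfrak{p}_{A,\epsilon}({\bf{x}})}\,|_{W_A \setminus R_A}\|_s \ \leq \ C_s\,\epsilon^2\,\|{\bf{x}}\|_s,
\end{equation*}
with $C_s$ depending only on $s$ (with $\epsilon/2$ in place of $\epsilon$ for the $P$-case). So choosing $\epsilon = \epsilon_s(\delta) = (\delta/C_s)^{1/2}$ enforces $\|\widehat{\mathfrak{p}_{A,\epsilon}}\|_{l^s \hookrightarrow l^s} \leq \delta$. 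Simultaneously, the ambient-norm estimates in \eqref{as11} and \eqref{as111} specialize, for such $\epsilon$ and for ${\bf{x}} \in \mathcal{S}_{l^s(A)}$, to
\begin{equation*}
\|Q^{(s)}_{A,\epsilon}({\bf{x}})\|_{L^{\infty}_{(s)}} \ \leq \ \frac{e^{\epsilon^2/2}}{\epsilon} \quad (s \in (1,2]) \qquad \text{and} \qquad \|P^{(s)}_{A,\epsilon}({\bf{x}})\|_{M_{(s)}} \ \leq \ \frac{2}{\epsilon} \quad (s \in (2,\infty]),
\end{equation*}
each of order $K_s/\sqrt{\delta}$, yielding the left-hand inequality of \eqref{Key0}.

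The remaining two inequalities are purely formal. Any $(l^s \rightarrow l^s)$-continuous orthogonal perturbation is \emph{a fortiori} an orthogonal perturbation, so $\mathfrak{P}^{(s),c}(A) \subset \mathfrak{P}^{(s)}(A)$ and hence $\beta^{(s),c}_R(\delta) \geq \beta^{(s)}_R(\delta)$. Likewise, the infimum defining $\beta^{(s)}(A,\delta)$ is taken over the subfamily of $\mathfrak{P}^{(s)}(A)$ constrained by $\|\widehat{\mathfrak{p}}\|_{l^s \hookrightarrow l^s} \leq \delta$, while the one defining $\kappa^{\star,(s)}(A)$ (resp.\ $\sigma^{(s)}(A)$) in \eqref{Khint02} is unconstrained, so $\beta^{(s)}(A,\delta) \geq \kappa^{\star,(s)}(A)$ (resp.\ $\sigma^{(s)}(A)$); taking $\sup_A$ gives the third inequality of \eqref{Key0}. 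The main technical point is purely the asymptotic bookkeeping behind Lemma \ref{Key}: extracting the two-power gain $\epsilon^2$ in the perturbation estimate against the one-power loss $\epsilon^{-1}$ in the ambient norm is precisely the mechanism -- identical in spirit to Proposition \ref{contunif} -- that produces the $\delta^{-1/2}$ rate, and it is robust under the cosmetic differences between the $Q$-type and $P$-type constructions.
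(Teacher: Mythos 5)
Your proposal is correct and follows essentially the same route the paper intends: the paper itself only states that Lemma \ref{Key} implies the corollary, that the proof for $s>1$ uses the Riesz products $Q^{(s)}_{A,\epsilon}$ and $P^{(s)}_{A,\epsilon}$, and that Proposition \ref{contunif} is the case $s=2$ -- precisely your calibration $\epsilon \sim \sqrt{\delta}$ trading the $O(\epsilon^2)$ perturbation bound of \eqref{as2} against the $O(1/\epsilon)$ ambient-norm bounds of \eqref{as11} and \eqref{as111}, plus the formal containment-of-classes inequalities. Your write-up is in fact more explicit than the paper's sketch.
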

\begin{remark} [$s \geq1$] \label{extremal} \  The extremal case $s=1$ is trivial in the sense that
\begin{equation} 
\beta_R^{(1),c}(\delta) \ = \beta_R^{(1)}(\delta) \ = \ 1, \ \ \ \ \delta \geq 0.
\end{equation}
 For $s > 1$, the proof of  \eqref{Key0} uses Riesz products. (Proposition \ref{contunif} is the case $s = 2$.) I do not know whether the estimate is optimal, or whether $\beta_R^{(s),c}(\delta) \ \geq \ \beta_R^{(s)}(\delta)$ is a strict inequality.   
\end{remark}  

\subsection{Parseval-type formulae:  extensions of the Grothendieck inequality} \label{scalarparseval} \      The \emph{Sidon} property of Rademacher systems,  
\begin{equation} \label{SidR}
\big\|U_{R_A}{\bf{x}}\big\|_{L^{\infty}}  \ = \  \big\|\sum_{\alpha \in A} {\bf{x}}(\alpha) r_{\alpha} \big\|_{L^{\infty}} \ \geq \ \frac{2}{\pi}\|{\bf{x}}\|_1, \ \ \ \ {\bf{x}} \in l^1(A),
\end{equation}
%as per \eqref{Sid1},  
with its dual equivalent, that for every ${\bf{y}} \in l^{\infty}(A)$ there exist $\mu({\bf{y}}) \in M(\Omega_A)$ such that
\begin{equation} \label{SidRD}
\widehat{\mu({\bf{y}})}(r_{\alpha}) \ = \ {\bf{y}}(\alpha), \ \ \ \ \ \alpha \in A,
\end{equation} 
implies 
\begin{equation} \label{usual}
\sum_{\alpha \in A} {\bf{x}}(\alpha) {\bf{y}}(\alpha) \ = \ \int_{\xi \in \Omega_A} \big(U_{R_A}{\bf{x}}\big)(\xi) \ \mu({\bf{y}})(d\xi), \ \ \ \ \ {\bf{x}} \in l^1(A), \ {\bf{y}} \in l^{\infty}(A).
\end{equation}\\  
Based on extensions of the Sidon property, similar expressions of the duality of $l^s$ and $l^{s^{\star}}  \  (1 < s  \leq 2, \ s^{\star} = \frac{s}{s-1} $) were derived in  \citet[Theorem 9.1]{blei2014grothendieck}.
%, which we view as extensions of $\mathcal{K}^{\star} < \infty$ (the Grothendieck inequality in dual form).  
These formulae are recalled below with derivations that slightly differ from the derivations in \citet{blei2014grothendieck}, and closely mimic the proof of Theorem \ref{MT1}. (See Remark \ref{deconstruct}.)  \ \\

Given a set $A$, we initialize $A_1  = A,$ and generate the \emph{set-cascade} 
\begin{equation} \label{cascade1}
A_{j+1} \ = \  W_{A_{j,odd}} \setminus R_{A_j}, \  \ \ j \geq 1.
\end{equation} 
(Cf. \eqref{rec3}.) As in \eqref{disjoint}, \eqref{disjoint1}, and \eqref{disjoint2},  let
\begin{equation} \label{disjoint3}
A^{\prime} \ = \ \bigcup_{j=1}^{\infty}A_j, \ \ \ \ \Omega_{A^{\prime}} \ {=} \ \bigtimes_{j=1}^{\infty} \Omega_{A_j}, \ \ \ \ \mathbb{P}_{A^{\prime}} \ {=} \ \bigtimes_{j=1}^{\infty} \mathbb{P}_{A_j}.
\end{equation} 
 Given ${\bf{x}} \in l^s_{\mathbb{R}}(A)$, we initialize ${\bf{x}}^{(1)}  =  {\bf{x}},$
and generate the \emph{vector-cascade}  ${\bf{x}}^{(j)} \in l^s(A_{j})$, $j \geq 1$,   by \ \\
\begin{equation} \label{recur1}
{\bf{x}}^{(j+1)}(\alpha) = \left \{
\begin{array}{lcc}
\big(Q_{A_j}^{(s)}({\bf{x}}^{(j)})\big)^{\wedge}(\alpha), &\alpha \in A_{j+1}, \  \ s \in [1,2],  \\\\
\big(P_{A_j}^{(s)}({\bf{x}}^{(j)})\big)^{\wedge}(\alpha),&\alpha \in A_{j+1}, \  \  s \in (2,\infty],
\end{array} \right.
\end{equation}\\
where  $Q_{A_j}^{(s)}({\bf{x}}^{(j)})$ and $P_{A_j}^{(s)}({\bf{x}}^{(j)})$  are defined on $\Omega_{A^{\prime}}$. (Cf. \eqref{view}.)  By \eqref{as2} with $\epsilon = 1$,
\begin{equation} \label{iter1}
\|{\bf{x}}^{(j+1)}\|_s \ \leq \ \delta_s \|{\bf{x}}^{(j)}\|_s, \ \ \ \ j = 1,\ldots,
\end{equation}
where\\
\begin{equation} \label{dels}
\delta_s =  \left \{
\begin{array}{lcc}
 (\sinh 1 - 1)^{1/s}, &\quad \  \ s \in [1,2],  \\\\
 \big(\sinh \frac{1}{2^s} -  \frac{1}{2^s} \big)^{1/s}, &\quad \  \  s \in (2,\infty], 
%\frac{1}{4} &\quad \ \ s = \infty,
\end{array} \right.
\end{equation}\\
and therefore,
\begin{equation}
\|{\bf{x}}^{(j)}\|_s \ \leq \ \delta_s^{j-1} \|{\bf{x}}\|_s, \ \ \ \ j = 1, \ldots \ .
\end{equation}
(Note:  $0 < \delta_s < 1$.)\ \\\

For $s \in [1,\infty]$ \ and ${\bf{x}} \in l^s_{\mathbb{R}}(A),$ \ define \ \\
\begin{equation} \label{Key1}
\Phi_A^{(s)}({\bf{x}}) \ = \  \left \{
\begin{array}{lcc}
 U_{R_{A_1}}{\bf{x}}, & \ \quad s = 1, \\\\
 \sum_{j=1}^{\infty}  \ \mathfrak{i}^{j-1} \ Q_{A_j}^{(s)}({\bf{x}}^{(j)}), & \ \quad  s \in [1,2],  \\\\
  \sum_{j=1}^{\infty}  \ \mathfrak{i}^{j-1} \ P_{A_j}^{(s)}({\bf{x}}^{(j)}), & \  \quad  s \in (2,\infty],\\\\
  P_{A_1}({\bf{x}}), & \ \quad s = \infty.
\end{array} \right.
\end{equation}\\\
For ${\bf{x}} = {\bf{u}} + \mathfrak{i}{\bf{v}}, \ {\bf{u}} \in l^s_{\mathbb{R}}(A), \ {\bf{v}} \in l^s_{\mathbb{R}}(A),$ let \\
\begin{equation} \label{complex1}
\Phi_A^{(s)}({\bf{x}}) \ \overset{\text{def}}{=} \ \Phi_A^{(s)}({\bf{u}}) + \mathfrak{i} \Phi_A^{(s)}({\bf{v}}).
\end{equation}\\

For $s = 1$ and $s = \infty,$   
\begin{equation} \label{trivial}
\begin{split}
\frac{2}{\pi}\|{\bf{x}}\|_{1} \ \leq \ \|\Phi_A^{(1)}({\bf{x}})\|_{L^{\infty}} \ &\leq \ \|{\bf{x}}\|_{1}, \ \ \ \ \ \ \ {\bf{x}} \in l^1(A),\\\\
\|{\bf{x}}\|_{\infty} \ \leq \ \|\Phi_A^{(\infty)}({\bf{x}})\|_M \ &\leq \ 4\|{\bf{x}}\|_{\infty}, \ \ \ \ \ {\bf{x}} \in l^{\infty}(A),
\end{split}
\end{equation}
\begin{equation} \label{trivial0}
\Phi_A^{(1)}({\bf{x}}) \convolution \Phi_A^{(\infty)}({\bf{y}}) \ = \ \Phi_A^{(1)}({\bf{x}}{\bf{y}}) \ \in \mathbb{A}(\Omega_{A^{\prime}}),\ \ \ \ \  {\bf{x}} \in l^{1}_{\mathbb{R}}(A), \ \ {\bf{y}} \in l^{\infty}_{\mathbb{R}}(A),
\end{equation}\\
and
\begin{equation} \label{trivial1}
\begin{split}
\sum_{\alpha \in A} {\bf{x}}(\alpha){\bf{y}}(\alpha) \ &= \ \int_{\omega \in \Omega_{A^{\prime}}}\big(\Phi_A^{(1)}({\bf{x}})\big)(\omega) \ \big(\Phi_A^{(\infty)}({\bf{y}})\big)(d\omega)\\\\
 &=  \ \big(\Phi_A^{(1)}({\bf{x}}) \convolution \Phi_A^{(\infty)}({\bf{y}})\big)(\boldsymbol{\omega}_0), \ \ \ \ \  {\bf{x}} \in l^{1}(A), \ \ {\bf{y}} \in l^{\infty}(A),\\\\
\end{split}
\end{equation}\\
where $\boldsymbol{\omega}_0 = \text{identity element of} \ \Omega_{A^{\prime}}$.  In \eqref{trivial}, the first line is trivial, and the second follows from \eqref{norm11}. Parseval's formula and \eqref{interp11} imply \eqref{trivial0} and \eqref{trivial1}, as per \eqref{usual}.  For $s \in (1, \infty)$, we have 
\begin{lemma} \label{lem11}
\ For \ ${\bf{x}} \in l^s_{\mathbb{R}}(A)$, \ $1 < s < \infty$, \\ 
\begin{equation} \label{geom}
\Phi_A^{(s)}({\bf{x}}) \ \in \   \left \{
\begin{array}{lcc}
  L^{\infty}_{(s)}(\Omega_{A^{\prime}}, \mathbb{P}_{A^{\prime}}), &  \quad 1 < s \leq 2, \\\\
  M_{(s)}(\Omega_{A^{\prime}}), & \quad 2 < s  < \infty,
\end{array} \right.
\end{equation}\\
and (then) for \ ${\bf{y}} \in l^{s^{\star}}_{\mathbb{R}}(A),$  $1 < s \leq 2 \leq s^{\star} = \frac{s}{s-1} < \infty,$ \\ 
\begin{equation} \label{disj}
\Phi_A^{(s)}({\bf{x}}) \convolution \Phi_A^{(s^{\star})}({\bf{y}}) \  =  \ \left \{
\begin{array}{lcc}
   \sum_{j=1}^{\infty} (-1)^{j-1} Q_{A_j,\epsilon_j/4}^{(1)}\big(({\bf{x}}  {\bf{y}})^{(j)}\big), & \ \quad 1 < s < 2, \\\\
 \sum_{j=1}^{\infty} (-1)^{j-1} P_{A_j,\epsilon_j}^{(1)}\big(({\bf{x}}  {\bf{y}})^{(j)}\big), & \ \quad s = 2,
\end{array} \right.
\end{equation}\\\
are elements of $\mathbb{A}(\Omega_{A^{\prime}})$, where $\epsilon_j = \epsilon({\bf{x}}^{(j)},{\bf{y}}^{(j)};1, s,s^{\star})$ are defined by  \eqref{conv11}.
Moreover, 
\begin{equation} \label{est22}
\big\|\Phi_A^{(s)}({\bf{x}}) \convolution \Phi_A^{(s^{\star})}({\bf{y}}) \big\|_{\mathbb{A}(\Omega_{A^{\prime}})} \ \leq \ K(\delta_s,\delta_{s^{\star}}) \ \|{\bf{x}}\|_s \|{\bf{y}}\|_{s^{\star}},
\end{equation}
where $\delta_s$ and $\delta_{s^{\star}}$ are given in \eqref{dels}, and $K(\delta_s,\delta_{s^{\star}})$  depends only on $\delta_s$ and $\delta_{s^{\star}}$.
\end{lemma}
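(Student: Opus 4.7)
The plan is to prove Lemma~\ref{lem11} in three steps, closely paralleling the argument for Theorem~\ref{MT1} but working within the $L^{\infty}_{(s)}/M_{(s)}$ framework of \S\ref{RieszP0}, and exploiting (i)~the independence of the factors $\Omega_{A_j}$ of the product space $\Omega_{A^{\prime}}$, (ii)~the cascade decay $\|{\bf{x}}^{(j)}\|_s\leq\delta_s^{j-1}\|{\bf{x}}\|_s$ with $\delta_s<1$, and (iii)~the convolution identities~\eqref{conv11}, which collapse $Q\convolution P$ into a single Riesz-product polynomial of the product vector.

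For the membership assertion~\eqref{geom}, I would bound each lifted summand of~\eqref{Key1} in its natural Banach space. For $s\in(1,2]$, Lemma~\ref{Key} yields $\|Q^{(s)}_{A_j}({\bf{x}}^{(j)})\|_{L^{\infty}_{(s)}}\leq c_s\,\|{\bf{x}}^{(j)}\|_s$, and for $s\in(2,\infty)$, $\|P^{(s)}_{A_j}({\bf{x}}^{(j)})\|_{M_{(s)}}\leq c_s\,\|{\bf{x}}^{(j)}\|_s$. Combined with~\eqref{iter1}, the series defining $\Phi_A^{(s)}({\bf{x}})$ converges absolutely in $L^{\infty}_{(s)}(\Omega_{A^{\prime}},\mathbb{P}_{A^{\prime}})$ (resp.\ $M_{(s)}(\Omega_{A^{\prime}})$). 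By Remark~\ref{homog1}, each lifted summand has Fourier support confined to the odd-order Walsh characters generated by $A_j$, which I will need in the next step.

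Next, to establish the convolution identity~\eqref{disj}, I would expand formally
\[
\Phi_A^{(s)}({\bf{x}})\convolution\Phi_A^{(s^{\star})}({\bf{y}})\;=\;\sum_{j,k\geq1}\mathfrak{i}^{j-1}\mathfrak{i}^{k-1}\,Q^{(s)}_{A_j}({\bf{x}}^{(j)})\convolution R^{(s^{\star})}_{A_k}({\bf{y}}^{(k)}),
\]
where $R$ is $Q^{(2)}$ if $s=2$ and $P^{(s^{\star})}$ if $s<2$, and argue that each $j\neq k$ term vanishes. Indeed, the spectra of the two lifted factors lie in the odd-order Walsh characters generated respectively by the disjoint sets $A_j$ and $A_k$, and therefore can meet only at $r_0$; but $r_0$ is even-order, so the spectra are in fact disjoint, forcing the Fourier transform of the cross-convolution to vanish identically and hence the cross-convolution itself to vanish. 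The double-sum rearrangement is legitimate by the absolute convergence from Step~1 together with $\|f\convolution g\|\leq\|f\|\cdot\|g\|$ for $f\in L^{\infty}_{(s)}$ and $g\in M_{(s^{\star})}$. The surviving diagonal terms carry the coefficient $\mathfrak{i}^{2(j-1)}=(-1)^{j-1}$, and~\eqref{conv11} with $u=1$ rewrites each as $Q^{(1)}_{A_j,\epsilon_j/4}({\bf{x}}^{(j)}{\bf{y}}^{(j)})$ for $s\in(1,2)$ and as $P^{(1)}_{A_j,\epsilon_j}({\bf{x}}^{(j)}{\bf{y}}^{(j)})$ for $s=2$, matching~\eqref{disj} under the convention $({\bf{x}}{\bf{y}})^{(j)}\overset{\text{def}}{=}{\bf{x}}^{(j)}{\bf{y}}^{(j)}$.

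Finally, for the estimate~\eqref{est22}, line~\eqref{as1} of Lemma~\ref{Key} gives $\|Q^{(1)}_{A_j,\epsilon_j/4}({\bf{x}}^{(j)}{\bf{y}}^{(j)})\|_{\mathbb{A}}\leq\frac{\sinh(\epsilon_j/4)}{\epsilon_j/4}\,\|{\bf{x}}^{(j)}{\bf{y}}^{(j)}\|_1\leq\frac{\sinh(1/4)}{1/4}\,\|{\bf{x}}^{(j)}\|_s\,\|{\bf{y}}^{(j)}\|_{s^{\star}}$ by H\"older, and the cascade contributes the geometric factor $(\delta_s\delta_{s^{\star}})^{j-1}$; summing yields $K(\delta_s,\delta_{s^{\star}})=C/(1-\delta_s\delta_{s^{\star}})$, with a parallel computation for $s=2$ using the $P^{(1)}$-bound. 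The main obstacle is the cross-term vanishing: one has to verify carefully that the lifted Fourier supports really are disjoint (this rests on both the disjointness of the $A_j$ and the odd-order spectrum, so that $r_0$ is excluded), and then that the resulting rearrangement from the double sum to the diagonal is legitimate in the $\mathbb{A}$-norm; once those points are secured, the remainder is bookkeeping with the Riesz-product estimates of Lemma~\ref{Key}.
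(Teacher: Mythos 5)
Your proposal is correct and follows essentially the same route as the paper's (sketched) proof: absolute convergence of the cascade series from Lemma \ref{Key} and \eqref{iter1}, vanishing of the off-diagonal convolutions because the odd-order spectra over the disjoint blocks $A_j$ are pairwise disjoint (the paper's "independence of the $R_{A_j}$"), collapse of the diagonal terms via \eqref{conv11}, and the $\mathbb{A}$-norm bound from \eqref{as1}, H\"older, and the geometric decay. You have merely supplied the details the paper's sketch omits, and they check out.
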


%indicator function = $(-1)^j\mathds{1}_{A_j}$ \ $\Gamma$ $\psi$ $\Psi$
\begin{proof}[Sketch of proof]
The series in \eqref{Key1} converge absolutely in $L^{\infty}_{(s)}(\Omega_{A^{\prime}}, \mathbb{P}_{A^{\prime}})$ \ and  \ $M_{(s^{\star})}(\Omega_{A^{\prime}})$, respectively, via \eqref{iter1} and a  geometric series argument; and hence \eqref{geom}.  The convolutions in \eqref{disj} are computed by applying \eqref{conv11} and independence  of Rademacher systems $R_{A_j}$.  The norm estimate in \eqref{est22} follows from \eqref{iter1}.\\  
\end{proof}
\begin{lemma} \label{lem22} \ For ${\bf{x}} \in l^s_{\mathbb{R}}(A)$,  ${\bf{y}} \in l^{s^{\star}}_{\mathbb{R}}(A),$  $1 < s \leq 2  \leq s^{\star} = \frac{s}{s-1} < \infty,$    
\begin{equation} \label{key22}
\begin{split}
\sum_{\alpha \in A} {\bf{x}}(\alpha){\bf{y}}(\alpha) \ &= \ \big(\Phi_A^{(s)}({\bf{x}}) \convolution \Phi_A^{(s^{\star})}({\bf{y}})\big)(\boldsymbol{\omega}_{0}) \ \\\
& = \ \int_{\omega \in \Omega_{A^{\prime}}}\big(\Phi_A^{(s)}({\bf{x}})\big)(\omega) \ \big(\Phi_A^{(s^{\star})}({\bf{y}})\big)(d\omega).
\end{split}
\end{equation}
(For $s = 2$,  \ $\big(\Phi_A^{(2)}({\bf{y}})\big)(d\omega)  =  \big(\Phi_A^{(2)}({\bf{y}})\big)(\omega) \ \mathbb{P}_{A^{\prime}}(d\omega).$)
\end{lemma}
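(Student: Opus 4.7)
The plan is to reduce the identity to a telescoping sum built on the cascade structure, much as in the proof of Theorem \ref{MT1}, but now using the Parseval formulae of Lemma \ref{key11} in place of the simpler orthogonality in \eqref{short11}. First I would dispense with the equality of the convolution form and the integral form in \eqref{key22}: by Lemma \ref{lem11} the convolution $\Phi_A^{(s)}({\bf{x}}) \convolution \Phi_A^{(s^{\star})}({\bf{y}})$ lies in $\mathbb{A}(\Omega_{A^{\prime}})$, so it is a continuous function on $\Omega_{A^{\prime}}$; evaluating its absolutely convergent Walsh series at the identity $\boldsymbol{\omega}_0$ (where every character equals $1$) gives $\sum_w \widehat{\Phi_A^{(s)}({\bf{x}})}(w)\,\widehat{\Phi_A^{(s^{\star})}({\bf{y}})}(w)$, which by \eqref{Par1} coincides with the Lebesgue integral on the right-hand side of \eqref{key22}.

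Next, I would expand $\Phi_A^{(s)}({\bf{x}})$ and $\Phi_A^{(s^{\star})}({\bf{y}})$ by \eqref{Key1} and compute the integral as a double sum over $(j_1,j_2)$. The key observation is that $Q_{A_j}^{(s)}({\bf{x}}^{(j)})$ and $P_{A_j}^{(s^{\star})}({\bf{y}}^{(j)})$ -- viewed as functions on $\Omega_{A^{\prime}}$ via $\pi_{A_j}$ -- depend only on the $j$-th coordinate of the product $\Omega_{A^{\prime}} = \bigtimes_{j\geq 1}\Omega_{A_j}$, and by Remark \ref{homog1} have spectra consisting solely of odd-order Walsh characters, so in particular each is mean-zero. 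Independence of the factors $\mathbb{P}_{A_j}$ then kills every off-diagonal term $j_1\neq j_2$, leaving
\begin{equation*}
\int_{\Omega_{A^{\prime}}} \Phi_A^{(s)}({\bf{x}})\,\Phi_A^{(s^{\star})}({\bf{y}})\,d\mathbb{P}_{A^{\prime}} \ = \ \sum_{j=1}^{\infty}(-1)^{j-1}\int_{\Omega_{A_j}} Q_{A_j}^{(s)}({\bf{x}}^{(j)})\,\Psi_{A_j}^{(s^{\star})}({\bf{y}}^{(j)})\,d\mathbb{P}_{A_j},
\end{equation*}
where $\Psi = P$ for $s\in(1,2)$ and $\Psi = Q$ for $s=s^{\star}=2$, and where the factor $(-1)^{j-1} = \mathfrak{i}^{2(j-1)}$ arises from pairing the series coefficients. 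Absolute convergence needed to interchange sum and integral comes from the $L^{\infty}_{(s)}$ and $M_{(s^{\star})}$ bounds in Lemma \ref{Key} together with the geometric decay \eqref{iter1}.

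On the diagonal, I would invoke the appropriate Parseval formula from Lemma \ref{key11} (the first line when $s\in(1,2)$, the third when $s=2$) to get
\begin{equation*}
\int_{\Omega_{A_j}} Q_{A_j}^{(s)}({\bf{x}}^{(j)})\,\Psi_{A_j}^{(s^{\star})}({\bf{y}}^{(j)})\,d\mathbb{P}_{A_j} \ = \ {\bf{x}}^{(j)}\boldsymbol{\cdot}{\bf{y}}^{(j)} \ + \sum_{w\in W_{A_j}\setminus R_{A_j}}\widehat{Q_{A_j}^{(s)}({\bf{x}}^{(j)})}(w)\,\widehat{\Psi_{A_j}^{(s^{\star})}({\bf{y}}^{(j)})}(w).
\end{equation*}
Because both Riesz products have odd-order spectra, the residual sum is supported on $W_{A_j,\mathrm{odd}}\setminus R_{A_j} = A_{j+1}$ (see the cascade \eqref{cascade1}), and by the definition \eqref{recur1} of the vector-cascade it equals ${\bf{x}}^{(j+1)}\boldsymbol{\cdot}{\bf{y}}^{(j+1)}$. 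Hence the alternating sum telescopes:
\begin{equation*}
\sum_{j=1}^{n}(-1)^{j-1}\bigl({\bf{x}}^{(j)}\boldsymbol{\cdot}{\bf{y}}^{(j)} + {\bf{x}}^{(j+1)}\boldsymbol{\cdot}{\bf{y}}^{(j+1)}\bigr) \ = \ {\bf{x}}^{(1)}\boldsymbol{\cdot}{\bf{y}}^{(1)} + (-1)^{n-1}{\bf{x}}^{(n+1)}\boldsymbol{\cdot}{\bf{y}}^{(n+1)},
\end{equation*}
and the remainder is bounded by $\|{\bf{x}}^{(n+1)}\|_s\,\|{\bf{y}}^{(n+1)}\|_{s^{\star}} \leq (\delta_s \delta_{s^{\star}})^{n}\|{\bf{x}}\|_s\|{\bf{y}}\|_{s^{\star}} \to 0$ by \eqref{iter1}, since $\delta_s,\delta_{s^{\star}}<1$. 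This yields $\sum_{\alpha\in A}{\bf{x}}(\alpha){\bf{y}}(\alpha)$ in the real case, and the complex case follows from \eqref{complex1} by a routine bilinear expansion.

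The main technical obstacle is the cross-term analysis for $j_1\neq j_2$: one must be careful that despite $\Phi_A^{(s)}({\bf{x}})$ being only a measure when $s^{\star}>2$, the product $Q_{A_{j_1}}^{(s)}({\bf{x}}^{(j_1)})\cdot\Psi_{A_{j_2}}^{(s^{\star})}({\bf{y}}^{(j_2)})$ can still be meaningfully integrated (as a convolution with a function in $\mathbb{A}$), and its vanishing must be read off spectrally from the disjoint supports $W_{A_{j_1,\mathrm{odd}}}$ and $W_{A_{j_2,\mathrm{odd}}}$ inside $W_{A^{\prime}}$ -- this is where independence, encoded by the product structure of $\mathbb{P}_{A^{\prime}}$, does its essential work.
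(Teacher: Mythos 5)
Your proposal is correct and follows essentially the same route as the paper: both rest on the Parseval formulae of Lemma \ref{key11} applied level by level, the identification of the residual spectral sum with ${\bf{x}}^{(j+1)}\boldsymbol{\cdot}{\bf{y}}^{(j+1)}$ via the cascade recursion \eqref{recur1}, the vanishing of cross terms from the disjoint odd spectra of the independent factors, and the geometric decay \eqref{iter1} to kill the remainder. The only difference is bookkeeping — you expand the full integral as a double sum and telescope, while the paper runs an induction on the partial convolutions evaluated at $\boldsymbol{\omega}_0$ — which yields the same computation.
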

\begin{proof}[Sketch of proof] \ For $1 < s < 2$, \  by iterating \eqref{recur1} and Lemma \ref{key11}, we obtain (by induction)\\
\begin{equation} \label{key33}
\begin{split}
&\bigg(\sum_{j=1}^{n}  \mathfrak{i}^{j-1}  Q_{A_j}^{(s)}({\bf{x}}^{(j)}) \convolution \sum_{j=1}^{n}  \mathfrak{i}^{j-1} P_{A_j}^{(s^{\star})}({\bf{y}}^{(j)})\bigg)(\boldsymbol{\omega}_{0}) \\\\ 
&  =  \ \sum_{j=1}^{n}  (-1)^{j-1}  \int_{\Omega_{A^{\prime}}}Q_{A_j}^{(s)}({\bf{x}}^{(j)})  P_{A_j}^{(s^{\star})}({\bf{y}}^{(j)})(d\omega)\\\\ 
& = \  \sum_{\alpha \in A} {\bf{x}}(\alpha){\bf{y}}(\alpha) \  + \  (-1)^{n-1}\sum_{w \in W_{A_n}\setminus R_{A_n}}Q_{A_n,\epsilon_n/4}^{(1)}\big(({\bf{x}}  {\bf{y}})^{(n)}\big)^{\wedge}(w),\ \ \ \ \ \ \  n = 1, \ldots \  \ .\\
%& \ \ \ \ \ \ \ \ \ \ \ \ \ \ \ \ \ \ \ \ \ \ \ \ \ \ \ \ \ \ \  \ \ \ \ \ \ \  n = 1, \ldots \  \ .\\
\end{split}
\end{equation} 
(See \eqref{conv11}.)  Then, 
\begin{equation*}
\bigg(\sum_{j=1}^{n}  \mathfrak{i}^{j-1}  Q_{A_j}^{(s)}({\bf{x}}^{(j)})\convolution \sum_{j=1}^{n}  \mathfrak{i}^{j-1} P_{A_j}^{(s^{\star})}({\bf{y}}^{(j)})\bigg)(\boldsymbol{\omega}_{0}) \ \ \underset{n \to \infty}{\longrightarrow}\ \ \big(\Phi_A^{(s)}({\bf{x}}) \convolution \Phi_A^{(s^{\star})}({\bf{y}})\big)(\boldsymbol{\omega}_{0})
\end{equation*}\\\
and
\begin{equation*}
\sum_{w \in W_{A_n}\setminus R_{A_n}}Q_{A_n,\epsilon_n/4}^{(1)}\big(({\bf{x}}  {\bf{y}})^{(n)}\big)^{\wedge}(w) \ \ \underset{n \to \infty}{\longrightarrow} \ \ 0
\end{equation*}\\ 
converges uniformly on bounded subsets  of $l^s(A)$ and $l^{s^{\star}}(A)$, and hence \eqref{key22}. For $s = 2$, in the preceding argument replace $P_{A_j}^{(2)}({\bf{y}}^{(j)})$ with $Q_{A_j}^{(2)}({\bf{y}}^{(j)})$.   
\end{proof}
\noindent
We summarize:
\begin{theorem} \label{mainTh} \ The mappings in \eqref{complex1} are injections
\begin{equation} \label{injs}
\Phi_A^{(s)}: \ l^s(A) \ \rightarrow  \left \{
\begin{array}{lcc}
 L^{\infty}_{(s)}(\Omega_{A^{\prime}}, \mathbb{P}_{A^{\prime}}) , & \quad  s \in [1,2]  \\\\
 M_{(s)}(\Omega_{A^{\prime}}),& \quad  s \in (2,\infty],
\end{array} \right.
\end{equation}
with the following properties:\\

\noindent
{\bf{i}}. \ For ${\bf{x}} \in l^s(A),$ $s \in [1,\infty]$,\\ 
\begin{equation} \label{inters}
\big(\Phi_A^{(s)}({\bf{x}})\big)^{\wedge}(r_{\alpha}) \ = \ {\bf{x}}(\alpha), \ \ \ \ \ \alpha \in A.
\end{equation} \ \\

\noindent
{\bf{ii}}. \ For ${\bf{x}} \in l^s(A),$ $s \in [1,\infty]$,\\ 
\begin{equation} \label{est1}
\begin{split}
\frac{2}{\pi}\|{\bf{x}}\|_1 \ \leq \ \|\Phi_A^{(1)}({\bf{x}})\|_{L^{\infty}} \ & \leq \ \|{\bf{x}}\|_1, \ \ \ \  s =1,\\\\
 \|{\bf{x}}\|_s \ \leq \ \|\Phi_A^{(s)}({\bf{x}})\|_{L^{\infty}_{(s)}} \ & \leq \ \frac{ 2\sqrt{e}}{1 - \delta_s} \ \|{\bf{x}}\|_s, \ \ \ \  s \in (1,2],\\\\
 \|{\bf{x}}\|_s \ \leq \ \|\Phi_A^{(s)}({\bf{x}})\|_{M_{(s)}} \ &\leq \ \frac{4}{1 - \delta_s} \ \|{\bf{x}}\|_s, \ \ \ \ s \in (2,\infty],\\\\
 \|{\bf{x}}\|_{\infty} \ \leq \ \|\Phi_A^{(\infty)}({\bf{x}})\|_{M} \ &\leq  \ 4\|{\bf{x}}\|_{\infty}, \ \ \ \ s = \infty,\\\
\end{split}
\end{equation}
where $\delta_s$ is given in \eqref{dels}. \ \\

\noindent
{\bf{iii}}. \ For  \ ${\bf{x}} \in l^s(A)$,  \ ${\bf{y}} \in l^{s^{\star}}(A),$ \ $1 \leq s \leq 2 \leq s^{\star} = \frac{s}{s-1} \leq \infty,$ \\
 \begin{equation} \label{Par}
 \sum_{\alpha \in A} {\bf{x}}(\alpha){\bf{y}}(\alpha) \ = \ \int_{\omega \in \Omega_{A^{\prime}}}\big(\Phi_A^{(s)}({\bf{x}})\big)(\omega) \ \big(\Phi_A^{(s^{\star})}({\bf{y}})\big)(d\omega).
  \end{equation}\\
 {\bf{iv}}. \ For  ${\bf{x}} \in l^s(A), \ \ s  \in [1,\infty],$ \ $c \in \mathbb{R},$\\
  \begin{equation} \label{homog}
  \Phi_A^{(s)}(c{\bf{x}}) \ = \ c  \ \Phi_A^{(s)}(c{\bf{x}}).\\\\
  \end{equation}\\
{\bf{v}}. \ The transform maps\\  
 \begin{equation} \label{cont11}
 \widehat{\Phi_A^{(s)}}: \ l^s(A) \ \rightarrow \ l^s(W_{A^{\prime}}), \ \ \ s \in [1,\infty],\\\\
 \end{equation}
 are $(l^s \hookrightarrow l^s)$-continuous.
\end{theorem}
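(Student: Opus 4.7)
The plan is to assemble Theorem \ref{mainTh} almost entirely from the ingredients already prepared: the spectral, norm, and continuity properties of $Q_{A,\epsilon}^{(s)}$ and $P_{A,\epsilon}^{(s)}$ in Lemma \ref{Key}; the convolution identities \eqref{conv11}; the Parseval formulas of Lemma \ref{key11}; and the cascade estimate \eqref{iter1}. Throughout, the proof of all five parts reduces to the real-valued case via \eqref{complex1}; complex ${\bf x} = {\bf u} + \mathfrak{i}{\bf v}$ only doubles the constants and preserves every structural property.

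First I would verify (i). By construction \eqref{Key1}, $\Phi_A^{(s)}({\bf x})$ is the sum of independent building blocks supported on $\Omega_{A_j}$, so $(\Phi_A^{(s)}({\bf x}))^\wedge$ restricted to $W_{A_1}$ comes solely from the $j=1$ term $Q_{A_1}^{(s)}({\bf x}^{(1)})$ or $P_{A_1}^{(s)}({\bf x}^{(1)})$, with ${\bf x}^{(1)} = {\bf x}$; then \eqref{interp3} gives the identity on $R_A$. Part (iv) is immediate from Remark \ref{homog1}: each $Q_{A_j}^{(s)}$ and $P_{A_j}^{(s)}$ has spectrum in $W_{A_j,\mathrm{odd}}$, so both are odd functions, hence $\mathbb{R}$-homogeneous, and this property is preserved by the sum in \eqref{Key1}.

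For (ii), the lower bounds are forced by (i) and the duality on the $R_A$-spectrum (for $s \in (1,\infty)$ via Hausdorff--Young / Parseval, for $s=1$ via Cauchy--Schwarz, and for $s=\infty$ via the Sidon estimate \eqref{SidR}). For the upper bounds, sum the norms in \eqref{as1}, \eqref{as11}, \eqref{as111} applied with $\epsilon = 1$: the estimate $\|{\bf x}^{(j)}\|_s \leq \delta_s^{j-1}\|{\bf x}\|_s$ from \eqref{iter1} converts each bound into a geometric series with ratio $\delta_s < 1$, yielding the constants $\tfrac{2\sqrt{e}}{1-\delta_s}$ and $\tfrac{4}{1-\delta_s}$. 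The boundary cases $s=1$ and $s=\infty$ are already recorded in \eqref{trivial}. This same geometric-series argument shows that the series defining $\Phi_A^{(s)}({\bf x})$ converges absolutely in the claimed space, which in particular places $\Phi_A^{(s)}({\bf x})$ in $L^\infty_{(s)}$ respectively $M_{(s)}$ as required by \eqref{injs}.

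Part (iii) for $1 < s < \infty$ is precisely Lemma \ref{lem22}; the boundary cases $s=1$ and $s=\infty$ are \eqref{trivial1}. Finally, for (v), I would combine the $l^s$-norm continuity of the transform maps $(Q_{A_j}^{(s)})^\wedge$ and $(P_{A_j}^{(s)})^\wedge$ (the last statement of Lemma \ref{Key}) with the recursive definition \eqref{recur1} to see inductively that ${\bf x} \mapsto {\bf x}^{(j)}$ is $(l^s \to l^s)$-continuous for each $j$. The disjoint supports \eqref{disjoint3} make $\widehat{\Phi_A^{(s)}({\bf x})}$ the concatenation (in $l^s(W_{A'})$) of the $l^s$-continuous blocks $(Q_{A_j}^{(s)}({\bf x}^{(j)}))^\wedge$ or $(P_{A_j}^{(s)}({\bf x}^{(j)}))^\wedge$; because $\|{\bf x}^{(j)}\|_s$ decays geometrically uniformly on bounded subsets, the tail sums are uniformly small, giving uniform-on-bounded-sets approximation by continuous finite truncations and hence continuity of the full map in \eqref{cont11}. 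The main obstacle, and the place where care is needed, is the bookkeeping for $s > 2$: then $\Phi_A^{(s)}({\bf x})$ is only a measure (not a function), so \eqref{Par} must be read as the pairing defined by the absolutely convergent Walsh series expansion of the convolution $\Phi_A^{(s)}({\bf x}) \convolution \Phi_A^{(s^\star)}({\bf y})$ from Lemma \ref{lem11}, rather than as a pointwise Lebesgue integral -- this is the content of Remark \ref{fide} and is what Lemma \ref{lem22} establishes via the telescoping identity \eqref{key33}.
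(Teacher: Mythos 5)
Your proposal is correct and follows essentially the same route as the paper's own (sketched) proof: part (i) from the $j=1$ block and \eqref{interp3}, part (ii) from \eqref{SidR}, \eqref{norm11}, Lemma \ref{Key} and the geometric decay \eqref{iter1}, part (iii) from \eqref{trivial1} and Lemma \ref{lem22}, part (iv) from Remark \ref{homog1}, and part (v) from the last assertion of Lemma \ref{Key}. Your write-up is in fact somewhat more detailed than the paper's sketch (notably the uniform-approximation argument for (v) and the remark on reading \eqref{Par} as the convolution pairing for $s^{\star}>2$), and the only slip is cosmetic: in part (iii) the exponent $s$ ranges over $[1,2]$, not $(1,\infty)$.
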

\begin{proof}[Sketch of proof] \ The interpolation property in \eqref{inters} follows from
\begin{equation} 
\big(\Phi_A^{(s)}({\bf{x}})\big)^{\wedge}(r_{\alpha}) = \left \{
\begin{array}{lcc}
\big(Q_{A_1}^{(s)}({\bf{x}}^{(1)})\big)^{\wedge}(\alpha), &\alpha \in A_{1}, \  \ s \in [1,2]  \\\\
\big(P_{A_1}^{(s)}({\bf{x}}^{(1)})\big)^{\wedge}(\alpha),&\alpha \in A_{1}, \  \  s \in (2,\infty].
\end{array} \right.
\end{equation}\\
The first line in \eqref{est1} is the Sidon property in \eqref{SidR}, and the fourth follows from \eqref{norm11}. In \eqref{Par}, the instance $s = 1$ is a restatement of \eqref{trivial1}, and the case $1 < s \leq 2$ is a restatement of \eqref{key22} in  Lemma \ref{lem22}.  The second and third lines in \eqref{est1} follow from   \eqref{iter1}.  The homogeneity property in \eqref{homog} follows from the homogeneity of  $Q_{A_j}^{(s)}$ and $P_{A_j}^{(s)}$. (See Remark \ref{homog1}.)  The continuity of the transform map in \eqref{cont11}  follows from the last assertion in Lemma \ref{Key}.
\end{proof}

\begin{remark}[representations of $l^s$] \
For $s=2$ in Theorem \ref{mainTh}, the map $\Phi_A^{(2)}$  is the $\big(l^2(A) \hookrightarrow L^{\infty})$-ultra-interpolant of Theorem \ref{MT1}
\begin{equation} \label{template}
  \Phi_{A,\delta}({\bf{x}}) \ {=} \  \sum_{j=1}^{\infty} \  \mathfrak{i}^{j-1} \ G_{j,\delta}({\bf{x}}^{(j)}), \ \ \ \ \ {\bf{x}} \in l^2(A),
\end{equation} 
with $G_{j,\delta}  =  Q_{A_j}$ and $\delta  = \sqrt{\sinh 1 - 1}$.  %Indeed, the additional properties of the ultra-interpolant $\Phi_A^{(2)}$ ($\mathbb{R}$-homogeneity and $l^s \hookrightarrow l^s)$-continuity of the transform) result by taking the interpolants $G_{j,\delta}$  in \eqref{template} to be the Riesz products $Q_{A_j}$.
We refer to the maps in \eqref{injs} that satisfy  \eqref{inters}, \eqref{est1} (with general constants on the right side), and \eqref{Par}, as representations of \ $l^s(A)$; specifically,  $(l^s(A) \hookrightarrow L^{\infty}_{(s)})$-representations for $s \in [1,2]$, and $(l^s(A) \hookrightarrow M_{(s)})$-representations for $s \in (2,\infty]$.  The gist of Theorem \ref{mainTh}  is that the constructs in  \eqref{complex1} are uniformly bounded $\mathbb{R}$-homogeneous representations of $l^s(A)$ with norm-continuous transforms.

%For $s = 1$ and $s = \infty$, via proof similar to one used for $1 < s < \infty$,    
%\begin{equation} \label{Key111}
%\Phi_A^{(s)}({\bf{x}}) \ \overset{\text{def}}{=}  \  \left \{
%\begin{array}{lcc}
% \sum_{j=1}^{\infty}  \ \mathfrak{i}^{j-1} \ Q_{A_j}^{(1)}({\bf{x}}^{(j)}), & \ \quad  s =1,  \\\\
 % \sum_{j=1}^{\infty}  \ \mathfrak{i}^{j-1} \ P_{A_j}({\bf{x}}^{(j)}), & \  \quad  s = \infty, 
%\end{array} \right.
%\end{equation}\\
%are also, respectively,  $\mathbb{R}$-homogeneous $\big(l^1(A) \hookrightarrow \mathbb{A}(\Omega_{A^{\prime}})\big)$- and $\big(l^1(A) \hookrightarrow M(\Omega_{A^{\prime}})\big)$-representations with norm-continuous transforms.  Notably, $\Phi_A^{(1)} = U_{R_{A_1}}$  and  $\Phi_A^{(\infty)} = P_{A_1}$  are simpler and more 'efficient.' \\
  \end{remark}
  \begin{remark}[idea deconstructed] \label{deconstruct}\ The case $\#A = 2$, which we excluded, is trivial.  For, if $\#A = \#A_1 = 2$, then $A_j = \emptyset$ for $j > 2.$
  
 In \citet[Theorem 9.1]{blei2014grothendieck} we took $A$ to be infinite, without loss of generality.  We fixed a countably infinite partition $\{A_j: j \in \mathbb{N}\}$ of $A$ with the property that each $A_j$ had the same cardinality as $A$, and chose bijections
\begin{equation} \label{bijection}
\tau_0: \ A \rightarrow A_1, \ \  \ \ \ \tau_j: \ W_{A_j} \setminus R_{A_j} \ \rightarrow \ A_{j+1}, \ \ \ \ j \in \mathbb{N}.
\end{equation}
For ${\bf{x}} \in l^s_{\mathbb{R}}(A)$ \ $(1 \leq s  \leq \infty)$,  we defined ${\bf{x}}^{(1)} \in l^s_{\mathbb{R}}(A_1)$ by
\begin{equation} \label{init}
{\bf{x}}^{(1)}(\tau_0\alpha) = {\bf{x}}(\alpha), \ \ \ \alpha \in A,
\end{equation} 
and \  ${\bf{x}}^{(j+1)} \in l^s_{\mathbb{R}}(A_{j+1}),$ \  $ j = 1, 2, \ldots,$ by\\ 
\begin{equation} \label{recur}
{\bf{x}}^{(j+1)}(\tau_j w) = \left \{
\begin{array}{lcc}
\big(Q_{A_j}^{(s)}({\bf{x}}^{(j)}\big)^{\wedge}(w), &\quad w \in W_{A_j} \setminus R_{A_j}, \  \ s \in [1,2],  \\\\
\big(P_{A_j}^{(s)}({\bf{x}}^{(j)}\big)^{\wedge}(w),&\quad w \in W_{A_j} \setminus R_{A_j}, \  \  s \in (2,\infty].
\end{array} \right.
\end{equation}\\
(Cf. \eqref{cascade1} and \eqref{recur1}.) Representations of $l^s$ with respect to $(\Omega_A,\mathbb{P}_A)$ were then derived   %\begin{equation} \label{cascade}
%(\Omega_A,\mathbb{P}_A)\ = \  \big(\bigtimes _{j=1}^{\infty} \Omega_{A_j},\bigtimes_{j=1}^{\infty}\mathbb{P}_{A_j}\big),
% \end{equation}
 via recursions nearly identical to those above based on  \eqref{cascade1} and  \eqref{recur1}. 

I prefer cascades leading to the countably infinite product  $(\Omega_{A^{\prime}},\mathbb{P}_ {A^{\prime}})$ in \eqref{disjoint3}, wherein $(\Omega_A,\mathbb{P}_A)$ is the first factor.  Specifically, the "cascade" approach vacates the 'infinite $A$' assumption, and also leads to the  $(l^2 \hookrightarrow L^{\infty})$-ultra-interpolant $\Phi_{A,\delta}$ in Theorem \ref{MT1}, or $\Phi_A^{(2)}$ in Theorem \ref{mainTh}; and thus a view of the Grothendieck inequality as an upgrade of Khintchin's. (Cf. Remark \ref{upgrade2}.)
  \end{remark}
\section{\bf{Interpolation sets (brief overview)}}  \label{interpolation}  
%\subsection{Interpolation sets} \label{interpolation} \ 
 The proof of Theorem \ref{mainTh} relied on upgrades of interpolation properties of Rademacher systems,
   \begin{equation} \label{LpMp}
\begin{split}
l^s(R_A) &\subset \big(L^{\infty}_{(s)}(\Omega_A,\mathbb{P}_A) \big )^{\wedge} \big |_{R_A}, \ \ \ \ s \in [1,2],\\\
l^s(R_A) &\subset \big(M_{(s)}(\Omega_{A}) \big )^{\wedge} \big |_{R_A}, \ \ \ \ s \in (2,\infty],
\end{split}
\end{equation}
where the instance  $s = 2$  is the dual equivalent of the Khintchin inequality, 
 and the instance $s = \infty$ is the dual equivalent of  
 \begin{equation}
 f \in C_{R_A}(\Omega_A) \ \Rightarrow \ \widehat{f} \in l^1(R_A).
\end{equation} 
 %(Cf. \eqref{SidR} and \eqref{SidRD} in the previous section.)
The properties in \eqref{LpMp}, formally tagged in \S 6.3, naturally fit in a broader context of interpolation phenomena, which we briefly survey below. 

\
\noindent

\subsection{$\Lambda(p)$-sets} \label{open} \ $E \subset \Gamma$ (= discrete abelian group)  is $\Lambda(p)$, $p \in (0,\infty)$, if $\kappa_q(E,p) < \infty$ for some $q \in (0,p)$.  
%Equivalently, $E \subset \Gamma$ is  $\Lambda(p)$ if         
%\begin{equation}
%L^q_E \ \subset \ L^p(\widehat{\Gamma},\mathfrak{m}) \ \ \ \ \ q < p.
%\end{equation}
(See Remark \ref{Lambda}.) The $\Lambda(p)$ property had been formalized and studied first in the circle group setting $\widehat{\Gamma} = [0,2\pi)$ \citep{rudin1960trigonometric}, but soon was cast in other settings as well (e.g., \citet{bonami1968ensembles}, \citet{bonami1970etude}, \citet{zygmund1974fourier}).
 In dyadic settings, Rademacher systems $R_A$ via the Khintchin inequalities \citep{khintchine1923dyadische}, and more generally, Walsh systems  $W_{A,k}$ via Bonami's inequalities \citep{bonami1968ensembles}, are prototypical $\Lambda(p)$-sets for all  $p > 0$.\footnote{\label{footnote4} Bonami's inequalities became known as  \emph{hypercontractive inequalities}, variants of which had previously appeared in a theoretical physics context  \citep{nelson1966quartic}.  (See \citet{davies1992hypercontractivity} for a detailed survey of hypecontractivity in a mathematical physics context, where the term \emph{hypercontractivity} originated.)
Bonami's inequalities were applied also in a theoretical computer science framework, with their first use in  \citet{kahn1988influence}, in the proof of the KKL theorem; e.g., see \citet{o2008some}.} The question \citep{rudin1960trigonometric}, which became known as the $\Lambda(p)$-\emph{set problem}, whether for a given $p > 0$  there exist  $\Lambda(p)$-sets that are not $\Lambda(s)
$ for any $s > p > 0$, had been resolved affirmatively first for even integers $p \geq 4$ in  \citet{rudin1960trigonometric}; negatively in \citet{bachelis1974lambda} for $p \in (0,2)$ (see also \citet{hare1988elementary}), and affirmatively in \citet{bourgain1989bounded} for  $p \in (2, \infty)$  (see also \citet{talagrand1995sections}).  
At $p = 2$, the $\Lambda(2)$-\emph{set problem} remains open. 

A general notion of $\Lambda(2)$-uniformizability, and, specifically, the  $\Lambda(2)$-modulus of $E \subset \Gamma$ are verbatim transcriptions from the  dyadic setting (Remark \ref{upgrade4}). We have already noted the open question, whether every $\Lambda(2)$-set is $\Lambda(2)$-\emph{uniformizable}, and its connection to the $\Lambda(2)$-set problem; that a negative answer to the uniformizability question would imply the existence of a $\Lambda(2)$-set that is not $\Lambda(s)$ for any $s > 2$ (Lemma \ref{digress1}). The uniformizability question is related also to the \emph{$\Lambda(2)$ union problem}, hitherto open, whether a finite union of $\Lambda(2)$-sets is  $\Lambda(2)$;  an affirmative answer to the former would imply an affirmative answer to the latter. (Cf. \citet[Ch. III \S 6]{Blei:2001}.)\

\noindent
\subsection{$p$-Sidon sets}
\begin{definition} \label{defnp}
$E \subset \Gamma$ is $p$-Sidon, $p \in [1,2)$, if
\begin{equation} \label{Sidonp}
\big \{\hat{f}: f  \in C(\widehat{\Gamma}), \  \widehat{f} = 0 \ \text{on} \  E^c \big \} \  \overset{\text{def}}{=} \  \widehat{C_E} \subset l^p(E),
\end{equation}
or, equivalently (by duality),  if  
\begin{equation} \label{interpolate3}
l^{p^*}(E) \subset \big({M(\widehat{\Gamma})}\big)^{\wedge}\big |_E, \ \  \ \ \  {p^*}  = \frac{p}{p-1}.
\end{equation}\\
\end{definition}

The case $p = 1$ was considered first in  \citet{rudin1960trigonometric}, wherein  $E \subset \mathbb{Z}$ \  satisfying \eqref{Sidonp} with $p = 1$  were dubbed \emph{Sidon sets} -- homage to the result in \citet{sidon1927satz}  that \emph{lacunary} $E \subset \mathbb{Z}^+$  satisfy \eqref{Sidonp} with $p = 1$. The notion of  $p$-\emph{Sidonicity} was formalized later in \citet{edwards1974p}.  Over the years, studies of \emph{Sidonicity} have indeed branched out in different directions and various settings.  (For a detailed survey, see  \citet{graham2013interpolation}.  Beware:  in additive number theory, a \emph{Sidon set} has altogether a different meaning, which originated in \citet{sidon1932satz}; see \citet{o2004complete}.) 
  
The Rademacher system $R_A \  (= W_{A,1})$, an analogue of a lacunary subset of  $\mathbb{Z}^+$, is an archetypal $1$-Sidon set.   Littlewood's $4/3$-inequality  \citep{Littlewood:1930} (in part a corollary to the $(C_{R \times R} \hookrightarrow l^{1,2})$-inequality) asserts that if $A$ is infinite, then  $W_{A,2} \subset W_A $ is an \emph{exact} $4/3$-Sidon set, i.e., $W_{A,2}$ is $p$-Sidon  $\Leftrightarrow  p \geq 4/3.$  \emph{Exact} $p$-Sidon sets  for arbitrary  $p \in (1,2)$ were produced in \citet{blei1979fractional}.  We know more about $1$-Sidonicity (the extremal case) than about $p$-Sidonicity (the full scale).  For example, the union of finitely many $1$-Sidon sets is known to be $1$-Sidon  \citep{drury1972unions},  but whether the same holds for $p$-Sidon sets,   $p \in (1,2)$, is open. 

\begin{remark}[$p$-Sidon constant] \ $E \subset \Gamma$ is $p$-Sidon, $p \in [1,2)$, if and only if
\begin{equation}
\sigma_p(E) \ \overset{def}{=} \ \sup \big\{ {\|\widehat{f}\|_p}/{\|f\|_{L^{\infty}}}: E \text{-polynomials}, \ f \neq 0 \big\} \ < \ \infty.
\end{equation}
By duality, in the 'language' of interpolants (Remark \ref{temp}),  
\begin{equation}
\sigma_p(E) \ {=} \ \inf \big \{\|G\|_{l^{p^*} \hookrightarrow M}: \big(l^{p^*}(E) \hookrightarrow M(\widehat{\Gamma})\big)\text{-interpolants}   \ G \big \}.
\end{equation}
\end{remark}

\noindent
\subsection{The $L(p)$ and $S(p)$ properties}
%{\bf{$L(p)$-sets and $S(p)$-sets.}}
 \ The  $\Lambda(2)$  and 
$1$-Sidon properties have extensions, which are exemplified by \eqref{LpMp}, and are ostensibly distinct from $\Lambda(p)$ and $p$-Sidonicity.  Let
\begin{equation}
\begin{split}
L^{\infty}_{(p)}(\widehat{\Gamma},\mathfrak{m}) \ &\overset{\text{def}}{=}  \ \big \{f \in L^{\infty}(\widehat{\Gamma},\mathfrak{m}): \hat{f} \in l^p(\Gamma) \big \},  \ \ \ \ p \in [1,2],\\\
%and \ \ \ \ \ \ \ \ \ \ \ \ \ \ \ \ \ \ \ \ \ \ \ \ \   & \\\
M_{(p)}(\widehat{\Gamma}) \  &\overset{\text{def}}{=} \  \big \{\lambda \in M(\widehat{\Gamma}): \Hat{\lambda} \in l^p(\Gamma) \big \}, \ \ \ \ p \in (2,\infty].
\end{split}
\end{equation} \ \\
(Cf. \eqref{xtendp} and  \eqref{quasi}.) 

 \ 
\begin{definition}  \label{SL} \ 
 $E \subset \Gamma$ is an $L(p)$-set for $p \in [1,2]$, if for every  $\phi \in l^p(E)$ there exist  $f \in L^{\infty}_{(p)}(\widehat{\Gamma},\mathfrak{m})$ such that 
 \begin{equation}
 \widehat{f}\big|_E \ = \ \phi,
 \end{equation}
 and an $S(p)$-set for $p \in (2,\infty]$, if for every  $\phi \in l^p(E)$ there exist  $\mu \in M_{(p)}(\widehat{\Gamma})$ such that 
 \begin{equation}
 \widehat{\mu}\big|_E \ = \ \phi.
 \end{equation}
 \end{definition}

\

By duality, as in \S \ref{more}, \  $E \subset \Gamma$ an $L(p)$-set,   $p \in [1, 2]$, if and only if \\
\begin{equation}
\kappa^{(p)}(E) \ \overset{def} = \ \inf \big \{\|G\|_{l^p \hookrightarrow L^{\infty}_{(p)}}: \big(l^p(E) \hookrightarrow L^{\infty}_{(p)}(\widehat{\Gamma},\mathfrak{m})\big)\text{-interpolants}   \ G \big \} \ < \ \infty,
\end{equation}\\
and an $S(p)$-set,  $p \in (2, \infty]$, if and only if \\
\begin{equation}
\sigma^{(p)}(E) \ \overset{def} = \ \inf \big \{\|G\|_{l^p \hookrightarrow M_{(p)}}: \big(l^p(E) \hookrightarrow M_{(p)}(\widehat{\Gamma})\big)\text{-interpolants}   \ G \big \} \ < \ \infty.
\end{equation}\\

Gauging the 'size' of perturbations, we compute \ $\beta^{(p)}(E,\delta)$,  $\delta \geq 0$, as in \eqref{mod2}. We say  $E \subset \Gamma$ is $L(p)$-\emph{uniformizable} ($1 \leq p \leq 2$), or $S(p)$-\emph{uniformizable} ($2 < p \leq \infty$), if \ $\beta^{(p)}(E,\delta) < \infty$ \  for \  $\delta > 0$ \ in the corresponding ranges of $p$.  

To mark $\big(l^p(E) \rightarrow l^p(\Gamma)\big)$-continuity of transforms, we define $\beta^{(p),c}(E,\delta)$ as in \eqref{mod4}, and dub $E \subset \Gamma$ \emph{continuous}-$L(p)$-\emph{uniformizable} ($1 \leq p \leq 2$), or  \emph{continuous}-$S(p)$-\emph{uniformizable} ($2 < p \leq \infty$), if  $\beta^{(p),c}(E,\delta) < \infty$ ($\delta > 0$) in the respective ranges of $p$.

 I know scant little about the $L(p)$ and $S(p)$ properties, beyond their roles in the proof of Theorem \ref{mainTh}.  Two questions I thought about and could not answer are stated in the remarks below; and there are others that will almost surely occur to the reader...  

\begin{remark}[$S(p)$-sets] 
 For $p \in (2,\infty]$, every $S(p)$-set is \emph{a fortiori}  $q$-Sidon for every $q \in [p^{\star},2)$, $p^{\star}  = \frac{p}{p-1}$.  That is, for $E \subset \Gamma$ and $p \in (2,\infty]$, 
 \begin{equation}
 \sigma_{q}(E) \ \leq \ \sigma^{(p)}(E), \ \ \ \ \ p^{*} \leq q < 2.
 \end{equation}
 In particular, $p^*$-Sidonicity is upgraded by the $S(p)$ property, which asserts that interpolants can be chosen to be $M_{(p)}$-valued, and not merely $M$-valued;  see  \S \ref{upgrade3} ("notion of an upgrade").
 
 The $M_{(p)}(\Omega_A)$-valued Riesz products (Lemma \ref{Key}) imply that Rademacher systems are continuous $S(p)$-uniformizable for every $p \in (2,\infty]$.  It can be similarly shown with a bit more work, specifically by modifying arguments in \citet{drury1972unions} (also based on Riesz products), that 1-Sidon sets are continuous-$S(p)$-uniformizable for every $p \in (2,\infty]$. 
 
 In Part II of the monograph, we will deduce that $F \subset W_{A,n}$ is an $S(p)$-set for all $p \in (2, \frac{2\dim F}{\dim F - 1}]$, \ $n = 2, \ldots$, where $\dim F$ is the \emph{combinatorial dimension} of $F$ \citep{blei1984combinatorial1}.
  
 \begin{question}
 Does $q$-Sidon $\Rightarrow$ $S(q^*)$-set, \ $q \in (1,2)$?
 \end{question}  
 \end{remark} 
 
\

\begin{remark} [$L(p)$-sets] \label{persp3} \ For every $E\subset \Gamma,$ 
\begin{equation}
U_E{\bf{x}} \  \in \  L^{\infty}_{(1)}(\widehat{\Gamma},\mathfrak{m})  \ \ \    \text{for all } \ {\bf{x}} \in l^1(E),
\end{equation}
i.e., every $E \subset \Gamma$ is $L(1)$ (trivially!). For  \ $1 < p \leq 2$,  
%then \ $l^p(E) \subset  \big(L^{\infty}_{(p)}(\widehat{\Gamma},\mathfrak{m})\big)^{\wedge}$, i.e., 
\begin{equation} \label{persp1}
U_E{\bf{x}}  \in   L^{\infty}_{(p)}(\widehat{\Gamma},\mathfrak{m})  \ \ \  \text{for all } \ {\bf{x}} \in l^p(E)
\end{equation}
only for finite $E \subset \Gamma$, and otherwise there exist an orthogonal $(l^p,L^{\infty}_{(p)})$-perturbations \ $\mathfrak{p}_E: l^p(E) \rightarrow \mathfrak{S}_{\Gamma \setminus E},$ \  i.e., 
\begin{equation}
U_E{\bf{x}} + \mathfrak{p}_E({\bf{x}}) \in L^{\infty}_{(p)}(\widehat{\Gamma},\mathfrak{m})   \ \ \ \text{for all } \ {\bf{x}} \in l^p(E),
\end{equation}\
if and only if $E \subset \Gamma$  is  $L(p)$.\\

 We will note in Part II that  for every $n = 1, \ldots, $ \ $W_{A,n}$ is $L(p)$ for all $p \in [1,2]$.
% \begin{theorem}[{\cite[Theorem 2.2]{blei1980interpolation}}] \label{persp6}  For $E \subset \Gamma$  and $t \in [1, 2]$, if
%\begin{equation} \label{persp5}
%\kappa_2(E,q) \ = \ \mathcal{O}(q^{t/2}),
%\end{equation}
%then $E$ is $L(p)$-uniformizable for $p \in [t, 2]$.
%\end{theorem}

\noindent
%(The proof of Theorem \ref{persp6} in fact shows that if $E \subset \Gamma$ satisfies \eqref{persp5}, then $E$ is a continuous-$L(p)$-uniformizable set for $p \in [t, 2].$)
%In particular, if $E \subset W_{A,2}$, then $E$ is $L(p)$-uniformizable for every $p \in [\dim E, 2]$, where $\dim E$ denotes the \emph{combinatorial dimension} of $E$ \citep{blei1984combinatorial1}.  

%\begin{question} \ Is Theorem \ref{persp5} optimal?  E.g., is $W_{A,2}$  $L(p)$-uniformizable for some  $p \in (1,2)?$
%\end{question}
\begin{question}
Does $\Lambda(2)$ $\Rightarrow$ $L(p)$,\  $p \in (1,2)$?
\end{question}
 \end{remark}

\bibliographystyle{apalike}
\bibliography{blei}
\vspace{.3in}
\end{document}